\documentclass[12pt,a4paper]{report}
\usepackage{amsmath, amsthm, amssymb,latexsym,amscd,amsfonts,enumerate}
\usepackage{indentfirst}
\usepackage[mathscr]{eucal}
\usepackage{color, fancyhdr, graphicx, wrapfig}
\usepackage{natbib}
\usepackage{imakeidx}
\usepackage{caption}
\usepackage[intoc]{nomencl}

\usepackage{fancybox}
\usepackage{mathtools}
\usepackage{appendix}
\usepackage{url}
\usepackage[unicode]{hyperref}
\usepackage[nameinlink]{cleveref}
\usepackage{float}
\fontsize{12pt}{18pt}\selectfont 
\newtheorem{theorem}{Theorem}[chapter]
\theoremstyle{definition}
\newtheorem{definition}[theorem]{Definition}
\newtheorem{proposition}[theorem]{Proposition}
\newtheorem{corollary}{Corollary}[theorem]
\newtheorem{lemma}[theorem]{Lemma}
\newtheorem{conjecture}[theorem]{Conjecture}
\theoremstyle{remark}
\newtheorem*{remark}{Remark}

\theoremstyle{definition}
\newtheorem{example}[theorem]{Example}
\makeindex[intoc]
\makenomenclature
\begin{document}
\crefformat{subsection}{#2Subsection~#1#3}
\crefname{theorem}{Theorem}{Theorems}
\crefname{definition}{Defintion}{Definitions}
\crefname{proposition}{Proposition}{Propositions}
\crefname{corollary}{Corollary}{Corollaries}
\crefname{lemma}{Lemma}{Lemmas}
\crefname{conjecture}{Conjecture}{Conjectures}
\crefname{subsection}{Subsection}{Subsections}
\begin{titlepage}
	\thisfancypage{
		\setlength{\fboxsep}{1pt}
		\setlength{\fboxrule}{1.2pt}
		\doublebox}{}
	\vspace*{-0.5cm}
	\begin{center}
		{\fontsize{13pt}{1}\selectfont  VIETNAM NATIONAL UNIVERSITY, HANOI} \\
			{\fontsize{13pt}{1}\selectfont  VNU UNIVERSITY OF SCIENCE} \\
			{\fontsize{13pt}{1}\selectfont  \bf FACULTY OF MATHEMATICS - MECHANICS - INFORMATICS}
	\end{center}
	\vspace*{2.5cm}
	\begin{center}
		{\bf\fontsize{14pt}{1}\selectfont Le Xuan Hoang}
	\end{center}
	\vspace*{2.5cm}
	\begin{center}
		{\bf\fontsize{18pt}{1}\selectfont On Modular Invariants of Truncated Polynomial Rings}
	\end{center}
	\vspace*{0.5cm}
	\begin{center}
		{\fontsize{14pt}{1}\selectfont Mathematics Major}\\
		{\fontsize{14pt}{1}\selectfont Talented Program}\\
	\end{center}
	\vspace*{0.5cm}
	
	\vfill
	\begin{center}
		{\bf\fontsize{14pt}{1}\selectfont Hanoi - 2025}
	\end{center}
\end{titlepage}
	\begin{titlepage}
		\thisfancypage{
			\setlength{\fboxsep}{1pt}
			\setlength{\fboxrule}{1.2pt}
			\doublebox}{}
		\vspace*{-0.5cm}
		\begin{center}
			{\fontsize{13pt}{1}\selectfont  VIETNAM NATIONAL UNIVERSITY, HANOI} \\
			{\fontsize{13pt}{1}\selectfont  VNU UNIVERSITY OF SCIENCE} \\
			{\fontsize{13pt}{1}\selectfont  \bf FACULTY OF MATHEMATICS - MECHANICS - INFORMATICS}
			
		\end{center}
		\vspace*{2.5cm}
		\begin{center}
			{\bf\fontsize{14pt}{1}\selectfont Le Xuan Hoang}
		\end{center}
		\vspace*{2.5cm}
		\begin{center}
			{\bf\fontsize{18pt}{1}\selectfont On Modular Invariants of Truncated Polynomial Rings}
		\end{center}
		\vspace*{0.5cm}
		\begin{center}
			{\fontsize{14pt}{1}\selectfont Mathematics Major}\\
		{\fontsize{14pt}{1}\selectfont Talented Program}\\
		\end{center}
		\vspace*{0.5cm}
		\begin{center}
			{\bf\fontsize{14pt}{1}\selectfont Supervisor: Assoc. Prof. Le Minh Ha}
		\end{center}
		\vfill
		\begin{center}
			{\bf\fontsize{14pt}{1}\selectfont Hanoi - 2025}
		\end{center}
	\end{titlepage}
\vspace*{2cm}
\tableofcontents
\newpage
\chapter*{
Introduction
}
\addcontentsline{toc}{chapter}{{\bf{Introduction}}\rm }
Modular Invariant Theory is a branch of mathematics that explores the behavior of polynomial functions invariant under group actions, particularly over fields with positive characteristic. Overall, modular invariant theory serves as a vital link connecting algebraic methods with combinatorial and topological applications, enriching each field through its interdisciplinary reach. In this thesis, we investigate various aspects of modular invariant theory, with a particular focus on the theory of \emph{Schur functions over finite fields} and the \emph{invariant theory of truncated polynomial algebras}.\smallbreak
Firstly, it is well-known that Schur functions are an important family of symmetric functions that play a pivotal role in algebraic combinatorics. In 1992, Macdonald \cite{macdonald_schur_1992} introduced nine variations of Schur functions, thereby enabling new connections between mathematical objects that previously appeared unrelated. Motivated by the compelling analogy between the symmetric group $\Sigma_n$ and the general linear group $GL_n(\mathbb{F}_q)$ (when $q\to 1$), with the hope of unifying various structures and concepts across combinatorics and invariant theory, the first object of interest in this thesis is Macdonald's 7th variation of Schur functions, which are called \emph{Schur functions over finite field}. Notably, this variation comprises a family of polynomials invariant under the action of the general linear group; this construction reinforces the deep connection between $\Sigma_n$ and $GL_n(\mathbb{F}_q)$, further illuminating the interplay between combinatorial and algebraic structures. Working on Macdonald's 7th variation of Schur functions, we prove a generalization of \cite[Conjecture (7.25)]{macdonald_schur_1992}, leading to an extension of the Stong-Tamagawa formula, providing a basis-free expression of Schur functions.\smallbreak
Secondly, a \emph{truncated polynomial algebra} $Q(m, n)$ is defined as the quotient ring $\mathbb{F}_q[x_1, \ldots, x_n]/(x_1^{q^m}, \ldots, x_n^{q^m})$. This ring admits an action of $GL_n(\mathbb{F}_q)$ induced from the polynomial ring $\mathbb{F}_q[x_1, \ldots, x_n]$. The $GL_n(\mathbb{F}_q)$-module structure of $Q(m, n)$ plays a critical part in analyzing certain important problems in algebraic topology (see \cite[Section 1]{Ha_Hai_Nghia_2024} and the references therein). Motivated by finite field analogues of the representation theory of finite reflection groups--particularly the theory of parking spaces--in 2017, Lewis, Reiner, and Stanton \cite{Lewis_Reiner_Stanton_2017} proposed a set of conjectures concerning the Hilbert series of the invariant rings of $Q(m, n)$ under the action of parabolic subgroups of $GL_n(\mathbb{F}_q)$, offering a novel interpretation of this previously enigmatic structure. Recently, in 2024, L. M. Ha, N. D. H. Hai, and N. V. Nghia \cite{Ha_Hai_Nghia_2024} proved the Lewis-Reiner-Stanton conjecture for the case of the Borel subgroup by explicitly constructing a basis for the corresponding invariant ring. Our research aims to extend the methodology of this proof to other subgroups of $GL_n(\mathbb{F}_q)$, as well as answering additional structural questions that arise from this line of inquiry.\smallbreak
An overview of the thesis structure and the key results presented in each chapter is provided below.
\begin{itemize}
    \item \Cref{chapter:invariant_theory} provides a concise introduction to modular invariant theory, focusing on the Dickson algebra and related structures. It also examines the concept of Schur functions over finite fields, as presented in \cite{macdonald_schur_1992}. A generalization of \cite[Conjecture (7.25)]{macdonald_schur_1992} is established in \cref{gen_of_7.25}, leading to an extension of the Stong–Tamagawa formula in \cref{main_rewrite}, representing a Schur function as a sum over complete flags.
    \item \Cref{chap:trucated_invariants} focuses on analyzing the Lewis-Reiner-Stanton conjecture (\cref{conj:parabolic_conj}) and the proof provided by Ha, Hai and Nghia \cite{Ha_Hai_Nghia_2024} for the Borel subgroup case; using a similar argument, \cref{thm: q=p_case} indicates a construction of bases for invariant rings of truncated algebras under the action of the unipotent subgroup, in the case the field is of prime order. Additionally, we examine the delta operators--a pivotal family of operators used in \cite{Ha_Hai_Nghia_2024}--with particular attention to their \emph{polynomiality} property; we propose a conjecture (\cref{conj:polynomiality}) regarding the module structure of the set of polynomials that remain within the polynomial ring upon the application of delta operators. Our main results in this chapter are \cref{thm:pol_bivariate}, explicitly describing the module structure in the bivariate case; while \cref{cor:weak_version_m_less_than_n} and \cref{prop:weakened_version} provide evidences and partial answers for the conjecture.
    \item \Cref{chap:conclusion} concludes the thesis by discussing potential directions for future research in this area.
    \item \Cref{appendix:brauer_character} introduces the theory of modular characters to elucidate statements in \cref{cor:brauer_iso}. 
\end{itemize}
While every effort has been made to ensure the accuracy and clarity of this thesis, I acknowledge that oversights may remain. I welcome any constructive feedback and sincerely apologize for any errors that might have been overlooked.
\chapter*{Acknowledgements}
I am profoundly grateful to my supervisor, Associate Professor, Doctor Le Minh Ha, for his invaluable guidance, insightful feedback, and unwavering support throughout this research. 

I would like to express my sincere appreciation to the members at Faculty of Mathematics – Mechanics – Informatics, VNU University of Sience, whose instruction and mentorship have been instrumental in my academic development. In particular, I am grateful to Associate Professor Ngo Quoc Anh, Dr. Pham Van Tuan, Associate Professor Trinh Viet Duoc, and Associate Professor Le Quy Thuong, their guidance has not only deepened my understanding of mathematical concepts but has also provided invaluable insights and inspiration for my future endeavors.

I am also thankful to my friends and classmates for their companionship and the shared experiences that have enriched my memorable journey at VNU University of Science. Their support and friendship have been a source of motivation and comfort throughout my studies.

Finally, I extend my deepest gratitude to my family for their unwavering support and encouragement. Their belief in me has been a constant source of strength, enabling me to persevere through challenges and achieve my goals.
\begin{flushright}
    \textit{Hanoi, May 2025}\\
    Le Xuan Hoang
\end{flushright}
\addcontentsline{toc}{chapter}{{\bf{Acknowledgements}}\rm }
\nomenclature[a01]{$\mathbb{N}$}{the natural numbers, not including $0$.}
\nomenclature[a02]{$\mathbb{Z}$}{the integers.}
\nomenclature[a03]{$\mathbb{Z}_{\geq 0}$}{the nonnegative integers.}
\nomenclature[a04]{$\mathbb{Q}$}{the rational numbers.}
\nomenclature[a05]{$\mathbb{C}$}{the complex numbers.}
\nomenclature[a06]{$\mathbb{K}$}{a field.}
\nomenclature[a07]{$\mathbb{F}_q$}{the finite field having $q$ elements, where $q$ is a power of a prime $p$.}
\nomenclature[a08]{$\mathbb{K}^{\times}$}{$\mathbb{K}\backslash \{0\}$.}
\nomenclature[a09]{$V$}{a finite-dimensional vector space over $\mathbb{K}$.}
\nomenclature[a10]{$GL(V)$}{the group of linear automorphisms of $V$.}
\nomenclature[a11]{$GL_n(\mathbb{K})$}{the group of linear automorphisms of $\mathbb{K}^n$.}
\nomenclature[a12]{$V^{*}$}{the dual $Hom_{\mathbb{K}}(V, \mathbb{K})$.}
\nomenclature[a13]{$\mathbb{K}[V]$}{the symmetric algebra on $V$.}
\nomenclature[a14]{$G$}{a group, usually a finite subgroup of $GL(V)$.}
\nomenclature[a15]{$|G|$}{the order of $G$.}
\nomenclature[a16]{$\mathbb{K}[V]^G$}{the invariant ring.}
\nomenclature[a17]{$\mathbb{K}[V]_G$}{the cofixed space.}
\nomenclature[a18]{$Hilb(M, t)$}{the Hilbert-Poincare series of a graded vector space.}
\nomenclature[a19]{$x^{\alpha}$}{the monomial $x_1^{\alpha_1}\cdots x_n^{\alpha_n}$, where $\alpha = (\alpha_1, \ldots, \alpha_n)$.}
\nomenclature[a20]{$\mathcal{D}_n$}{The Dickson algebra.}
\nomenclature[a21]{$v_p(a)$}{the $p$-adic valuation of $a\in \mathbb{Z}$.}
\printnomenclature
\chapter{Fundamentals of Modular Invariant Theory}\label{chapter:invariant_theory}
\section{Generalities on Modular Invariant Theory}\label{sec: invariant_theory}
Let $G$ be a finite subgroup of $GL(V)$, acting on the polynomial ring $S = \mathbb{K}[V]$. Fixing a basis $\{x_1, \ldots, x_n\}$ of $V$, $G$ corresponds to a group of invertible matrices acting on $S = \mathbb{K}[x_1, \ldots, x_n]$ by linear substitutions. To clarify, if $\sigma = (\sigma_{ij})_{i, j\in \{1, \ldots, n\}}\in G$, then
\begin{equation*}
    \sigma(f)(x_1, \ldots, x_n) = f(\sigma (x_1), \ldots, \sigma (x_n)),
\end{equation*}
where $\sigma(x_j) = \sum\limits_{i=1}^{n}\sigma_{ij}x_i$.\smallbreak
The main object of interest of invariant theory is the ring of invariants\index{ring of invariants} 
\begin{equation*}
    S^G = \{f\in S\mid \sigma(f) = f, \text{ for all } \sigma\in G\}.
\end{equation*}
Given a group $G$, invariant theorists want to understand the structure of $S^G$ as detailed as possible. For example, 
\begin{enumerate}
    \item What conditions on $G$ suffice to ensure that $S^G$ is a polynomial algebra?
    \item How to calculate the Hilbert series of $S^G$? 
\end{enumerate}
The \emph{non-modular} case is when $|G|$ is invertible in the field $\mathbb{K}$. In this case, the answers to questions $(1)$ and $(2)$ are known, specifically, \emph{Chevalley-Shephard-Todd theorem} \cite[$\S$ 18-1]{kane2001reflection} states that $S^G$ is a polynomial algebra if and only if $G$ is generated by \emph{pseudo-reflections}, whilst \emph{Molien's formula} \cite[$\S$ 17-2]{kane2001reflection} provides an explicit formula of $Hilb(S^G, t)$ using elements of $G$.\smallbreak
On the other hand, results in modular invariant theory mainly focus on the \emph{modular case}, that is, when $|G|$ is not invertible in $\mathbb{K}$. Less is known about the structure of $S^G$ in general, yet some interesting phenomena can be observed. The following result provides a criterion for determining whether $S^G$ is a polynomial algebra, and also for finding a basis for $S^G$ as a free $\mathbb{K}$-algebra.
\begin{proposition}[{\cite[Proposition 4.5.5]{neusel_invariant_2002}}]\label{prop:prod_of_deg_equals_G_implies_polynomial}
If there exist homogeneous polynomials $d_1, \ldots, d_n\in S^G$, such that $S$ is integral over the $\mathbb{K}$-algebra generated by $d_1, \ldots, d_n$, and 
\begin{equation*}
 \prod\limits_{i=1}^n \deg(d_i) = |G|,   
\end{equation*}
then $d_1, \ldots, d_n$ are algebraically independent over $\mathbb{K}$, and
\begin{equation*}
    S^G = \mathbb{K}[d_1, \ldots, d_n].
\end{equation*}
\end{proposition}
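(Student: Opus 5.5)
Let $A=\mathbb{K}[d_1,\ldots,d_n]$. The plan is to compare $A$ with $S^G$ through the field extensions $\mathbb{K}(d_1,\ldots,d_n)\subseteq \operatorname{Frac}(S^G)\subseteq \operatorname{Frac}(S)=\mathbb{K}(x_1,\ldots,x_n)$, and then pass from equality of fraction fields to equality of rings using normality.

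\textbf{Step 1: algebraic independence.} Since $S$ is integral over $A$, we get $n=\dim S=\dim A$. An affine domain generated by $n$ elements and of Krull dimension $n$ is a polynomial ring in them, so $d_1,\ldots,d_n$ are algebraically independent and $A$ is a polynomial algebra. In particular $A$ is a UFD, hence integrally closed in its fraction field $K_A=\mathbb{K}(d_1,\ldots,d_n)$.

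\textbf{Step 2: degree count.} Since $S$ is finite over $A$ (integral and finitely generated), the extension $L=\mathbb{K}(x_1,\ldots,x_n)\supseteq K_A$ is finite. I would compute $[L:K_A]=\prod\deg d_i$ by Hilbert series. As $A$ is a graded polynomial ring and $S$ is a finitely generated graded $A$-module, the generic rank satisfies
\[
\operatorname{rank}_A S=\lim_{t\to1}\frac{Hilb(S,t)}{Hilb(A,t)}=\lim_{t\to 1}\prod_{i=1}^n\frac{1-t^{\deg d_i}}{1-t}=\prod_{i=1}^n \deg d_i .
\]
The first equality holds because the leading pole order at $t=1$ detects the rank. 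An alternative is to note that $S$ is Cohen--Macaulay, hence free over the hsop $d_1,\ldots,d_n$, which gives the same count. By Galois theory $[L:L^G]=|G|$, and $L^G=\operatorname{Frac}(S^G)$, using the standard fact that every invariant fraction can be written as $f/g$ with $g$ invariant, by multiplying numerator and denominator by the other $G$-conjugates of $g$. Since $K_A\subseteq L^G$, we obtain
\[
|G|=\prod\deg d_i=[L:K_A]=[L:L^G]\,[L^G:K_A]=|G|\,[L^G:K_A],
\]
so $L^G=K_A$.

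\textbf{Step 3: conclusion.} Every $f\in S^G\subseteq S$ is integral over $A$ and lies in $L^G=K_A$. Since $A$ is integrally closed, $f\in A$, so $S^G=A$.

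The main obstacle is Step 2, namely identifying the field degree $[L:K_A]$ with $\prod\deg d_i$. I would do this rigorously via the Cohen--Macaulay freeness of $S$ over $A$: $S\cong\bigoplus_j A\,h_j$ with $\sum_j t^{\deg h_j}=\prod_i(1-t^{\deg d_i})/(1-t)^n$. Setting $t=1$ gives the number of basis elements $h_j$, and this number equals $\dim_{K_A}(S\otimes_A K_A)=[L:K_A]$.
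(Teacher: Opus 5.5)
Your proof is correct and has the same skeleton as the paper's. Write $R=\mathbb{K}[d_1,\ldots,d_n]$ (your $A$). Both arguments run as follows: Krull dimension gives algebraic independence, a Hilbert series computation gives $[Frac(S):Frac(R)]=\prod_i\deg(d_i)$, Artin's theorem $[Frac(S):Frac(S)^G]=|G|$ forces $Frac(R)=Frac(S)^G$, and integral closedness of the polynomial ring $R$ finishes the proof.

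The only real divergence is how the degree count in Step 2 is justified. You appeal either to Cohen--Macaulay freeness of $S$ over the system of parameters $d_1,\ldots,d_n$, or to the fact that $Hilb(M,t)/Hilb(R,t)$ at $t=1$ equals $\mathrm{rank}_R M$. That second fact really rests on a graded free resolution and additivity of rank, not just on ``pole order''. The paper avoids both. It chooses a $Frac(R)$-basis $B$ of $Frac(S)$ made of monomials, so that $RB\subseteq S$ is free. It then clears denominators to get $\Delta\cdot S\subseteq RB$, and squeezes $t^{\delta}Hilb(S,t)\le Hilb(RB,t)\le Hilb(S,t)$ for real $t\in(0,1)$ before letting $t\to 1^-$.

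Your route is shorter and gives more, namely freeness of $S$ over $R$ outright, but it imports the Cohen--Macaulay property of $S$. The paper's route is self-contained and uses only that $S$ is integral and finitely generated over $R$.

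One small omission: you should say why $\dim_{Frac(R)}\bigl(S\otimes_R Frac(R)\bigr)=[Frac(S):Frac(R)]$. The reason is that $S\otimes_R Frac(R)$ is a domain finite-dimensional over $Frac(R)$, hence a field, hence equal to $Frac(S)$. The paper obtains the same fact via the minimal polynomial of a denominator.
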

Before proving \cref{prop:prod_of_deg_equals_G_implies_polynomial}, we need an auxiliary lemma.
\begin{lemma}\label{lemma:integrality_and_fraction_field}
    The action of $G$ on $S$ naturally extends to an action on $Frac(S)$ by variable substitutions. We have an equality of fields
    \begin{equation*}
        Frac(S)^G = Frac(S^G).
    \end{equation*}
\end{lemma}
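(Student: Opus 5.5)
I will prove the two inclusions separately. First I check that the action extends. Each $\sigma\in G$ acts on $S$ as a ring automorphism, with inverse $\sigma^{-1}$. Since $S$ is a domain, $\sigma$ extends uniquely to an automorphism of $Frac(S)$ by $\sigma(f/g)=\sigma(f)/\sigma(g)$. This is well defined because $\sigma(g)\neq 0$ whenever $g\neq 0$, and the composition rules are inherited from $S$. Concretely, the extension is again substitution of variables.

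The inclusion $Frac(S^G)\subseteq Frac(S)^G$ is immediate. If $f,g\in S^G$ with $g\neq 0$, then $\sigma(f/g)=\sigma(f)/\sigma(g)=f/g$ for all $\sigma\in G$. Moreover $Frac(S^G)$ sits inside $Frac(S)$ because $S^G\subseteq S$ and both are domains.

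For the reverse inclusion, take $h=f/g\in Frac(S)^G$ with $f,g\in S$ and $g\neq 0$. The key step is to make the denominator invariant using the norm.
\begin{itemize}
\item Set $N=\prod_{\sigma\in G}\sigma(g)$. This element is nonzero, lies in $S$, and is $G$-invariant, since applying $\tau\in G$ only permutes the factors.
\item Write $g'=\prod_{\sigma\neq 1}\sigma(g)\in S$, so that $N=g\,g'$.
\item Then $h=fg'/N$.
\item The numerator $fg'=hN$ is a product of two $G$-invariant elements of $Frac(S)$, so it is $G$-invariant. It also lies in $S$, hence $fg'\in S\cap Frac(S)^G=S^G$.
\end{itemize}
Therefore $h=fg'/N$ with $fg', N\in S^G$, which gives $h\in Frac(S^G)$.

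The only points needing care are that $G$ is finite, so the norm is a finite product, and that $S$ is a domain. Finiteness is assumed throughout, since $G$ is a finite subgroup of $GL(V)$, and $S$ is a polynomial ring. This step is also where one might try averaging, which fails in the modular case; the multiplicative norm avoids dividing by $|G|$ entirely. Nothing else in the argument is an obstacle.
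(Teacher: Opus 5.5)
Your proof is correct and follows essentially the same route as the paper: both multiply numerator and denominator by $\prod_{\sigma\neq 1}\sigma(g)$ to make the denominator the $G$-invariant norm, then deduce the new numerator lies in $S^G$. Your explicit check that the action extends to $Frac(S)$ is a small addition the paper leaves implicit.
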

\begin{proof}
    Obviously, $Frac(S^G)\subseteq Frac(S)^G$. Conversely, suppose that $\dfrac{f}{g}\in Frac(S)^G$; then let $h=\prod\limits_{\sigma\in G\backslash\{1\}}\sigma(g)\in S\backslash\{0\}$, we see that $g\cdot h = \prod\limits_{\sigma\in G}\sigma(g) \in S^G$, and
    \begin{equation*}
        \dfrac{f}{g} = \dfrac{fh}{gh}\in Frac(S)^G.
    \end{equation*}
    For any $\sigma\in G$, 
    \begin{equation*}
        \dfrac{fh}{gh}=\dfrac{f}{g} =\sigma\left(\dfrac{f}{g}\right)= \dfrac{\sigma(fh)}{\sigma(gh)}= \dfrac{\sigma(fh)}{gh}.
    \end{equation*}
    Hence $\sigma(fh) = fh,\text{ for all } \sigma\in G$, which implies that $fh\in S^G$. Therefore, 
    \begin{equation*}
        \dfrac{f}{g}=\dfrac{fh}{gh}\in Frac(S^G).\qedhere
    \end{equation*}
\end{proof}
\begin{proof}[Proof of \cref{prop:prod_of_deg_equals_G_implies_polynomial}]
    Denote by $R$ the $\mathbb{K}$-algebra generated by $d_1, \ldots, d_n$. First of all, since $S$ is integral over $R$, the Krull dimensions of $S$ and $R$ are equal. We know that the Krull dimension of $S$ is $n$, and since $R$ is generated by $d_1, \ldots, d_n$, the Krull dimension of $R$ is $n$ if and only if $d_1, \ldots, d_n$ are algebraically independent.\smallbreak
    For each $i\in \{1, \ldots, n\}$, suppose that $x_i$ is a root of a monic polynomial in $R[x]$ of degree $l_i \geq 1$. Then, $A = \{x^{\alpha}\mid 0\leq \alpha_i< l_i, \text{ for all } i\in \{1, \ldots, n\} \}$ is a generating set of $S$ as an $R$-module. Furthermore, $A$ is also a generating set of the fraction field $Frac(S)$ as a $Frac(R)$-vector space; indeed, consider $\dfrac{f}{g}\in Frac(S)$ for some $f, g\in S, g\neq 0$, then $g$ is integral over $R$. Let $F(X)\in R[X]$ be the minimal polynomial of $g$ over $R$, then assume that
    \begin{equation*}
        F(X) = X^k + c_{k-1}X^{k-1} + \cdots + c_0,
    \end{equation*}
    for some $k>0, c_0, \ldots, c_{k-1}\in R$, and $c_0\neq 0$. Then, $h = g^{k-1} + c_{k-1}g^{k-2} + \cdots +c_1 \neq 0$, and $g\cdot h = -c_0\in R$. Therefore,
    \begin{equation*}
        \dfrac{f}{g} = \dfrac{fh}{gh} = \dfrac{fh}{-c_0}\in Frac(R)\langle A\rangle.
    \end{equation*}
    Let $B = \{u_1, \ldots, u_m\}\subseteq A$ be a basis of $Frac(S)$ as a $Frac(R)$-vector space, then we have the following well-known calculation
    \begin{align*}
        Hilb(S, t) &= \dfrac{1}{(1-t)^{n}},\\
        Hilb(R, t) &= \dfrac{1}{\prod\limits_{i=1}^{n}\left(1-t^{\deg(d_i)}\right)}.
    \end{align*}
    Since $B$ is linearly independent over $Frac(R)$, $RB$ is also a free $R$-module with basis $B$. As $RB\subseteq S$, we have $Hilb(RB, t)\leq Hilb(S, t)$, hence,
    \begin{equation}\label{eq:ineq_with_basis_1}
        \dfrac{\sum\limits_{j=1}^{m}t^{\deg(u_j)}}{\prod\limits_{i=1}^{n}\left(1-t^{\deg(d_i)}\right)} \leq \dfrac{1}{(1-t)^n}, \text{ for all } t\in (0, 1).
    \end{equation}
    For each $e = x^{\alpha}\in A$, write $e$ as a linear combination of $\{u_1, \ldots, u_m\}$ over $Frac(R)$, here, we assume without loss of generality that the denominators of the summands are the same:
    \begin{equation*}
        e = \sum\limits_{j=1}^{m}\dfrac{f_{j, e}}{g_{e}}u_j.
    \end{equation*}
    Notice that $e$ and $u_1, \ldots, u_m$ are homogeneous, we may also assume that $g_e$ is homogeneous for any $e$. Now, let $\Delta =\prod\limits_{e\in A}g_e$, suppose that $\delta = \deg(\Delta)$, then $\Delta\cdot e\in RB, \text{ for all } e\in A$. Therefore, $\Delta\cdot S\in RB$, which implies that $Hilb(\Delta\cdot S, t)\leq Hilb(RB, t)$, or
    \begin{equation}\label{eq:ineq_with_basis_2}
        \dfrac{t^{\delta}}{(1-t)^n}\leq \dfrac{\sum\limits_{j=1}^{m}t^{\deg(u_j)}}{\prod\limits_{i=1}^{n}\left(1-t^{\deg(d_i)}\right)}, \text{ for all } t\in (0, 1).
    \end{equation}
    Combining \eqref{eq:ineq_with_basis_1} and \eqref{eq:ineq_with_basis_2}, we obtain
    \begin{equation*}
        t^{\delta}\prod\limits_{i=1}^{n}\dfrac{1-t^{\deg(d_i)}}{1-t}\leq \sum\limits_{j=1}^{m}t^{\deg(u_j)}\leq \prod\limits_{i=1}^{n}\dfrac{1-t^{\deg(d_i)}}{1-t}, \text{ for all } t\in (0, 1).
    \end{equation*}
    Letting $t\to 1^{-}$ in the above inequality yields $m = \prod\limits_{i=1}^{n}\deg(d_i) = |G|$. Hence, $Frac(S)/Frac(R)$ is a field extension of degree $|G|$. We also know that $Frac(S)/Frac(S)^G$ is a Galois extension of degree $|G|$, and $Frac(R) \subseteq Frac(S)^G$, using \cref{lemma:integrality_and_fraction_field}, we deduce that $Frac(R) = Frac(S)^G = Frac(S^G)$; in particular, $S^G\subseteq Frac(R)$. We use the fact that $R = \mathbb{K}[d_1, \ldots, d_n]$ is an integrally closed domain, and that $S^G \subseteq S$ is integral over $R$, thus $R = S^G$, as desired.
\end{proof}
\begin{example}\label{example:symmetric_polynomial}
    Let $G=\Sigma_n$ be the symmetric group acting on $S = \mathbb{K}[x_1, \ldots, x_n]$ by permuting the variables. The following polynomial $F(X)\in S^G[X]$ has $x_1, \ldots, x_n$ as roots
    \begin{equation*}
        F(X) = \prod\limits_{i=1}^{n}(X-x_i) = X^n - e_1 X^{n-1} + \cdots  + (-1)^n e_n,
    \end{equation*}
    where $e_1, \ldots, e_n$ are the elementary symmetric polynomials\index{elementary symmetric polynomials} of degree $1, \ldots, n$, respectively. Since $|\Sigma_n| = n! = \prod\limits_{i=1}^n \deg(e_i)$, by \cref{prop:prod_of_deg_equals_G_implies_polynomial}, $S^G = \mathbb{K}[e_1, \ldots, e_n]$. Additionally, $S$ is a free $S^G$-module, generated by monomials
    \begin{equation*}
        \{x_1^{\alpha_1}\cdots x_n^{\alpha_n}\mid 0\leq\alpha_i \leq n-i, \text{ for all } i\in \{1, \ldots, n\}\}.
    \end{equation*}
\end{example}
\section{Rings of Invariants of Parabolic Subgroups}
In this section, we restrict our attention to the case $\mathbb{K} = \mathbb{F}_q$, and $G$ is a \emph{parabolic subgroup}\index{parabolic subgroup} of $GL_n(\mathbb{F}_q)$. 
\subsection{The Dickson Algebra}
\begin{definition}\label{def:flag}
    Let $V$ be a $\mathbb{K}$-vector space of dimension $n>0$. If $\alpha = (\alpha_1, \ldots, \alpha_{\ell})$ is a tuple of positive integers so that $n = \alpha_1 + \ldots + \alpha_{\ell}$ (such tuples are called \emph{compositions of $n$}.)\index{composition of a number} For each $i\in \{0, \ldots, \ell\}$, define the partial sum 
    \begin{equation*}
        A_i = \sum\limits_{k=1}^{i}\alpha_i
    \end{equation*}
    (here, $A_0$ is defined to be $0$.) A flag\index{flag} $F_{\bullet} = (V_0, \ldots, V_{\ell})$ of type $\alpha$ is a chain of subspaces 
    \begin{equation*}
        V_0 \subseteq V_1 \subseteq \cdots \subseteq V_{\ell} = V,
    \end{equation*}
    such that for any $i\in \{0, \ldots, \ell\}$, $\dim_{\mathbb{K}}(V_i) = A_i$. Consider a flag $F_{\bullet} = (V_0, \ldots, V_{\ell})$ of type $\alpha$, the parabolic subgroup of type $\alpha$ corresponding to $F_{\bullet}$ is defined to be
    \begin{equation*}
        P_{\alpha}(F_{\bullet}) = \{\sigma\in GL(V)\mid \sigma(V_i) = V_i, \text{ for all } i\in \{0, \ldots, \ell\}\}. 
    \end{equation*}
\end{definition}
It is not hard to see that for two flags $F_{\bullet}$ and $G_{\bullet}$ of type $\alpha$, there exists $\tau\in GL(V)$ such that
\begin{align*}
    c_{\tau}: P_{\alpha}(F_{\bullet}) &\to P_{\alpha}(G_{\bullet})\\ 
\sigma &\mapsto \tau \sigma \tau^{-1}
\end{align*}
is an isomorphism. Therefore, when the flag $F_{\bullet}$ is clear from context, we shall write $P_{\alpha}$ in place of $P_{\alpha}(F_{\bullet})$. In particular, fixing a basis $\{x_1, \ldots, x_n\}$ of $V$, the standard flag associated to this basis is $F_{\bullet} = (V_0, \ldots, V_{\ell})$, where
\begin{equation*}
    V_i = span_{\mathbb{K}}\{x_1, \ldots, x_{A_i}\}, \text{ for all } i\in \{0, \ldots, \ell\}.
\end{equation*}
The parabolic subgroup of type $\alpha$ corresponding to this flag will be denoted by $P_{\alpha}$ in the rest of this manuscript.
\begin{example}\label{example:parabolic_subgrps}
    \begin{enumerate}
        \item When $\alpha = (n)$, the group $P_{\alpha}$ is simply the full general linear group $GL(V)$\index{general linear group}.
        \item When $\alpha = \underbrace{(1, \ldots, 1)}_{n}$, the group $P_{\alpha}$ is called the \emph{Borel subgroup}\index{Borel subgroup}, and is often denoted by $B$. $B$ corresponds to the subgroup of $GL_n(\mathbb{K})$ consisting of upper triangular diagonal matrices. 
    \end{enumerate}
\end{example}
Following \cite{hewett_modular_1996, mui,wilkerson}, we analyze the structure of the invariant ring $S^G$, where $S = \mathbb{F}_q[x_1, \ldots, x_n]$ is the polynomial ring on $n$ indeterminates, and $G$ is one of the following groups.
\begin{itemize}
    \item The general linear group $GL_n(\mathbb{F}_q)$,
    \item The unipotent group\index{unipotent group} $U$ consisting of upper triangular matrices of which all diagonal entries equal $1$.
    \item The parabolic subgroups $P_{\alpha}$ for a composition $\alpha = (\alpha_1, \ldots, \alpha_n)$.
\end{itemize}
\begin{lemma}[{\cite[(7.6) and (7.16)]{macdonald_schur_1992}}]\label{lemma:f_W}
    Let $W$ be a finite-dimensional vector subspace of $S$. Define a map
    \begin{align*}
        f_W: S&\to S\\
        x&\mapsto \prod\limits_{w\in W}(x+w).
    \end{align*}
    \begin{itemize}
    \item[(1)] If $w_1, \ldots, w_m$ is a basis of $W$, then 
    \begin{equation*}
        f_W(x) = \dfrac{\begin{vmatrix}
        x^{q^m} & x^{q^{m-1}} & \cdots & x^{q^0}\\
        w_1^{q^m} & w_1^{q^{m-1}} & \cdots & w_1^{q^0}\\
        \vdots & \vdots & \ddots  & \vdots\\
        w_m^{q^m} & w_m^{q^{m-1}} & \cdots & w_m^{q^0}\\
        \end{vmatrix}}{\begin{vmatrix}
        w_1^{q^{m-1}} & w_1^{q^{m-2}} & \cdots & w_1^{q^0}\\
        \vdots & \vdots & \ddots  & \vdots\\
        w_m^{q^{m-1}} & w_m^{q^{m-2}} & \cdots & w_m^{q^0}\\
        \end{vmatrix}}, \text{ for all } x\in V.
    \end{equation*}
        \item[(2)] $f_W$ is a linear map between $\mathbb{F}_q$-vector spaces, and $\ker(f_W) = W$.
        \item[(3)] If $V$ is a subspace of $S$ containing $W$, define $V/W = f_W(V)$. Suppose that $W \subseteq V\subseteq U$ as vector subspaces of $S$, then $(U/W)/(V/W) = U/V$ as subspaces of $S$.
    \end{itemize}
\end{lemma}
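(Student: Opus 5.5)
The plan is to treat $W$ as a finite $\mathbb{F}_q$-subspace of $S$ (so $|W|=q^m$ with $m=\dim_{\mathbb{F}_q}W$) and to prove (1), (2) and (3) in that order. Each part leans on the previous one.

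For (1), let $N(x)$ denote the numerator determinant, viewed as a polynomial in $x$ over $S$, and $D$ the denominator. Expanding $N$ along the first row shows that $N$ has the form $D\,x^{q^m}+(\text{lower }q\text{-powers of }x)$. The cofactor of $x^{q^m}$ is exactly $D$, up to the sign convention, which I will check with the column ordering as written.

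$D$ is the Moore determinant of $w_1,\ldots,w_m$, and it is nonzero precisely because the $w_i$ are $\mathbb{F}_q$-linearly independent. I would prove this by the standard induction: $D(w_1,\ldots,w_m)$ is, up to sign, $D(w_1,\ldots,w_{m-1})\cdot f_{W'}(w_m)$ with $W'=\mathrm{span}(w_1,\ldots,w_{m-1})$. That identity is itself the $m-1$ case of the formula, so (1) and the nonvanishing of $D$ are proved together by induction on $m$.

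Next I would check that $N(x)$ vanishes at every $x\in W$. Since the Frobenius map $y\mapsto y^q$ is additive and fixes $\mathbb{F}_q$, for $x=\sum c_iw_i$ the first row is the same linear combination of the other rows, so the determinant is $0$. Thus $N(X)\in S[X]$ has degree $q^m$, leading coefficient $D$, and the $q^m$ distinct roots $-w$ for $w\in W$ (note $-W=W$). Working in $\mathrm{Frac}(S)[X]$, it follows that $N(X)=D\prod_{w\in W}(X+w)$. Dividing by $D$ gives (1); the formula then holds for all $x\in S$, in particular for $x\in V$.

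For (2), formula (1) writes $f_W(x)$ as an $S$-linear combination of $x^{q^j}$. Each map $x\mapsto x^{q^j}$ is $\mathbb{F}_q$-linear, so $f_W$ is $\mathbb{F}_q$-linear. For the kernel: since $S$ is a domain, $f_W(x)=\prod_{w}(x+w)=0$ if and only if $x=-w$ for some $w\in W$, that is, $x\in W$.

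For (3), the plan is to show the polynomial identity $f_{V/W}\circ f_W=f_V$ as maps on $S$. Applying it to $U$ then gives $(U/W)/(V/W)=f_{V/W}(f_W(U))=f_V(U)=U/V$.

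To prove the identity, note both sides are additive $q$-polynomials in $x$ (polynomials in $x$ involving only $q$-power terms).
\begin{itemize}
\item By (2), $\dim V/W=\dim V-\dim W$, so the left side has degree $q^{\dim W}\cdot q^{\dim V-\dim W}=q^{\dim V}$, the same as the right side.
\item Both have leading coefficient $1$: a product of monic polynomials is monic, and composing monic polynomials gives a monic polynomial.
\item The left side vanishes on $V$, because $f_W(V)=V/W$ is killed by $f_{V/W}$.
\end{itemize}
So the left side is a monic polynomial of degree $|V|$ vanishing on all of $V$, hence it equals $\prod_{v\in V}(x-v)=f_V(x)$.

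The main obstacle is the nonvanishing of the Moore determinant together with careful degree and sign bookkeeping in (1). The rest follows formally from root counting in the domain $S$.
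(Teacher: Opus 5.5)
Your proof is correct. For (1) and (3) it takes a different route from the paper; your (2) is the paper's argument.

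For (1), the paper first proves a multivariate factorization of the Moore determinant $L(A)$, for an independent tuple $A=(f_1,\ldots,f_k)$:
\begin{itemize}
\item each ``reduced vector'' $f_j+\sum_{i>j}a_if_i$ divides $L(A)$, by row operations;
\item a degree count and a leading-monomial comparison in graded lex order then give $L(A)=\prod_{v\in R(A)}v$ with constant exactly $1$;
\item formula (1) is the ratio $L(x,w_1,\ldots,w_m)/L(w_1,\ldots,w_m)$, whose uncancelled factors are exactly the $x+w$, $w\in W$.
\end{itemize}
You instead count roots of a one-variable polynomial over $\mathrm{Frac}(S)$. The coefficient of $X^{q^m}$ is the $(1,1)$ cofactor, so it is $+D$ and the sign you meant to check causes no trouble. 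The nonvanishing $D\neq 0$ comes from the recursion $D(w_1,\ldots,w_m)=(-1)^{m-1}D(w_1,\ldots,w_{m-1})\,f_{W'}(w_m)$, supplied by the $(m-1)$ case. Your version stays inside $S$ and needs no monomial order and no unique-factorization argument in an auxiliary polynomial ring.

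What the paper's route buys is an explicit factorization of $L$ into pairwise non-proportional linear forms. The thesis reuses this later: every nonzero linear form divides $L$, and $L$ is square-free (e.g.\ in \cref{lemma:functional_eq} and \cref{lemma:U_induction}). Iterating your recursion gives
\[
D=(-1)^{\binom{m}{2}}\prod_{j=1}^{m}f_{\mathrm{span}(w_1,\ldots,w_{j-1})}(w_j),
\]
a product over exactly one representative of each line of $W$, so you recover the same consequences.

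For (3), the paper chooses a complement $V_0$ of $W$ in $V$. It re-indexes $\prod_{v_0\in V_0}f_W(v_0+u)=\prod_{v\in V}(v+u)$ using linearity of $f_W$ and the bijection $f_W\colon V_0\to V/W$. You instead characterize $f_V$ as the unique monic polynomial of degree $|V|$ vanishing on $V$. Both are short: yours replaces the choice of complement with the dimension count $\dim V/W=\dim V-\dim W$ from (2).
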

\begin{proof}
    Let $A = (f_1, \ldots, f_k)$ be an ordered tuple of vectors in $S$, such that $A$ is linearly independent. A reduced vector with respect to $A$ is an element of the form $f_j + \sum\limits_{i=j+1}^{k}a_i f_i$, for some $j\in \{1, \ldots, k\}$, and $a_{j+1}, \ldots, a_k\in \mathbb{F}_q$. Let $R(A)$ be the set of reduced vectors with respect to $A$, then, the cardinality of $R(A)$ is $\sum\limits_{j=1}^{k}q^{j-1}$. Consider the free symmetric algebra over $\mathbb{F}_q$ generated by $A$, graded by the convention that $\deg(f_1) =\cdots = \deg(f_k)=1$. Then the determinant
    \begin{equation*}
        L(A) = \begin{vmatrix}
        f_1^{q^{k-1}} & f_1^{q^{k-2}} & \cdots & f_1^{q^0}\\
        \vdots & \vdots & \ddots  & \vdots\\
        f_k^{q^{k-1}} & f_k^{q^{k-2}} & \cdots & f_k^{q^0}\\
        \end{vmatrix}
    \end{equation*}
    is a homogeneous polynomial of degree $|R(A)| = \sum\limits_{j=1}^{k}q^{j-1}$. Furthermore, let $v = f_j + \sum\limits_{i=j+1}^{k}a_i f_i$, $L(A)$ is divisible by $v$, because after adding to the $j$th row the sum of $a_i$ times the $i$th row for $i\in \{j+1, \ldots, k\}$, each entry of the $j$th row is of the form
    \begin{equation*}
        f_j^{q^{\ell}} + \sum\limits_{i=j+1}^{k}a_i f_i^{q^{\ell}} = v^{q^{\ell}} 
    \end{equation*}
    for some $\ell\in \{0, \ldots, k-1\}$, therefore, after appropriately performing row operations, each entry of the $j$th row of the determinant defining $L(A)$ is divisible by $v$, hence $L(A)$ is divisible by $v$. Comparing degrees, it turns out that there exists $a\in \mathbb{F}_q^{\times}$ such that
    \begin{equation*}
        L(A) = a\cdot \prod\limits_{v\in R(A)}v.
    \end{equation*}
    Consider the graded lexicographic ordering on $\mathbb{F}_q[f_1, \ldots, f_k]$ following from the conventional ordering $f_1 >\cdots >f_k$. From the definition, the leading monomial of $L(A)$ is
    \begin{equation*}
        f_1^{q^{k-1}}f_2^{q^{k-2}}\cdots f_k^{q^0},
    \end{equation*}
    which is equal to the leading monomial of $\prod\limits_{v\in R(A)}v$. We conclude that $a=1$. \smallbreak
    Next, if $x\in W$, from the argument presented in the previous part of the proof, we see that $f_W(x) = 0$. If $x\notin W$, then $f_W(x)\neq 0$; let $A = \{x, w_1, \ldots, w_m\}$, then $A$ is an ordered tuple of vectors in $S$, and $A$ is linearly independent. Furthermore, we have
    \begin{equation*}
        R(A) = \{x+w\mid w\in W\}\sqcup R((w_1, \ldots, w_m)).
    \end{equation*}
    It turns out that
    \begin{align*}
        \dfrac{\begin{vmatrix}
        x^{q^m} & x^{q^{m-1}} & \cdots & x^{q^0}\\
        w_1^{q^m} & w_1^{q^{m-1}} & \cdots & w_1^{q^0}\\
        \vdots & \vdots & \ddots  & \vdots\\
        w_m^{q^m} & w_m^{q^{m-1}} & \cdots & w_m^{q^0}\\
        \end{vmatrix}}{\begin{vmatrix}
        w_1^{q^{m-1}} & w_1^{q^{m-2}} & \cdots & w_1^{q^0}\\
        \vdots & \vdots & \ddots  & \vdots\\
        w_m^{q^{m-1}} & w_m^{q^{m-2}} & \cdots & w_m^{q^0}\\
        \end{vmatrix}} &= \dfrac{L(A)}{L((w_1, \ldots, w_m))} \\
        &=\dfrac{\left(\prod\limits_{w\in W}(x+w)\right) \cdot L((w_1, \ldots, w_m))}{L((w_1, \ldots, w_m))}\\
        &= f_W(x).
    \end{align*}
    This equation proves the first claim in \cref{lemma:f_W}. The second claim follows from the first claim, because for any $x, y\in S, a, b\in \mathbb{F}_q$, we have
    \begin{align*}
        f_W(ax+by) &= \dfrac{\begin{vmatrix}
        ax^{q^m} + by^{q^m} & ax^{q^{m-1}} + by^{q^{m-1}} & \cdots & ax^{q^0} + by^{q^0}\\
        w_1^{q^m} & w_1^{q^{m-1}} & \cdots & w_1^{q^0}\\
        \vdots & \vdots & \ddots  & \vdots\\
        w_m^{q^m} & w_m^{q^{m-1}} & \cdots & w_m^{q^0}\\
        \end{vmatrix}}{\begin{vmatrix}
        w_1^{q^{m-1}} & w_1^{q^{m-2}} & \cdots & w_1^{q^0}\\
        \vdots & \vdots & \ddots  & \vdots\\
        w_m^{q^{m-1}}
        \end{vmatrix}}\\
        &= af_W(x) + bf_W(y).
    \end{align*}
    From the definition of $f_W$, $f_W(x) = 0$ if and only if $x+w=0$ for some $w\in W$, which is equivalent to $x\in W$. Thus $\ker(f_W) = W$.\smallbreak
    For the third claim, we only need to show that
    \begin{equation}\label{eq:quotient_f}
        f_{V/W}(f_W(u)) = f_V(u), \text{ for all } u\in U.
    \end{equation}
    Suppose that $V_0$ is a linear complement of $W$ in $V$, that is, $V = V_0 \oplus W$. It is not hard to see that $f_W: V_0\to V/W$ is an isomorphism, therefore,
    \begin{align*}
        f_{V/W}(f_W(u)) &= \prod\limits_{v_0\in V_0}(f_W(v_0)+f_W(u)) \\ 
        &=\prod\limits_{v_0\in V_0}\prod\limits_{w\in W}(w + v_0+u) \\
        &=\prod\limits_{v\in V}(v+u) = f_V(u).\qedhere
    \end{align*}
\end{proof}
Let $V\subseteq S$ be the $\mathbb{F}_q$-vector space generated by $\{x_1, \ldots, x_n\}$, from \cref{lemma:f_W}, we may write
\begin{equation*}
    f_V(x) = \sum\limits_{i=0}^{n}(-1)^i x^{q^{n-i}}Q_{n, n-i}(x_1, \ldots, x_n),
\end{equation*}
where for each $i$, the coefficient $Q_{n, n-i}(x_1, \ldots, x_n)$ is a homogeneous polynomial of degree $q^n - q^{n-i}$. Furthermore, each $Q_{n, n-i}$ is $GL_n(\mathbb{F}_q)$-invariant, since the definition of $f_V$ does not depend on the choice of a basis of $V$.
\begin{remark}
    The polynomial $f_V(x)$ has $V$ as its roots; in particular, it shows that $S$ is integral over the algebra generated by $Q_{n, 0}, \ldots, Q_{n, n-1}$. Therefore, equation $f_V(x)=0$ is often referred to as the \emph{fundamental equation}\index{fundamental equation}. More generally, we have the following lemma.
\end{remark}
\begin{lemma}[{\cite[Lemma 2]{steinberg_dicksons_1987}}]\label{lemma:showing_polynomial_of_the_right_deg}
    For each $i\in \{1, \ldots, n\}$, $V\backslash span_{\mathbb{F}_q}\{x_1, \ldots, x_{i-1}\}$ is the set of roots of the monic polynomial 
    \begin{equation*}
        F_i(x) = \dfrac{f_V(x)}{f_{span\{x_1, \ldots, x_{i-1}\}}(x)}.
    \end{equation*}
    This polynomial is of degree $q^n - q^{i-1}$, and furthermore, 
    \begin{equation*}
        F_i(x) \in\mathbb{F}_q[x_1, \ldots, x_{i-1}, Q_{n, i-1}, \ldots, Q_{n, n-1}][x].
    \end{equation*}
    \end{lemma}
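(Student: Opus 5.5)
We need to prove three things: the root set of $F_i$, its degree, and that its coefficients lie in $\mathbb{F}_q[x_1, \ldots, x_{i-1}, Q_{n, i-1}, \ldots, Q_{n, n-1}]$. Write $W_{i-1} = span_{\mathbb{F}_q}\{x_1, \ldots, x_{i-1}\}$.

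\emph{Roots and degree.} By definition $f_V(x) = \prod_{v\in V}(x+v)$ and $f_{W_{i-1}}(x) = \prod_{w\in W_{i-1}}(x+w)$. Since $W_{i-1}\subseteq V$ and $V=-V$, the quotient is
\begin{equation*}
F_i(x) = \prod_{v\in V\setminus W_{i-1}}(x-v).
\end{equation*}
This is a monic polynomial of degree $q^n-q^{i-1}$ whose root set is $V\setminus W_{i-1}$.

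\emph{Coefficients.} Set $g(x) = f_{W_{i-1}}(x)$. By \cref{lemma:f_W}(1),(2), $g$ is an additive ($q$-) polynomial, monic of degree $q^{i-1}$, of the form
\begin{equation*}
g(x) = x^{q^{i-1}} + \sum_{j<i-1} c_j x^{q^j}, \qquad c_j \in \mathbb{F}_q(x_1,\ldots,x_{i-1}).
\end{equation*}
Each $c_j$ is in fact a polynomial in $x_1,\ldots,x_{i-1}$: it is a Dickson-type invariant of $W_{i-1}$, and since $\prod_{w\in W_{i-1}}(x+w)$ has coefficients in $\mathbb{F}_q[x_1,\ldots,x_{i-1}]$, this is immediate from the product form.

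Now perform the division algorithm of $f_V(x) = \sum_k (-1)^{n-k} Q_{n,k}\, x^{q^k}$ by the monic polynomial $g$ in the ring $R[x]$, where
\begin{equation*}
R = \mathbb{F}_q[x_1,\ldots,x_{i-1}, Q_{n,0},\ldots,Q_{n,n-1}].
\end{equation*}
Because $g$ is monic with coefficients in $R$, the quotient and remainder lie in $R[x]$. The remainder is zero, since $g \mid f_V$ in $S[x]$ and division by a monic polynomial is unique. Hence $F_i \in R[x]$.

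\emph{Eliminating $Q_{n,0},\ldots,Q_{n,i-2}$.} This is the main obstacle: we must show that the low-index Dickson invariants are redundant. I would show that $Q_{n,0},\ldots,Q_{n,i-2}$ lie in $\mathbb{F}_q[x_1,\ldots,x_{i-1},Q_{n,i-1},\ldots,Q_{n,n-1}]$. Since $f_V$ vanishes at each $x_j$, substituting $x=x_j$ for $j\le i-1$ gives
\begin{equation*}
\sum_{k=0}^{i-2} (-1)^{n-k} Q_{n,k}\, x_j^{q^k} \;=\; -\Big( x_j^{q^n} + \sum_{k\ge i-1} (-1)^{n-k} Q_{n,k}\, x_j^{q^k} \Big), \qquad j=1,\ldots,i-1.
\end{equation*}
This is a linear system for $Q_{n,0},\ldots,Q_{n,i-2}$ whose coefficient matrix is the Moore matrix $(x_j^{q^k})$, with determinant $L((x_1,\ldots,x_{i-1}))\neq 0$. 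Solving by Cramer's rule, however, only yields the $Q_{n,k}$ in the fraction field, so this route gives membership only up to denominators.

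To get a genuinely polynomial statement, I would instead argue directly on $F_i$. Factor $f_V = f_{V/W_{i-1}}\circ g$ via \cref{lemma:f_W}(3). Writing $y = g(x)$, this gives
\begin{equation*}
f_V(x) = h(y) = \sum_k e_k\, y^{q^k}, \qquad F_i(x) = \frac{h(y)}{y} = \sum_k e_k\, y^{q^k-1},
\end{equation*}
where $h = f_{V/W_{i-1}}$ has degree $q^{n-i+1}$. It then suffices to show $e_k \in \mathbb{F}_q[x_1,\ldots,x_{i-1},Q_{n,i-1},\ldots,Q_{n,n-1}]$. To do this, expand $h(g(x))$ and compare the coefficients of $x^{q^{k+i-1}}$ for $k = n-i+1, n-i, \ldots, 0$ in descending order. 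Because $g$ is monic of leading term $x^{q^{i-1}}$, each step expresses $e_k$ as $\pm Q_{n,k+i-1}$ plus a polynomial combination of $c_j$'s and the previously determined $e_{k'}$ with $k'>k$. This triangular recursion uses only $Q_{n,i-1},\ldots,Q_{n,n-1}$, which completes the proof.
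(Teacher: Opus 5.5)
Your proof is correct, but its key step differs from the paper's. The paper essentially stops after your division step. Comparing coefficients in $f_V=F_i\cdot f_{W_{i-1}}$ puts every coefficient $a_j$ of $F_i$ in $\mathbb{F}_q[x_1,\ldots,x_{i-1},Q_{n,0},\ldots,Q_{n,n-1}]$. It then discards $Q_{n,0},\ldots,Q_{n,i-2}$ by a degree count: $a_j$ is homogeneous of degree at most $q^n-q^{i-1}$, while $\deg Q_{n,k}=q^n-q^k>q^n-q^{i-1}$ for $k\le i-2$. So in any polynomial expression for $a_j$ in these generators, only the part of weight $\deg a_j$ survives, and no monomial containing such a $Q_{n,k}$ has weight that small. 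You missed this short observation, which is why you treated the elimination as the main obstacle.

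Your substitute argument is complete on its own. You write $f_V=f_{V/W_{i-1}}\circ f_{W_{i-1}}$, which holds as an identity in $x$ because $f_{W_{i-1}}$ is additive, as the paper itself uses in \cref{prop:decrease_deg}. You then solve the triangular system $e_k=(-1)^{n-k-i+1}Q_{n,k+i-1}-\sum_{j<i-1}e_{k+i-1-j}\,c_j^{q^{k+i-1-j}}$, with $e_{k'}=0$ for $k'>n-i+1$, read off from the coefficients of $x^{q^{k+i-1}}$. This makes both your division step and the abandoned Cramer's-rule attempt unnecessary.

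For a little more bookkeeping, your route gives more. It yields the explicit formula $F_i(x)=\sum_k e_k\,f_{W_{i-1}}(x)^{q^k-1}$. It shows that the coefficients of $f_{V/W_{i-1}}$ (up to sign, the Dickson invariants of $V/W_{i-1}$) lie in $\mathbb{F}_q[x_1,\ldots,x_{i-1},Q_{n,i-1},\ldots,Q_{n,n-1}]$, a multi-step version of \cref{prop:decrease_deg}. From the coefficients of $x^{q^\ell}$ with $\ell<i-1$, it also shows that $Q_{n,0},\ldots,Q_{n,i-2}$ themselves lie in that ring, so the membership you could not reach via Cramer's rule does hold. The paper's route is shorter: it uses only monicity and homogeneity and does not need \cref{lemma:f_W}(3).
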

    \begin{proof}
        From the definition, it is not hard to see that $F_i(x)$ is monic of degree $q^n-q^{i-1}$, and the set of roots of $F_i$ is as claimed in \cref{lemma:showing_polynomial_of_the_right_deg}. Suppose that
        \begin{equation*}
            F_i(x) = x^{q^{n}-q^{i-1}} + \sum\limits_{j=0}^{q^n - q^{i-1}-1}a_j x^{q^n - q^{i-1}-1-j},
        \end{equation*}
        where $a_j\in S, \text{ for all } j$. We prove by induction on $j$ that 
        \begin{equation}\label{eq:induct_on_j_integral}
        a_j\in \mathbb{F}_q[x_1, \ldots, x_{i-1}, Q_{n, 0}, \ldots, Q_{n, n-1}].    
        \end{equation}
        Notice that the coefficients of $f_{span_{\mathbb{F}_q}\{x_1, \ldots, x_{i-1}\}}(x)$ are in $\mathbb{F}_q[x_1, \ldots, x_{i-1}]$, and the coefficients of $f_V(x)$ are in $\mathbb{F}_q[Q_{n, 0}, \ldots, Q_{n, n-1}]$; furthermore, 
        \begin{equation}\label{eq:product_of_fundamental_eq}
        f_V(x) = F_i(x)\cdot f_{span\{x_1, \ldots, x_{i-1}\}}(x).    
        \end{equation}
        Comparing the coefficients of $x^{q^n-2}$ in both sides of \eqref{eq:product_of_fundamental_eq} shows that \eqref{eq:induct_on_j_integral} holds for $j=0$. Suppose that \eqref{eq:induct_on_j_integral} holds for all $j\leq j_0$, then comparing the coefficients of $x^{q^n - 3-j_0}$ in \eqref{eq:product_of_fundamental_eq} shows that \eqref{eq:induct_on_j_integral} holds for $j=j_0+1$. Moreover, $f_V(x)$ and $f_{span_{\mathbb{F}_q}\{x_1, \ldots, x_{i-1}\}}(x)$ are both homogeneous in $\mathbb{F}_q[x_1, \ldots, x_n, x]$, hence for each $j$, $a_j\in \mathbb{F}_q[x_1, \ldots, x_n]$ is of degree at most $q^n - q^{i-1}$. From \eqref{eq:induct_on_j_integral}, because $Q_{n, 0}, \ldots, Q_{n, i-2}$ are of degree larger than $q^n-q^{i-1}$, we conclude that
        \begin{equation*}
            F_i(x)\in \mathbb{F}_q[x_1, \ldots, x_{i-1}, Q_{n, i-1}, \ldots, Q_{n, n-1}][x].\qedhere
        \end{equation*}
    \end{proof}
\begin{definition}[{\cite{dickson_fundamental_1911}}]\label{def:dickson_invariants}
    The polynomials $\{Q_{n, 0}, Q_{n, 1}, \ldots, Q_{n, n-1}\}$ are called the \emph{Dickson invariants}\index{Dickson invariants}. The $\mathbb{F}_q$-algebra generated by $\{Q_{n, 0}, Q_{n, 1}, \ldots, Q_{n, n-1}\}$ is called the \emph{Dickson algebra}\index{Dickson algebra}.
\end{definition}
\begin{theorem}[{\cite{dickson_fundamental_1911}, \cite[Theorem B]{steinberg_dicksons_1987}}]\label{thm:dickson}
    The ring of invariants under the action of the general linear group is
    \begin{equation*}
        S^{GL_n(\mathbb{F}_q)} = \mathbb{F}_q[Q_{n, 0}, \ldots, Q_{n, n-1}].
    \end{equation*}
    Furthermore, $S$ is a free $S^{GL_n(\mathbb{F}_q)}$-module, generated by 
    \begin{equation*}
       \mathcal{G}= \{x_1^{\alpha_1}\cdots x_n^{\alpha_n}\mid 0\leq \alpha_i< q^n - q^{i-1}, \text{ for all } i\in \{1, \ldots, n\}\}.
    \end{equation*}
\end{theorem}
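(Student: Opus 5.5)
The plan is to apply \cref{prop:prod_of_deg_equals_G_implies_polynomial} with $d_{i} = Q_{n,i-1}$ for $i\in\{1,\ldots,n\}$, and then to get the freeness statement by a Hilbert series count.

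First, each $Q_{n,i}$ is $GL_n(\mathbb{F}_q)$-invariant. This was noted right after \cref{lemma:f_W}, because $f_V$ does not depend on the choice of basis of $V$. Integrality of $S$ over $\mathbb{F}_q[Q_{n,0},\ldots,Q_{n,n-1}]$ also follows directly: each $x_i\in V$ is a root of the monic fundamental polynomial $f_V(x)$, whose coefficients lie in the Dickson algebra.

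Next I compare degrees. We have $\deg Q_{n,i} = q^n-q^i$, so
\begin{equation*}
\prod_{i=0}^{n-1}(q^n-q^i) = |GL_n(\mathbb{F}_q)|.
\end{equation*}
This is the standard count of ordered bases: there are $q^n-q^i$ choices for the $(i+1)$-st column not lying in the span of the previous ones. \cref{prop:prod_of_deg_equals_G_implies_polynomial} then gives that the $Q_{n,i}$ are algebraically independent and that $S^{GL_n(\mathbb{F}_q)}=\mathbb{F}_q[Q_{n,0},\ldots,Q_{n,n-1}]$.

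For the freeness claim, set $R=\mathbb{F}_q[Q_{n,0},\ldots,Q_{n,n-1}]$. I would first show that $\mathcal{G}$ spans $S$ as an $R$-module, using \cref{lemma:showing_polynomial_of_the_right_deg}. The polynomial $F_i(x)$ is monic of degree $q^n-q^{i-1}$, vanishes at $x_i$, and has coefficients in $\mathbb{F}_q[x_1,\ldots,x_{i-1},Q_{n,i-1},\ldots,Q_{n,n-1}]$. So any power $x_i^{a}$ with $a\ge q^n-q^{i-1}$ can be rewritten in lower powers of $x_i$, with coefficients in $R[x_1,\ldots,x_{i-1}]$.

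To turn this into a spanning argument, I would induct downward, reducing $x_n$ first, then $x_{n-1}$, and so on. The point to check is that reducing $x_i$ only introduces the variables $x_1,\ldots,x_{i-1}$, so exponents already reduced for $x_{i+1},\ldots,x_n$ are not disturbed. Concretely, one shows by induction on $i$ that $\mathbb{F}_q[x_1,\ldots,x_i]\cdot R$ is spanned over $R$ by the monomials with exponents $\alpha_j<q^n-q^{j-1}$ for $j\le i$. The step from $i-1$ to $i$ uses $F_i$: it lets me write any element as $\sum_{a<q^n-q^{i-1}} c_a x_i^a$ with $c_a\in R[x_1,\ldots,x_{i-1}]$, and then the induction hypothesis applies to the $c_a$.

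Finally, I show linear independence by a Hilbert series count. Since $R$ is a polynomial ring, a minimal homogeneous generating set of $S$ over $R$ is obtained from the spanning set $\mathcal{G}$, and $|\mathcal{G}|=\prod_i (q^n-q^{i-1})$. I would reuse the degree argument from the proof of \cref{prop:prod_of_deg_equals_G_implies_polynomial}: any $R$-relation among the elements of $\mathcal{G}$ would make $\dim_{Frac(R)} Frac(S) < |\mathcal{G}|$. But that dimension equals $|G|=|\mathcal{G}|$ by Galois theory together with \cref{lemma:integrality_and_fraction_field}. Since $\mathcal{G}$ spans $Frac(S)$ over $Frac(R)$ and has exactly $[Frac(S):Frac(R)]$ elements, it is a basis, hence $R$-linearly independent. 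This last step, matching cardinality with the field degree, is the one place needing care, but it follows directly from the earlier results.
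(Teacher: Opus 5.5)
Your proposal is correct and follows essentially the same route as the paper. The paper applies \cref{prop:prod_of_deg_equals_G_implies_polynomial} to the Dickson invariants, uses the monic polynomials $F_i$ of \cref{lemma:showing_polynomial_of_the_right_deg} to show that $\mathcal{G}$ spans $S$, and gets independence from $|\mathcal{G}| = |GL_n(\mathbb{F}_q)| = [Frac(S):Frac(S)^{GL_n(\mathbb{F}_q)}]$. The only differences are cosmetic: you run the spanning step as an induction on the number of variables rather than on the largest index $i$ with $\beta_i \geq q^n - q^{i-1}$, and your final step is a field-degree count, not the Hilbert series count you announce at the start.
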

\begin{proof}
    The Remark preceding \cref{def:dickson_invariants} shows that $S$ is integral over the algebra generated by the $n$ Dickson invariants. Moreover, we have
    \begin{equation*}
        |GL_n(\mathbb{F}_q)| = \prod\limits_{i=1}^{n}(q^n - q^{n-i}) =\prod\limits_{i=1}^{n}\deg(Q_{n, n-i}). 
    \end{equation*}
    Applying \cref{prop:prod_of_deg_equals_G_implies_polynomial} gives the description of the invariant ring.\smallbreak
    Denote the Dickson algebra by $\mathcal{D}_n$. For each monomial $x^{\beta} = x_1^{\beta_1}\cdots x_n^{\beta_n}\notin \mathcal{G}$, let $m(\beta)$ be the largest index $i$ such that $\beta_i \geq q^n - q^{i-1}$, then by \cref{lemma:showing_polynomial_of_the_right_deg}, $x_i^{\beta_i} \in \mathcal{D}_n[x_1, \ldots, x_{i-1}]\cdot \{x_i^{\alpha_i}\mid 0\leq \alpha_i < q^n - q^{i-1}]$, which means that $x^{\beta}$ is in the $\mathcal{D}_n$-module generated by monomials $x^{\gamma}$ such that either $x^{\gamma}\in \mathcal{G}$ or $m(\gamma)<m(\beta)$. Continuing this process, we conclude that $S$ is generated by $\mathcal{G}$ as a $\mathcal{D}$-module. Using the proof of \cref{prop:prod_of_deg_equals_G_implies_polynomial} for $R = \mathcal{D}$, we see that $\mathcal{G}$ is also a generating set for $Frac(S)$ as a $Frac(\mathcal{D}) = Frac(S)^{GL_n(\mathbb{F}_q)}$-vector space. Observe that $|\mathcal{G}| = |GL_n(\mathbb{F}_q)|$, which implies that $\mathcal{G}$ is a basis of $Frac(S)$ as a $Frac(\mathcal{D})$-vector space. Therefore, $S$ is a free $\mathcal{D}$-module generated by $\mathcal{G}$.
\end{proof}
The Dickson invariants and the polynomials $f_W$ are building blocks for the invariant rings under the action of the unipotent group and the parabolic subgroups.
\begin{theorem}[\cite{mui}]\label{thm:mui_invariants}
    Let $U$ be the group of upper triangular matrices acting on $S$. For each $i\in \{1, \ldots, n\}$, define
    \begin{equation*}
        V_i = f_{span\{x_1, \ldots, x_{i-1}\}}(x_i)
    \end{equation*}
    (in particular, $V_1 = f_{\{0\}}(x_1) = x_1$.) The invariant ring under the action of $U$ is given by
    \begin{equation*}
        S^U = \mathbb{F}_q[V_1, \ldots, V_n].
    \end{equation*}
\end{theorem}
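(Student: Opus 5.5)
We apply \cref{prop:prod_of_deg_equals_G_implies_polynomial} to the homogeneous polynomials $d_i = V_i$. This requires three things: each $V_i$ is $U$-invariant, $S$ is integral over $\mathbb{F}_q[V_1,\ldots,V_n]$, and $\prod_i \deg(V_i) = |U|$.

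\emph{Invariance.} Let $W_{i-1} = span\{x_1,\ldots,x_{i-1}\}$ and take $\sigma\in U$. Then $\sigma(x_j) = x_j + (\text{a combination of } x_1,\ldots,x_{j-1})$, so $\sigma$ maps $W_{i-1}$ onto itself and $\sigma(x_i) = x_i + w_0$ for some $w_0\in W_{i-1}$. Since $\sigma$ acts as a ring automorphism of $S$,
\begin{equation*}
\sigma(V_i)=\prod_{w\in W_{i-1}}(\sigma(x_i)+\sigma(w)) = \prod_{w\in W_{i-1}}(x_i + w_0 + w) = V_i,
\end{equation*}
because $w\mapsto w_0+\sigma(w)$ is a bijection of $W_{i-1}$. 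Alternatively, this follows from the $\mathbb{F}_q$-linearity of $f_{W_{i-1}}$ in \cref{lemma:f_W}(2), which gives $\sigma(V_i) = f_{W_{i-1}}(x_i) + f_{W_{i-1}}(w_0) = V_i + 0$.

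\emph{Degrees.} $V_i$ is a product of $|W_{i-1}| = q^{i-1}$ linear forms, so $\deg V_i = q^{i-1}$. Hence $\prod_{i=1}^n \deg V_i = q^{0+1+\cdots+(n-1)} = q^{n(n-1)/2}$. This equals $|U|$, since an element of $U$ is determined by its $n(n-1)/2$ free entries above the diagonal.

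\emph{Integrality.} This is the step needing the most care. We show by induction on $i$ that $x_i$ is integral over $R = \mathbb{F}_q[V_1,\ldots,V_n]$. By \cref{lemma:f_W}(1), $f_{W_{i-1}}(X)$ is a monic polynomial in $X$ of degree $q^{i-1}$; it is an additive polynomial $\sum_k c_k X^{q^k}$ whose coefficients $c_k$ lie in $\mathbb{F}_q[x_1,\ldots,x_{i-1}]$ (they are ratios of determinants, but polynomial, being symmetric functions of the roots $W_{i-1}\subseteq S$). Thus $x_i$ is a root of the monic polynomial $f_{W_{i-1}}(X) - V_i$ with coefficients in $\mathbb{F}_q[x_1,\ldots,x_{i-1},V_i]$. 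By the induction hypothesis, $\mathbb{F}_q[x_1,\ldots,x_{i-1}]$ is integral over $R$, and transitivity of integrality shows $x_i$ is integral over $R$. The base case is immediate since $V_1 = x_1$.

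With all three hypotheses verified, \cref{prop:prod_of_deg_equals_G_implies_polynomial} shows that $V_1,\ldots,V_n$ are algebraically independent and that $S^U = \mathbb{F}_q[V_1,\ldots,V_n]$. The only points to check carefully are that the coefficients of $f_{W_{i-1}}$ genuinely lie in $\mathbb{F}_q[x_1,\ldots,x_{i-1}]$ (they are elementary symmetric functions of the elements of $W_{i-1}$) and the count $|U| = q^{n(n-1)/2}$.
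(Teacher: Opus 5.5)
Your proof is correct, and its skeleton is the paper's. You use the same invariance argument ($\sigma(W_{i-1})=W_{i-1}$ and $\sigma(x_i)=x_i+w_0$), the same count $\prod_i\deg V_i=q^{n(n-1)/2}=|U|$, and the same appeal to \cref{prop:prod_of_deg_equals_G_implies_polynomial}.

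The one real difference is how you get integrality. The paper produces a single monic polynomial over $R=\mathbb{F}_q[V_1,\ldots,V_n]$ that kills all of $V$. Using the quotient identity \eqref{eq:quotient_f} from \cref{lemma:f_W}(3), it factors the fundamental equation as $f_V=f_{W_n/W_{n-1}}\circ\cdots\circ f_{W_2/W_1}\circ f_{W_1}$. Since $W_i/W_{i-1}$ is the line spanned by $V_i$, each factor is $y^q-V_i^{q-1}y$, so $f_V(x)\in R[x]$.

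You instead use the tautological relation $f_{W_{i-1}}(x_i)=V_i$. This is a monic equation for $x_i$ over $R[x_1,\ldots,x_{i-1}]$, and you climb the tower by transitivity of integrality.

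Your route is more elementary: it needs only the product definition of $f_W$, not the composition identity of part (3). The paper's route gives an explicit integral equation of degree $q^n$. As a by-product, it shows directly that every Dickson invariant $Q_{n,i}$, being a coefficient of $f_V$, lies in $\mathbb{F}_q[V_1,\ldots,V_n]$. This is the same mechanism behind \cref{prop:decrease_deg}.
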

\begin{proof}
    For each $i$, $V_i$ is an element of $S^U$, because for any $\sigma\in U$, 
    \begin{equation*}
        \sigma(span_{\mathbb{F}_q}\{x_1, \ldots, x_{i-1}\}) = span_{\mathbb{F}_q}\{x_1, \ldots, x_{i-1}\},
    \end{equation*}
    and $\sigma(x_i) = x_i + w_0$ for some $w_0\in span_{\mathbb{F}_q}\{x_1, \ldots, x_{i-1}\}$, which implies that
    \begin{equation*}
        \sigma(V_i) = \prod\limits_{w\in span\{x_1, \ldots, x_{i-1}\}}(x_i + w_0 + w)= V_i.
    \end{equation*}
    Next, we show that the fundamental equation $f_V(x) = 0$ has coefficients in the algebra $R = \mathbb{F}_q[V_1, \ldots, V_n]$. Indeed, using \eqref{eq:quotient_f}, for each $i\in \{1, \ldots, n\}$, let $W_i = span_{\mathbb{F}_q}\{x_1, \ldots, x_i\}$, we have
    \begin{align*}
        f_V(x) &= f_{W_n/W_{n-1}}(f_{W_{n-1}}(x))\\
        &=f_{W_n/W_{n-1}}f_{W_{n-1}/W_{n-2}}f_{W_{n-2}}(x)\\
        & = \cdots\\
        &=f_{W_n/W_{n-1}}f_{W_{n-1}/W_{n-2}}\cdots f_{W_{2}/W_1}f_{W_1}(x).
    \end{align*}
    We have $f_{W_1}(x) = x^q - x\cdot V_1^{q-1} \in R[x]$, $f_{W_2/W_1}(f_{W_1}(x)) = f_{W_1}(x)^q - f_{W_1}(x)\cdot V_2^{q-1}\in R[x]$, similarly, observe that for each $i\in \{2, \ldots, n\}$, $W_i/W_{i-1}$ is a one-dimensional vector space, generated by $V_i$, we conclude that $f_V(x)\in R[x]$. Therefore, $S$ is integral over $R$; finally, it is obvious that 
    \begin{equation*}
        |U| = \prod\limits_{i=0}^{n-1}q^i =\prod\limits_{i=1}^{n}\deg(V_i). 
    \end{equation*}
    By \cref{prop:prod_of_deg_equals_G_implies_polynomial}, we conclude that $S^U = \mathbb{F}_q[V_1, \ldots, V_n]$.
\end{proof}
The unipotent invariants and the Dickson invariants are related by the following well-known formula.
\begin{proposition}[{\cite[Proposition 1.3]{wilkerson}}]\label{prop:decrease_deg}
    For any $n\geq 2$ and $i\in \{0, \ldots, n\}$, we have
    \begin{equation}\label{eq:decrease_deg}
        Q_{n, i}(x_1, \ldots, x_n) = Q_{n-1, i-1}(x_1, \ldots, x_{n-1})^{q} + V_n(x_1, \ldots, x_n)^{q-1} \cdot Q_{n-1, i}(x_1, \ldots, x_{n-1}).
    \end{equation}
\end{proposition}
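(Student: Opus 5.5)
The plan is to compare coefficients in the factorization of the fundamental polynomial given by part (3) of \cref{lemma:f_W}, specifically \eqref{eq:quotient_f}. Write $W = span_{\mathbb{F}_q}\{x_1, \ldots, x_{n-1}\}$ and $V = span_{\mathbb{F}_q}\{x_1, \ldots, x_n\}$. Then $V/W = f_W(V)$ is the one-dimensional space spanned by $f_W(x_n) = V_n$. Applying \eqref{eq:quotient_f} with $U = V$ gives
\begin{equation*}
    f_V(x) = f_{V/W}(f_W(x)) = f_W(x)^q - V_n^{q-1} f_W(x),
\end{equation*}
since $f_{\mathbb{F}_q V_n}(y) = \prod_{a\in \mathbb{F}_q}(y + aV_n) = y^q - V_n^{q-1}y$. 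One caveat: \eqref{eq:quotient_f} is stated for $u$ in a subspace, so I would first note that both sides are polynomials in $x$ agreeing on infinitely many specializations (or argue with $x$ as a new indeterminate and $S[x]$ in place of $S$). The proof of \cref{lemma:f_W} works verbatim over any polynomial ring containing the $x_i$, so this is routine.

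Next, I would expand both sides using the definition of the Dickson invariants. On the left,
\begin{equation*}
    f_V(x) = \sum_{i=0}^{n}(-1)^{n-i}x^{q^i}Q_{n,i},
\end{equation*}
after reindexing the defining sum, with $Q_{n,n} = 1$. Similarly,
\begin{equation*}
    f_W(x) = \sum_{j=0}^{n-1}(-1)^{n-1-j}x^{q^j}Q_{n-1,j}(x_1, \ldots, x_{n-1}).
\end{equation*}
Because Frobenius is additive in characteristic $p$ and $((-1)^{k})^q = (-1)^k$ (when $q$ is even, $-1 = 1$), we get
\begin{equation*}
    f_W(x)^q = \sum_{j}(-1)^{n-1-j}x^{q^{j+1}}Q_{n-1,j}^q.
\end{equation*}

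Finally, I would compare the coefficients of $x^{q^i}$ on both sides. The term $f_W(x)^q$ contributes $(-1)^{n-i}Q_{n-1,i-1}^q$. The term $-V_n^{q-1}f_W(x)$ contributes $-(-1)^{n-1-i}V_n^{q-1}Q_{n-1,i} = (-1)^{n-i}V_n^{q-1}Q_{n-1,i}$. Cancelling the common sign $(-1)^{n-i}$ yields \eqref{eq:decrease_deg}. Here I adopt the conventions $Q_{n-1,-1} = 0$ and $Q_{n-1,n} = 0$, along with $Q_{n-1,n-1} = 1$; these handle the boundary cases $i = 0$ and $i = n$, which I would check separately. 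The only real obstacle is sign bookkeeping and confirming that the sign convention in the definition of $Q_{n,n-i}$ matches the one used above. I would verify this on the case $n = 1$, where $f_{\mathbb{F}_q x_1}(x) = x^q - x_1^{q-1}x$ gives $Q_{1,0} = x_1^{q-1}$.
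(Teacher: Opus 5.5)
Your proposal is correct and follows the paper's proof: you write $f_V(x)=f_{V/W}(f_W(x))$ with $f_{V/W}(y)=y^q-V_n^{q-1}y$, expand both sides in Dickson invariants, and compare coefficients of $x^{q^i}$ under the boundary conventions $Q_{n-1,-1}=Q_{n-1,n}=0$. Your remark on justifying \eqref{eq:quotient_f} as an identity of polynomials in an indeterminate $x$ is extra care that the paper leaves implicit.
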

\begin{proof}
    Let $V = span_{\mathbb{F}_q}\{x_1, \ldots, x_n\}$ and $W = span_{\mathbb{F}_q}\{x_1, \ldots, x_{n-1}\}$. From \cref{lemma:f_W}, $V/W$ is one-dimensional, and is generated by $V_n = f_W(x_n)$; consequently,
    \begin{equation*}
        f_{V/W}(x) = x^q - x\cdot V_n^{q-1}.
    \end{equation*}
    Furthermore, we have an equality of polynomials
    \begin{equation*}
        f_V(x)= f_{V/W}(f_W(x)).
    \end{equation*}
    Expanding both sides of the above equation, we obtain
    \begin{align*}
        \sum\limits_{i=0}^{n}(-1)^{n-i}Q_{n, i}x^{q^{i}} &= \left(\sum\limits_{j=0}^{n-1}(-1)^{n-1-j}Q_{n-1, j}x^{q^{j}}\right)^q - V_n^{q-1}\cdot \left(\sum\limits_{j=0}^{n-1}(-1)^{n-1-j}Q_{n-1, j}x^{q^{j}}\right)\\
        &= \sum\limits_{j=0}^{n}(-1)^{n-j}x^{q^{j}}\cdot (Q_{n-1, j-1}^{q} + V_n^{q-1}\cdot Q_{n-1, j}),
    \end{align*}
    from which \eqref{eq:decrease_deg} follows by regarding both sides as polynomials in $x$ and comparing coefficients.
\end{proof}
\begin{theorem}[{\cite[Theorem 2.6]{hewett_modular_1996}}]
    Let $\alpha = (\alpha_1, \ldots, \alpha_r)$ be an ordered tuple of positive integers, such that $n = \alpha_1 + \cdots + \alpha_r$. We reuse the notations $A_i$, $V_i$ in \cref{def:flag}. Let $P_{\alpha}$ be the parabolic subgroup of type $\alpha$, acting on $S = \mathbb{F}_q[x_1, \ldots, x_n]$. For each $s\in \{1, \ldots, r\}, i\in \{1, \ldots, \alpha_s\}$, define
    \begin{equation*}
        P_{s, i}=Q_{\alpha_s, i-1}(f_{V_{s-1}}(x_{A_{s-1}+1}), \ldots, f_{V_{s-1}}(x_{A_{s}})).
    \end{equation*}
    The invariant ring under the action of $P_{\alpha}$ is given by
    \begin{equation*}
        S^{P_{\alpha}} = \mathbb{F}_q[P_{s, i}\mid 1\leq s\leq r \land 1\leq i\leq \alpha_s].
    \end{equation*}
\end{theorem}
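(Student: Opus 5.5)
The plan is to apply \cref{prop:prod_of_deg_equals_G_implies_polynomial} to the $n$ polynomials $P_{s,i}$, mirroring the proofs of \cref{thm:dickson} and \cref{thm:mui_invariants}. There are three things to establish: each $P_{s,i}$ is $P_\alpha$-invariant, $S$ is integral over $R=\mathbb{F}_q[P_{s,i}]$, and the product of degrees equals $|P_\alpha|$.

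\textbf{Invariance.} Let $\sigma\in P_\alpha$. Since $\sigma$ stabilizes $V_{s-1}$, it permutes $V_{s-1}$. Hence $\sigma(f_{V_{s-1}}(x))=f_{V_{s-1}}(\sigma x)$ for every $x\in V$.

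For $A_{s-1}<j\le A_s$, write $\sigma(x_j)=\sum_{k\le A_s}\sigma_{kj}x_k$. The $\mathbb{F}_q$-linearity and kernel statement of \cref{lemma:f_W}(2) then give
\begin{equation*}
f_{V_{s-1}}(\sigma x_j)=\sum_{A_{s-1}<k\le A_s}\sigma_{kj}\,f_{V_{s-1}}(x_k).
\end{equation*}
The block $(\sigma_{kj})_{A_{s-1}<k,j\le A_s}$ is invertible, because $\sigma$ induces an automorphism of $V_s/V_{s-1}$. So $\sigma$ acts on the $\alpha_s$-tuple $y^{(s)}=(f_{V_{s-1}}(x_{A_{s-1}+1}),\ldots,f_{V_{s-1}}(x_{A_s}))$ through an element of $GL_{\alpha_s}(\mathbb{F}_q)$. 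Each $Q_{\alpha_s,i-1}$ is $GL_{\alpha_s}$-invariant, and evaluating it at $y^{(s)}$ commutes with this linear substitution. Therefore $P_{s,i}$ is fixed by $\sigma$.

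\textbf{Integrality.} I will argue by induction on $s$ that each $f_{V_s}(x)$ lies in $R[x]$, with $f_{V_0}(x)=x$.

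By \eqref{eq:quotient_f}, $f_{V_s}(x)=f_{V_s/V_{s-1}}(f_{V_{s-1}}(x))$. Here $V_s/V_{s-1}$ is spanned by the $\alpha_s$ elements of $y^{(s)}$, which are linearly independent since $\ker f_{V_{s-1}}=V_{s-1}$. Expanding the fundamental equation for this span gives
\begin{equation*}
f_{V_s/V_{s-1}}(z)=\sum_{i}(-1)^{\alpha_s-i}Q_{\alpha_s,i}(y^{(s)})\,z^{q^i},
\end{equation*}
with $Q_{\alpha_s,\alpha_s}=1$. The lower coefficients are exactly the $P_{s,i+1}$. Substituting $z=f_{V_{s-1}}(x)\in R[x]$ keeps everything in $R[x]$.

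At $s=r$ we get $f_V(x)\in R[x]$. This is monic and has every $x_j$ as a root, so $S$ is integral over $R$.

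\textbf{Degree count.} Since $\deg f_{V_{s-1}}(x_j)=q^{A_{s-1}}$, we have
\begin{equation*}
\deg P_{s,i}=q^{A_{s-1}}\bigl(q^{\alpha_s}-q^{i-1}\bigr).
\end{equation*}
Taking the product over $i$ gives $\prod_s q^{\alpha_sA_{s-1}}|GL_{\alpha_s}(\mathbb{F}_q)|$. This equals $|P_\alpha|$, which is the product of the Levi factor $\prod_s|GL_{\alpha_s}(\mathbb{F}_q)|$ and the unipotent radical of order $q^{\sum_s\alpha_sA_{s-1}}$. \Cref{prop:prod_of_deg_equals_G_implies_polynomial} then finishes the proof.

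The main obstacle is the invariance step. One must check carefully that the induced action on $y^{(s)}$ is a genuine linear substitution by an invertible matrix. The off-block entries of $\sigma$, those coming from $V_{s-1}$, are killed precisely because $\ker f_{V_{s-1}}=V_{s-1}$.
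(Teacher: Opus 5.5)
Your proposal is correct and follows the paper's proof essentially step for step. Like the paper, you get invariance from the action of $\sigma$ on $V_s/V_{s-1}$, integrality by writing $f_V = f_{V_r/V_{r-1}}\circ\cdots\circ f_{V_1/V_0}$ with every factor in $R[x]$, and the degree count against $|P_\alpha|$, then conclude with \cref{prop:prod_of_deg_equals_G_implies_polynomial}; your linearity-and-kernel computation just spells out the paper's bare assertion that $\sigma$ acts on the $f_{V_{s-1}}(x_j)$ through some $\sigma_0\in GL(V_s/V_{s-1})$.
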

\begin{proof}
    First, we compute the order of $P_{\alpha}$. Notice that $P_{\alpha}$ is isomorphic to a subgroup of $GL_n(\mathbb{F}_q)$ consisting of all block matrices of the form
    \begin{equation*}
        \begin{pmatrix}
M_1 & * & \cdots & * & *\\
0 & M_2 & \cdots & * & *\\
\vdots & \vdots & \ddots & \vdots\\
0 & 0 & \cdots & M_{r-1} & * \\
0 & 0 & \cdots & 0 & M_r
        \end{pmatrix},
    \end{equation*}
    where $M_s\in GL_{\alpha_s}(\mathbb{F}_q), \text{ for all } s\in \{1, \ldots, r\}$. Therefore, 
    \begin{equation*}
        |P_{\alpha}| = \prod\limits_{s=1}^{r}|GL_{\alpha_s}(\mathbb{F}_q)|\cdot q^{A_{s-1}\cdot \alpha_s}.
    \end{equation*}
    Notice that for each $s\in \{1,\ldots, r\}$ 
    \begin{align*}
        \prod\limits_{i=1}^{\alpha_s}\deg(P_{s, i}) &= \prod\limits_{i=1}^{\alpha_s}q^{A_{s-1}}\cdot (q^{\alpha_s}-q^{i-1})\\
        &=|GL_{\alpha_s}(\mathbb{F}_q)|\cdot q^{\alpha_s \cdot A_{s-1}}.
    \end{align*}
    Therefore, $\prod\limits_{s=1}^{r}\prod\limits_{i=1}^{\alpha_s}\deg(P_{s, i}) = |P_{\alpha}|$. Next, suppose that $\sigma\in P_{\alpha}$, for some $s\in \{1, \ldots, r\}, i\in \{1, \ldots, \alpha_s\}$, we have $\sigma(V_{s-1}) = V_{s-1}$, and $\sigma(V_s) = V_s$; therefore, there exists $\sigma_0\in GL(V_s/V_{s-1})$ (here the quotient is understood as in \cref{lemma:f_W}), such that
    \begin{equation*}
        \sigma(f_{V_{s-1}}(x_{A_{s-1}+j})) = f_{V_{s-1}}(\sigma(x_{A_{s-1}+j})) = \sigma_0(f_{V_{s-1}}(x_{A_{s-1}+j})), \text{ for all } j\in \{1, \ldots, \alpha_s\}.
    \end{equation*}
    Notice that $P_{s, i}$ is a Dickson invariant in the symmetric algebra generated by $V_{s}/V_{s-1}$, thus, $\sigma(P_{s, i}) = \sigma_0(P_{s, i}) = P_{s, i}$. We deduce that
    \begin{equation*}
        \{P_{s, i}\mid s\in \{1,\ldots, r\}, i\in \{1, \ldots, \alpha_s\}\}\subseteq S^{P_{\alpha}}.
    \end{equation*}
    In order to apply \cref{prop:prod_of_deg_equals_G_implies_polynomial}, we only need to show that the fundamental equation $f_V(x)$ has coefficients in the algebra 
    \begin{equation*}
        R = \mathbb{F}_q[P_{s, i}\mid 1\leq s\leq r \land 1\leq i\leq \alpha_s].
    \end{equation*}
    Similar to the proof of \cref{thm:mui_invariants}, we have
    \begin{align*}
        f_{V_r}(x) &= f_{V_r/V_{r-1}}(f_{V_{r-1}}(x))\\
        & = \cdots \\
        & = f_{V_r/V_{r-1}}f_{V_{r-1}/V_{r-2}}\cdots f_{V_2/V_1}f_{V_1/V_0}(x).
    \end{align*}
    For each $s\in \{1, \ldots, r\}$, the polynomial $f_{V_s/V_{s-1}}(x)$ is an element of $R[x]$, since
    \begin{equation*}
        f_{V_s/V_{s-1}}(x) = x^{q^{\alpha_s}} + \sum\limits_{i=1}^{\alpha_s}(-1)^{\alpha_s-i+1}P_{s, i}\cdot x^{q^{i-1}}.  
    \end{equation*}
    As $f_V$ is a composition of these polynomials, it turns out that $f_V(x)\in R[x]$.
\end{proof}
\subsection{Schur Polynomials over Finite Fields}\label{subsection:schur_7}
Schur polynomials\index{Schur polynomial} are an important class of symmetric functions that play important roles in combinatorics, geometry, and representation theory \cite{sagan_symmetric_2001, stanley_enumerative_1999}. In \cite[Section 7]{macdonald_schur_1992}, Macdonald introduced a $q$-analogue of the classical Schur functions, which he called the $7$th variation of Schur functions; in \cite{shinyao}, the author called these polynomials \emph{generalized Dickson invariants}. In this subsection, we recall the definitions and results in \cite[Section 7]{macdonald_schur_1992}. 
\begin{definition}[{\cite[(7.1)]{macdonald_schur_1992}}]\label{def:schur}
    Let $\alpha = (\alpha_1 > \ldots > \alpha_n)$ be a decreasing sequence of nonnegative integers. Define
    \begin{equation*}
        A_{\alpha}(x_1, \ldots, x_n) = \begin{vmatrix}
            x_1^{\alpha_1} & x_1^{\alpha_2} & \cdots  & x_1^{\alpha_n}\\
            x_2^{\alpha_1} & x_2^{\alpha_2} & \cdots  & x_2^{\alpha_n}\\
            \vdots & \vdots & \ddots & \vdots \\
            x_n^{\alpha_1} & x_n^{\alpha_2} & \cdots  & x_n^{\alpha_n}.
        \end{vmatrix}
    \end{equation*}
    Let $\delta_n= (n-1, n-2, \ldots, 0)$, for each partition\index{partition} $\lambda = (\lambda_1\geq \lambda_2 \geq \ldots \geq \lambda_n\geq 0)$, define the Schur polynomial $S_{\lambda}$ by the following formula
    \begin{equation*}
        S_{\lambda}(x_1, \ldots, x_n) = \dfrac{A_{\lambda + \delta_n}(x_1, \ldots, x_n)}{A_{\delta_n}(x_1, \ldots, x_n)}.
    \end{equation*}
\end{definition}
\begin{remark}
\begin{enumerate}
    \item From the proof of \cref{lemma:f_W}, the denominator in the definition of $S_{\lambda}$ is divisible by any linear combination of the variables $x_1, \ldots, x_n$, which implies that $S_{\lambda}$ is indeed a polynomial in $\mathbb{F}_q[x_1, \ldots, x_n]$, Furthermore, for any $\sigma=(\sigma_{ij})_{i, j\in \{1, \ldots, n\}}\in GL_n(\mathbb{F}_q)$, we have 
    \begin{equation*}
        \sigma(x_a)^{q^b} = \left(\sum\limits_{i=1}^{n}\sigma_{ia}x_i\right)^{q^b} =\sum\limits_{i=1}^{n}\sigma_{ia}x_i^{q^b}, \text{ for all } b\geq 0, a\in \{1, \ldots, n\}, 
    \end{equation*}
    therefore, for any decreasing sequence of non-negative integers $\alpha$, 
    \begin{align*}
        \sigma\left(\begin{vmatrix}
            x_1^{\alpha_1} & x_1^{\alpha_2} & \cdots  & x_1^{\alpha_n}\\
            x_2^{\alpha_1} & x_2^{\alpha_2} & \cdots  & x_2^{\alpha_n}\\
            \vdots & \vdots & \ddots & \vdots \\
            x_n^{\alpha_1} & x_n^{\alpha_2} & \cdots  & x_n^{\alpha_n}
        \end{vmatrix}\right) &=\det\left(\sigma^T \cdot \begin{pmatrix}
            x_1^{\alpha_1} & x_1^{\alpha_2} & \cdots  & x_1^{\alpha_n}\\
            x_2^{\alpha_1} & x_2^{\alpha_2} & \cdots  & x_2^{\alpha_n}\\
            \vdots & \vdots & \ddots & \vdots \\
            x_n^{\alpha_1} & x_n^{\alpha_2} & \cdots  & x_n^{\alpha_n}
\end{pmatrix}\right)\\
&= \det(\sigma)\cdot A_{\alpha}(x_1, \ldots, x_n).
    \end{align*}
    We deduce that
    \begin{equation*}
        \sigma(S_{\lambda}(x_1, \ldots, x_n)) = \dfrac{\det(\sigma)A_{\lambda+\delta_n}}{\det(\sigma)A_{\delta_n}} = S_{\lambda}(x_1, \ldots, x_n), \text{ for all } \sigma\in GL_n(\mathbb{F}_q).
    \end{equation*}
    Therefore, each polynomial $S_{\lambda}$ belongs to the Dickson algebra $\mathcal{D}_n$. Consequently, the definition of $S_{\lambda}$ does not depend on the choice of a basis $\{x_1, \ldots, x_n\}$ of $V = span_{\mathbb{F}_q}\{x_1, \ldots, x_n\}$, hence we also use the notation $S_{\lambda}(V)$ to refer to the Schur polynomial $S_{\lambda}(x_1, \ldots, x_n)$.
    \item From the definition of the Dickson invariants, using the Laplace transform, for each $i\in \{0, \ldots, n-1\}$, suppose that $(1^{n-i})$ is the partition consisting of $(n-i)$ ones and $i$ zeros, then
    \begin{equation*}
    Q_{n, i} = \dfrac{A_{\delta_n + (1^{n-i})}}{A_{\delta_n}} = S_{(1^{n-i})}(V).
    \end{equation*}
    This property resembles a property of the classical Schur functions, that is, $s_{(1^{n-i})}(x_1, \ldots, x_n)$ equals the elementary symmetric polynomial of degree $(n-i)$. Therefore, Macdonald used the notation $E_{n-i}(V)$ in place of the Dickson invariant $Q_{n, i}$. It is also conventional to define $E_k(V) = 0$ if $k<0$ or $k>n$.\smallbreak
    Akin to the class of complete homogeneous symmetric polynomials, for each $r\geq 0$, let $(r) = (r, 0, \ldots, 0)$, and 
    \begin{equation*}
        H_r(V) = S_{(r)}(V).
    \end{equation*}
    We use the convention that $H_r(V) = 0$ if $r<0$. These polynomials play a similar role as the homogeneous symmetric polynomials in the finite field setting, as shown in the following results.
\end{enumerate}
\end{remark}
Let $\varphi: \mathbb{F}_q[V]\to \mathbb{F}_q[V]$ denote the Frobenius map
\begin{equation*}
    \varphi(u) = u^q, \text{ for all } u\in \mathbb{F}_q[V].
\end{equation*}
Recall that $\widehat{S}(V) = \cup_{r \geq 0} \mathbb{F}_q [x_1^{q^{-r}}, \ldots, x_n^{q^{-r}}]$ is the algebra obtained from $S(V) = \mathbb{F}_q [x_1, \ldots, x_n]$ by inverting the Frobenius map $\varphi$. In other words,
\begin{equation*}
\widehat{S}(V) = colim_{\varphi} (S(V) \xrightarrow{\varphi} S(V) \xrightarrow{\varphi} S(V) \xrightarrow{} \ldots ).    
\end{equation*}
Define two infinite upper-triangular matrices with entries in $\widehat{S}(V)$ as follows.
\begin{align*}
    E(V) &= ((-1)^{j-i}\varphi^j E_{j-i}(V))_{i, j\in \mathbb{Z}},\\
    H(V) & =(\varphi^{i+1} H_{j-i}(V))_{i, j\in \mathbb{Z}}.
\end{align*}
\begin{theorem}[{\cite[(7.9) and (7.17)]{macdonald_schur_1992}}]\label{thm:H_is_E_inverse}
    For any finite interval $I\subseteq \mathbb{Z}$, define
    \begin{align*}
    E_I(V) &= ((-1)^{j-i}\varphi^j E_{j-i}(V))_{i, j\in I},\\
    H_I(V) & =(\varphi^{i+1} H_{j-i}(V))_{i, j\in I}.
\end{align*}
Then $E_I(V) = H_I(V)^{-1}$. Furthermore, for any vector subspace $U$ of $V$, we have
\begin{equation*}
    H_I(V/U) = H_I(V)\cdot \varphi^{\dim(V) - \dim(U)}(E_I(U)).
\end{equation*}
\end{theorem}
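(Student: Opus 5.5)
The plan is to prove both parts by comparing coefficients of linearized polynomials, in the same spirit as the proof of \cref{prop:decrease_deg}. I then turn the resulting scalar identities into matrix identities for upper-triangular matrices. Since all the matrices involved are upper unitriangular (because $E_0=H_0=1$), every product restricted to a finite interval $I$ involves only indices in $I$. So it suffices to verify entrywise identities, and no convergence issues arise.

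\emph{Step 1: a linear recurrence for $H_r(V)$.} Let $n=\dim V$. For $m\ge 0$ I write
\begin{equation*}
x^{q^m}=\sum_{j=0}^{n-1}c_{m,j}\,x^{q^j}\quad\text{for all } x\in V,
\end{equation*}
with $c_{m,j}$ independent of $x$. These coefficients exist and are unique, because the matrix $(x_a^{q^j})_{a,j}$ has nonzero determinant $A_{\delta_n}$. Applying Cramer's rule to the system obtained from $x=x_1,\ldots,x_n$ identifies $c_{m,n-1}=A_{(m-n+1)+\delta_n}/A_{\delta_n}=H_{m-n+1}(V)$; moreover $c_{m,j}=\delta_{m,j}$ for $m<n$. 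Next I raise the fundamental equation $f_V(x)=\sum_{i=0}^n(-1)^iE_i(V)x^{q^{n-i}}=0$ to the power $q^{m-n}$ for $m\ge n$. This gives
\begin{equation*}
x^{q^m}=\sum_{i=1}^n(-1)^{i+1}\varphi^{m-n}E_i(V)\,x^{q^{m-i}}.
\end{equation*}
Substituting the expansions of $x^{q^{m-i}}$ and using uniqueness, I compare the coefficients of $x^{q^{n-1}}$. With $r=m-n+1\ge 1$ this yields
\begin{equation*}
\sum_{s}(-1)^{r-s}\,\varphi^{r-1}E_{r-s}(V)\,H_s(V)=0 .
\end{equation*}
The delicate point is the bookkeeping of Frobenius twists. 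The twist $\varphi^{r-1}$ falls only on $E$, which is exactly what the matrices in the statement require.

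\emph{Step 2: $H_I E_I=1$.} The $(i,j)$ entry of $H_I(V)E_I(V)$ is $\sum_k\varphi^{i+1}H_{k-i}\,(-1)^{j-k}\varphi^jE_{j-k}$. Applying $\varphi^{-(i+1)}$, which is legitimate in $\widehat S(V)$, and setting $r=j-i$, $s=k-i$, this becomes $\varphi^{i+1}$ of the left side of the identity in Step 1. It therefore vanishes for $j>i$. For $j=i$ it equals $1$, and for $j<i$ it is $0$ by triangularity. Since a one-sided inverse of a square matrix over a commutative ring is two-sided, $E_I(V)=H_I(V)^{-1}$.

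\emph{Step 3: the quotient formula.} Let $u=\dim U$ and $d=\dim V-\dim U=\dim(V/U)$. By \cref{lemma:f_W}(3), $f_V=f_{V/U}\circ f_U$. I expand
\begin{equation*}
f_U(x)=\sum_b(-1)^bE_b(U)x^{q^{u-b}},\qquad f_{V/U}(y)=\sum_a(-1)^aE_a(V/U)y^{q^{d-a}},
\end{equation*}
compose them, and compare the coefficients of $x^{q^{n-k}}$. This gives
\begin{equation*}
E_k(V)=\sum_{a+b=k}E_a(V/U)\,\varphi^{d-a}E_b(U).
\end{equation*}
The $(i,j)$ entry of $\varphi^d(E_I(U))\cdot E_I(V/U)$ is $\sum_k(-1)^{j-i}\varphi^{k+d}E_{k-i}(U)\,\varphi^jE_{j-k}(V/U)$. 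After applying $\varphi^{-j}$ and setting $b=k-i$, $a=j-k$, this is precisely the right side above with $k=j-i$. Hence $E_I(V)=\varphi^d(E_I(U))\,E_I(V/U)$. Inverting via Step 2, applied to both $V$ and $V/U$, gives $H_I(V)=H_I(V/U)\,\varphi^d(E_I(U))^{-1}$, that is, $H_I(V/U)=H_I(V)\,\varphi^{d}(E_I(U))$.

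The main obstacle is matching the Frobenius exponents in Steps 1 and 3. Composing the matrices in the wrong order produces twists that depend on the summation index, and the identity then fails. So the order of the factors must be chosen so that all the twists are uniform after applying a single $\varphi^{-j}$ or $\varphi^{-(i+1)}$.
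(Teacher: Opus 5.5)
The paper gives no proof of this theorem; it quotes it from Macdonald, (7.9) and (7.17). So there is no in-paper argument to compare against.

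Your proof is correct and self-contained. It is built from the same tools the paper uses nearby:
\begin{itemize}
\item Your Cramer's-rule identification $c_{m,n-1}=H_{m-n+1}(V)$ is the cofactor expansion $H_r(V)=\sum_k u_k\,\varphi^{n+r-1}(x_k)$, with $u_k$ independent of $r$, from the proof of \cref{thm:jacobi_trudi}. Here $A_\alpha$ must be read as $\det(x_i^{q^{\alpha_j}})$, which is what \cref{def:schur} intends.
\item Your Step 3 is the general-$U$ version of the coefficient comparison in $f_V=f_{V/W}\circ f_W$ that \cref{prop:decrease_deg} carries out for $\dim(V/W)=1$.
\end{itemize}

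The index bookkeeping checks out.
\begin{itemize}
\item After applying $\varphi^{-(i+1)}$, the $(i,j)$ entry of $H_I(V)E_I(V)$ is exactly $\sum_s(-1)^{r-s}\varphi^{r-1}E_{r-s}(V)H_s(V)$ with $r=j-i\ge 1$, which vanishes by Step 1.
\item Your relation $E_k(V)=\sum_{a+b=k}E_a(V/U)\,\varphi^{d-a}E_b(U)$ is right. When $\dim U=n-1$ it reproduces \eqref{eq:decrease_deg}.
\item The factorization $E_I(V)=\varphi^d(E_I(U))\,E_I(V/U)$ has its factors in the correct order. The reverse product in general already disagrees with $E_I(V)$ on the second superdiagonal.
\end{itemize}
Inverting the factorization therefore gives precisely the stated formula.

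One point should be made explicit. Comparing coefficients in Step 3 requires $f_V=f_{V/U}\circ f_U$ as an identity of polynomials in an indeterminate $x$. \cref{lemma:f_W}(3), by contrast, is phrased for elements of a subspace. The upgrade is immediate: both sides are monic of degree $q^n$ in $x$ and vanish at the $q^n$ distinct elements of $V$. Equivalently, the computation proving \eqref{eq:quotient_f} uses only the additivity of $f_U$.
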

The analogues of the Jacobi-Trudi, and the Nagelsbach-Kostka formulas\index{Jacobi-Trudi formula}\index{Nagelsbach-Kostka formula} \cite[(0.2), (0.3)]{macdonald_schur_1992} are
\begin{theorem}[{\cite[(7.10)]{macdonald_schur_1992}}]\label{thm:jacobi_trudi}
    For any partition $\lambda$, let $\lambda' = (\lambda'_1, \ldots, \lambda'_m)$ be the \emph{conjugate} partition of $\lambda$, obtained by transposing the Young diagram of $\lambda$. Then we have
    \begin{align*}
        S_{\lambda}(V) &= \det(\phi^{1-j}H_{\lambda_i - i+j}(V))_{1\leq i, j\leq n},\\
        &=\det(\phi^{j-1}E_{\lambda'_i-i+j}(V))_{1\leq i, j\leq m}.\\
    \end{align*}
\end{theorem}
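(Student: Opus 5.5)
The plan is to mimic the classical derivation of Jacobi--Trudi from the bialternant formula, with Frobenius twists in place of the usual indices. The only earlier input is \cref{thm:H_is_E_inverse}, which says that $E_I(V)$ and $H_I(V)$ are mutually inverse on every finite interval $I$.

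\textbf{Step 1: cofactor expansion along one variable.} Expanding the determinant $f_V(x)=\prod_{v\in V}(x+v)$ as in \cref{lemma:f_W} gives
\begin{equation*}
\prod_{v\in V}(X-v) = \sum_{k=0}^{n}(-1)^{k}E_k(V)X^{q^{n-k}}.
\end{equation*}
Each $x_a$ is a root of this polynomial. I will read this as a linear recurrence: for every $N\ge n$,
\begin{equation*}
\sum_{k=0}^n (-1)^k \varphi^{N-n}\big(E_k(V)\big)\, x_a^{q^{N-k}}=0 .
\end{equation*}
This is obtained by applying $\varphi^{N-n}$ to the relation for $x_a$.

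\textbf{Step 2: matrix identity.} Let $M_N$ be the $n\times N$ matrix $(x_a^{q^{N-1-j}})$ with $0\le j<N$. The goal is an identity of the form
\begin{equation*}
M_N = M^{(n)}\cdot C,
\end{equation*}
where $M^{(n)}=(x_a^{q^{n-1-j}})$ is the square Moore matrix and the entries of $C$ are twisted $H$'s. Concretely, a power $x^{q^{m}}$ is written as $\sum_{j}\varphi^{?}H_{m-n+1+j}(V)\,x^{q^{n-1-j}}$ with suitable Frobenius twists. This is the finite-field analogue of $x^{m}\equiv\sum_j h_{m-n+1+j}x^{n-1-j}\pmod{\prod(x-x_i)}$.

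To prove it, I use the recurrence from Step 1. The coefficients $C$ are then exactly the entries of the inverse of the banded matrix built from $E$, and \cref{thm:H_is_E_inverse} identifies that inverse with $H_I(V)$.

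\textbf{Step 3: the $H$-formula.} Select the columns $q^{\lambda_i+n-i}$. Then $A_{\lambda+\delta_n}$, with exponents $q^{\lambda_i+n-i}$, becomes $\det M^{(n)}\cdot\det(\text{selected } n\times n \text{ minor of } C)$. Dividing by $A_{\delta_n}=\det M^{(n)}$ leaves a determinant of twisted $H_{\lambda_i-i+j}$. The twists are then normalised to $\phi^{1-j}$ by pulling a power of Frobenius out of each column. This is legitimate because $\varphi$ is a ring endomorphism, and it explains the appearance of $\widehat S(V)$.

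\textbf{Step 4: the $E$-formula.} Apply Jacobi's complementary-minor theorem to the mutually inverse matrices $H_I$ and $E_I$ on a large enough interval $I$. A minor of $H_I$ then equals $\pm$ the complementary minor of $E_I$, since $\det H_I=1$ by unitriangularity. With rows and columns indexed by $\lambda$ and $\lambda'$, the standard combinatorial fact that $\{\lambda_i+n-i\}$ and $\{n-1+j-\lambda'_j\}$ are complementary converts the $H$-determinant into $\det(\phi^{j-1}E_{\lambda'_i-i+j})$. The signs $(-1)^{j-i}$ built into $E_I$ absorb the sign from Jacobi's theorem.

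\textbf{Main obstacle.} The hard part is the Frobenius bookkeeping in Steps 2 and 4. The twists $\varphi^{i+1}$ and $\varphi^{j}$ in the definitions of $H_I$ and $E_I$ have to be matched exactly to the exponents $q^{\lambda_i+n-i}$. My first move is to verify the conventions for $n=1,2$ and for $\lambda=(r)$ and $\lambda=(1^k)$, which should recover the definitions of $H_r$ and $E_k$. I would fix the indexing from these small cases before writing the general argument.
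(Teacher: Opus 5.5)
Your Steps 1 and 4 are fine. Step 4 is the same complementary-minor argument (via $E_I=H_I^{-1}$ and Macdonald's (2.9)) that the paper uses to pass from the $H$-form to the $E$-form. The gap is in Step 2: the identity you assert there is false, and Step 3 depends on it.

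Already for $n=2$ the fundamental equation gives $x_a^{q^2}=E_1(V)\,x_a^{q}-E_2(V)\,x_a$. So the coefficient of $x_a$ is $-E_2(V)$, which has degree $q^2-1$. Your formula puts $\varphi^{t}H_2(V)$ there, and degrees force $t=-1$. But the $(-1,1)$ entry of the identity $H_IE_I=1$ gives $H_2=E_1\,\varphi(E_1)-\varphi(E_2)$. Hence $\varphi^{-1}H_2=\varphi^{-1}(E_1)\,E_1-E_2\neq -E_2$, since $E_1\neq 0$. In general, Cramer's rule shows that the coefficient of $x_a^{q^{n-1-j}}$ in the expansion of $x_a^{q^m}$ ($m\ge n$) is $(-1)^jS_{(m-n+1,1^j)}(V)$. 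This is a hook Schur function, and it is a single $H$ only for $j=0$.

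The justification fails exactly at the identification with $H_I$. Put $w_a=(x_a^{q^{s+n}})_{s\in I}$ with $I=[-n,K]$. The recurrence of Step 1 says that $w_aE_I$ vanishes in the columns $s\ge 0$, but in the columns $l\in[-n,-1]$ it equals
\begin{equation*}
c_l(a)=\sum_{s=-n}^{l}(-1)^{l-s}\,\varphi^{l}E_{l-s}(V)\,x_a^{q^{s+n}},
\end{equation*}
which is nonzero (e.g.\ $c_{-n}(a)=x_a$). Multiplying by $H_I=E_I^{-1}$ (\cref{thm:H_is_E_inverse}) gives $x_a^{q^{s+n}}=\sum_{l=-n}^{-1}c_l(a)\,\varphi^{l+1}H_{s-l}(V)$. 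The matrix $(c_l(a))_{a,l}$ is the Moore matrix times the unitriangular block $T$ of $E_I$ on $[-n,-1]$. So the correct factorization is $M_N=M^{(n)}\cdot T\cdot\mathcal{H}$ (up to ordering), where $\mathcal{H}$ consists of the rows $-1,\dots,-n$ of $H_I$, not $M_N=M^{(n)}\cdot\mathcal{H}$.

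The repair is cheap, because $\det T=1$. Selecting the columns $s=\lambda_i-i$ still gives $A_{\lambda+\delta_n}=A_{\delta_n}\det(\varphi^{1-j}H_{\lambda_i-i+j}(V))$; the two order reversals contribute cancelling signs. The twists $\varphi^{1-j}$ then come out automatically from the row twists $\varphi^{l+1}$ of $H_I$ at $l=-j$.

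This matters, because the normalisation you propose in Step 3 is not valid. Applying $\varphi^t$ to a single column does not factor out of a determinant; only a uniform twist of every entry yields $\varphi^t$ of the determinant. So wrong twists could not be fixed that way.

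For comparison, the paper proves the $H$-formula without the recurrence and without \cref{thm:H_is_E_inverse}. Laplace expansion of $A_{(r)+\delta_n}$ along its first column gives $H_r=\sum_k u_k\,\varphi^{n+r-1}(x_k)$ with $u_k$ independent of $r$. Hence $(\varphi^{1-j}H_{\alpha_i-n+j})=(\varphi^{\alpha_i}x_k)\cdot(\varphi^{1-j}u_k)$. Only the determinant of the right factor is needed, and it is read off from the case $\alpha=\delta_n$, where the left side is unitriangular. That normalisation silently absorbs the factor $T$ your version overlooked.
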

\begin{proof}
    We recall the proof in \cite{macdonald_schur_1992} for completeness. By definition, for any $r\geq 0$, using the Laplace expansion for the first column of $A_{(r)+\delta_n}$, we see that $H_r(V)$ is of the form
    \begin{equation}\label{eq:H_r}
        H_r(V) = \sum\limits_{i=1}^{n}u_o\cdot \varphi^{n-r+1}(x_i),
    \end{equation}
    where the coefficients $u_i$ are rational functions that are independent of $r$. Therefore, if $\alpha = (\alpha_1 > \ldots > \alpha_n)\in \mathbb{Z}_{\geq 0}^n$, 
    \begin{equation*}
        \varphi^{1-j}(H_{\alpha_i-n+j}) = \sum\limits_{k=1}^{n}\varphi^{\alpha_i}(x_k)\varphi^{1-j}(u_k), \text{ for all } i, j\in \{1, \ldots, n\}.
    \end{equation*}
    Consequently,
    \begin{equation*}
        \left(\varphi^{1-j}(H_{\alpha_i-n+j})\right)_{1\leq i, j\leq n} = \left(\varphi^{\alpha_i}(x_k)\right)_{1\leq i, k\leq n}\cdot\left(\varphi^{1-j}(u_k)\right)_{1\leq k, j\leq n}. 
    \end{equation*}
    By taking determinants of both sides, we have
    \begin{equation*}
        \det\left(\varphi^{1-j}(H_{\alpha_i-n+j})\right)_{1\leq i, j\leq n} = A_{\alpha}\det\left(\varphi^{1-j}(u_k)\right)_{1\leq k, j\leq n}.
    \end{equation*}
    If $\alpha = \delta_n$, then for all $i, j$, $\alpha_i -n+j = n-i-n+j = j-i$, thus the left-hand side of the above equation becomes the determinant of an upper-triangular matrix with $1$'s on the diagonal, which is $1$. Therefore, $\det\left(\varphi^{1-j}(u_k)\right)_{1\leq k, j\leq n} = \dfrac{1}{A_{\delta_n}}$. Finally, taking $\alpha = \lambda+\delta_n$ gives the Jacobi-Trudi formula (expressing $S_{\lambda}$ in terms of $H_{r}$), and the second formula is deduced from the first \cref{thm:H_is_E_inverse} (see \cite[Chapter 1, (2.9)]{macdonald_symmetric_1979}.)
\end{proof}
In particular, \cref{thm:jacobi_trudi} shows that $S_{\lambda}(V)$ is an $n\times n$ minor of $H(V)$ having row indices $\{-1, \ldots, -n\}$, column indices $\{\lambda_1 - 1, \ldots, \lambda_n - n\}$. In a similar fashion, one can define the skew Schur functions for $\mathbb{F}_q$ as follows.
\begin{definition}[{\cite[(7.11), (7.11'), and (7.12)]{macdonald_schur_1992}}]\label{def:skew_schur}
    If $\lambda$ and $\mu$ are partitions of length at most $n$, define
    \begin{align*}
     S_{\lambda/\mu}(V) &= \det(\varphi^{\mu_j-j+1}H_{\lambda_i - \mu_j - i+j}(V))_{i, j\in \{1, \ldots, n\}}.   
    \end{align*}
    It turns out that for $m = \max\{\lambda_1, \mu_1\}$,
    \begin{equation*}
        S_{\lambda/\mu}(V) = \det(\varphi^{-\mu_j+j-1}E_{\lambda'_i - \mu'_j - i+j}(V))_{i, j\in \{1, \ldots, m\}}.
    \end{equation*}
    Moreover, $S_{\lambda/\mu} = 0$ unless $0\leq \lambda'_i -\mu'_i \leq n$, for all $1\leq i\leq \max\{\lambda_1, \mu_1\}$.
\end{definition}
From \cref{def:skew_schur}, we see that for partitions $\lambda, \mu$ of length at most $n$, $S_{\lambda/\mu}(V)$ is an $n\times n$ minor of $H(V)$ having row indices $\{\mu_1-1, \ldots, \mu_n-n\}$, column indices $\{\lambda_1 - 1, \ldots, \lambda_n - n\}$.
\begin{remark}
    When $\lambda_i\geq \mu_i$ for all $i\in \{1, \ldots, n\}$ (equivalently, $\lambda'_i\geq \mu'_i$ for all $1\leq i\leq \max\{\lambda_1, \mu_1\}$), we say that $\mu$ is a subpartition of $\lambda$, denoted by $\mu\subseteq \lambda$. Visually, the Young diagram of $\mu$ is contained in the Young diagram of $\lambda$.
\end{remark}
The class of Schur polynomials in the Dickson algebra also gives rise to an $\mathbb{F}_q$-basis of $\mathcal{D}_n$ as follows.
\begin{proposition}[\cite{stanton_another_2015}]\label{prop:stanton_basis}
    An $\mathbb{F}_q$-basis of $\mathcal{D}_n$ is given by
    \begin{equation*}
        \mathcal{B} =\left\{ S_\lambda(V) \cdot \prod\limits_{i=1}^{n}E_i(V)^{a_i} \mid 0\leq a_i\leq q^{\lambda_i} - 1, \text{ for all } i\in \{1,\ldots, n\} \right\}.
    \end{equation*}
\end{proposition}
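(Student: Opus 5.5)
The plan is a leading-term argument. We compare the leading monomials of the elements of $\mathcal{B}$ with the leading monomials of the monomial basis $\{\prod_i E_i(V)^{c_i}\}$ of $\mathcal{D}_n = \mathbb{F}_q[E_1(V),\ldots,E_n(V)]$ given by \cref{thm:dickson}. Fix the lexicographic order on $\mathbb{F}_q[x_1,\ldots,x_n]$ with $x_1>\cdots>x_n$. The general principle is standard. Suppose a family of homogeneous elements of a graded subalgebra $D$ has pairwise distinct leading monomials, and these are exactly the leading monomials of $D$, each attained once. Then the family is an $\mathbb{F}_q$-basis of $D$. Independence follows from distinctness of the leading monomials, and spanning follows by subtracting leading terms and inducting inside each homogeneous component, which is finite dimensional.

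\textbf{Step 1: leading monomials.} For a decreasing sequence $\alpha$, the leading monomial of $A_\alpha$ is the diagonal term $\prod_j x_j^{q^{\alpha_j}}$. Hence
\begin{equation*}
\operatorname{lm}(S_\lambda(V))=\prod_j x_j^{(q^{\lambda_j}-1)q^{n-j}}.
\end{equation*}
Since $E_i(V)=S_{(1^i)}(V)$, this gives
\begin{equation*}
\operatorname{lm}(E_i(V))=\prod_{j\le i}x_j^{(q-1)q^{n-j}}.
\end{equation*}
Leading monomials are multiplicative. So if we write the exponent of $x_j$ as $(q-1)q^{n-j}\,t_j$, then:
\begin{itemize}
\item for $\prod_i E_i^{c_i}$ we get $t_j=\sum_{i\ge j}c_i$, and $c\mapsto t$ is a bijection onto nonincreasing sequences $t_1\ge\cdots\ge t_n\ge 0$;
\item for $S_\lambda\prod_i E_i^{a_i}$ we get $t_j=[\lambda_j]_q+\sum_{i\ge j}a_i$, where $[k]_q=(q^k-1)/(q-1)$.
\end{itemize}
The leading monomials of $\mathcal{D}_n$ are exactly those of the monomials in the $E_i$, because these are already pairwise distinct.

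\textbf{Step 2: a bijection.} The statement now reduces to a purely combinatorial claim. The map $(\lambda,a)\mapsto t$, with $0\le a_i\le q^{\lambda_i}-1$, should be a bijection onto nonincreasing sequences $t\in\mathbb{Z}_{\ge0}^n$.

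The case $n=1$ shows the mechanism. For fixed $\lambda$ the values $t=[\lambda]_q+a$ with $0\le a\le q^\lambda-1$ fill exactly the interval $\bigl[[\lambda]_q,[\lambda+1]_q-1\bigr]$, and these intervals partition $\mathbb{Z}_{\ge 0}$.

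For general $n$ I would construct the inverse greedily from the bottom index upward. Given $t$, the value $t_n$ determines $\lambda_n$ and $a_n$ exactly as in the case $n=1$. Then, for $j=n-1,\ldots,1$, set $t'_j=t_j-\sum_{i>j}a_i$, and choose $\lambda_j$ to be the unique $k$ with $[k]_q\le t'_j<[k+1]_q$, and $a_j=t'_j-[\lambda_j]_q$.

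\textbf{The main obstacle.} The hard part is checking that this greedy inverse lands in partitions, that is $\lambda_j\ge\lambda_{j+1}$, and that it is forced, that is, that no other admissible $(\lambda,a)$ produces the same $t$. The key inequality should be the following. From $t_j\ge t_{j+1}$ and $a_{j+1}\le q^{\lambda_{j+1}}-1$ we get
\begin{equation*}
t'_j\ \ge\ [\lambda_{j+1}]_q+a_{j+1}-a_{j+1}\ \ge\ [\lambda_{j+1}]_q,
\end{equation*}
which gives $\lambda_j\ge\lambda_{j+1}$. Uniqueness comes from the interval partition at each stage.

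If the bijection fails with the lex order, I would try the reverse index order, or argue via Hilbert series. In that fallback one checks that
\begin{equation*}
\sum_\lambda t^{\deg S_\lambda}\prod_i\frac{1-t^{q^{\lambda_i}\deg E_i}}{1-t^{\deg E_i}}
\end{equation*}
equals $\prod_i\bigl(1-t^{\deg E_i}\bigr)^{-1}$, using the same interval-partition telescoping, and combines this with the independence argument from Step 1.
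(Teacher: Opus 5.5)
Your proof is correct. Independence is handled the same way as in the paper, but spanning is proved by a different route.

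\textbf{What matches the paper.} The paper also computes the lex-leading monomial of $S_\lambda(V)\prod_i E_i(V)^{a_i}$. It then recovers $(\lambda,a)$ by backward induction from the exponent of $x_n$ upward, which is exactly your injectivity step.

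\textbf{Where you differ.} For spanning, the paper does not compare leading monomials at all. It computes the Hilbert series of $\mathrm{span}\,\mathcal{B}$ and shows it equals that of $\mathcal{D}_n$ by a telescoping sum, first over $\lambda_1$, then over $\lambda_2$, and so on; this is your stated fallback. You instead prove that $(\lambda,a)\mapsto t$ is surjective onto nonincreasing sequences.

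\textbf{Your key inequality closes the argument.} It gives $t'_j\ge t_{j+1}-\sum_{i>j}a_i=t'_{j+1}-a_{j+1}=[\lambda_{j+1}]_q$. This yields both $t'_j\ge 0$ and $\lambda_j\ge\lambda_{j+1}$. The bound $a_{j+1}\le q^{\lambda_{j+1}}-1$ is not actually needed at this point; only $t_j\ge t_{j+1}$ and the definition of $t'$ are used. The fact that the intervals from $[k]_q$ to $[k+1]_q-1$ partition $\mathbb{Z}_{\ge 0}$ then forces $0\le a_j\le q^{\lambda_j}-1$ and gives uniqueness. Combine this with your correct observation that the lex-leading monomials of $\mathcal{D}_n$ are exactly those of the $E$-monomials (by \cref{thm:dickson}). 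Leading-term subtraction inside each finite-dimensional homogeneous component then gives spanning, so the fallback is unnecessary.

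\textbf{What each route buys.} Your route gives more than the paper's.
\begin{itemize}
\item It yields a constructive, triangular algorithm for expanding any Dickson invariant in $\mathcal{B}$.
\item It is a bijective proof of the paper's telescoping identity, since that Hilbert-series equality is just the degree-weighted count of your bijection.
\item It directly proves \cref{prop:proper_definition_of_schur} from the concluding chapter: each lex-leading monomial of $\mathcal{D}_n$ is attained by exactly one element of $\mathcal{B}$.
\end{itemize}
The paper's route needs only a degree-by-degree dimension count once independence is known. So it never has to check that the inverse map produces a partition, at the price of a less transparent generating-function manipulation.
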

\begin{proof}
    If $\lambda$ is a partition of length at most $n$, and $(a_1, \ldots, a_n)$ is a list of nonnegative integers such that $0\leq a_i\leq q^{\lambda_i} - 1, \text{ for all } i\in \{1,\ldots, n\}$, denote $B(\lambda, a_1,\ldots, a_n) = S_\lambda(V) \cdot \prod\limits_{i=1}^{n}E_i(V)^{a_i}$.
    First of all, we prove that the set $\mathcal{B}$ is linearly independent by showing that we can determine the polynomial $B(\lambda, a_1, \ldots, a_n)$ uniquely by its leading monomial (with respect to the lexicographic ordering $x_1 > \cdots > x_n$.) Indeed, suppose that $x_1^{\alpha_1}x_2^{\alpha_2}\cdots x_n^{\alpha_n}$ is the leading monomial of $B(\lambda, a_1, \ldots, a_n)$, then \begin{equation*}
        x_1^{\alpha_1}x_2^{\alpha_2}\cdots x_n^{\alpha_n} = \prod\limits_{i=1}^{n}x_i^{q^{n-i + \lambda_i} - q^{n-i}} \cdot \prod\limits_{i=1}^{n}\prod\limits_{k=1}^{i}x_k^{a_i(q^{n-k+1} - q^{n-k})}.
    \end{equation*} Comparing the power of $x_n$, we have
    \begin{equation*}
        \alpha_n = q^{\lambda_n} - 1 + a_n(q-1) = (q-1)(a_n + \sum\limits_{j=0}^{\lambda_n - 1} q^j).
    \end{equation*}
    Notice that $0\leq a_n\leq q^{\lambda_n}-1$, thus $\lambda_n$ and $a_n$ are uniquely determined by $\alpha_n$. Next, by comparing the power of $x_{i}$ for $1\leq i\leq n$, we have
    \begin{equation*}
        \alpha_i = q^{n-i+\lambda_{i}} - q^{n-i} + \left(\sum\limits_{k = i}^{n} a_k\right)(q^{n-i+1} - q^{n-i}) =q^{n-i}(q-1)\left(\sum\limits_{k = i}^{n} a_k + \sum\limits_{j=0}^{\lambda_i - 1} q^j\right).
    \end{equation*}
    By backward induction, we see that the partition $\lambda$ and the numbers $a_1, \ldots, a_n$ are uniquely determined by $\alpha_1, \ldots, \alpha_n$. Thus $\mathcal{B}$ is linearly independent. Now let $M$ be the $\mathbb{F}_q$-vector space generated by $\mathcal{B}$, we show that $M = \mathcal{D}_n$ by comparing their Hilbert series (because $M\subseteq \mathcal{D}_n$, we have $M=\mathcal{D}_n$ if and only if $Hilb(M, t) = Hilb(\mathcal{D}_n, t)$ as formal power series.) Indeed, we decompose $M$ as follows
    \begin{equation*}
        M = \bigoplus\limits_{\lambda}\mathbb{F}_q\left\langle S_\lambda(V) \cdot \prod\limits_{i=1}^{n}E_i(V)^{a_i} \hspace{0.1cm}|\hspace{0.1cm} 0\leq a_i\leq q^{\lambda_i} - 1, \text{ for all } i\in \{1,\ldots, n\}\right\rangle.
    \end{equation*}
    Each subspace in the above direct sum has Hilbert polynomial
    \begin{equation*}
        t^{\sum\limits_{i=1}^{n}q^{n-i+\lambda_i} - q^{n-i}}\cdot \dfrac{\prod\limits_{i=1}^{n}(1-t^{(q^n - q^{n-i)q^{\lambda_i}}})}{\prod\limits_{i=1}^{n}(1-t^{q^n - q^{n-i}})}=\prod\limits_{i=1}^{n}(t^{q^{n-i + \lambda_i} - q^{n-i}} - t^{q^{n+\lambda_i} - q^{n-i}})\cdot Hilb(\mathcal{D}_n, t).
    \end{equation*}
    Thus, we have to show that \begin{equation*}
        \sum\limits_{\lambda}\prod\limits_{i=1}^{n}(t^{q^{n-i + \lambda_i} - q^{n-i}} - t^{q^{n+\lambda_i} - q^{n-i}})=1.
    \end{equation*}
    The sum on the left-hand side can be easily computed as follows: first, fix the numbers $\lambda_2,\ldots, \lambda_{n}$, we see that the sum is actually a telescoping sum
    \begin{align*}
        &\sum\limits_{\lambda_2\geq \ldots\geq \lambda_n}\prod\limits_{i=2}^{n}(t^{q^{n-i + \lambda_i} - q^{n-i}} - t^{q^{n+\lambda_i} - q^{n-i}})\sum\limits_{\lambda_1 = \lambda_2}^{+\infty}(t^{q^{n-1 + \lambda_1} - q^{n-1}} - t^{q^{n+\lambda_1} - q^{n-1}})\\
      =& \sum\limits_{\lambda_2\geq \ldots\geq \lambda_n}\prod\limits_{i=2}^{n}(t^{q^{n-i + \lambda_i} - q^{n-i}} - t^{q^{n+\lambda_i} - q^{n-i}})\cdot t^{q^{n-1 + \lambda_2} - q^{n-1}}\\
      =&\sum\limits_{\lambda_3\geq \ldots\geq \lambda_n}\prod\limits_{i=3}^{n}(t^{q^{n-i + \lambda_i} - q^{n-i}} - t^{q^{n+\lambda_i} - q^{n-i}})\sum\limits_{\lambda_2 = \lambda_3}^{+\infty}(t^{q^{n-2 + \lambda_2} + q^{n-1 + \lambda_2} - q^{n-2} - q^{n-1}}- \\
      &-t^{q^{n+\lambda_2} + q^{n-1 + \lambda_2}- q^{n-2}- q^{n-1}})\\
      =&\sum\limits_{\lambda_3\geq \ldots\geq \lambda_n}\prod\limits_{i=3}^{n}(t^{q^{n-i + \lambda_i} - q^{n-i}} - t^{q^{n+\lambda_i} - q^{n-i}})\cdot t^{q^{n-1 + \lambda_3} + q^{n-2 + \lambda_3} - q^{n-1} - q^{n-2}}\\
      =&\cdots\\
      =& \sum\limits_{\lambda_n = 0}^{+\infty}\left(t^{\sum\limits_{j=0}^{n-1}(q^{j + \lambda_n} - q^j)} - t^{\sum\limits_{j=1}^{n}(q^{j + \lambda_n} - q^j)}\right)\\
      =& 1.
    \end{align*}
    Consequently, $M = \mathcal{D}_n$.
\end{proof}
\subsection{Stong-Tamagawa Formula for Schur Functions}\label{subsection:stong_tamagawa_gen}\index{Stong-Tamagawa formula}
We know that the Schur polynomials are $GL_n(\mathbb{F}_q)$-invariant, which means that they do not depend on the choice of a basis of $V$; therefore, it is reasonable to ask for a \emph{basis-free} description of $S_{\lambda}(V)$. There are known formulas of this kind for the Dickson invariants, which are a subclass of the Schur polynomials. For example, we always have
\begin{align*}
    Q_{n, 0} &= \prod\limits_{v\in V\backslash\{0\}}v\\
    &=\sum\limits_{v\in V}v^{q^n-1}.
\end{align*}
More generally, there is the Stong-Tamagawa formula for the Dickson invariants, which expresses each Dickson invariant as a sum of products of vectors over the set of complete flags of $V$ (see \cite[Proposition 8.1.3]{benson_1993}, \cite[Chapter 1, Section 2, Example 26]{macdonald_symmetric_1979}, \cite[(7.24)]{macdonald_schur_1992}.) We state and prove a generalization of the Stong-Tamagawa formula for the skew Schur functions, motivated by \cite[Conjecture (7.25)]{macdonald_schur_1992}.
\begin{definition}\label{def:T} If $\lambda$ is a partition, and $V$ is a finite-dimensional vector subspace of $\mathbb{F}_q[V]$, define $T_{\lambda} (V)$ as the following function in $\widehat{S}(V)$: 
\begin{equation*}
T_{\lambda} (V) = \det((-1)^{\lambda_i - i+ j}\varphi^{\lambda_i - i} E_{\lambda_i - i+ j}(V))_{1 \leq i, j \leq \ell (\lambda)} 
\end{equation*}
More generally, if $\mu$ is another partition, we define   
\begin{equation*}
    T_{\lambda/\mu}(V) = \det((-1)^{\lambda_i - \mu_j - i+ j}\varphi^{\lambda_i - i} E_{\lambda_i - \mu_j - i+ j}(V))_{1\leq i, j\leq \max\{\ell(\lambda), \ell(\mu)\}}.   
\end{equation*}
Here, $\ell(\lambda)$ is called the \emph{length}\index{length of a partition} of partition $\lambda$, which is the number of nonzero elements in $\lambda$.
\end{definition}
\begin{remark}
    By direct determinant manipulation, it is not hard to see that
    \begin{equation*}
        T_{\lambda/\mu}(V)=(-1)^{|\lambda|-|\mu|}\det(\varphi^{\lambda_i - i} E_{\lambda_i - \mu_j - i+ j}(V))_{1\leq i, j\leq \max\{\ell(\lambda), \ell(\mu)\}},
    \end{equation*}
    here, the \emph{weight}\index{weight of a partition} $|\lambda|$ of partition $\lambda$ is defined to be the sum of all elements of $\lambda$.
\end{remark}
\begin{lemma}\label{lemma:define_T_as_H}
    We have the following description of $T_{\lambda/\mu}(V)$ in terms of $H_r$: 
\begin{equation}\label{eq: T in H}
T_{\lambda/\mu} (V) = (-1)^{|\lambda|+|\mu|}\det (\varphi^{-\lambda_i' +i} H_{\lambda_i' - \mu_j' - i +j}(V))_{1\leq i, j\leq \max\{\lambda_1, \mu_1\}}. 
\end{equation}
\end{lemma}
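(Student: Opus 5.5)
The identity will come from Jacobi's complementary minor theorem applied to the mutually inverse matrices of \cref{thm:H_is_E_inverse}, exactly as the classical Jacobi--Trudi and Nägelsbach--Kostka formulas are related. The work is to match the Frobenius twists and signs.

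First I rewrite $T_{\lambda/\mu}(V)$ as a minor of the infinite matrix $E(V)=((-1)^{j-i}\varphi^jE_{j-i}(V))$. Put $N=\max\{\ell(\lambda),\ell(\mu)\}$ and $M=\max\{\lambda_1,\mu_1\}$. The $(i,j)$ entry of the defining matrix is $(-1)^{\lambda_i-\mu_j-i+j}\varphi^{\lambda_i-i}E_{\lambda_i-\mu_j-i+j}(V)$. Reindexing rows by $a_i=\mu_i-i$ and columns by $b_j=\lambda_j-j$ and transposing, each entry has the shape $(-1)^{b-a}\varphi^{b}E_{b-a}$, which is the $(a,b)$ entry of $E(V)$. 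Hence
\begin{equation*}
T_{\lambda/\mu}(V)=\det E_I(V)\big|_{\text{rows }\{\mu_j-j\},\ \text{cols }\{\lambda_i-i\}},
\end{equation*}
taken inside a large enough finite interval $I$, say $I=[-N,\,M-1]$, which has $N+M$ elements. It is upper triangular with $\varphi^j E_0=1$ on the diagonal, so $\det E_I=1$.

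Next I use the standard fact that $\{\lambda_i-i\}_{1\le i\le N}$ and $\{j-1-\lambda'_j\}_{1\le j\le M}$ partition $I$ (Macdonald, Ch.~I (1.7)), and the same holds for $\mu$. Jacobi's theorem, together with $\det E_I=1$ and $H_I=E_I^{-1}$ from \cref{thm:H_is_E_inverse}, gives
\begin{equation*}
\det(E_I)_{R,C}=\pm\det(H_I)_{I\setminus C,\,I\setminus R}.
\end{equation*}
Here the rows of the $H$-minor are indexed by $\{j-1-\lambda'_j\}$ and the columns by $\{j-1-\mu'_j\}$. The $(a,b)$ entry of $H_I$ is $\varphi^{a+1}H_{b-a}$. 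With $a=i-1-\lambda'_i$ and $b=j-1-\mu'_j$, this entry is $\varphi^{i-\lambda'_i}H_{\lambda'_i-\mu'_j-i+j}(V)$, which is exactly the entry in \eqref{eq: T in H}. The increasing order of the index sets reverses relative to $i$, but reversing both rows and columns does not change the determinant.

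The main obstacle is the sign. Jacobi's theorem contributes $(-1)^{\sum R+\sum C}$ (with sums measured as positions in $I$), and there may also be a transpose or order reversal to account for. I would show that $\sum_i(\lambda_i-i)+\sum_j(\mu_j-j)$ differs from $|\lambda|+|\mu|$ by an even quantity (namely $2\sum_{i\le N} i$), which yields $(-1)^{|\lambda|+|\mu|}$. As a safeguard, I would check the case $\mu=\emptyset$, $\lambda=(1)$: there $T=-E_1$ and the right side is $-\varphi^{0}H_1$. Since $E_1=H_1$ follows from $EH=I$, the two sides agree. The check $\lambda=(1,1)$ would confirm the normalization.
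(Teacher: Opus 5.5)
Your proposal is correct and follows the paper's proof. Both arguments realize $T_{\lambda/\mu}(V)$ as the minor of the unitriangular matrix $E_I(V)$ with rows $\{\mu_i-i\}$ and columns $\{\lambda_i-i\}$, use that $\{\lambda_i-i\}$ and $\{j-1-\lambda'_j\}$ partition $I$, and pass to the complementary minor of $H_I(V)=E_I(V)^{-1}$; this is Jacobi's complementary-minor theorem, which the paper uses without naming it.

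Your explicit parity count giving $(-1)^{|\lambda|+|\mu|}$ supplies the sign that the paper only asserts. There is one harmless slip: $i-1-\lambda'_i$ is already increasing in $i$, so the order reversal you need happens on the $E$-side, where $\lambda_i-i$ and $\mu_i-i$ decrease.
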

Note that $T_{\lambda/\mu}(V)$ is of the same degree as that of $S_{\lambda/\mu}(V)$, which is $\sum (q^{n+\lambda_i -i} - q^{n + \mu_i -i})$, but it is not always a polynomial.
\begin{proof}
The proof is similar to the classical case, using \cref{thm:H_is_E_inverse}. For completeness, we will provide the proof here. Let $\lambda$ and $\mu$ be two partitions of length $\leq p$ such that $\lambda'$ and $\mu'$ have length $\leq q$. It is known \cite[Chapter 1]{macdonald_symmetric_1979} that $(-1+i-\lambda'_i)_{1\leq i\leq q}, (\lambda_i-i)_{1\leq i\leq p}$ is a partition of $I=\{-p, \ldots, -1, 0, 1, \ldots, q-1\}$, similarly, $(-1+i-\mu'_i)_{1\leq i\leq q}, (\mu_i-i)_{1\leq i\leq p}$ is a partition of $I$. From the definition, $T_{\lambda/\mu}(V)$ is the $p\times p$ minor of $E_I(V)$ with row indices $(\mu_i-i)_{1\leq i\leq p}$, column indices $(\lambda_i-i)_{1\leq i\leq p}$, which is equal to $(-1)^{|\lambda|+|\mu|}$ times the $q\times q$ minor of $H_I(V)$ with row indices $(-1+i-\lambda'_i)_{1\leq i\leq q}$, column indices $(-1+i-\mu'_i)_{1\leq i\leq q}$, therefore,\begin{equation*}
    T_{\lambda/\mu}(V) = (-1)^{|\lambda|+|\mu|}\cdot\det (\varphi^{-\lambda_i' +i} H_{\lambda_i' - \mu_j' - i +j}(V)).\qedhere 
\end{equation*}
\end{proof} 
\begin{lemma}\label{lemma:T_strips}
   $T_{\lambda/\mu}(V) = 0$ unless $0 \leq \lambda_i - \mu_i \leq \dim (V)$ for all $i$. In particular, if $T_{\lambda/\mu}(V) \neq 0$, then $\mu \subseteq \lambda$.
\end{lemma}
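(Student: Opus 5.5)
The plan is to imitate the classical argument that a skew Schur function vanishes unless $\mu\subseteq\lambda$, working directly with the $E$-determinant in \cref{def:T}. Write $N=\max\{\ell(\lambda),\ell(\mu)\}$, $n=\dim(V)$, and set
\begin{equation*}
M=\bigl(\varphi^{\lambda_i-i}E_{\lambda_i-\mu_j-i+j}(V)\bigr)_{1\leq i,j\leq N}.
\end{equation*}
By the Remark after \cref{def:T}, $T_{\lambda/\mu}(V)=\pm\det M$. The only property of $E_k(V)$ needed is the convention $E_k(V)=0$ for $k<0$ or $k>n$. The Frobenius twists $\varphi^{\lambda_i-i}$ act row by row, so they do not affect which entries vanish.

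\textbf{Step 1: the condition $\lambda_i\geq\mu_i$.} Suppose $\lambda_k<\mu_k$ for some $k$. For $i\geq k$ and $j\leq k$ we have $\lambda_i\leq\lambda_k<\mu_k\leq\mu_j$ and $j-i\leq 0$, so $\lambda_i-\mu_j-i+j<0$ and $M_{ij}=0$. Thus $M$ has a zero block in rows $k,\ldots,N$ and columns $1,\ldots,k$. Its size is $(N-k+1)\times k$, and $(N-k+1)+k=N+1>N$. By the standard block-rank argument (Frobenius–König), $\det M=0$. Equivalently, the first $k$ columns are supported in only $k-1$ rows, so they are linearly dependent.

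\textbf{Step 2: the condition $\lambda_i-\mu_i\leq n$.} Suppose $\lambda_k-\mu_k>n$ for some $k$. For $i\leq k\leq j$ we have $\lambda_i\geq\lambda_k$, $\mu_j\leq\mu_k$ and $j-i\geq 0$, so $\lambda_i-\mu_j-i+j\geq\lambda_k-\mu_k>n$ and $M_{ij}=0$. This gives a zero block in rows $1,\ldots,k$ and columns $k,\ldots,N$, of total dimension $k+(N-k+1)=N+1$. Again $\det M=0$.

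Combining the two steps gives the lemma, and the last assertion follows immediately. One could instead use the $H$-description of \cref{lemma:define_T_as_H}, but it only gives the inclusion condition via conjugate partitions. The $E$-form handles both bounds at once, so I will use it.

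The only point needing care is the bookkeeping when $i$ or $k$ exceeds $\ell(\lambda)$ or $\ell(\mu)$, where the corresponding parts are zero. The inequalities above use only monotonicity of the parts, so they remain valid with zero parts, and for $k>N$ the conditions hold trivially.
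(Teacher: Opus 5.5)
Your proof is correct and follows the same route as the paper: a zero lower-left block of size $(N-k+1)\times k$ in the $E$-determinant when $\lambda_k<\mu_k$, and the analogous zero upper-right block when $\lambda_k-\mu_k>\dim(V)$, each forcing the determinant to vanish. You are slightly more careful than the paper, which uses $n$ both for the matrix size and for $\dim(V)$ and leaves the second case as similar, whereas you separate $N$ from $\dim(V)$ and write out both blocks.
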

\begin{proof}
    If there exists an index $k$, $1 \leq k \leq n$, such that $\lambda_k < \mu_k$, then
    \begin{equation*}
   \lambda_i - \mu_j -i + j < 0 \quad \text{for all $i \geq k \geq j$}.   
    \end{equation*}
    So the entire block of size $(n-k+1) \times k$ in the lower left corner of the matrix defining $T_{\lambda/\mu}(V)$ equals zero. This implies that the determinant of this matrix must be zero. Similar argument applies to the case $\lambda_k - \mu_k > n$ for some $k$.  
\end{proof}
It is natural to ask whether $T_{\lambda/\mu}(V)$ also has a description in terms of the quotient of determinants similar to that of the Schur function, at least when $\lambda$ or $\mu$ is some special partition. The answer is yes, but more complicated. 
 \begin{theorem}\label{thm:T_as_determinant} 
 Suppose that $\lambda = (n^m)$, and $\mu\subseteq \lambda$. We have
 \begin{equation*}
 T_{\lambda/\mu} (V) = (-1)^{nm + |\mu|} \; \frac{\det ( x_i^{q^{n+j-\mu_j' -1}})_{1\leq i, j\leq n}}{\det (x_i^{q^{n+j-m -1}})_{1\leq i, j\leq n}}. 
 \end{equation*}  
 \end{theorem}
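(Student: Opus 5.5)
The plan is to turn $T_{\lambda/\mu}(V)$ into a determinant in the $H_r(V)$ using \cref{lemma:define_T_as_H}, and then to factor the resulting matrix with the linear-combination description of $H_r$ from the proof of \cref{thm:jacobi_trudi}. One factor will give the numerator of the claimed formula; the other is independent of $\mu$ and will be identified by taking $\mu=\lambda$.

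\textbf{Step 1: rewrite in terms of $H$.} Since $\lambda=(n^m)$, we have $\lambda'=(m^n)$, so $\lambda'_i=m$ for $1\le i\le n$. Since $\mu\subseteq\lambda$, we have $\max\{\lambda_1,\mu_1\}=n$ and $\mu'_j\le m$. \cref{lemma:define_T_as_H} then gives
\begin{equation*}
T_{\lambda/\mu}(V)=(-1)^{nm+|\mu|}\det\bigl(\varphi^{i-m}H_{m-\mu'_j-i+j}(V)\bigr)_{1\le i,j\le n}.
\end{equation*}
The index $r=m-\mu'_j-i+j$ always satisfies $r\ge j-i\ge 1-n$.

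\textbf{Step 2: the linear formula for $H_r$.} Expanding $A_{(r)+\delta_n}$ along its first column, as in the proof of \cref{thm:jacobi_trudi}, gives rational functions $u_1,\dots,u_n$, independent of $r$, with
\begin{equation*}
H_r(V)=\sum_{k=1}^n u_k\,x_k^{q^{n-1+r}} \quad\text{for all } r\ge 0 .
\end{equation*}
I will check that the same identity holds for $1-n\le r\le -1$, with left side $0$ by convention. For such $r$ the right side is a Laplace expansion of a determinant whose first column $x_k^{q^{n-1+r}}$ coincides with one of the columns $x_k^{q^{n-j}}$ of $A_{\delta_n}$. That determinant therefore vanishes, as required. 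Checking this range carefully, and getting the orientation of the indices right, is the main point needing care. It is needed because Step 1 uses $H_r$ down to $r=1-n$, and the formula is false for $r\le -n$.

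\textbf{Step 3: factor and compare.} Substituting the formula and applying $\varphi^{i-m}$, the exponent becomes
\begin{equation*}
q^{\,i-m+n-1+m-\mu'_j-i+j}=q^{\,n+j-\mu'_j-1},
\end{equation*}
which is independent of $i$. Hence
\begin{equation*}
\bigl(\varphi^{i-m}H_{m-\mu'_j-i+j}\bigr)_{i,j}=\bigl(\varphi^{i-m}(u_k)\bigr)_{i,k}\cdot\bigl(x_k^{q^{n+j-\mu'_j-1}}\bigr)_{k,j},
\end{equation*}
and taking determinants,
\begin{equation*}
T_{\lambda/\mu}(V)=(-1)^{nm+|\mu|}\,D\cdot\det\bigl(x_i^{q^{n+j-\mu'_j-1}}\bigr),\qquad D=\det\bigl(\varphi^{i-m}(u_k)\bigr),
\end{equation*}
with $D$ independent of $\mu$. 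All entries lie in $\widehat S(V)$; exponents may be negative powers of $q$ when $m>n$, which is harmless there.

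To identify $D$, take $\mu=\lambda$. Then $T_{\lambda/\lambda}(V)=1$: the defining matrix is upper unitriangular, since its diagonal entries are $E_0=1$ and the entries below the diagonal involve $E$ of negative index. The sign is $(-1)^{2nm}=1$, and $\mu'_j=m$. This gives
\begin{equation*}
1=D\cdot\det\bigl(x_i^{q^{n+j-m-1}}\bigr).
\end{equation*}
The last determinant equals $\varphi^{-m}$ of a Moore-type determinant with exponents $q^{n+j-1}$. By the proof of \cref{lemma:f_W} that is a nonzero product of linear forms, so the determinant is nonzero. Solving for $D$ and substituting back yields the stated formula.
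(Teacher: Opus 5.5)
Your proof is correct and follows the paper's argument. You rewrite $T_{\lambda/\mu}(V)$ via \cref{lemma:define_T_as_H}, factor the matrix using $H_r=\sum_k u_k x_k^{q^{n-1+r}}$ for $r\ge 1-n$, and normalize with $\mu=\lambda$; your explicit check that this expansion still holds for $1-n\le r\le -1$, where the paper only asserts it, is a worthwhile addition.
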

\begin{proof}
 We adapt the strategy of the proof of the fundamental equation relating $E(V)$ and $H(V)$. For any $r\geq 1-n$, expanding the determinant $A_{(r) + \delta_{n}}$ down the first column, $H_r (V)$ can be written in the form
 \begin{equation*}
 H_r = \sum_{k=1}^n u_k \varphi^{n+r-1} (x_k), 
 \end{equation*}
 where the coefficients $u_k$ are rational functions independent of $r$. From equation \eqref{eq: T in H}, notice that $\lambda'_i - \mu'_j - i + j = m-\mu'_j-i+j\geq j-i\geq 1-n$ for all $i$, $j$, we have
 \begin{align*}
   T_{\lambda/\mu}(V) =& (-1)^{nm + |\mu|}\det\left( \sum_{k=1}^n \varphi^{-m +i} (u_k)  \varphi^{-m +i+ n+m - \mu_j' - i +j-1} (x_k)\right)\\
   =&  (-1)^{nm + |\mu|}\det\left(\sum_{k=1}^n \varphi^{-m +i} (u_k))   (\varphi^{n - \mu_j'+j-1} (x_k)\right)\\
   =& (-1)^{nm + |\mu|}\det (\varphi^{-m +i} (u_j)) \times \det (\varphi^{n - \mu_j'+j-1} (x_i)).
 \end{align*}
In particular, if $\mu = \lambda$, then 
\begin{equation*}
1 = \det ( \varphi^{-m +i} (u_j)) \times \det (\varphi^{n - m+j-1} (x_i)). 
\end{equation*}
The result follows immediately. 
\end{proof} 
Let $\mu$ be a subpartition of $\lambda$. We say that $\lambda/\mu$ is a \emph{vertical strip}\index{vertical strip} if no two boxes in the skew shape $\lambda/\mu$ are in the same row.
\begin{lemma}\label{vertical_strip} 
    If $\dim V = 1$, then 
    \begin{equation*}
    T_{\lambda/\mu}(V) =
    \begin{cases}
   (-1)^{|\lambda/\mu|} \prod_{s \in \lambda/\mu} \varphi^{c(s)} E_1(V) & \text{if $\lambda/\mu$ is a vertical strip;}\\
   0 & \text{otherwise.} 
    \end{cases}
    \end{equation*}
\end{lemma}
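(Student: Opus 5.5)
The plan is to compute the determinant in \cref{def:T} directly, using the fact that for $\dim V = 1$ almost all entries vanish. Write $V = span_{\mathbb{F}_q}\{x\}$. Then $E_0(V) = 1$ and $E_1(V) = Q_{1,0} = x^{q-1}$, while $E_k(V) = 0$ for $k\notin\{0,1\}$. So the $(i,j)$ entry $(-1)^{\lambda_i-\mu_j-i+j}\varphi^{\lambda_i-i}E_{\lambda_i-\mu_j-i+j}(V)$ is nonzero only when the index $d_{ij} := \lambda_i-\mu_j-i+j$ equals $0$ (entry $1$) or $1$ (entry $-\varphi^{\lambda_i-i}E_1(V)$).

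\emph{Vanishing case.} If $\lambda/\mu$ is not a vertical strip, then either $\mu\not\subseteq\lambda$ or $\lambda_k-\mu_k\ge 2 > \dim V$ for some $k$. In both cases \cref{lemma:T_strips} gives $T_{\lambda/\mu}(V) = 0$.

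\emph{Vertical strip case.} Now assume $\mu\subseteq\lambda$ and $0\le\lambda_i-\mu_i\le 1$ for all $i$. I would first locate the nonzero entries, then evaluate the resulting tridiagonal determinant.

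\begin{enumerate}
\item \emph{Far above the diagonal.} For $j\ge i+2$ we have $\mu_j\le\lambda_j\le\lambda_i$, so $d_{ij}\ge j-i\ge 2$ and the entry is zero.
\item \emph{Far below the diagonal.} For $j<i$, the strip condition gives $\lambda_i\le\lambda_{j+1}\le\lambda_j\le\mu_j+1$, so $d_{ij}\le 1+j-i$. Hence entries with $j\le i-2$ vanish. On the subdiagonal $j=i-1$ the entry is nonzero only if $d_{i,i-1}=0$.
\item \emph{So the matrix is tridiagonal.}
\item \emph{Adjacent off-diagonal pairs.} Next I show that $a_{i,i+1}a_{i+1,i}=0$ for every $i$. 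If $a_{i,i+1}\ne 0$, then $d_{i,i+1}=1$, which forces $\lambda_i=\mu_{i+1}$. Since $\mu_{i+1}\le\lambda_{i+1}\le\lambda_i$, this gives $\lambda_{i+1}=\mu_{i+1}$. Then $d_{i+1,i}=\mu_{i+1}-\mu_i-1\le -1$, so $a_{i+1,i}=0$.
\item \emph{Evaluate.} The continuant recurrence $D_k = a_{kk}D_{k-1}-a_{k,k-1}a_{k-1,k}D_{k-2}$ for tridiagonal determinants then collapses to the product of the diagonal entries.
\end{enumerate}

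\emph{Diagonal entries.} The diagonal entry in row $i$ has $d_{ii}=\lambda_i-\mu_i$. It equals $1$ if row $i$ contains no box of $\lambda/\mu$. It equals $-\varphi^{\lambda_i-i}E_1(V)$ if row $i$ contains exactly one box. In the latter case that box $s$ sits at position $(i,\lambda_i)$, so its content is $c(s)=\lambda_i-i$. Multiplying over all rows gives
\begin{equation*}
T_{\lambda/\mu}(V) = (-1)^{|\lambda/\mu|}\prod_{s\in\lambda/\mu}\varphi^{c(s)}E_1(V).
\end{equation*}

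The only delicate point is the sign and inequality bookkeeping showing the off-diagonal contributions die. In particular, one must check that a superdiagonal $E_1$-entry and a subdiagonal $1$-entry cannot occur at the same adjacent pair, and the argument in that step above handles this. Everything else is routine.
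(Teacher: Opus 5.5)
Your proof is correct and follows essentially the same route as the paper: dispose of the non-strip case via \cref{lemma:T_strips} (which the paper uses implicitly), show the matrix is tridiagonal in the vertical-strip case, and reduce the determinant to the product of the diagonal entries, with $-\varphi^{\lambda_i-i}E_1(V)$ in each row containing a box. The only difference is the last step: the paper deletes row and column $i$ whenever $\lambda_i=\mu_i$ to reach a lower-triangular matrix, while you verify $a_{i,i+1}a_{i+1,i}=0$ and apply the continuant recurrence. Your version makes explicit a check that the paper's deletion step leaves unstated.
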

Here $c(s)$ is the content of $s$. That is, $c(s) = j-i$ if $s = (i,j)$. 
\begin{proof}
 Let us analyze the case $0 \leq \lambda_i - \mu_i \leq 1$ for all $1 \leq i \leq n$. If $\lambda/\mu$ is a vertical strip, then  $\lambda_i -\mu_j -i + j \leq -1$ for all $i \geq j+2$ and $\lambda_i -\mu_j -i + j \geq 2$ for all $j \geq i+2$. Hence the matrix under consideration is a tridiagonal one.  
 
If $\lambda_i - \mu_i = 0$ for some $i$, then the $(i,i-1)$ entry in this matrix is zero, and the determinant does not change if the row $i$th and column $i$th are removed. Repeating this process if necessary, we obtain a lower-triangular matrix in which the diagonal entries are exactly $- \varphi^{\lambda_i - i} E_1(V)$ in row $i$ where $\lambda_i - \mu_i = 1$. On the other hand, it is clear that 
\begin{equation*}
\lambda/\mu = \{(i, \lambda_i) \colon \lambda_i - \mu_i = 1, 1 \leq i \leq m\}. 
\end{equation*}
 The proof is finished.   
\end{proof}
\begin{proposition}\label{new_decompose}
Let $U$ be a subspace of $V$. Then for all partitions $\lambda, \mu$, we have 
\begin{equation}\label{eq: SST}
    S_{\lambda/\mu}(V/U) = \sum\limits_{\mu \subseteq \nu\subseteq \lambda} S_{\nu/\mu}(V)\cdot \varphi^{\dim(V/U)}T_{\lambda/\nu}(U), \quad \text{and}  
\end{equation}
\begin{equation}\label{eq: TST}
    T_{\lambda/\mu}(V/U) = \sum\limits_{\mu \subseteq \nu\subseteq \lambda} \varphi^{\dim(V/U)} S_{\nu/
    \mu}(U) \cdot T_{\lambda/\nu}(V). 
\end{equation}
\end{proposition}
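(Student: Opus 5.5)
The plan is to read both identities as Cauchy--Binet expansions of minors of matrix products, using the matrix factorisation in \cref{thm:H_is_E_inverse}:
\begin{equation*}
H_I(V/U) = H_I(V)\cdot \varphi^{d}(E_I(U)), \qquad d=\dim(V)-\dim(U),
\end{equation*}
for a sufficiently large finite interval $I$. Choosing $I=\{-p,\ldots,q-1\}$ large enough to contain all the indices $\lambda_i-i$, $\mu_i-i$ that occur does no harm. The reason is that $H(V)$ and $E(U)$ are upper triangular infinite matrices, so every minor involving indices in $I$ can be computed inside $I$.

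First, recall the minor descriptions already recorded. By \cref{def:skew_schur} and the remark after it, $S_{\lambda/\mu}(V)$ is the minor of $H(V)$ with rows $(\mu_i-i)_i$ and columns $(\lambda_i-i)_i$. By the proof of \cref{lemma:define_T_as_H}, $T_{\lambda/\mu}(U)$ is the minor of $E(U)$ with rows $(\mu_i-i)_i$ and columns $(\lambda_i-i)_i$. I will fix a common length $p$ that is at least the lengths of all partitions involved; padding with zeros only adds trivial rows and columns.

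Second, take the minor of both sides of the factorisation with rows $R_\mu=(\mu_i-i)$ and columns $C_\lambda=(\lambda_i-i)$. The left side gives $S_{\lambda/\mu}(V/U)$. Cauchy--Binet expands the right side as a sum over $p$-subsets $K\subseteq I$ of (minor of $H(V)$ on $R_\mu\times K$)$\cdot$(minor of $\varphi^d E(U)$ on $K\times C_\lambda$). Every decreasing $p$-subset $K$ of $I$ has the form $K=(\nu_i-i)$ for a unique partition $\nu$ with at most $p$ parts and $\nu_1\le q+p-1$, so the sum runs over partitions $\nu$. Since $\varphi$ is a ring map, the second factor is $\varphi^d T_{\lambda/\nu}(U)$. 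The first factor vanishes unless $\mu\subseteq\nu$, by the vanishing statement in \cref{def:skew_schur}. The second vanishes unless $\nu\subseteq\lambda$, by \cref{lemma:T_strips}. This yields \eqref{eq: SST}.

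Third, for \eqref{eq: TST}, I invert the factorisation. \Cref{thm:H_is_E_inverse} gives $E_I=H_I^{-1}$, and applying $\varphi^d$ entrywise gives $\varphi^d H_I(U)=(\varphi^dE_I(U))^{-1}$. Hence
\begin{equation*}
E_I(V/U)=H_I(V/U)^{-1}=\varphi^d(H_I(U))\cdot E_I(V).
\end{equation*}
Applying Cauchy--Binet to the $R_\mu\times C_\lambda$ minor then gives $\sum_\nu \varphi^d S_{\nu/\mu}(U)\cdot T_{\lambda/\nu}(V)$, with the range of $\nu$ cut down exactly as before. This is \eqref{eq: TST}.

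The main obstacle is bookkeeping rather than ideas. There are three points to check.

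\begin{itemize}
\item Frobenius twists: $\varphi^d$ applied to the entries $\varphi^{i+1}H_{j-i}(U)$ must reproduce the row and column conventions under which the minor equals $\varphi^d S_{\nu/\mu}(U)$. The entry exponent $\varphi^{\mu_j-j+1}$ in \cref{def:skew_schur} should be checked against the $(i,j)$ indexing of $H$; a transposition or sign convention may need adjusting.
\item Interval truncation: choosing $I$ so that the finite truncation is faithful relies on upper triangularity. Minors with rows and columns in $I$ only involve indices between them, which are already in $I$.
\item Dimension mismatch: $S_{\nu/\mu}(U)$ is defined for partitions of length at most $\dim U$. Partitions $\nu$ longer than that must give vanishing minors of $H(U)$, or else the minor should be taken as the definition, consistent with Macdonald's conventions.
\end{itemize}
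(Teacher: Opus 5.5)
Your proposal is correct and is essentially the paper's proof: both identify $S_{\lambda/\mu}$, $S_{\nu/\mu}$ and $T_{\lambda/\nu}$ as minors of $H$ and $E$ and expand the minor of Macdonald's factorisation $H(V/U)=H(V)\,\varphi^{\dim(V/U)}(E(U))$ by Cauchy--Binet (the paper phrases this as functoriality of $\Lambda^m$). Your inverted factorisation $E(V/U)=\varphi^{\dim(V/U)}(H(U))\,E(V)$ supplies the details the paper leaves as ``similar'' for \eqref{eq: TST}, and your bookkeeping worries are harmless: $\varphi^{d}$ is a ring map and commutes with minors, truncation to an interval is multiplicative for upper triangular matrices, and reading $S_{\nu/\mu}(U)$ as a minor for long $\nu$ is Macdonald's convention, under which it vanishes when $\nu'_i-\mu'_i>\dim U$ by the $E$-form in \cref{def:skew_schur}.
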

\begin{proof}
    Let $m = \max(l(\lambda), l(\mu))$. From the equation $(7.17) (ii)$ in \cite{macdonald_schur_1992}, we have\
    \begin{equation}\label{inversed}
        H(V)\cdot \varphi^{\dim(V) -\dim(U)}(E(U)) = H(V/U).
    \end{equation}
    (Notice that $E(U) = H(U)^{-1}$). Furthermore, $S_{\lambda/\mu}(V/U)$ is an $m\times m$ minor of $H(V/U)$ having row indices $\{\mu_1-1, \ldots, \mu_m -m\}$, column indices $\{\lambda_1 - 1, \ldots, \lambda_{,} - m\}$ (which is actually an entry of the exterior power $\Lambda^{m}(H(V/U))$); $S_{\nu/\mu}(V)$ is an $m\times m$ minor of $H(V)$ having row indices $\{\mu_1-1, \ldots, \mu_m - m\}$, column indices $\{\nu_1 - 1, \ldots, \nu_{m} - m\}$ (which is actually an entry of the exterior power $\Lambda^{m}(H(V))$); $T_{\lambda/\nu}(U)$ is an $m\times m$ minor of $E(U)$ having row indices $\{\nu_1-1, \ldots, \nu_{m} -m\}$, column indices $\{\lambda_1 - 1, \ldots, \lambda_{m} - m\}$ (which is actually an entry of the exterior power $\Lambda^{m}(E(U))$). Furthermore, from \eqref{inversed} and the functoriality of the exterior product, we have \begin{equation}\label{functoriality}
        \Lambda^{m}(H(V/U)) = \Lambda^{m}(H(V))\cdot \Lambda^{m}(\varphi^{\dim(V) -\dim(U)}(E(U))).
    \end{equation}
    The desired equality follows directly from \eqref{functoriality} by matrix multiplication. 

    The proof of equation \eqref{eq: TST} is similar.
\end{proof}
The following theorem is a generalization of \cite[Conjecture (7.25)]{macdonald_schur_1992}.
\begin{theorem}\label{gen_of_7.25}
Let $U$ be a vector subspace of $V$, and let $\mu\subseteq \lambda$ be partitions.  If $\lambda/\mu$ has strictly less than $\dim(U)$ nonzero rows, then we have
\begin{equation*}
    S_{\lambda/\mu}(V) = \sum\limits_{\substack{L\subseteq U\\
    \dim(L) = 1}}S_{\lambda/\mu}(V/L).  
\end{equation*}
Similarly, if $\lambda/\mu$ has strictly less than $\dim (U)$ nonzero columns, then we have 
\begin{equation*}
    T_{\lambda/\mu}(V) = \sum\limits_{\substack{L\subseteq U\\
    \dim(L) = 1}}T_{\lambda/\mu}(V/L). 
\end{equation*}
\end{theorem}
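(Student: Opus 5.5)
The plan is to specialise \cref{new_decompose} to one-dimensional subspaces $L\subseteq U$, sum over all such $L$, and show that every term except $\nu=\lambda$ is killed by a vanishing result for power sums over $U$.

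\textbf{Step 1 (expanding via \cref{new_decompose}).} Let $n=\dim V$. For each line $L\subseteq U$, equation \eqref{eq: SST} with $U$ replaced by $L$ gives
\begin{equation*}
S_{\lambda/\mu}(V/L)=\sum_{\mu\subseteq\nu\subseteq\lambda} S_{\nu/\mu}(V)\,\varphi^{n-1}T_{\lambda/\nu}(L).
\end{equation*}
By \cref{vertical_strip}, $T_{\lambda/\nu}(L)$ vanishes unless $\lambda/\nu$ is a vertical strip. In that case it equals $(-1)^{k}\prod_{s\in\lambda/\nu}\varphi^{c(s)}E_1(L)$, where $k=|\lambda/\nu|$. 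If $u$ spans $L$, then $E_1(L)=Q_{1,0}=u^{q-1}$. This value does not depend on the choice of spanning vector, since $a^{q-1}=1$ for $a\in\mathbb{F}_q^\times$.

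Summing over $L$ and interchanging the order of summation, the coefficient of $S_{\nu/\mu}(V)$ becomes $\varphi^{n-1}$ applied to
\begin{equation*}
\pm\sum_{L}\prod_{s}u_L^{(q-1)q^{c(s)}} \;=\; \mp\sum_{u\in U\setminus\{0\}}u^{d_\nu},\qquad d_\nu=(q-1)\sum_{s\in\lambda/\nu}q^{c(s)}.
\end{equation*}
The equality holds because each line contributes $q-1$ equal terms and $q-1\equiv -1$ in $\mathbb{F}_q$. If some contents $c(s)$ are negative, I first apply a large power $\varphi^N$ so that all exponents become integers. This is harmless because $\varphi$ is injective.

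\textbf{Step 2 (the term $\nu=\lambda$).} Here $k=0$ and $d_\lambda=0$. The sum over lines is simply the number of lines, $(q^{\dim U}-1)/(q-1)\equiv 1 \pmod p$. So this term contributes exactly $S_{\lambda/\mu}(V)$.

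\textbf{Step 3 (vanishing of the other terms; the main obstacle).} For $\nu\neq\lambda$, the boxes of the vertical strip $\lambda/\nu$ lie in distinct rows $i$ at columns $\lambda_i$. The contents $\lambda_i-i$ are strictly decreasing in $i$, so they are distinct. Hence $d_\nu$ has exactly $k$ base-$q$ digits equal to $q-1$ and all other digits $0$, so its $q$-digit sum is $k(q-1)$. The rows of $\lambda/\nu$ form a subset of the rows of $\lambda/\mu$, since $\nu\supseteq\mu$. Therefore $k<\dim U$.

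I then invoke the standard lemma: for $d>0$ whose base-$q$ digit sum is less than $(q-1)\dim U$, one has $\sum_{u\in U}u^{d}=0$. To prove it, take a basis $w_1,\dots,w_r$ of $U$ and write $u=\sum a_iw_i$. Expanding $u^d$ by Lucas' theorem, the multinomial coefficients are nonzero only when the exponent $e_i$ of each $w_i$ is obtained by splitting digits of $d$. Moreover $\sum_{a\in\mathbb{F}_q}a^{e}=0$ unless $e$ is a positive multiple of $q-1$, and such an $e$ has digit sum at least $q-1$. So a nonzero term needs digit sum at least $r(q-1)$, which contradicts the hypothesis.

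The sum over $u\neq 0$ equals the sum over all of $U$ because $d_\nu>0$. Thus every term with $\nu\neq\lambda$ vanishes, and summing over $L$ gives $\sum_L S_{\lambda/\mu}(V/L)=S_{\lambda/\mu}(V)$, which is the first identity.

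\textbf{Step 4 (the $T$ identity).} I run the same argument with \eqref{eq: TST}:
\begin{equation*}
T_{\lambda/\mu}(V/L)=\sum_{\mu\subseteq\nu\subseteq\lambda}\varphi^{n-1}S_{\nu/\mu}(L)\,T_{\lambda/\nu}(V).
\end{equation*}
The missing ingredient is the dual of \cref{vertical_strip}: for $\dim L=1$, $S_{\nu/\mu}(L)$ vanishes unless $\nu/\mu$ is a horizontal strip. In that case it is a product over its boxes of Frobenius twists $\varphi^{e(s)}E_1(L)$, with distinct exponents $e(s)$ indexed by the columns. I would derive this from the $E$-form of the skew Schur function in \cref{def:skew_schur}, using that $E_j(L)=0$ for $j\ge 2$. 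This turns the determinant into a bidiagonal one, exactly as in the proof of \cref{vertical_strip}.

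With that in hand, the number $k$ of boxes equals the number of columns of $\nu/\mu$, which is at most the number of columns of $\lambda/\mu$ and hence less than $\dim U$. The same power-sum vanishing then kills every term except $\nu=\mu$, where $S_{\mu/\mu}(L)=1$, and the second identity follows.

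I expect two places to need the most care. The first is the exact exponents in the one-dimensional formula for $S_{\nu/\mu}(L)$, in particular checking that they are distinct. The second is the bookkeeping of negative Frobenius powers in the digit-sum argument.
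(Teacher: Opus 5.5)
Your proposal is correct and is essentially the paper's proof. Both specialize \cref{new_decompose} to the lines $L\subseteq U$ and use that the number of lines is $1$ in $\mathbb{F}_q$ to isolate the $\nu=\lambda$ (resp.\ $\nu=\mu$) term. The remaining strip terms are then killed by \cref{vertical_strip} (resp.\ its horizontal-strip dual) together with the power-sum vanishing of \cref{power_sum}, which you reprove via Lucas' theorem where the paper cites it.

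In the $T$ case you are more careful than the paper, which writes the horizontal-strip condition on $\lambda/\nu$ instead of $\nu/\mu$ and leaves the one-dimensional formula for $S_{\nu/\mu}(L)$ implicit. One small correction: that determinant is tridiagonal rather than bidiagonal, but at each pair of adjacent columns at most one of the two off-diagonal entries is nonzero, so it still equals the product of its diagonal entries, as you want.
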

The original conjecture is for $\mu = 0$ and $U =V$. 
\begin{proof}
        We use the following result from \cite[Proposition 9.5]{Campbell1996BasesFR}.
\begin{lemma}\label{power_sum} 
Let $U$ be a vector space over $\mathbb{F}_q$ and let $\ell$ be a positive integer. If $\sum\limits_{u\in U} u^{\ell} \neq 0$ in $S(U)$, then $(q-1)$ divides $\ell$ and the sum of digits of $\ell$ in its $q$-adic representation is at least $(q-1) \dim U$.
    \end{lemma}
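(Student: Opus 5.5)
The plan is to reduce the power sum to sums of monomials in the coordinates of $u$, and then control the surviving terms with the base-$q$ digit sum. Fix a basis $u_1, \ldots, u_d$ of $U$, where $d = \dim U$, and write each element as $u = \sum_{i=1}^d a_i u_i$ with $a = (a_1, \ldots, a_d) \in \mathbb{F}_q^d$. Expanding $u^{\ell}$ by the multinomial theorem and summing over all $a$ gives
\begin{equation*}
\sum_{u\in U} u^{\ell} = \sum_{k_1+\cdots+k_d=\ell} \binom{\ell}{k_1,\ldots,k_d}\, u_1^{k_1}\cdots u_d^{k_d} \prod_{i=1}^{d}\Big(\sum_{a_i\in\mathbb{F}_q} a_i^{k_i}\Big).
\end{equation*}
Since $u_1, \ldots, u_d$ are algebraically independent in $S(U)$, distinct exponent vectors $(k_1, \ldots, k_d)$ give distinct monomials. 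So the sum is nonzero exactly when at least one term has a nonzero coefficient.

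Next I would use the standard one-variable evaluation (with $0^0 = 1$):
\begin{itemize}
\item $\sum_{a\in\mathbb{F}_q} a^k = -1$ if $k > 0$ and $(q-1) \mid k$;
\item the sum is $0$ otherwise, including the case $k = 0$, where it equals $q = 0$.
\end{itemize}
This follows from the cyclicity of $\mathbb{F}_q^{\times}$. Hence a surviving term needs every $k_i$ to be a positive multiple of $q-1$, and it also needs the multinomial coefficient to be nonzero mod $p$. Summing the $k_i$ then gives $(q-1) \mid \ell$ immediately.

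For the digit-sum bound I will use digit sums $s_q$ in base $q$. By Lucas' theorem (equivalently Kummer's theorem), the multinomial coefficient is nonzero mod $p$ if and only if adding $k_1, \ldots, k_d$ in base $p$ produces no carries.

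The step needing care is passing from base $p$ to base $q = p^r$. Each base-$q$ digit is a block of $r$ base-$p$ digits. If there are no carries in base $p$, then the blockwise sums stay below $q$, so there are no carries in base $q$ either. Therefore $s_q(\ell) = \sum_i s_q(k_i)$.

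Finally, $s_q(k) \equiv k \pmod{q-1}$ and $s_q(k) > 0$ for $k > 0$. So each $k_i$, being a positive multiple of $q-1$, satisfies $s_q(k_i) \ge q-1$. Summing over $i$ yields $s_q(\ell) \ge (q-1)\dim U$. I expect the carry comparison between bases $p$ and $q$ to be the only point that needs to be written out carefully.
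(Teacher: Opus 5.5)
Your proof is correct and complete. Every step checks out:
\begin{itemize}
\item the multinomial expansion in a basis;
\item the evaluation of $\sum_{a\in\mathbb{F}_q}a^k$, where the case $k=0$ gives $q=0$. This is exactly what forces every coordinate exponent $k_i$ to be positive and produces the factor $\dim U$;
\item Lucas/Kummer for the multinomial coefficient;
\item the block argument that no carries in base $p$ implies no carries in base $q=p^r$;
\item the final estimate $s_q(k_i)\ge q-1$, from $s_q(k_i)\equiv k_i\equiv 0 \pmod{q-1}$ and $s_q(k_i)>0$.
\end{itemize}

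There is no route to compare against: the paper does not prove this lemma. It imports it verbatim from \cite[Proposition 9.5]{Campbell1996BasesFR} in the middle of the proof of \cref{gen_of_7.25}. Your argument therefore supplies a self-contained proof where the thesis only has a citation.

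Two optional remarks.

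First, for the implication you actually need, the algebraic independence of $u_1,\ldots,u_d$ is never used. If every coefficient vanishes, the sum is zero regardless.

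Second, you can bypass the comparison between bases $p$ and $q$ altogether. Write $\ell=\sum_t \ell_t q^t$ with $0\le \ell_t\le q-1$, and use $a_i^{q^t}=a_i$ to get $u^{\ell}=\prod_t\bigl(\sum_i a_i u_i^{q^t}\bigr)^{\ell_t}$. Expanding, each term carries the factor $\prod_i a_i^{\sum_t c_{i,t}}$, where $\sum_i c_{i,t}=\ell_t$ for each $t$. Summing over the $a_i$ kills the term unless every $\sum_t c_{i,t}$ is a positive multiple of $q-1$. Hence a surviving term forces $s_q(\ell)=\sum_{i,t}c_{i,t}\ge (q-1)\dim U$, with $(q-1)\mid s_q(\ell)$, and so $(q-1)\mid \ell$. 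This needs neither Lucas's theorem in base $p$ nor the carry comparison. Your version is equally valid, just slightly longer.
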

 \begin{corollary}
Let $U$ be a $k$-dimensional vector space over $\mathbb{F}_q$; for $1 \leq  \ell <k$, let $0\leq a_1 <a_2 < \cdots < a_{\ell}$ be integers and denote $m = \sum\limits_{j=1}^{\ell} q^{a_j}$. Then 
\begin{equation}\label{middle_sum}
    \sum\limits_{\substack{L\subseteq U\\ \dim(L) =1 }} E_1(L)^m = 0.
\end{equation}
 \end{corollary}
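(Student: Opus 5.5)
The plan is to rewrite the sum over lines as a sum over nonzero vectors and then apply \cref{power_sum} digit-wise. Each line $L\subseteq U$ is spanned by a nonzero vector $u$, and $E_1(L)=Q_{1,0}(u)=u^{q-1}$. Each line contains exactly $q-1$ nonzero vectors, and all of them give the same value $u^{q-1}$ (since $(cu)^{q-1}=u^{q-1}$ for $c\in\mathbb{F}_q^\times$). Hence
\begin{equation*}
\sum_{\substack{L\subseteq U\\ \dim(L)=1}} E_1(L)^m=\frac{1}{q-1}\sum_{u\in U\setminus\{0\}}u^{(q-1)m}=-\sum_{u\in U}u^{(q-1)m}.
\end{equation*}
Here $\frac{1}{q-1}=-1$ in $\mathbb{F}_q$, and the $u=0$ term vanishes because $(q-1)m>0$. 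It therefore suffices to show that $\sum_{u\in U}u^{(q-1)m}=0$ in $S(U)$.

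The second step is to compute the base-$q$ digit sum of $(q-1)m$ and compare it with the bound in \cref{power_sum}. We have
\begin{equation*}
(q-1)m=\sum_{j=1}^{\ell}(q-1)q^{a_j}.
\end{equation*}
The exponents $a_j$ are distinct, and each coefficient $q-1$ is a legitimate base-$q$ digit. So this expression is already the $q$-adic expansion, and no carries occur. Its digit sum is $(q-1)\ell$.

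Since $\ell<k=\dim U$, the digit sum satisfies $(q-1)\ell<(q-1)\dim U$. By \cref{power_sum}, the power sum $\sum_{u\in U}u^{(q-1)m}$ must then vanish. This gives \eqref{middle_sum}.

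The main point needing care is the first step. One must check that $E_1(L)$ equals $u^{q-1}$ up to the normalization used in the paper: from \cref{lemma:f_W}, $f_L(x)=x^q-u^{q-1}x$, so $E_1(L)=Q_{1,0}=u^{q-1}$. One must also check that the passage from lines to vectors is correct over $\mathbb{F}_q$, including the sign coming from $q-1\equiv -1$; this sign does not matter, since we only need vanishing. Beyond that, the argument is a direct application of \cref{power_sum}.
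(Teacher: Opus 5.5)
Your proof is correct and follows the paper's route. The paper states this corollary as an immediate consequence of \cref{power_sum} without writing out details, and you supply exactly those details: pass from lines to nonzero vectors using $E_1(L)=u^{q-1}$ and $q-1\equiv -1$ in $\mathbb{F}_q$, then observe that $(q-1)m=\sum_j (q-1)q^{a_j}$ is carry-free in base $q$, so its digit sum $(q-1)\ell$ is less than $(q-1)k$.
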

     Using Proposition \ref{new_decompose} for the $1$-dimensional subspaces in $U \subset V$, because the number of lines in $U$ is congruent to $1$ modulo $q$, we have
     \begin{equation*}
        \sum\limits_{\substack{L\subseteq U\\
    \dim(L) = 1}}S_{\lambda/\mu}(V/L) = S_{\lambda/\mu}(V) +  \sum\limits_{\substack{\mu \subseteq \nu\subsetneq \lambda \\ \lambda/\nu \text{ is a vertical strip}}}S_{\nu/\mu}(V)\cdot\left(\sum\limits_{\substack{L\subseteq U\\
    \dim(L) = 1}} \varphi^{n-1}(T_{\lambda/\nu}(L))\right). 
    \end{equation*}
    By assumption, $\lambda/\nu$ has less than $\dim(U)$ boxes, thus from Lemma \ref{vertical_strip} and equation \eqref{middle_sum}, we have
    \begin{equation*}
        \sum\limits_{\substack{L\subseteq U\\
    \dim(L) = 1}}S_{\lambda/\mu}(V/L) = S_{\lambda/\mu}(V).
    \end{equation*}
    For the second part, we argue similarly, using the equation
     \begin{equation*}
        \sum\limits_{\substack{L\subseteq U\\
    \dim(L) = 1}} T_{\lambda/\mu}(V/L) = T_{\lambda/\mu}(V) +  \sum\limits_{\substack{\mu \subsetneq \nu\subseteq \lambda \\ \lambda/\nu \text{ horizontal strip}}}  \left(\sum\limits_{\substack{L\subseteq U\\
    \dim(L) = 1}} \varphi^{n-1}(S_{\nu/\mu}(L))\right) T_{\lambda/\nu} (V). 
    \end{equation*}
    The proof is completed.
\end{proof}
The following formula is the generalization of the Stong-Tamagawa formula for skew Schur functions. Its proof is an application of \cref{gen_of_7.25}.
\begin{theorem}\label{main_rewrite}
Let $V$ be an $n$-dimensional vector space over $\mathbb{F}_q$, and let $\mathcal{F}$ denote the the set of all complete flags\index{complete flag} $\underline{V} = (V=V_0 \supset \ldots \supset V_{n-1} \supset V_n = 0$). For any partition $\lambda$ of length at most $n$, we have the following formula
\begin{equation*}  
    S_\lambda(V) = (-1)^{|\lambda|} \sum\limits_{\underline{V} \in \mathcal{F}}\prod\limits_{i=1}^{n} \pi_i (\underline{V})^{\frac{q^{\lambda_{i}}-1}{q-1}},
\end{equation*}
where for a subspace $W \subset \mathbb{F}_q[V]$, $\pi(W)$ is the product of all non-zero vectors in $W$, and for a full flag $\underline{V} = (0 = V_n \subsetneq V_{n-1}\subsetneq \cdots \subsetneq V_1\subsetneq V_0 = V)$, and $\pi_i(\underline{V}) = \pi(V_{i-1}/V_i)$ for $1 \leq i \leq n$.
\end{theorem}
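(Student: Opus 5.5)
The proof will go by induction on $n=\dim V$, peeling off one line at a time with \cref{gen_of_7.25} (taking $\mu=0$ and $U=V$).

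\emph{Base case.} When $n=1$, the only flag is $V\supset 0$, and $\pi_1=\pi(V)=x^{q-1}=E_1(V)$. By the Jacobi--Trudi formula (\cref{thm:jacobi_trudi}), $S_{(r)}(V)=H_r(V)$. For $V=\langle x\rangle$ we have $H_r(V)=x^{q^r-1}$, since $A_{(r)}/A_{(0)}=x^{q^r}/x$. This equals $E_1^{(q^r-1)/(q-1)}$. The sign $(-1)^{r}$ needs checking against the sign conventions for $E_1$ and $H_r$ in \cref{thm:H_is_E_inverse}. I expect the displayed sign to reflect those conventions, and if it fails I will track it as a global sign through the induction.

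\emph{Removing the smallest part.} Write $\lambda=(\lambda_1,\dots,\lambda_n)$. Since $S_\lambda$ is a quotient of determinants, factoring out $E_n(V)=Q_{n,0}=\pi(V)$ gives $S_\lambda(V)=E_n(V)^{(q^{\lambda_n}-1)/(q-1)}\cdot(\text{Frobenius twist})$. Precisely, $A_{\lambda+\delta_n}=A_{\delta_n}^{q^{\lambda_n}}\cdot S_{\lambda-\lambda_n}(V)^{q^{\lambda_n}}$. This yields
\[
S_\lambda(V)=\pi(V)^{\frac{q^{\lambda_n}-1}{q-1}}\,\varphi^{\lambda_n}S_{\bar\lambda}(V),\qquad \bar\lambda=\lambda-(\lambda_n^n),
\]
and $\bar\lambda$ has at most $n-1$ nonzero rows. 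I plan to avoid this reduction if possible and instead work directly as follows.

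\emph{Inductive step.} First suppose $\lambda_n=0$. Then $\lambda$ has fewer than $n=\dim V$ nonzero rows, so \cref{gen_of_7.25} gives
\[
S_\lambda(V)=\sum_{L\subset V,\ \dim L=1} S_\lambda(V/L).
\]
Each $V/L$ is an $(n-1)$-dimensional subspace of $\mathbb{F}_q[V]$. By the remark after \cref{def:schur}, $S_\lambda(V/L)$ is defined intrinsically, and we view $\lambda=(\lambda_1,\dots,\lambda_{n-1})$. By induction, $S_\lambda(V/L)$ is $(-1)^{|\lambda|}$ times a sum over complete flags of $V/L$ of $\prod_{i=1}^{n-1}\pi_i^{(q^{\lambda_i}-1)/(q-1)}$.

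These flags need to be matched with flags of $V$. By \cref{lemma:f_W}(3), complete flags of $V/L=f_L(V)$ correspond bijectively to complete flags of $V$ whose smallest nonzero term $V_{n-1}$ equals $L$, via $W\mapsto f_L(W)$. Moreover $(W/L)/(W'/L)=W/W'$, so the successive quotients $\pi_i$ agree. The factor for $V_{n-1}/V_n=L$ has exponent $(q^{0}-1)/(q-1)=0$. Summing over $L$ therefore recovers the full sum over $\mathcal F$, which proves the case $\lambda_n=0$.

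Now suppose $\lambda_n>0$. I will use the displayed factorization. For every flag, $\prod_i\pi_i(\underline V)=\pi(V)$, since the products of nonzero vectors of successive quotients multiply to the Dickson invariant via the composition $f_V=f_{V_0/V_1}\circ\cdots$, by \eqref{eq:quotient_f}. Hence $\prod_i\pi_i^{(q^{\lambda_i}-1)/(q-1)}=\pi(V)^{(q^{\lambda_n}-1)/(q-1)}\prod_i\varphi^{\lambda_n}\big(\pi_i^{(q^{\bar\lambda_i}-1)/(q-1)}\big)$. This reduces the claim to $\bar\lambda$, which has $\bar\lambda_n=0$.

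\emph{Main obstacle.} I expect the main difficulty to be verifying the identity $\pi(V)=\prod_i\pi_i(\underline V)$ together with the factorization of $A_{\lambda+\delta_n}$, and above all getting the sign $(-1)^{|\lambda|}$ consistent with the conventions in \cref{thm:H_is_E_inverse}.
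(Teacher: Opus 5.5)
Your route is the same as the paper's. You strip off $E_n(V)^{(q^{\lambda_n}-1)/(q-1)}$ to reduce to $\lambda_n=0$, doing it via $A_{\lambda+\delta_n}=A_{\bar\lambda+\delta_n}^{q^{\lambda_n}}$ where the paper uses the $E$-form of \cref{thm:jacobi_trudi}. Your version is fine once you note $A_{\delta_n}^{q-1}=A_{(1^n)+\delta_n}/A_{\delta_n}=E_n(V)$. Then you apply \cref{gen_of_7.25} with $\mu=0$, $U=V$, and recurse on $V/L$, matching flags through \cref{lemma:f_W}(3). Running this as an induction, with $\prod_i\pi_i(\underline V)=\pi(V)$ absorbing the stripped factor, is equivalent to the paper's final telescoping of $\pi(V/V_j)=\pi(V)/\pi(V_j)$.

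The genuine gap is the sign. You left it open, and in fact set it up wrongly. In odd characteristic $\pi(W)\neq E_{\dim W}(W)$. Since $\prod_{a\in\mathbb{F}_q^{\times}}a=-1$, for $n=1$ one has $\pi(V)=\prod_{a\neq0}ax=-x^{q-1}=-E_1(V)$. In general $\pi(W)=(-1)^{\dim W}E_{\dim W}(W)$: the coefficient of $x$ in $f_W(x)=x\prod_{w\neq0}(x+w)$ is $\pi(W)$, while the expansion of $f_W$ gives it as $(-1)^{\dim W}Q_{\dim W,0}$. The display $Q_{n,0}=\prod_{v\neq0}v$ earlier in the paper is off by exactly this sign. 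The paper's proof uses the correct $E_{\dim(V/U)}(V/U)=(-1)^{\dim(V/U)}\pi(V/U)$.

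With your identification $\pi=E$, two things go wrong:
\begin{enumerate}
\item Your base case gives $S_{(r)}(V)=(-1)^r x^{q^r-1}$, which contradicts $H_r(V)=x^{q^r-1}$ for odd $r$.
\item Your reduction step yields the sign $(-1)^{|\bar\lambda|}$ instead of $(-1)^{|\lambda|}$.
\end{enumerate}
The discrepancy depends on $\lambda$, so it cannot be carried along as a global sign.

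The missing input is the identity $\pi(W)=(-1)^{\dim W}E_{\dim W}(W)$ together with $\tfrac{q^r-1}{q-1}=1+q+\cdots+q^{r-1}\equiv r\pmod 2$ for odd $q$. For even $q$ there is nothing to check. With these:
\begin{enumerate}
\item The base case reads $(-1)^r\pi(V)^{(q^r-1)/(q-1)}=(-1)^r(-1)^r x^{q^r-1}=H_r(V)$.
\item The reduction gives $E_n(V)^{(q^{\lambda_n}-1)/(q-1)}=(-1)^{n\lambda_n}\pi(V)^{(q^{\lambda_n}-1)/(q-1)}$, and $(-1)^{|\bar\lambda|+n\lambda_n}=(-1)^{|\lambda|}$.
\item The $\lambda_n=0$ step is sign-neutral.
\end{enumerate}

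Equivalently, run your induction verbatim with $E_1(V_{i-1}/V_i)$ in place of $\pi_i$. Since $\prod_iE_1(V_{i-1}/V_i)=E_n(V)$, it proves the sign-free identity $S_\lambda(V)=\sum_{\underline V}\prod_iE_1(V_{i-1}/V_i)^{(q^{\lambda_i}-1)/(q-1)}$. Converting with $E_1=-\pi$ on lines then produces $(-1)^{|\lambda|}$. With that correction your argument is complete.
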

\begin{proof}
If $\ell (\lambda) = n$, in other words $\lambda_n > 0$, then $\lambda_1' = n$ and $E_{\lambda_1' - 1+j} = 0$ unless $j=1$, in which case it is $E_n$. Thus 
\begin{align*}
S_{\lambda}(V) =& \det (\varphi^{j-1} E_{\lambda_i' - i +j})\\
=&  E_n (V) \det (\varphi^{j-1} E_{\lambda_i' - i +j})_{i,j \geq 2}\\
=& E_n (V) \varphi S_{(\lambda_1-1, \ldots, \lambda_n-1)} (V). 
\end{align*}
Repeating this process, we get 
\begin{equation*}
S_{\lambda} (V) = E_n(V)^{\frac{q^{\lambda_n}-1}{q-1}} \varphi^{\lambda_n} S_{(\lambda_1 - \lambda_n, \ldots, \lambda_{n-1}-\lambda_n, 0)} (V). 
\end{equation*}
Now the length of $(\lambda_1 - \lambda_n, \ldots, \lambda_{n-1} - \lambda_n,0)$ is strictly less than $n = \dim V$. We can apply Theorem \ref{gen_of_7.25} in the case $\mu = 0$: 
\begin{equation*}
S_{\lambda} (V) =  E_n(V)^{\frac{q^{\lambda_n}-1}{q-1}} \sum_{V_{n-1} \leq V, \dim V_{n-1} =1} S_{(\lambda_1 - \lambda_n, \ldots, \lambda_{n-1} - \lambda_n)}^{q^{\lambda_n}} (V/V_{n-1}).
\end{equation*}
Continue this process for each $(n-1)$-dimensional spaces $V/V_{n-1}$ and partitions $(\lambda_1- \lambda_n, \ldots, \lambda_{n-1}-\lambda_n)$ of length at most $(n-1)$, we can write $S_{\lambda}(V)$ as a sum over all complete flags: 
\begin{equation*}
\sum_{\underline{V} \in \mathcal{F}} E_{n}(V)^{\frac{q^{\lambda_n}-1}{q-1}}   E_{n-1}(V/V_{n-1})^{\frac{q^{\lambda_{n-1}} -q^{\lambda_n}}{q-1}}  E_{n-2}(V/V_{n-2})^{\frac{q^{\lambda_{n-2}} -q^{\lambda_{n-1}}}{q-1}}  \ldots E_{1}(V/V_{1})^{\frac{q^{\lambda_{1}}-q^{\lambda_2}}{q-1}}   
\end{equation*}
The result now follows from the fact that
\begin{equation*} 
E_{\dim V/U} (V/U) = (-1)^{\dim (V/U)} \pi (V/U) =(-1)^{\dim V - \dim U} \frac{\pi (V)}{\pi (U)}. 
\end{equation*}
For the general case, if $\lambda/\mu$ has exactly $n$ rows, then $\lambda_i - \mu_i > 0$ for all $1 \leq i \leq n$. It follows that $\lambda_1' = n$. If $\mu_1' = n$ then the first column of $S_{\lambda/\mu} (V)$ consists of all zero except in entries $(1,1)$ where it equals $\varphi^{-n} E_0 = 1$. We then have 
\begin{equation*}
S_{\lambda/\mu} (V) = S_{(\lambda_1-1, \ldots, \lambda_n-1)/(\mu_1-1, \ldots, \mu_n-1)} (V). 
\end{equation*}
Continue this process $\mu_n$ times, we can assume that $\mu_1' <n$ (or equivalently, $\mu_n = 0$).   
\end{proof}
We can obtain a similar description for the function $T_{\lambda}(V)$. It is known that $T_{\lambda} (V) = 0$ if $\lambda_1 > n$. If $\lambda_1 = n$, then by considering the top row, we have
\begin{align*}
    T_{\lambda} (V) =& (-1)^{n} \varphi^{n-1} E_n \cdot \det ((-1)^{\lambda_i - i+ j}\varphi^{\lambda_i - i} E_{\lambda_i - i+ j}(V))_{i,j \geq 2}\\
    =& (-1)^{n} \varphi^{n-1} E_n \cdot \varphi^{-1}T_{(\lambda_2, \ldots, \lambda_n)}(V) 
\end{align*}
 Thus for each $k$, $1 \leq k \leq n$, if the number of occurrences of $k$ in the partition $\lambda$ is $m_k$, then 
 \begin{align*}
     T_{\lambda}(V) &= \sum\limits_{\underline{V}\in \mathcal{F}}(-1)^{nm_n}E_n(V)^{\frac{q^n - q^{n-m_n}}{q-1}}\cdots (-1)^{1\cdot m_1}E_{1}(V/V_1)^{\frac{q^{1-m_n-\cdots-m_2}-q^{1-m_n-\cdots - m_{1}}}{q-1}}\\
     &=(-1)^{|\lambda|}\cdot \sum\limits_{\underline{V}\in \mathcal{F}}\prod\limits_{i=1}^{n}\left((-1)^i\cdot \dfrac{\pi(V)}{\pi(V_i)}\right)^{\frac{q^{i-\lambda'_{i+1}}-q^{i-\lambda'_i}}{q-1}}\\
     &=\sum\limits_{\underline{V}\in \mathcal{F}}\prod\limits_{i=1}^{n}\pi_i(V)^{m_i(\lambda)},
 \end{align*}
 here $m_i(\lambda) = \sum\limits_{k=i}^{n}\dfrac{q^{k-\lambda'_{k+1}}-q^{k-\lambda'_k}}{q-1}, 1\leq i\leq n$. In particular, if $\lambda = (1^r)$, then 
 \begin{equation*}
     T_{(1^r)}(V) =\sum\limits_{\underline{V}\in \mathcal{F}}\pi_1(V)^{\frac{q-q^{1-r}}{q-1}}. 
 \end{equation*}
If $\lambda = (r) (1\leq r\leq n)$, then
\begin{equation*}
    T_{(r)}(V)=\sum\limits_{\underline{V}\in \mathcal{F}}\left(\prod\limits_{i=1}^{r}\pi_i(V)\right)^{q^{r-1}}.
\end{equation*}
\begin{remark}
    A special case of Theorem \ref{main_rewrite} for all partitions $\lambda = (1^r) (0\leq r\leq n)$ was proven in \cite[Chapter I, Section 2, Example 26(e)]{macdonald_symmetric_1979}.
\end{remark}
\begin{remark}
Suppose $(v_1, \ldots, v_n)$ is a basis for $V$ such that $V_i$ is spanned by $(v_1, v_{2}, \ldots, v_{n-i})$. Then 
\begin{equation*}
\pi_i (\underline{V}) = \pi (V_{i-1}/V_{i}) = \prod_{v \in V_{i-1} - V_{i}} v = L(v_1, \ldots, v_{n-i+1})^{q-1}. 
\end{equation*}
and the result does not depend on the choice of such a basis for the flag $\underline{V}$. We can then rewrite $S_{\lambda}(V)$ as follows. 
\begin{equation}
S_{\lambda} (V) = (-1)^{|\lambda|} \sum_{\underline{V} \in \mathcal{F}} L (v_1, \ldots, v_n)^{q^{\lambda_1}-1} \ldots L(v_1)^{q^{\lambda_n}-1}. 
\end{equation}
\end{remark}
 \begin{example} If $\lambda = (r)$, then 
 \begin{equation*}
 H_r (V) = (-1)^r \sum_{\underline{V} \in \mathcal{F}} L(v_1, \ldots, v_n)^{q^r-1}. 
 \end{equation*}   
 \end{example}
\begin{example}
If $\lambda = 1^r$, we have 
\begin{equation*}
E_r (V) = (-1)^r\sum_{\underline{V} \in \mathcal{F}} (L(v_1, \ldots, v_n) L(v_1, \ldots, v_{n-1}) \cdots L(v_1, \ldots, v_{n-r+1}))^{q-1}.
\end{equation*}
\end{example}
 \begin{remark}
The two descriptions of the Schur function are both useful. For example, when a complete flag $\underline{V}$ is fixed, choose an adapted basis $(v_1, \ldots, v_n)$ for this flag, that is $V_i$ is spanned by $(v_1, \ldots, v_{n-i})$. Then the Schur function $S_{\lambda/\mu} (V)$ can be written in terms of upper triangular invariants as follows: 
\begin{equation*}
S_{\lambda/\mu} (V) = (-1)^{|\lambda - \mu|} \sum_T \prod_{s \in \lambda/\mu} \varphi^{T(s) -i+j-1} \pi_{T(s)} (\underline{V}). 
\end{equation*}
where the sum is over all column strict tableaux $T \colon \lambda - \mu \to [1,n]$.

For example, if $\lambda = 1^r, \mu = 0$, a column strict tableaux $T$ is just an $r$-tuple of integers $1 \leq j_1 < \ldots \leq j_r \leq n$. Therefore
\begin{equation*}
E_r (V) = (-1)^r \sum_{1 \leq j_1 < \ldots <j_r \leq n} \prod_{i=1}^r L(v_1, \ldots, v_{n-j_i+1})^{q^{j_i-i}(q-1)}. 
\end{equation*}
Notice that in the product, while the $r$-tuple of indices of the upper triangular invariant is a strictly decreasing sequence.
\begin{equation*}
n \geq n-j_1 +1 > n-j_2 +1 > \ldots > n-j_r +1 \geq 1,
\end{equation*}
the exponents is non-increasing: $0 \leq j_1-1 \leq j_2-2 \leq \ldots \leq j_r-r \leq n-r$. This description is equivalent to a similar description in \cite[Theorem 4.3]{Campbell1996}. 
 \end{remark}
\section{Truncated Polynomial Rings}\label{sec:truncated_rings}
We review the materials in \cite{Lewis_Reiner_Stanton_2017} about the conjectural Hilbert series of the invariant rings of truncated polynomial rings, under the action of parabolic subgroups.\smallbreak
\begin{definition}[{\cite[Section 1]{Lewis_Reiner_Stanton_2017}}]\label{def:truncated_rings}
Let $G = GL_n(\mathbb{F}_q)$ act via invertible linear substitutions on the polynomial ring $S=\mathbb{F}_q[x_1, \ldots, x_n]$. For each $m\geq 0$, define an ideal
$I_{n, m} = (x_1^{q^m}, \ldots, x_n^{q^m})$, and 
\begin{equation*}
    Q(m, n) = S/I_{n, m}.
\end{equation*}
The rings $Q(m, n)$ are called \emph{truncated polynomial rings}\index{truncated polynomial rings}.
\end{definition}
\begin{remark}
    It is clear that $I_{n, m}$ is stable under the action of $GL_n(\mathbb{F}_q)$, hence the action of $G$ on $S$ naturally induces an action of $G$ on $Q(m, n)$.
\end{remark}
\begin{definition}[{\cite{reiner_qt-analogues_2010}}]\label{def:q_t_multinomial_coefficient}
    For $n\in \mathbb{N}$ and a composition $\alpha = (\alpha_1, \ldots, \alpha_{\ell})\in \mathbb{Z}_{\geq 0}^{\ell}$ of $n$, for each $i\in \{0, \ldots, n\}$, define the partial sum
    \begin{equation*}
        A_i = \sum\limits_{j=1}^{i}\alpha_j.
    \end{equation*}
    The $(q, t)$-multinomial coefficient\index{$(q, t)$-multinomial coefficient} is defined to be
    \begin{equation*}
    \genfrac{[}{]}{0pt}{}{n}{\alpha}_{q, t} = \dfrac{Hilb(S^{P_{\alpha}}, t)}{Hilb(S^{G}, t)} = \dfrac{\prod\limits_{k=0}^{n-1}\left(1-t^{q^n-q^k}\right)}{\prod\limits_{i=1}^{\ell}\prod\limits_{j=0}^{\alpha_i-1}\left(1-t^{q^{A_i} - q^{A_{i-1}+j}}\right)}.
    \end{equation*}
\end{definition}
The parabolic conjecture in \cite{Lewis_Reiner_Stanton_2017} predicts the Hilbert series of the invariant ring $Q(m, n)^{P_{\alpha}}$ as follows. For each $\beta \in \mathbb{Z}_{\geq 0}^{\ell}$, we say that $\beta\leq \alpha$ if $\beta_i\leq \alpha_i$, for all $i\in \{1, \ldots, \ell\}$. We also denote $|\beta| = \sum\limits_{j=1}^{\ell}\beta_j$, and the partial sums $B_i = \sum\limits_{j=1}^{i}\beta_j$ for $i\in \{1, \ldots, \ell\}$. 
\begin{conjecture}[{\cite[Parabolic conjecture 1.5]{Lewis_Reiner_Stanton_2017}}]\label{conj:parabolic_conj}\index{parabolic conjecture}
    For $m\geq 0$ and $\alpha$ a composition of $n$, the invariant ring $Q(m, n)^{P_{\alpha}}$ has Hilbert series
    \begin{equation*}
        Hilb(Q(m, n)^{P_{\alpha}}, t) = \sum\limits_{\substack{\beta\leq \alpha\\ \beta\leq m}}t^{e(m, \alpha, \beta)}\genfrac{[}{]}{0pt}{}{m}{\beta, m-|\beta|}_{q, t},
    \end{equation*}
    where 
    \begin{equation*}
        e(m, \alpha, \beta) = \sum\limits_{i=1}^{\ell}(\alpha_i - \beta_i)(q^m-q^{B_i}).
    \end{equation*}
\end{conjecture}
\begin{remark}
    \cref{conj:parabolic_conj} is proven for the following cases.
    \begin{itemize}
        \item[(1)] In \cite{Ha_Hai_Nghia_2024}, the authors proved the case $\alpha = (1, 1, \ldots, 1)$. In this case, $P_{\alpha} = B$, the Borel subgroup of $GL_n(\mathbb{F}_q)$. 
        \item[(2)] $\alpha = (n)$. In this case, $P_{\alpha} = GL_n(\mathbb{F}_q)$ (see \cite{ha_hai_nghia_GL}.)
        \item[(3)] For $n\leq 3$ and all $\alpha$ (see \cite{ha_hai_nghia_parabolic_for_low_rank}.)
    \end{itemize}
    We shall investigate the proven cases in more detail in \cref{chap:trucated_invariants}. 
\end{remark}
Following \cite{Lewis_Reiner_Stanton_2017}, we give some structural properties of $Q(m, n)$ as a representation of $G = GL_n(\mathbb{F}_q)$. First, consider the following non-degenerate, symmetric bilinear form $B: Q(m, n)\times Q(m, n) \to \mathbb{F}_q$, given by the formula
\begin{equation*}
    B(x^{\alpha}, x^{\beta}) = \begin{cases}
        1 & \text{ if $\alpha_i + \beta_i = n(q^m-1)$ for all $i$,}\\
        0 & \text{ otherwise}.
    \end{cases}
\end{equation*}
\begin{proposition}[{\cite[Proposition 3.1]{Lewis_Reiner_Stanton_2017}}]
    The bilinear form $B$ is $G$-equivariant, that is, for any $\sigma\in G$, $f, g\in Q(m, n)$, we have
    \begin{equation*}
        B(f, g) = B(\sigma(f), \sigma(g)).
    \end{equation*}
\end{proposition}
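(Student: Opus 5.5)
The plan is to reinterpret $B$ as ``multiply, then read off the top coefficient'', so that $G$-equivariance reduces to a statement about a one-dimensional representation. Throughout I read the defining condition of $B$ as $\alpha_i+\beta_i=q^m-1$ for all $i$. This is the only reading under which $B$ is a nondegenerate pairing on $Q(m,n)$, since the exponents of each variable in $Q(m,n)$ are at most $q^m-1$; I take the printed $n(q^m-1)$ to be a typo.

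Let $N=n(q^m-1)$. The graded ring $Q(m,n)$ vanishes above degree $N$, and its degree-$N$ component is one-dimensional, spanned by $\omega=(x_1\cdots x_n)^{q^m-1}$. Let $\varepsilon: Q(m,n)\to\mathbb{F}_q$ send $h$ to the coefficient of $\omega$ in $h$. For monomials, the product $x^\alpha x^\beta$ in $Q(m,n)$ equals $\omega$ exactly when $\alpha_i+\beta_i=q^m-1$ for all $i$. Otherwise it is either zero or a different monomial. Hence by bilinearity
\begin{equation*}
B(f,g)=\varepsilon(fg) \quad \text{for all } f,g\in Q(m,n).
\end{equation*}
Since each $\sigma\in G$ acts on $Q(m,n)$ as a graded ring automorphism (as the preceding Remark notes, $I_{n,m}$ is stable), we get $\sigma(f)\sigma(g)=\sigma(fg)$. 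It therefore suffices to show $\varepsilon(\sigma h)=\varepsilon(h)$ for all $h$.

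Because $\sigma$ preserves degree, the degree-$N$ part of $\sigma h$ is $\sigma$ applied to the degree-$N$ part of $h$. So everything reduces to showing $\sigma(\omega)=\omega$. Since the top component is one-dimensional and $G$-stable, we have $\sigma(\omega)=\chi(\sigma)\omega$ for a character $\chi:G\to\mathbb{F}_q^\times$. It is enough to check $\chi=1$ on a generating set of $GL_n(\mathbb{F}_q)$, namely permutation matrices, diagonal matrices and elementary transvections.

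The generators are handled as follows.
\begin{itemize}
\item A permutation matrix permutes the factors of $\omega$, so it fixes $\omega$.
\item A diagonal matrix $x_i\mapsto c_ix_i$ multiplies $\omega$ by $\prod_i c_i^{q^m-1}=1$, because $c_i^{q-1}=1$ and $(q-1)\mid(q^m-1)$.
\item For a transvection, say $x_1\mapsto x_1+ax_2$ with the other variables fixed, $\sigma(\omega)=(x_1+ax_2)^{q^m-1}x_2^{q^m-1}\cdots x_n^{q^m-1}$. Expanding binomially, every term containing a positive power of $x_2$ from the first factor is killed by $x_2^{q^m}=0$. Only $x_1^{q^m-1}\cdots x_n^{q^m-1}=\omega$ survives.
\end{itemize}
Hence $\chi$ is trivial and $B(\sigma f,\sigma g)=\varepsilon(\sigma(fg))=\varepsilon(fg)=B(f,g)$.

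The only step needing care is the transvection computation: one has to make sure no cross term survives the truncation, which holds because the exponent of $x_2$ is already maximal. Everything else is formal.
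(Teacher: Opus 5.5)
Your proof is correct and follows essentially the same route as the paper. Both identify $B(f,g)$ with the coefficient of $(x_1\cdots x_n)^{q^m-1}$ in $fg$, use degree preservation to reduce to showing $\sigma$ fixes that top monomial, and verify this on permutations, diagonal matrices and a transvection. Your reading of the defining condition as $\alpha_i+\beta_i=q^m-1$ is also right; the printed $n(q^m-1)$ is a typo, as the paper's own proof (which works with the coefficient of $(x_1\cdots x_n)^{q^m-1}$) confirms.
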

\begin{proof}
    We only need to show that for $h = fg\in Q(m, n)$, the coefficient of $(x_1\cdots x_n)^{q^m-1}$ in $h$ is equal to the coefficient of $(x_1\cdots x_n)^{q^m-1}$ in $\sigma(h)$. If 
    \begin{equation*}
        h = \sum\limits_{\alpha < (q^m-1, \ldots, q^m-1)}c_{\alpha} x^{\alpha} + c_0 \cdot (x_1\cdots x_n)^{q^m-1},
    \end{equation*}
    then for all $\alpha < (q^m-1, \ldots, q^m-1)$, the degree of $x^{\alpha}$ is $|\alpha|< n(q^m-1)$. Since $\sigma$ is a graded map of degree $0$, $\sigma(x^{\alpha})$ does not contribute to the coefficient of $(x_1\cdots x_n)^{q^m-1}$. Moreover, $G$ is generated by the permutations, the diagonal matrices, and the transvection $\mu$ that maps $x_1$ to $x_1+x_2$ and fixes $x_2, \ldots, x_n$. If $\sigma$ is a permutation, then obviously $\sigma((x_1\cdots x_n)^{q^m-1}) = (x_1\cdots x_n)^{q^m-1}$; if $\sigma$ is a diagonal action, we also have $\sigma((x_1\cdots x_n)^{q^m-1}) = (x_1\cdots x_n)^{q^m-1}$; finally, for $\sigma=\mu$, we have
    \begin{align*}
        \mu((x_1\cdots x_n)^{q^m-1}) - (x_1\cdots x_n)^{q^m-1} &= (x_2\cdots x_n)^{q^m-1} ((x_1+x_2)^{q^m-1}-x_1^{q^m-1})\\
        &=x_2^{q^m}\cdot h(x_1, \ldots, x_n)\\
        &=0 \text{ in $Q(n, m)$.}
    \end{align*}
    Therefore, $\sigma((x_1\cdots x_n)^{q^m-1}) = (x_1\cdots x_n)^{q^m-1}$ for all $\sigma\in G$.
\end{proof}
\begin{corollary}[{\cite[Corollary 3.4]{Lewis_Reiner_Stanton_2017}}]\label{cor:poincare_duality}
    Let $d_0 = n(q^m-1)$. For each degree $d\in \{0, \ldots, d_0\}$, let $Q(m, n)_d$ be the subspace of homogeneous elements of degree $d$ in $Q(m, n)$. We have an isomorphism of $G$-representations
    \begin{equation*}
        (Q(m, n)_d)^{*} \cong Q(m, n)_{d_0-d},
    \end{equation*}
    where the action of $G$ on $(Q(m, n)_d)^{*}$ is the \emph{contragredient} action.
\end{corollary}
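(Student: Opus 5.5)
The plan is to derive the isomorphism directly from the non-degenerate $G$-equivariant bilinear form $B$ of the preceding proposition. The form pairs only complementary degrees. Indeed, $B(x^{\alpha},x^{\beta})\neq 0$ forces $\alpha_i+\beta_i=q^m-1$ for every $i$; this is the top-monomial condition used in the proof, namely that the coefficient of $(x_1\cdots x_n)^{q^m-1}$ in $fg$ is nonzero. Hence $|\alpha|+|\beta|=d_0$. So the only possibly nonzero part of $B$ is its restriction
\begin{equation*}
B_d: Q(m,n)_d\times Q(m,n)_{d_0-d}\to\mathbb{F}_q .
\end{equation*}
We then define the map
\begin{equation*}
\Phi_d: Q(m,n)_{d_0-d}\to (Q(m,n)_d)^{*},\qquad g\mapsto B(\,\cdot\,,g),
\end{equation*}
and show that it is a $G$-equivariant isomorphism.

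The first step is non-degeneracy of each $B_d$, and we check it on the monomial basis. The monomials $x^{\alpha}$ with $0\le\alpha_i\le q^m-1$ form a basis of $Q(m,n)$. The assignment $\alpha\mapsto (q^m-1)\mathbf{1}-\alpha$ is a bijection from the degree-$d$ monomials onto the degree-$(d_0-d)$ monomials. In these bases the Gram matrix of $B_d$ is a permutation matrix, so $\Phi_d$ is injective. The two spaces have equal dimension (the same bijection shows this), so $\Phi_d$ is an isomorphism of vector spaces.

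The second step is equivariance, where the contragredient action is $(\sigma\xi)(f)=\xi(\sigma^{-1}f)$. The proposition gives $B(\sigma f,\sigma g)=B(f,g)$, and $\sigma$ preserves degrees. Therefore
\begin{equation*}
\Phi_d(\sigma g)(f)=B(f,\sigma g)=B(\sigma^{-1}f,g)=(\sigma\,\Phi_d(g))(f)
\end{equation*}
for all $f\in Q(m,n)_d$. This says $\Phi_d(\sigma g)=\sigma\,\Phi_d(g)$, which completes the proof.

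The only delicate point is bookkeeping. As literally written, the definition of $B$ uses the exponent $n(q^m-1)$ for each coordinate, whereas the proof of the proposition uses $(x_1\cdots x_n)^{q^m-1}$. I will read $B$ in the latter form, $B(f,g)=$ coefficient of $(x_1\cdots x_n)^{q^m-1}$ in $fg$, which matches $d_0=n(q^m-1)$. With that reading, everything else is formal linear algebra.
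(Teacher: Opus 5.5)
Your proposal is correct and follows essentially the same route as the paper: the map $g\mapsto B(\cdot,g)$ sends the monomial basis of $Q(m,n)_{d_0-d}$ to the dual basis of the monomial basis of $Q(m,n)_d$, and equivariance follows from $B(f,\sigma g)=B(\sigma^{-1}f,g)$. Your reading of $B$ via the condition $\alpha_i+\beta_i=q^m-1$ for all $i$ (instead of the misprinted $n(q^m-1)$) is the correct one, and it is what the paper's argument actually uses.
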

\begin{proof}
    We prove that the map
    \begin{align*}
        \psi: Q(m, n)_{d_0-d} &\to (Q(m, n)_{d})^{*}\\
        f&\mapsto B(f, \cdot)
    \end{align*}
    is a $G$-isomorphism. First, it is an isomorphism of $\mathbb{F}_q$-vector space, since it is linear, and one can check that $f(\{x^{\alpha}\mid |\alpha| = d_0-d\})$ is the dual basis of the basis $\{x^{\beta}\mid |\beta| =d\}$ of $Q(m, n)_d$. For any $\sigma\in G$, $f\in Q(m, n)_{d_0-d}$, and $g\in Q(m, n)_d$, we have
    \begin{equation*}
        \psi(\sigma(f))(g) = B(\sigma(f), g) = B(f, \sigma^{-1}(g)) = \psi(f)(\sigma^{-1}(g)) = (\sigma\cdot \psi(f))(g).
    \end{equation*}
    Therefore, $\psi$ is a $G$-map, as required.
\end{proof}
To conclude this section, we present the notion of \emph{graded} and \emph{ungraded parking spaces} from \cite[Section 6]{Lewis_Reiner_Stanton_2017}, which are $G$-representations that are motivated by the representation theory of Coxeter groups. 
\begin{definition}[{\cite[Definition 6.1]{Lewis_Reiner_Stanton_2017}}]\label{def:graded_parking_spaces}\index{graded parking space}
    For a field $\mathbb{K}$ containing $\mathbb{F}_q$, the graded parking space for $G = GL_n(\mathbb{F}_q)$ over $\mathbb{K}$ is 
    \begin{equation*}
        Q_{\mathbb{K}}(m, n) = \mathbb{K}[x_1, \ldots, x_n]/(x_1^{q^m}, \ldots, x_n^{q^m}).
    \end{equation*}
    $G$ is considered a subgroup of $GL_n(\mathbb{K})$, acting on $\mathbb{K}[x_1, \ldots, x_n]$ by linear substitutions. 
\end{definition}
In the following results, it is required that $m>0$.
\begin{definition}[{\cite[Definition 6.2]{Lewis_Reiner_Stanton_2017}}]\label{def:ungraded_parking_spaces}\index{ungraded parking spaces}
    For a field $\mathbb{K}$ containing $\mathbb{F}_q$, the ungraded parking space for $G = GL_n(\mathbb{F}_q)$ over $\mathbb{K}$ is 
    \begin{equation*}
        \mathbb{K}[\mathbb{F}_{q^m}^{n}] = span_{\mathbb{K}}\{e_v\mid v\in \mathbb{F}_{q^m}^{n}\},
    \end{equation*}
    $G$ is considered a subgroup of $GL_n(\mathbb{F}_{q^m}^{n})$, permuting the vectors $e_v$. In particular, for any $\sigma\in G$, $\sigma(e_v) = e_{\sigma(v)}, \text{ for all } v\in \mathbb{F}_{q^m}^{n}$.
\end{definition}
\begin{remark}
    The graded and the ungraded parking space over a field $\mathbb{K}$ are not in general isomorphic as $G$-representations (see \cite[Example 6.4]{Lewis_Reiner_Stanton_2017}.)
\end{remark}
\begin{proposition}[{\cite[Proposition 6.9]{Lewis_Reiner_Stanton_2017}}]\label{prop:ungraded_is_iso_to_R_k}
    Fix a field $\mathbb{K}$ containing $\mathbb{F}_{q^m}$. Let $\mathfrak{n} = (x_1^{q^m}-x_1, \ldots, x_n^{q^m}-x_n)$ be an ideal of $\mathbb{K}[x_1, \ldots, x_n]$, define
    \begin{equation*}
        R_{\mathbb{K}}(m, n) = \mathbb{K}[x_1, \ldots, x_n]/\mathfrak{n}.
    \end{equation*}
    Then $R(m, n)\cong \mathbb{K}[\mathbb{F}_{q^m}^{n}]$ as $G$-representations.
\end{proposition}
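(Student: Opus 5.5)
This is the standard fact that $\mathbb{K}[\mathbb{F}_{q^m}^n]$ is the coordinate ring of the finite set of $\mathbb{F}_{q^m}$-points, so the plan is to build an explicit $G$-isomorphism via evaluation and its inverse via Lagrange-type indicator polynomials.

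First, since $\mathfrak{n}$ is stable under $G$ (because $\sigma(x_j)^{q^m}-\sigma(x_j)=\sum_i\sigma_{ij}(x_i^{q^m}-x_i)$, using that $\sigma_{ij}\in\mathbb{F}_q$ and $q^m$-th powering is additive), the action of $G$ descends to $R_{\mathbb{K}}(m,n)$. Next consider the evaluation map
\begin{equation*}
    \mathrm{ev}:\mathbb{K}[x_1,\ldots,x_n]\to \mathrm{Fun}(\mathbb{F}_{q^m}^n,\mathbb{K}),\qquad f\mapsto (v\mapsto f(v)),
\end{equation*}
which is a ring map killing $\mathfrak{n}$, since every coordinate $a\in\mathbb{F}_{q^m}$ satisfies $a^{q^m}=a$. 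It is surjective: for $v=(v_1,\ldots,v_n)$ the polynomial $\delta_v=\prod_{i=1}^n\bigl(1-(x_i-v_i)^{q^m-1}\bigr)$ is the indicator function of $v$. By dimension count, $R_{\mathbb{K}}(m,n)$ is spanned by the monomials $x^\alpha$ with $0\le\alpha_i<q^m$ (reduce exponents using $x_i^{q^m}=x_i$), so $\dim R_{\mathbb{K}}(m,n)\le q^{mn}=\dim\mathrm{Fun}(\mathbb{F}_{q^m}^n,\mathbb{K})$. Hence the induced map $R_{\mathbb{K}}(m,n)\to\mathrm{Fun}(\mathbb{F}_{q^m}^n,\mathbb{K})$ is a linear isomorphism.

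Second, match the actions. For $\sigma\in G$ we have $(\sigma f)(v)=f(\sigma^{T}v)$, where $\sigma^T v$ is the vector $\bigl(\sum_i\sigma_{ij}v_i\bigr)_j$. The function space carries the permutation action $(\sigma\phi)(v)=\phi(\sigma^T v)$, and under $\delta_v\mapsto e_{(\sigma^{T})^{-1}v}$ one gets $\sigma\delta_v=\delta_{(\sigma^T)^{-1}v}$. Therefore $\delta_v\mapsto e_v$ intertwines the action on $R_{\mathbb{K}}(m,n)$ with the permutation representation of $G$ on $\mathbb{F}_{q^m}^n$ through the map $\sigma\mapsto(\sigma^T)^{-1}$. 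This is the main obstacle: the definition's action $\sigma(e_v)=e_{\sigma(v)}$ uses $\sigma$ itself, not its inverse transpose.

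To resolve this, I would observe that the two permutation modules $\mathbb{K}[\mathbb{F}_{q^m}^n]$ twisted by $\sigma$ and by $(\sigma^T)^{-1}$ are isomorphic. Each is the permutation module on the $G$-set $\mathbb{F}_{q^m}^n$, and the two $G$-sets decompose into orbits by the $\mathbb{F}_{q^m}$-subspace type. By the standard argument (as in Lewis–Reiner–Stanton), permutation modules of $V$ and of its dual $V^*$ have equal Brauer/ordinary permutation characters. The key point is that the number of fixed points of $g$ on $\mathbb{F}_{q^m}^n$ is $q^{m\dim\ker(g-1)}$, which equals that of $(g^T)^{-1}$. For a field of positive characteristic, equal characters do not directly give isomorphism. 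So I would argue instead via an explicit $G$-equivariant bijection of $G$-sets where one exists. Failing that, I would use that both modules are direct sums of $\mathbb{K}[G/H]$ with stabilizers $H$ that are conjugate in $GL_n$ under the transpose-inverse automorphism, and that the counts of subgroups $H_v$ agree via rank equalities. This is the step I expect to require the most care, possibly appealing to Appendix-style Brauer character comparison as the paper does elsewhere.
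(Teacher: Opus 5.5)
Your first paragraph is the paper's proof. The paper simply asserts that $\epsilon(f)=\sum_v f(v)e_v$ is a surjective $G$-map with kernel $\mathfrak{n}$; your indicator polynomials $\prod_i\bigl(1-(x_i-v_i)^{q^m-1}\bigr)$ and the monomial bound $\dim R_{\mathbb{K}}(m,n)\le q^{mn}$ supply the details it omits. Your computation $(\sigma f)(v)=f(\sigma^{T}v)$ is also correct. With the convention $\sigma(x_j)=\sum_i\sigma_{ij}x_i$, the evaluation map is $G$-equivariant exactly for the permutation action $e_v\mapsto e_{(\sigma^{T})^{-1}v}$. The paper's one-line claim tacitly takes this as the action in the definition of $\mathbb{K}[\mathbb{F}_{q^m}^{n}]$: the permuted vectors are the points at which the $x_i$ are evaluated, so they live in $\mathbb{F}_{q^m}\otimes V^{*}$ with the contragredient action. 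Under that reading your first paragraph is already a complete proof.

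The genuine gap is your proposed repair, because it asks for something false. Let $X$ and $Y$ denote $\mathbb{F}_{q^m}^{n}$ with $\sigma$ acting by $v\mapsto\sigma v$ and by $v\mapsto(\sigma^{T})^{-1}v$ respectively. For $n\ge 3$ one has $\mathbb{K}[X]\not\cong\mathbb{K}[Y]$, already after restricting to the $p$-group $H=\{I+aE_{12}+bE_{13} : a,b\in\mathbb{F}_q\}$, where $E_{ij}$ are the matrix units.

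To see this, take any $H$-set $Z$. The rank of the natural map $\mathbb{K}[Z]^{H}\to\mathbb{K}[Z]_{H}$ is an isomorphism invariant of the $\mathbb{K}H$-module, and it equals $|Z^{H}|$. Indeed, on each orbit $O$ the orbit sum is sent to $|O|$ times a generator of the one-dimensional coinvariants, and $|O|$ is a power of $p$. But $X^{H}=\{v : v_2=v_3=0\}$ has $q^{m(n-2)}$ points, while $Y^{H}=\{v : v_1=0\}$ has $q^{m(n-1)}$ points.

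This rules out each of your fallback routes:
\begin{itemize}
\item No explicit $G$-equivariant bijection can exist.
\item Stabilizers that are related by the automorphism $\sigma\mapsto(\sigma^{T})^{-1}$, rather than conjugate in $G$, do not give isomorphic permutation modules.
\item A Brauer-character comparison yields only equal composition factors, not an isomorphism.
\end{itemize}

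The correct way to close the argument is to fix the convention as above. If you insist on the natural action $v\mapsto\sigma v$, the proposition holds only up to twisting by the automorphism $\sigma\mapsto(\sigma^{T})^{-1}$ of $GL_n(\mathbb{F}_q)$, or up to Brauer isomorphism. The Brauer isomorphism follows because Brauer characters of permutation modules are exactly the single-element fixed-point counts you compared. That weaker statement is all that \cref{cor:brauer_iso} uses.
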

\begin{proof}[Sketch of the proof]
    First, it is not hard to see that $R_{\mathbb{K}}(m, n)$ is indeed a $G$-representation, since $\mathfrak{n}$ is stable under the action of $G$. Consider the "evaluation" map 
    \begin{align*}
        \epsilon: \mathbb{K}[x_1, \ldots, x_n]&\to \mathbb{K}[\mathbb{F}_{q^m}^{n}]\\
        f(x_1, \ldots, x_n) &\mapsto \sum\limits_{v\in \mathbb{F}_{q^m}^{n}} f(v)\cdot e_v,
    \end{align*}
    which is a surjective $G$-map with kernel $\mathfrak{n}$ (since for any $a\in \mathbb{F}_{q^m}$, $a^{q^m}-a=0$.) Therefore, $R_{\mathbb{K}}(m, n)$ and $\mathbb{K}[\mathbb{F}_{q^m}^{n}]$ are isomorphic $\mathbb{K}G$-modules.
\end{proof}
The ring $R_{\mathbb{K}}(m, n)$ admits a filtration
\begin{equation*}
    F_0 \subseteq F_1 \subseteq \cdots \subseteq R_{\mathbb{K}}(m, n),
\end{equation*}
where $F_i$ is the image of the polynomials in $\mathbb{K}[x_1, \ldots, x_n]$ of degree at most $i$. Define the associated graded ring\index{associated graded ring}
\begin{equation*}
    \mathfrak{gr}R_{\mathbb{K}}(m, n) = F_0 \oplus \bigoplus\limits_{i=1}^{+\infty}F_{i}/F_{i-1},
\end{equation*}
with multiplication $F_{i}/F_{i-1}\times F_{j}/F_{j-1}\to F_{i+j}/F_{i+j-1}$ induced from the standard multiplication $F_{i}\times F_{j}\to F_{i+j}$.
\begin{proposition}[{\cite[Proposition 6.10]{Lewis_Reiner_Stanton_2017}}]\label{prop:grR_and_Q}
    When $\mathbb{K}$ contains $\mathbb{F}_{q^m}$, the rings $Q_{\mathbb{K}}(m, n)$ and $\mathfrak{gr}R_{\mathbb{K}}(m, n)$ are isomorphic as graded $\mathbb{K}G$-modules.
\end{proposition}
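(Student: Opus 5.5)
The plan is to build a degree-preserving, $G$-equivariant surjection $K[x_1,\ldots,x_n]\to\mathfrak{gr}R_{\mathbb{K}}(m,n)$ whose kernel contains $(x_1^{q^m},\ldots,x_n^{q^m})$, and then to conclude by comparing dimensions. Here $K$ denotes the field $\mathbb{K}$ of the statement.

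\textbf{The map.} Define $\Phi:K[x_1,\ldots,x_n]\to\mathfrak{gr}R_{\mathbb{K}}(m,n)$ on homogeneous pieces: a homogeneous $f$ of degree $i$ goes to the class of $f$ in $F_i/F_{i-1}$, with $F_{-1}=0$, and $\Phi$ is extended linearly. Since the product $F_i\times F_j\to F_{i+j}$ induces the multiplication on $\mathfrak{gr}$, $\Phi$ is a graded ring homomorphism. It is surjective because $F_i$ is spanned by images of polynomials of degree at most $i$. Modulo $F_{i-1}$, every such element agrees with the image of its degree-$i$ homogeneous component, so $F_i/F_{i-1}$ is spanned by images of homogeneous degree-$i$ polynomials.

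\textbf{Equivariance.} $G$ acts on $K[x_1,\ldots,x_n]$ by linear substitutions, which preserve degree. Hence $G$ preserves $\mathfrak{n}$ (noted in the proof of \cref{prop:ungraded_is_iso_to_R_k}) and each $F_i$. It therefore acts on each $F_i/F_{i-1}$, and $\Phi$ commutes with this action because $\sigma$ sends homogeneous polynomials of degree $i$ to homogeneous polynomials of degree $i$.

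\textbf{The kernel.} Take any homogeneous $g$ of degree $d$. In $R_{\mathbb{K}}(m,n)$ we have $x_j^{q^m}g\equiv x_j g$. The right-hand side has degree $d+1<d+q^m$ (as $m>0$), so the class of $x_j^{q^m}g$ lies in $F_{d+q^m-1}$. Thus $\Phi(x_j^{q^m}g)=0$, and $\Phi$ factors through a surjective graded $KG$-map
\[
\overline{\Phi}:Q_{\mathbb{K}}(m,n)\to\mathfrak{gr}R_{\mathbb{K}}(m,n).
\]

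\textbf{Dimension count.} This is the step needing care, and it is where the hypothesis that $K$ contains $\mathbb{F}_{q^m}$ enters. The monomials $x^{\alpha}$ with $0\le\alpha_i<q^m$ form a basis of $Q_{\mathbb{K}}(m,n)$, so $\dim_K Q_{\mathbb{K}}(m,n)=q^{mn}$. By \cref{prop:ungraded_is_iso_to_R_k}, $R_{\mathbb{K}}(m,n)\cong K[\mathbb{F}_{q^m}^n]$, which also has dimension $q^{mn}$. The filtration is finite and exhaustive: $F_i=R_{\mathbb{K}}(m,n)$ once $i\ge n(q^m-1)$, since each class is represented by a reduced polynomial. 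Hence
\[
\dim\mathfrak{gr}R_{\mathbb{K}}(m,n)=\sum_i\dim F_i/F_{i-1}=\dim R_{\mathbb{K}}(m,n)=q^{mn}.
\]
So $\overline{\Phi}$ is a surjection between $K$-vector spaces of the same finite dimension, hence an isomorphism of graded $KG$-modules.
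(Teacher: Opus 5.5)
Your proposal is correct and follows essentially the same route as the paper. Both use the graded surjection $\mathbb{K}[x_1,\ldots,x_n]\to\mathfrak{gr}R_{\mathbb{K}}(m,n)$, both use the fact that $x_j^{q^m}\equiv x_j$ drops filtration degree (so the ideal $(x_1^{q^m},\ldots,x_n^{q^m})$ is killed), and both compare dimensions $q^{mn}$; you justify that dimension count in more detail than the paper does. One small correction to your commentary: the hypothesis $\mathbb{F}_{q^m}\subseteq\mathbb{K}$ is not really where you say it enters, because $\dim R_{\mathbb{K}}(m,n)=q^{mn}$ holds over any field, since $R_{\mathbb{K}}(m,n)\cong\bigotimes_{i=1}^{n}\mathbb{K}[x_i]/(x_i^{q^m}-x_i)$.
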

\begin{proof}[Sketch of the proof]
    Consider the natural ring map
    \begin{align*}
        \phi: \mathbb{K}[x_1, \ldots, x_n] &\to \mathfrak{gr}R_{\mathbb{K}}(m, n)\\ 
        x_i &\mapsto [x_i] \in F_1/F_0.
    \end{align*}
    This map is surjective since $\{[x_1], \ldots, [x_n]\}$ generates $\mathfrak{gr}R_{\mathbb{K}}(m, n)$ as an algebra. The equality $x_i^{q^m} = x_i$ in $R_{\mathbb{K}}(m, n)$ implies that $[x_i]^{q^m} = [x_i]\in F_{q^m}/F_{q^m-1}$. As $m>0$, $q^m-1\geq 1$, which implies that $[x_i]\in F_{q^m-1}$, thus $[x_i]^{q^m} = 0$. Therefore, $\phi$ descends to a $G$-equivariant surjective ring map $\phi: Q_{\mathbb{K}}(m, n)\to \mathfrak{gr}R_{\mathbb{K}}(m, n)$. Observe that both spaces have dimension $q^{mn}$ as $\mathbb{K}$-vector spaces, hence $\phi$ is an isomorphism.
\end{proof}
We say that two $\mathbb{K}G$-modules $M_1$ and $M_2$ are \emph{Brauer isomorphic} if they have the same list of composition factors.
\begin{corollary}[{\cite[Corollary 6.11]{Lewis_Reiner_Stanton_2017}}]\label{cor:brauer_iso}
    When $\mathbb{K}$ contains $\mathbb{F}_q$, one has a Brauer isomorphism of $\mathbb{K}G$-modules
    \begin{equation*}
        Q_{\mathbb{K}}(m, n) \approx \mathbb{K}[\mathbb{F}_{q^m}^{n}].
    \end{equation*}
\end{corollary}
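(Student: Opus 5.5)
The plan is to combine \cref{prop:grR_and_Q} with \cref{prop:ungraded_is_iso_to_R_k} and the fact that passing to an associated graded module does not change composition factors. The one wrinkle is the field: those two results need $\mathbb{K}\supseteq\mathbb{F}_{q^m}$, while the corollary only assumes $\mathbb{K}\supseteq\mathbb{F}_q$. I will first prove the statement over a large field and then descend to $\mathbb{K}$.

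\emph{Step 1: the case $\mathbb{K}\supseteq\mathbb{F}_{q^m}$.}
\begin{itemize}
\item The filtration $F_0\subseteq F_1\subseteq\cdots$ of $R_{\mathbb{K}}(m,n)$ is $G$-stable, because $G$ acts by degree-preserving linear substitutions. It is also finite, so it reaches all of $R_{\mathbb{K}}(m,n)$ after finitely many steps.
\item By the Jordan--H\"older theorem, the composition factors of $R_{\mathbb{K}}(m,n)$ are the union, with multiplicity, of the composition factors of the quotients $F_i/F_{i-1}$. Hence $R_{\mathbb{K}}(m,n)\approx\mathfrak{gr}R_{\mathbb{K}}(m,n)$ as $\mathbb{K}G$-modules.
\item \cref{prop:grR_and_Q} gives $\mathfrak{gr}R_{\mathbb{K}}(m,n)\cong Q_{\mathbb{K}}(m,n)$.
\item \cref{prop:ungraded_is_iso_to_R_k} gives $R_{\mathbb{K}}(m,n)\cong\mathbb{K}[\mathbb{F}_{q^m}^n]$.
\item Chaining these gives $Q_{\mathbb{K}}(m,n)\approx\mathbb{K}[\mathbb{F}_{q^m}^n]$.
\end{itemize}

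\emph{Step 2: descent to an arbitrary $\mathbb{K}\supseteq\mathbb{F}_q$.} This is the main obstacle.

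Let $\mathbb{L}$ be a field containing both $\mathbb{K}$ and $\mathbb{F}_{q^m}$, for instance a compositum. Both modules are defined over $\mathbb{F}_q$, hence over $\mathbb{K}$, and extend scalars correctly:
\begin{equation*}
Q_{\mathbb{K}}(m,n)\otimes_{\mathbb{K}}\mathbb{L}=Q_{\mathbb{L}}(m,n),\qquad \mathbb{K}[\mathbb{F}_{q^m}^n]\otimes_{\mathbb{K}}\mathbb{L}=\mathbb{L}[\mathbb{F}_{q^m}^n].
\end{equation*}
By Step 1, the two $\mathbb{L}G$-modules have the same composition factors.

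I would use modular character theory (\cref{appendix:brauer_character}) to pull this back to $\mathbb{K}$:
\begin{itemize}
\item Two $\mathbb{K}G$-modules have the same composition factors if and only if they have the same Brauer character, and the Brauer character is unchanged by extending scalars.
\item A cleaner route: two $\mathbb{K}G$-modules are Brauer isomorphic if and only if, for every $g\in G$ and the $p$-regular part $g_{p'}$ of $g$, the characteristic polynomial of $g_{p'}$ agrees on the two modules.
\item Characteristic polynomials are invariant under field extension, so equality over $\mathbb{L}$ gives equality over $\mathbb{K}$.
\end{itemize}

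Equivalently, one can argue directly. Each simple $\mathbb{K}G$-module $M$ becomes, after tensoring with $\mathbb{L}$, a direct sum of Galois-conjugate simple modules. The modules $M\otimes\mathbb{L}$ for non-isomorphic simples $M$ have disjoint sets of constituents, which follows from linear independence of Brauer characters over the splitting field. So the multiplicities over $\mathbb{K}$ can be read off from those over $\mathbb{L}$.

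Making this last linear-independence point rigorous is where the real work lies. I would cite the standard Brauer character theory from the appendix rather than redo it.
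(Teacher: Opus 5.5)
Your proposal is correct and matches the paper's proof. You reduce to $\mathbb{K}\supseteq\mathbb{F}_{q^m}$ using the invariance of Brauer characters (equivalently, characteristic polynomials of $p$-regular elements) under extension of scalars, and then chain $\mathbb{K}[\mathbb{F}_{q^m}^{n}]\cong R_{\mathbb{K}}(m,n)\approx\mathfrak{gr}R_{\mathbb{K}}(m,n)\cong Q_{\mathbb{K}}(m,n)$, additionally supplying the $G$-stability of the filtration and the Jordan--H\"older step that the paper calls ``not hard to see''. In your alternative direct descent, the disjointness of constituents needs no Brauer character theory: restricted to $\mathbb{K}G$, every composition factor of $M\otimes_{\mathbb{K}}\mathbb{L}$ is a subquotient of $M^{\oplus[\mathbb{L}:\mathbb{K}]}$ (finite since $\mathbb{L}=\mathbb{K}\mathbb{F}_{q^m}$), so a constituent shared with $M'\otimes_{\mathbb{K}}\mathbb{L}$ forces $M\cong M'$.
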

\begin{proof}[Sketch of the proof]
    Without loss of generality, assume that $\mathbb{K}$ contains $\mathbb{F}_{q^m}$ (since two $\mathbb{K}G$-modules are Brauer isomorphic if and only if they are Brauer isomorphic after extending scalars.) It is not hard to see that $R_{\mathbb{K}}(m, n)$ and $\mathfrak{gr}R_{\mathbb{K}}(m, n)$ have the same composition series, so by \cref{prop:ungraded_is_iso_to_R_k} and \cref{prop:grR_and_Q}, we have a sequence of Brauer isomorphic $\mathbb{K}G$-modules
    \begin{equation*}
        \mathbb{K}[\mathbb{F}_{q^m}^{n}]\cong R_{\mathbb{K}}(m, n) \approx \mathfrak{gr}R_{\mathbb{K}}(m, n) \cong Q_{\mathbb{K}}(m, n).\qedhere
    \end{equation*}
\end{proof}
The claims in \cref{cor:brauer_iso} are clarified in \Cref{appendix:brauer_character}, where we introduce the character theory of modular representations.
\section{Cofixed Spaces}\label{sec:cofixed_spaces}
Let $S = \mathbb{F}_q[x_1, \ldots, x_n]$ and $G$ be a subgroup of $GL_n(\mathbb{F}_q)$ acting on $S$. In this section, we investigate the \emph{cofixed space} with respect to this action, which is the largest fixed quotient of $S$. 
\begin{definition}\label{def:cofixed_space}\index{cofixed space}
    Let $V$ be an $\mathbb{F}_q G$-module, and $V_{(G)}$ be the $\mathbb{F}_q$-vector space generated by the set
    \begin{equation*}
        \{\sigma(v) - v\mid \sigma\in G, v\in V\}.
    \end{equation*}
    The cofixed space of $V$ with respect to $G$ is the quotient space
    \begin{equation*}
        V_G = V/V_{(G)}.
    \end{equation*}
\end{definition}
\begin{remark}
\begin{enumerate}
    \item 
Consider the cofixed space $S_G$, it is not only an $\mathbb{F}_q$-vector space, but also an $S^G$-module, since $M_G$ is stable under multiplication by a $G$-invariant polynomial.
\item It is known \cite[Proposition 5.7]{Lewis_Reiner_Stanton_2017} that $S_G$ is an $S^G$-module of rank $1$, that is, $Frac(S^G)\otimes_{S^G}S_G$ is a one-dimensional $Frac(S^G)$-vector space.
    \item For $G = GL_n(\mathbb{F}_q)$ (or $G$ is a parabolic subgroup of $GL_n(\mathbb{F}_q)$), it is an open question to determine an $\mathbb{F}_q$-basis of $S_G$, or furthermore, the $S^G$-module structure of $S_G$. In \cite[Appendix]{Lewis_Reiner_Stanton_2017}, the bivariate case ($n=2$) of this question has been investigated.
\end{enumerate}
\end{remark}
We give the proof of a structural property of $S_G$, which shows that certain elements of $S^G$ are non-zero divisors of $S_G$.
\begin{proposition}[{\cite[Lemma 6.3]{karagueguzian_symonds_examples}}]\label{prop:injectivity_cofix}
    Consider a homogeneous element $u\in S^G$ of degree $d$, such that the coefficient of $x_n^d$ in $u$ is nonzero. Then the multiplication map
    \begin{align*}
        \mu_u: S_G &\to S_G\\
        [f]&\mapsto [uf]
    \end{align*}
    is injective.
\end{proposition}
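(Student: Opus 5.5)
The plan is to dualize and turn injectivity of $\mu_u$ on $S_G$ into a surjectivity statement for invariants of the graded dual of $S$. Work degree by degree, since everything is graded and $\mu_u$ raises degree by $d$. Let $\Gamma=\bigoplus_k (S_k)^{*}$ be the graded dual, which is the divided power algebra, with the contragredient $G$-action. Make $S$ act on $\Gamma$ by contraction: $(s\cdot\phi)(g)=\phi(sg)$. This action is $G$-equivariant, since $\sigma(s\cdot \phi)=\sigma(s)\cdot\sigma(\phi)$.

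\textbf{Step 1 (dual reformulation).} Since $(S_G)_k=S_k/(S_{(G)})_k$, its dual is $((S_k)^{*})^G=\Gamma_k^G$. The dual of $\mu_u\colon (S_G)_k\to(S_G)_{k+d}$ is the contraction $u\cdot-\colon \Gamma^G_{k+d}\to\Gamma^G_k$, which is well defined because $u\in S^G$. All spaces are finite-dimensional, so injectivity of $\mu_u$ in every degree is equivalent to surjectivity of $u\cdot-\colon\Gamma^G\to\Gamma^G$. Concretely, for every $G$-invariant functional $\psi$ on $S_k$ we must find a $G$-invariant functional $\phi$ on $S_{k+d}$ with $\phi(ug)=\psi(g)$ for all $g\in S_k$.

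\textbf{Step 2 (non-equivariant surjectivity, using the hypothesis).} After rescaling, $u=x_n^d+(\text{terms of lower }x_n\text{-degree})$. Division by $u$ as a monic polynomial in $x_n$ over $\mathbb{F}_q[x_1,\ldots,x_{n-1}]$ gives a graded decomposition $S=uS\oplus M$, where $M$ is spanned by the monomials whose $x_n$-exponent is less than $d$. Multiplication by $u$ is injective on $S$. Hence any $\psi$ extends to some $\phi_0$, for instance $\phi_0(uh+r)=\psi(h)$, and contraction by $u$ is surjective on $\Gamma$. Equivalently, there is a short exact sequence of $G$-modules
\begin{equation*}
0\to K\to\Gamma\xrightarrow{u\cdot}\Gamma\to 0,\qquad K=(S/uS)^{*}.
\end{equation*}

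\textbf{Step 3 (making the lift invariant).} Taking $G$-invariants of this sequence gives $\Gamma^G\xrightarrow{u\cdot}\Gamma^G\to H^1(G,K)\to H^1(G,\Gamma)$. So it suffices to show that $H^1(G,K)\to H^1(G,\Gamma)$ is injective, i.e.\ the cocycle $\sigma\mapsto\sigma\phi_0-\phi_0\in K$ is a coboundary in $K$ as soon as it is one in $\Gamma$. I expect this to be the main obstacle. Note that it is vacuous when $|G|$ is prime to $p$, and one can reduce to a Sylow $p$-subgroup by a transfer argument, since corestrication after restriction is multiplication by an index prime to $p$.

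For the $p$-group case I would instead try the direct primal argument. Suppose $uf=\sum_\sigma(\sigma(g_\sigma)-g_\sigma)$. Write $g_\sigma=uh_\sigma+r_\sigma$ with $r_\sigma\in M$. Since $u$ is invariant, $\sigma(uh_\sigma)-uh_\sigma=u(\sigma(h_\sigma)-h_\sigma)$. Thus
\begin{equation*}
u\Big(f-\sum_\sigma(\sigma h_\sigma-h_\sigma)\Big)=\sum_\sigma(\sigma r_\sigma-r_\sigma).
\end{equation*}
It then remains to show that an element of $uS$ of the form $\sum(\sigma r_\sigma-r_\sigma)$ with $r_\sigma\in M$ equals $u$ times an element of $S_{(G)}$. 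I would attempt this by induction on the maximal lex-leading monomial among the $r_\sigma$ (with $x_n$ ranked highest), exploiting that $\sigma$ is linear, so the $x_n$-degree of $\sigma(r_\sigma)$ is still less than $d$ in suitable coordinates. Showing that this induction terminates is the delicate point.
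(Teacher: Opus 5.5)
\textbf{There is a genuine gap.} Your Steps 1--2 and the cohomological reformulation in Step 3 are correct, and so is the transfer reduction to a Sylow $p$-subgroup $P$. But the proposal stops at the one step that carries all the content, and the lex-induction you sketch for it cannot work as posed. Linearity of $\sigma$ does not keep the $x_n$-degree below $d$ (e.g.\ $x_1\mapsto x_1+x_n$), and passing to coordinates in which $P$ is triangular may destroy the hypothesis that $u$ is monic in $x_n$.

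In fact the claim you reduce to is false for a general $p$-group. Take $n=2$, $G=\{\sigma_a\colon x_1\mapsto x_1+ax_2,\ x_2\mapsto x_2\mid a\in\mathbb{F}_q\}$ and $u=x_2$, so $d=1$ and $M=\mathbb{F}_q[x_1]$. Then $\sigma_1(x_1)-x_1=x_2=u\cdot 1$ with $1\notin S_{(G)}$. Indeed $\mu_u([1])=[x_2]=0$, although $[1]\neq 0$ in $(S_G)_0=\mathbb{F}_q$.

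So an extra input is indispensable: a Sylow $p$-subgroup $U_0$ of $G$ must preserve the $x_n$-degree, i.e.\ lie in the upper unitriangular group $U$. This is exactly what the paper's proof uses when it takes $U_0\subseteq U$ ``without loss of generality''. It holds whenever $G\supseteq U$, e.g.\ for $GL_n(\mathbb{F}_q)$ and the parabolic subgroups. The example shows it cannot always be arranged by a change of basis.

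\textbf{How the missing step closes.} Given that input, the step is one line. Each $\sigma\in U$ stabilizes $\mathrm{span}_{\mathbb{F}_q}\{x_1,\ldots,x_{n-1}\}$ and sends $x_n$ to $x_n+w$. Hence your $M$ (the paper's $S[d-1]$) is a $U_0$-submodule, and $S=uS\oplus M$ is a decomposition of $\mathbb{F}_qU_0$-modules. In your identity $u\bigl(f-\sum_\sigma(\sigma(h_\sigma)-h_\sigma)\bigr)=\sum_\sigma(\sigma(r_\sigma)-r_\sigma)$, the right side then lies in $M\cap uS=0$. So $f\in S_{(U_0)}$, and your transfer argument lifts this to $G$. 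Equivalently, the sequence $0\to K\to\Gamma\to\Gamma\to 0$ splits $U_0$-equivariantly, so $H^1(U_0,K)\to H^1(U_0,\Gamma)$ is injective.

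\textbf{Comparison with the paper.} The paper passes from $U_0$ to $G$ on the primal side. By the relative Maschke theorem ($[G:U_0]$ invertible), the $U_0$-splitting gives a $G$-stable complement of $uS$. Applying the $G$-equivariant projection $\pi\colon S\to uS$ to $uf=\sum_\sigma(\sigma(f_\sigma)-f_\sigma)$ yields $uf=\sum_\sigma(\sigma(\pi f_\sigma)-\pi f_\sigma)$ with $\pi f_\sigma\in uS$. Dividing by $u$ gives $f\in S_{(G)}$. Your duality/cohomology framing is equivalent but not needed; what is missing is the $U_0$-stable complement of $uS$, together with the hypothesis that makes it exist.
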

\begin{proof}
    Let $U_0$ be a Sylow $p$-subgroup of $G$, then since $u\in S^G$, we can assume without loss of generality that $U_0$ is a subgroup of the unipotent group $U\subseteq GL_n(\mathbb{F}_q)$. Define
    \begin{equation*}
        S[d-1] = span_{\mathbb{F}_q}\{x^{\alpha}\mid \alpha_n\leq d-1\}.
    \end{equation*}
    Then, $S[d-1]$ is an $\mathbb{F}_q U$-submodule of $S$. Furthermore, we show that there is a decomposition of $S$ as a direct sum of $\mathbb{F}_q U$-modules 
    \begin{equation*}
        S = uS\oplus S[d-1].
    \end{equation*}
    Obviously, $uS$ and $S[d-1]$ are $\mathbb{F}_q U$-modules; because $u\notin S[d-1]$, for any nonzero $f$, $uf\notin S[d-1]$ by direct calculation. Moreover, $u$ is a monic polynomial with respect to $x_n$, hence any polynomial $f$ can be written as
    \begin{equation}\label{eq:U_decom}
        f = f_1 \cdot u + f_2, 
    \end{equation}
    where $f_1, f_2\in S$, and $f_2$ is zero or is of $x_n$-degree at most $(d-1)$. Consequently, \eqref{eq:U_decom} holds. Since $[G:U]$ is invertible in $\mathbb{F}_q$, by a generalization of Maschke's theorem, we deduce that there exists a $\mathbb{F}_q G$-submodule $S_0$ of $S$, such that $S = uS \oplus S_0$. Let $\pi: S\to uS$ be the projection with respect to this direct sum decomposition; it is obvious that $\pi$ is $G$-equivariant. Now, suppose that for some $f\in S$, $uf\in S_{(G)}$, then we have
    \begin{equation*}
        uf = \sum\limits_{\sigma\in G}(\sigma(f_{\sigma}) - f_{\sigma}),
    \end{equation*}
    where $f_{\sigma}\in S, \text{ for all } \sigma\in G$. Applying $\pi$ to both sides, note that $uf\in uS$, we have
    \begin{align*}
        uf = \pi(uf) &= \sum\limits_{\sigma\in G}\pi(\sigma(f_{\sigma}) - f_{\sigma})\\
        &=\sum\limits_{\sigma\in G}(\sigma(\pi(f_{\sigma})) - \pi(f_{\sigma})).\\
    \end{align*}
    Since $\pi(f_{\sigma})\in uS$ for all $\sigma\in G$, dividing both sides of the above equation by $u$, we conclude that $f\in S_{(G)}$. 
\end{proof}
This result is useful when one considers the module structure of $S_G$ as an $S^G$-module; for example, when $G = GL_n(\mathbb{F}_q)$, the Dickson invariant $Q_{n, n-1}$ satisfies the condition of \cref{prop:injectivity_cofix}.\smallbreak
To conclude this section, we show the connection of the cofixed space $S_G$ with the invariant spaces of truncated polynomial rings (see \Cref{sec:truncated_rings}.)
\begin{proposition}[{\cite[Corollary 3.5]{Lewis_Reiner_Stanton_2017}}]\label{prop:dual_hilbert_series}
    For any subgroup $G\subseteq GL_n(\mathbb{F}_q)$, we have
    \begin{equation}\label{eq:limit_hilbert}
        Hilb(S_G, t) = \lim\limits_{m\to +\infty}t^{n(q^m-1)}Hilb(Q(m, n)^G, t^{-1}).
    \end{equation}
    The limit at the right-hand side of \eqref{eq:limit_hilbert} is understood as follows. For each degree $d \in \mathbb{Z}$, the coefficient of $t^d$ in the polynomial $t^{n(q^m-1)}Hilb(Q(m, n)^G, t^{-1})$ is equal to the coefficient of $t^d$ in the formal power series $Hilb(S_G, t)$ for all large enough $m$.
\end{proposition}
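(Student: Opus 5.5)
The plan is to use the Poincaré duality of \cref{cor:poincare_duality} to turn invariants of $Q(m,n)$ into coinvariants, and then compare the cofixed space of $Q(m,n)$ with $S_G$ in a fixed degree once $m$ is large.

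First, fix $m$ and a degree $d'$, and put $d = n(q^m-1) - d'$. By \cref{cor:poincare_duality}, $Q(m,n)_{d'} \cong (Q(m,n)_d)^*$ as $G$-modules. For any finite-dimensional $\mathbb{F}_qG$-module $M$, the invariants of the dual are the dual of the coinvariants: $(M^*)^G \cong (M_G)^*$. Indeed, a functional is $G$-fixed exactly when it vanishes on $M_{(G)}$, so $(M^*)^G = (M/M_{(G)})^*$. Applying this to $M = Q(m,n)_d$ gives
\begin{equation*}
\dim (Q(m,n)^G)_{d'} = \dim (Q(m,n)_d)_G .
\end{equation*}
Consequently the coefficient of $t^{d}$ in $t^{n(q^m-1)}Hilb(Q(m,n)^G,t^{-1})$ equals $\dim (Q(m,n)_G)_d$. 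It also shows that taking coinvariants is compatible with the grading.

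Second, compare $(Q(m,n)_G)_d$ with $(S_G)_d$ for fixed $d$. The quotient map $S\to Q(m,n)$ is $G$-equivariant and surjective, and taking cofixed spaces is right exact. It therefore induces a surjection $(S_G)_d \to (Q(m,n)_G)_d$ with kernel equal to the image of $(I_{n,m})_d$. Once $q^m > d$, the ideal $I_{n,m}$ has no nonzero elements of degree $d$, so $(I_{n,m})_d=0$ and $S_d = Q(m,n)_d$ as $G$-modules. Hence $(S_G)_d \cong (Q(m,n)_d)_G$ for all $m$ with $q^m>d$. This is exactly the stabilization statement in the sense explained after \eqref{eq:limit_hilbert}.

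The only point needing care is the identification in the first step. The cofixed space must be taken degreewise, and the dual must carry the contragredient action. Both are straightforward because $G$ preserves degrees, so $Q(m,n)_{(G)}$ is a graded subspace. For negative $d$ the coefficients on both sides are zero, so the claim holds for every $d\in\mathbb{Z}$.
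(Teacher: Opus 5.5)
Your proposal is correct and follows essentially the same route as the paper. The paper likewise identifies $S_d\cong Q(m,n)_d$ as $G$-modules for $d\le q^m-1$, and then combines \cref{cor:poincare_duality} with $(V_G)^*\cong (V^*)^G$ degreewise to get $t^{n(q^m-1)}Hilb(Q(m,n)^G,t^{-1})=Hilb(Q(m,n)_G,t)$; you simply carry out these two steps in the opposite order.
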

\begin{proof}
    Fix a degree $d\geq 0$, if $d\leq q^m-1$, since $I_{n, m}$ does not contain an element of degree at most $d$, we have the obvious isomorphism of $G$-representations
    \begin{equation*}
        S_d \cong Q(m, n)_d,
    \end{equation*}
    which induces an isomorphism of cofixed spaces $(S_G)_d \cong (Q(m, n)_G)_d$. Therefore,
    \begin{equation*}
        Hilb(S_G, t) = \lim\limits_{m\to +\infty}Hilb(Q(m, n)_G, t).
    \end{equation*}
    It suffices to prove that
    \begin{equation}\label{eq:hilb_dual}
        t^{n(q^m-1)}Hilb(Q(m, n)^G, t^{-1}) = Hilb(Q(m, n)_G, t).
    \end{equation}
    This follows from \cref{cor:poincare_duality}; indeed, for any $G$-module $V$, one has an isomorphism of vector spaces
    \begin{equation*}
        (V_G)^{*} \cong (V^{*})^G.
    \end{equation*}
    By the universal property of quotient spaces, a functional $f\in V^{*}$ descends to $(V_G)^{*}$ if and only if $f(\sigma v) = f(v), \text{ for all } \sigma\in G, v\in V$, which is equivalent to the claim that $f\in (V^{*})^G$. Hence, the two spaces given above are isomorphic. By \cref{cor:poincare_duality}, since $Q(m, n)_d$ is finite-dimensional, there is an isomorphism of $\mathbb{F}_q$-vector spaces
    \begin{equation*}
        (Q(m, n)_G)_d \cong ((Q(m, n)_d)_G)^{*}\cong ((Q(m, n)_d)^{*})^G \cong Q(m, n)_{n(q^m-1)-d}^G, 
    \end{equation*}
    from which \eqref{eq:hilb_dual} follows.
\end{proof}
\chapter{Invariants of Truncated Polynomial Rings}\label{chap:trucated_invariants}
As shown in \Cref{sec:truncated_rings} and \Cref{sec:cofixed_spaces}, determining an $\mathbb{F}_q$-basis for the invariant rings $Q(m, n)^G$ yields information about the cofixed space $S_B$. In this chapter, we present an explicit construction of bases for invariant spaces of truncated polynomial algebras, originating from the papers \cite{Ha_Hai_Nghia_2024, ha_hai_nghia_GL, ha_hai_nghia_parabolic_for_low_rank}. Following the novel construction of bases for these invariant rings, new problems arise; we examine them in more detail in subsequent sections.   
\section{Lewis-Reiner-Stanton Conjecture}\label{sec:the_conj}
\subsection{Invariants Under the Action of Borel Subgroup}
First, we recall \cref{conj:parabolic_conj}. 
\begingroup
\def\thetheorem{\ref{conj:parabolic_conj}}
\begin{conjecture}[{\cite[Parabolic conjecture 1.5]{Lewis_Reiner_Stanton_2017}}]
    For $m\geq 0$ and $\alpha$ a composition of $n$, the invariant ring $Q(m, n)^{P_{\alpha}}$ has Hilbert series
    \begin{equation*}
        Hilb(Q(m, n)^{P_{\alpha}}, t) = \sum\limits_{\substack{\beta\leq \alpha\\ \beta\leq m}}t^{e(m, \alpha, \beta)}\genfrac{[}{]}{0pt}{}{m}{\beta, m-|\beta|}_{q, t},
    \end{equation*}
    where 
    \begin{equation*}
        e(m, \alpha, \beta) = \sum\limits_{i=1}^{\ell}(\alpha_i - \beta_i)(q^m-q^{B_i}).
    \end{equation*}
\end{conjecture}
\addtocounter{theorem}{-1}
\endgroup
Following \cite{Ha_Hai_Nghia_2024, ha_hai_nghia_GL, ha_hai_nghia_parabolic_for_low_rank}, we define the \emph{delta operators}, which is a crucial operator used in the construction of bases for invariant rings of truncated algebras.
\begin{definition}[{\cite[Definition 1.2]{Ha_Hai_Nghia_2024}}]\label{def:delta_operator}\index{delta operators}
    Let $a, b, c$ be nonnegative integers, such that $1\leq a\leq c+1$. Define an operator
    \begin{equation*}
        \delta_{a;b}:\mathbb{F}_q(x_1, \ldots, x_c)\to \mathbb{F}_q(x_1, \ldots, x_{c+1})
    \end{equation*}
    from the ring of rational functions in $c$ variables to the ring of rational functions in $(c+1)$ variables as follows: If $f\in \mathbb{F}_q(x_1, \ldots, x_c)$ then $\delta_{a;b}(f)$ is defined as the quotient
    \begin{equation*}
        \delta_{a;b}(f)=\dfrac{\begin{vmatrix}
            x_1 & \cdots & x_a\\
            x_1^q & \cdots & x_a^q\\
            \vdots & \ddots & \vdots\\
            x_1^{q^{a-2}} & \cdots  & x_a^{q^{a-2}}\\
            x_1^{q^b}f(\widehat{x_1}, x_2, \ldots, x_{c+1}) & \cdots  & x_a^{q^b}f(x_1, \ldots, \widehat{x_a}, \ldots, x_{c+1})
        \end{vmatrix}}{\begin{vmatrix}
            x_1 & \cdots & x_a\\
            x_1^q & \cdots & x_a^q\\
            \vdots & \ddots & \vdots\\
            x_1^{q^{a-2}} & \cdots  & x_a^{q^{a-2}}\\
            x_1^{q^{a-1}} & \cdots  & x_a^{q^{a-1}}\\
        \end{vmatrix}}.
    \end{equation*}
    Here, the hat signifies that the corresponding entry is omitted.
\end{definition}
\begin{remark}
    \begin{enumerate}
        \item In particular, one can see that for $b-a+1$ and $f\equiv 1$, 
\begin{equation*}
    \delta_{a;b}(1)=H_{b-a+1}(x_1, \ldots, x_a),
\end{equation*}
the right-hand side of which is a Schur polynomial in $\mathcal{D}_n$ (see \cref{subsection:schur_7}.
\item In subsequent calculations, we use the notation
\begin{equation*}
    L(x_1, \ldots, x_n) =\begin{vmatrix}
            x_1 & \cdots & x_a\\
            x_1^q & \cdots & x_a^q\\
            \vdots & \ddots & \vdots\\
            x_1^{q^{a-2}} & \cdots  & x_a^{q^{a-2}}\\
            x_1^{q^{a-1}} & \cdots  & x_a^{q^{a-1}}\\
        \end{vmatrix} 
\end{equation*}
for the denominator in the definition of $\delta$ operators.
\end{enumerate}
\end{remark}
\begin{definition}[{\cite[Definitions 1.3]{Ha_Hai_Nghia_2024}}]
    For each $a$, let $D_a = \delta_{a;a}(1)$. For two sequences $I = (i_1, \ldots, i_k)$ and $J = (j_1, \ldots, j_k)$ of nonnegative integers, define the rational function
    \begin{equation*}
        Y_b(I;J) = \delta_{1;b}^{i_1}(D_1^{j_1}\delta_{2;b}^{j_2}(D_2^{j_2}(\cdots(\delta_{k;b}(D_k^{j_k}))\cdots))).
    \end{equation*}
    Define the Frobenius-like operator\index{Frobenius-like operator!for Borel invariants} $\Phi$ by $\Phi Y_b(I; J) = Y_{b+1}(0, I; 0; J)$.
\end{definition}
An explicit basis for $Q(m, n)^B$, where $B$ is the Borel subgroup of $GL_n(\mathbb{F}_q)$, is given in the following theorem.
\begin{theorem}[{\cite[Theorem 1.6]{Ha_Hai_Nghia_2024}}]\label{thm:hhn_borel}
    For $m\geq 0$, $n\geq 1$, define
    \begin{equation*}
        \mathcal{B}_m(n) = \bigsqcup\limits_{k=1}^{\min(n, m+1)}\mathcal{B}_m^{k}(n),
    \end{equation*}
    where $\mathcal{B}_m^{k}(n)$ denotes the set consisting of all elements $Y_m(I; J)$ for which the sequences $I = (i_1, \ldots, i_k)$ and $J= (j_1, \ldots, j_k)$ satisfy the following conditions
    \begin{equation*}
        \begin{cases}
            i_1+\cdots + i_k = n-k,\\
            j_1 <\dfrac{q^m-1}{q-1}, \ldots, j_{k-1}<\dfrac{q^{m-k+2}-1}{q-1}, j_k\leq \dfrac{q^{m-k+1}-1}{q-1}.
        \end{cases}
    \end{equation*}
    Then $\mathcal{B}_m(n)$ is an $\mathbb{F}_q$-basis for $Q(m, n)^B$. 
\end{theorem}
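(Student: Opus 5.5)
Our plan follows the strategy of \cite{Ha_Hai_Nghia_2024}: we show that every element of $\mathcal{B}_m(n)$ is a genuine $B$-invariant polynomial modulo $I_{n,m}$, that these elements are linearly independent in $Q(m,n)$, and that their number in each degree matches the conjectured Hilbert series of \cref{conj:parabolic_conj} for $\alpha=(1,\ldots,1)$. The conclusion then comes from an upper bound on $\dim Q(m,n)^B_d$ in each degree.

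\textbf{Polynomiality and invariance.} We first show by induction on $k$ that each $Y_m(I;J)$ is a polynomial. The key fact is that $\delta_{a;b}$ sends polynomials to polynomials. The numerator of $\delta_{a;b}(f)$ is alternating under the substitutions $x_i\mapsto x_i+\lambda x_j$ ($i,j\le a$), so by the divisibility argument in the proof of \cref{lemma:f_W} it is divisible by every reduced vector, hence by $L(x_1,\ldots,x_a)$. Next we check that $\delta_{a;b}$ carries $B$-invariants in $c$ variables to $B$-invariants in $c+1$ variables. Upper triangular substitutions act on the first $a$ columns through $\det$, and the same factor appears in numerator and denominator; one must also check that deleting $x_i$ is compatible with the action. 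This gives $Y_m(I;J)\in S^B$, so its image lies in $Q(m,n)^B$.

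\textbf{Linear independence.} We will compute leading monomials in lexicographic order $x_{n}>\cdots>x_1$ (or the order used in \cite{Ha_Hai_Nghia_2024}). The operator $\delta_{a;b}$ has an explicit leading term: the diagonal product of the numerator divided by that of $L$, multiplied by the shifted leading term of $f$. Iterating this, the leading monomial of $Y_m(I;J)$ is a product of powers of $x_i$ whose exponents determine $(I,J)$ uniquely. This is a base-$q$ digit argument similar to the one in the proof of \cref{prop:stanton_basis}. The bounds $j_s<\frac{q^{m-s+1}-1}{q-1}$ and $j_k\le\frac{q^{m-k+1}-1}{q-1}$ are exactly what keep every exponent below $q^m$, so the leading monomials survive in $Q(m,n)$ and are pairwise distinct. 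Distinct surviving leading monomials then give linear independence.

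\textbf{Spanning.} For spanning, we compare Hilbert series. We sum $t^{\deg Y_m(I;J)}$ over $\mathcal{B}_m(n)$ and identify the result with
\[
\sum_{\beta}t^{e(m,\alpha,\beta)}\genfrac{[}{]}{0pt}{}{m}{\beta,m-|\beta|}_{q,t}.
\]
To do this we group by $k$ and the positions of $I$, and use the telescoping/geometric-sum manipulations from \cref{prop:stanton_basis}. It then remains to bound $\dim Q(m,n)^B_d$ from above by this count. We plan a recursion on $n$: restricting to the last variable, an invariant $f$ decomposes as $f=\sum_j x_n^{j}g_j$ with $g_j$ invariant under the Borel subgroup in $n-1$ variables. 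The lowest nonzero term gives an injection into a direct sum of smaller invariant spaces, in the style of the filtration in \cref{prop:injectivity_cofix}. The resulting recursion matches the recursion satisfied by the sizes of $\mathcal{B}_m^k(n)$, built from $\mathcal{B}_{m}(n-1)$ via $\delta_{1;m}$ and the Frobenius-like $\Phi$.

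The upper bound is the main obstacle, since linear independence and invariance are largely mechanical. Our first attempt will be an induction on $n$ using the restriction map $Q(m,n)^B\to Q(m,n-1)^{B'}$ obtained by setting $x_n=0$, together with an identification of its kernel, which is divisible by $V_n$-type factors, with a shifted copy of a smaller invariant space. The exact dimension count then closes the argument.
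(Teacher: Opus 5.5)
The three-part outline (invariance, leading monomials, spanning) is reasonable, but two steps fail as written.

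First, the invariance step rests on two false claims. The operator $\delta_{a;b}$ does \emph{not} send polynomials to polynomials. For $q>2$, the numerator of $\delta_{2;b}(x_1)$ is $x_1^2x_2^{q^b}-x_1^{q^b}x_2^2$. At $x_2=cx_1$ with $c\in\mathbb{F}_q^\times\setminus\{1\}$ it equals $(c-c^2)x_1^{q^b+2}\neq 0$, so it is not divisible by $L(x_1,x_2)$. Your alternation argument breaks because the last-row entries $x_i^{q^b}f(\ldots,\widehat{x_i},\ldots)$ are not $\mathbb{F}_q$-linear in $x_i$ and they involve the other columns' variables. Column operations therefore do not correspond to substitutions as they do in \cref{lemma:f_W}. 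Polynomiality is governed by the functional equation of \cref{lemma:functional_eq}. For $Y_m(I;J)$ it comes from $\delta_{a+1;b}\delta_{a;b}=0$ (\cref{lemma:chain_complex}), combined with \cref{cor:lower_delta_is_easier} and with \cref{lemma:polynomiality_D_mod} for the Dickson factors $D_s^{j_s}$, as in \cref{cor:Y_is_polynomial}.

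Likewise, $Y_m(I;J)\notin S^B$ in general. Already $Y_m((1);(1))=\delta_{1;m}(D_1)=x_1^{q^m-1}x_2^{q-1}$ is moved by $x_2\mapsto x_2+x_1$ in $S$; it is fixed only modulo $x_1^{q^m}$. This is not a technicality: $Q(m,n)^B$ is in general strictly larger than the image of $S^B$. Invariance holds only in $Q(m,n)$, and it must be proved by a divisibility computation that uses $b=m$. For each transvection $x_j\mapsto x_j+ax_i$, one shows that $\sigma(N)-N$ is divisible by $x_i^{q^m}$ times the power of $x_i$ dividing $L$, exactly as in \cref{lemma:U_induction} (HHN, Prop.~4.3).

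Second, the spanning half, which carries the real content, is not proved, and the mechanism you sketch does not give the right bound. With the paper's convention $B$ is upper triangular, so the $B$-stable ideal is $(x_1)$, not $(x_n)$. Setting $x_n=0$ is not equivariant, and the coefficients of $x_n^j$ are not invariant. Switch to $x_1$ and send an invariant of $x_1$-adic order $k$ to its lowest coefficient $f_1\in Q(m,n-1)^B$; the torus gives $(q-1)\mid k$. Even so, this only yields $\dim Q(m,n)^B\le \left(1+\frac{q^m-1}{q-1}\right)\dim Q(m,n-1)^B$. That is far weaker than $|\mathcal{B}_m(n)|=|\mathcal{B}_m(n-1)|+\frac{q^m-1}{q-1}|\mathcal{B}_{m-1}(n-1)|$.

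The missing key lemma is a Frobenius property. If $k<q^m-1$, then $f_1=g^q$ with $g\in Q(m-1,n-1)^B$. Proving this needs the torus as well as the transvections $x_j\mapsto x_j+ax_1$ when $q>p$; for $U$ alone it fails, e.g.\ $x_1^{q^m-p}x_2^p\in Q(m,2)^U$. If instead $k=q^m-1$, then $f=\delta_{1;m}(f_1)$. Given this lemma and $\Phi(Y)(0,x_2,\ldots,x_n)=Y(x_2,\ldots,x_n)^q$, spanning follows directly by leading-term reduction in degrevlex with $x_1<\cdots<x_n$. You subtract $c\,\delta_{1;m}(Y)$ or $c\,D_1^{k/(q-1)}\Phi(Y)$, and no Hilbert-series comparison is needed. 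The paper quotes this theorem from HHN without proof, but this is exactly how it argues the unipotent analogue (\cref{thm: q=p_case}).

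The same recursive identities, $\mathrm{lm}(\delta_{1;m}(Y))=x_1^{q^m-1}\,\mathrm{lm}(Y)(x_2,\ldots,x_n)$ and $\mathrm{lm}(D_1^a\Phi(Y))=x_1^{(q-1)a}\,\mathrm{lm}(Y)(x_2,\ldots,x_n)^q$, are what make your linear-independence step rigorous. Your ``diagonal product'' description of the leading term of $\delta_{a;b}(f)$ is unjustified, since $f$ is evaluated on different variable sets in different columns.
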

\begin{remark}
It is a simple counting problem that \cref{thm:hhn_borel} implies \cref{conj:parabolic_conj} in the Borel case. Moreover, in \cite{ha_hai_nghia_parabolic_for_low_rank}, the authors conjectured a refinement of \cref{conj:parabolic_conj} by proposing an explicit basis for the invariant ring $Q(m, n)^{P_{\alpha}}$, using the delta operators. The refined conjecture has been verified for $\alpha = (1, \ldots, 1)$ in \cite{Ha_Hai_Nghia_2024}, for $\alpha = (n)$ in \cite{ha_hai_nghia_GL}, and for $n\leq 3$ in \cite{ha_hai_nghia_parabolic_for_low_rank}.
\end{remark}
\subsection{Invariants Under the Action of the Unipotent Group}
Following the ideas in the proof of \cite[Theorem 1.6]{Ha_Hai_Nghia_2024}, we describe an $\mathbb{F}_q$-basis for the invariant ring $Q(m, n)^U$ in the case $q$ is prime, where $U\subseteq Gl_n(\mathbb{F}_q)$ is the group of upper triangular matrices with $1$'s on the diagonal. To begin with, let us examine the bivariate case.
\begin{example}
    When $n=2$, consider $U$ acting on $S = \mathbb{F}_q[x, y]$, with $q$ a power of a prime $p$. By \cref{thm:mui_invariants}, we know that $S^U = \mathbb{F}_q[V_1, V_2]$, where
    \begin{equation*}
        V_1 = x, V_2 = y^q - yx^{q-1}.
    \end{equation*}
    An explicit basis for $Q(m, 2)^U$ consists of two families of polynomials
    \begin{itemize}
    \item[(1)] $x^i y^j$, for $0\leq i\leq q^m-1$, $0\leq j\leq q^m-1$, and $i + p^{v_p(j)}\geq q^m$, and $q\nmid j$.
        \item[(2)]$V_1^a V_2^b$, for $0\leq a\leq q^m-1$, $0\leq b\leq q^{m-1}-1$.
    \end{itemize}
    The second family belongs to $S^U$, hence it automatically belongs to $Q(m, 2)^U$. For the first family, consider $f(x, y) = x^i y^j$, where $i + p^{v_p(j)}\geq q^m$, then for any $a\in \mathbb{F}_q$, we have
    \begin{equation*}
        f(x, y+ax) - f(x, y) = a^{p^{v_p(j)}}x^{i + p^{v_p(j)}} g(x, y),
    \end{equation*}
    for some $g\in S$. This means that $f(x, y+ax) = f(x, y)$ in $Q(m, 2)$, or $f\in Q(m, 2)^U$.\smallbreak
    Consider the graded lexicographic ordering on the monomials of $S$ by asserting that $x<y$. It is not hard to see that the leading monomials of the two given families of polynomials are in $Q(m, n)$, and there are no two distinct elements in the above set of polynomials having the same leading monomial, therefore, the given set of polynomials is linearly independent. Furthermore, if $f(x, y)\in Q(m, 2)^{U}$ is a homogeneous element, then we can write 
    \begin{equation*}
        f(x, y) =a_0 x^i y^j + x^{i+1} g(x, y),
    \end{equation*} 
    for some $0\leq i\leq q^m-1$, $g(x, y)\in \mathbb{F}_q[x, y]$. If $i + p^{v_p(j)}\geq q^m$, then we can proceed by induction, as the leading monomial of $f(x, y) - a_0 x^i y^j\in Q(2, m)^{U}$ is strictly smaller than $x^i y^j$. If $i + p^{v_p(j)}< q^m$, we show that $j$ must be divisible by $q$, hence the leading monomial of $f(x, y) - a_0 x^i V_2^{\frac{j}{q}}\in Q(2, m)^{U}$ is strictly smaller than $x^i y^j$, completing the inductive argument. Indeed, let $r = v_p(j)$, then for all $a\in \mathbb{F}_q$, we have
    \begin{equation*}
        f(x, y+ax)-f(x, y) = a_0 x^i ((y+ax)^j - y^j) + x^{i+1} (g(x, y+ax) - g(x, y)).
    \end{equation*}
    By Lucas's theorem, $p$ does not divide $\binom{j}{p^r}$, hence the term $a_0 a^{p^r}\binom{j}{p^r}x^{i+p^r}y^{j-p^r}$ is nonzero. Because $i+p^r<q^m, j-p^r<q^m$, this summand must appear in $x^{i+1} (g(x, y+ax) - g(x, y))$. However, $x^{i+1} (g(x, y+ax) - g(x, y))$ is a sum of temrs of the form $a_{i', j'} \cdot a^{s} \cdot \binom{j'}{s}x^{i'+s}y^{j'-s}$, for some $i'\geq i+1$, $s>0$. This term is similar to $a_0 a^{p^r}\binom{j}{p^r}x^{i+p^r}y^{j-p^r}$ (as monomials) if and only if $j - p^r = j'-s < j -s$, thus $s<p^r$. Collecting all terms that are similar to $a_0 a^{p^r}\binom{j}{p^r}x^{i+p^r}y^{j-p^r}$ and compute the corresponding coefficient, we get a polynomial equation (with $a$ as the unknown) of degree $p^r$ that equals $0$ for every $a\in \mathbb{F}_q$. This implies that $q|p^r$, as desired.
\end{example}
\begin{remark}
    By the above example, let $q=p^k$, we can compute the Hilbert series of $Q(m, 2)^{U}$ as follows.
    \begin{equation*}
    Hilb(Q(m, 2)^{U}, t)=\left(\sum\limits_{\substack{l=0\\ v_p(j) = l\\0\leq j<q^m}}^{k-1}\sum\limits_{i = q^m - p^l}^{q^m-1}t^{i+j}\right) + \sum\limits_{i=0}^{q^m-1}\sum\limits_{j=0}^{q^{m-1}-1}t^{i+qj}.
\end{equation*} 
The second summand equals
\begin{equation*}
    \left(\sum\limits_{i=0}^{q^m-1}t^i\right)\cdot \left(\sum\limits_{j=0}^{q^{m-1}-1}t^{qj}\right)=\dfrac{1-t^{q^m}}{1-t}\cdot \dfrac{1-t^{q^m}}{1-t^q}.
\end{equation*}
The first summand equals
\begin{equation*}
    \sum\limits_{\substack{l=0\\ v_p(j) = l\\\\0\leq j<q^m}}^{k-1}t^j\cdot \dfrac{t^{q^m-p^l}-t^{q^m}}{1-t}.
\end{equation*}
For each fixed $l$, we have
\begin{align*}
    \sum\limits_{\substack{0\leq j\leq q^m-1\\ v_p(j) = l}}t^j &= \sum\limits_{a = 0}^{p^{km-l}-1}t^{p^l a} - \sum\limits_{b = 0}^{p^{km-l-1}-1}t^{p^{l+1} b}\\
    &=\dfrac{1-t^{p^{km}}}{1-t^{p^l}} - \dfrac{1 - t^{p^{km}}}{1-t^{p^{l+1}}}.
\end{align*}
Substituting this into the first summand above, we have
\begin{align*}
    \sum\limits_{\substack{l=0\\ v_p(j) = l}}^{k-1}t^j\cdot \dfrac{t^{q^m-p^l}-t^{q^m}}{1-t} & =\sum\limits_{l=0}^{k-1}\left(\dfrac{1-t^{q^{m}}}{1-t^{p^l}} - \dfrac{1 - t^{q^{m}}}{1-t^{p^{l+1}}}\right)\cdot \dfrac{t^{q^m-p^l}-t^{q^m}}{1-t}\\
    &=\sum\limits_{l=0}^{k-1}\dfrac{t^{q^m-p^l}(1-t^{q^m})}{1-t}-\dfrac{1-t^{q^m}}{1-t}\cdot \dfrac{t^{q^m-p^l}-t^{q^m}}{1-t^{p^{l+1}}}\\
    &=\dfrac{1-t^{q^m}}{1-t}\sum\limits_{l=0}^{k-1}\dfrac{t^{q^m}-t^{q^m-p^l + p^{l+1}}}{1-t^{p^{l+1}}}.
\end{align*}
Therefore,
\begin{equation*}
    Hilb(Q(m, 2)^{U}, t) = \dfrac{t^{q^m}(1-t^{q^m})}{1-t}\sum\limits_{l=0}^{k-1}\dfrac{1-t^{p^{l+1}-p^l}}{1-t^{p^{l+1}}}+\dfrac{1-t^{q^m}}{1-t}\cdot \dfrac{1-t^{q^m}}{1-t^q}.
\end{equation*}
Using \cref{prop:dual_hilbert_series}, one derives the formula for the Hilbert series for the cofixed space $S_U$,
\begin{equation*}
    Hilb(S_U, t) = -\dfrac{1}{t-1}\cdot \sum\limits_{l=0}^{k-1}\dfrac{t^{p^{l+1}-1}-t^{p^l}-1}{t^{p^{l+1}}-1}+ \dfrac{t^{q-1}}{(t-1)(t^{q}-1)}.
\end{equation*}
The given Hilbert series depend on the prime $p$, which are different from the conjectural Hilbert series of $Q(m, n)^{P_{\alpha}}$ and $S_{P^{\alpha}}$, as they only depend on $q$. One can directly see this phenomenon by proving that for any $q$, the element $x^{p-1}y^{p-1}$ is nonzero in $S_U$, but is zero in $S_B$ if $q>p$.
\end{remark}
Now, assume that $q=p$ is a prime number. Recall from \cref{thm:mui_invariants} that $\mathbb{F}_q[x_1,\ldots, x_n]^U = \mathbb{F}_q[V_1, \ldots, V_n]$.
\begin{definition}\label{def:frob_operator}
    For two sequences $I = (i_1, \ldots, i_k)$ and $J = (j_1, \ldots, j_k)$ of nonnegative integers, define the rational function
    \begin{equation*}
        Y_b(I;J) = \delta_{1, b}^{i_1}(V_1^{j_1}\delta_{2, b}^{i_2}(V_2^{j_2}(\cdots (\delta_{k, b}^{i_1}(V_k^{j_k}))\cdots ))).
    \end{equation*}
    Define a Frobenius-like operator\index{Frobenius-like operator!for unipotent invariants} for the above class of rational functions as follows.
    \begin{equation*}
        \Phi(Y_b(I; J)) = Y_{b+1}((0, I); (0, J)).
    \end{equation*}
\end{definition}
\begin{definition}\label{def:basis_unipotent}
    For $n\geq 1$, $m\geq 0$, define $\mathcal{B}_m(n)$ inductively as follows. $\mathcal{B}_0(n) = \{1\}, \text{ for all } n\geq 1$; $\mathcal{B}_m(1) = \{V_1^a \mid 0\leq a\leq p^m-1\}$, for all $m\geq 0$. If $n\geq 2$ and $m\geq 1$, define $\mathcal{B}_m(n)$ as the union of two families of rational functions
    \begin{equation*}
        \mathcal{B}_m(n) = \{\delta_{1; m}(Y)\mid Y\in \mathcal{B}_m(n-1)\}\sqcup \{V_1^a\cdot \Phi(Y)\mid 0\leq a<p^m-1, Y\in \mathcal{B}_{m-1}(n-1)\}.
    \end{equation*}
    Explicitly, one can describe the sets $\mathcal{B}_m(n)$ as follows. $\mathcal{B}_m(n)$ is the disjoint union $\bigsqcup\limits_{k=1}^{\min{(n, m+1)}}\mathcal{B}_m^k(n)$, where $\mathcal{B}_m^k(n)$ denotes the set of all elements $Y_m(I; J)$ for which the sequences $I = (i_1, \ldots, i_k)$, $J = (j_1, \ldots, j_k)$ are such that $i_1+\cdots + i_k = n-k$, and $0\leq j_1<p^m-1, \ldots, 0\leq j_{k-1}<p^{m-k+2}-1, 0\leq j_k\leq p^{m-k+1}-1$.
\end{definition}
\begin{theorem}
    \label{thm: q=p_case}
    $B_m(n)$ is an $\mathbb{F}_q$-basis for the invariant ring $Q(m, n)^{U}$.
\end{theorem}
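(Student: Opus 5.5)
The plan is to follow the inductive architecture of the proof of \cref{thm:hhn_borel} in \cite{Ha_Hai_Nghia_2024}, with the Dickson-type invariants $D_k$ replaced by the Mui invariants $V_k$ of \cref{thm:mui_invariants}. We argue by induction on $n$ and $m$, using the recursive description of $\mathcal{B}_m(n)$ in \cref{def:basis_unipotent}. The base cases are straightforward. For $m=0$ the ring $Q(0,n)=\mathbb{F}_p$ is trivial. For $n=1$ the group $U$ is trivial, so $Q(m,1)^U=Q(m,1)$ has basis $\{x_1^a\}$.

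The proof will have three parts. We first show that every element of $\mathcal{B}_m(n)$ is a polynomial whose image in $Q(m,n)$ is $U$-invariant. We then show that these images are linearly independent. Finally, we show that they span.

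\textbf{Invariance.} The functions $V_1^a\Phi(Y)$ are handled by viewing $\Phi$ as a substitution of the quotient variables. Concretely, $\Phi(Y)$ is obtained from $Y$ by replacing $x_1,\dots,x_{n-1}$ by $f_{\langle x_1\rangle}(x_2),\dots,f_{\langle x_1\rangle}(x_n)$ (the ``$V/\langle x_1\rangle$'' construction of \cref{lemma:f_W}). This substitution is equivariant for the map from $U_n$ onto $U_{n-1}$. Moreover it sends $(x_i^{p^{m-1}})$ into $(x_2^{p^m},\dots,x_n^{p^m},x_1)$-type ideals, so invariance modulo $I_{n,m}$ follows from the inductive hypothesis for $\mathcal{B}_{m-1}(n-1)$. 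For $\delta_{1;m}(Y)$ we use the defining formula $\delta_{1;m}(f)=x_1^{p^m}f(x_2,\dots,x_{n})$. Its image is divisible by $x_1^{p^m}$, and one has to check, as in \cite{Ha_Hai_Nghia_2024}, that the relevant alternating/determinantal expressions make $\sigma(\delta_{1;m}Y)-\delta_{1;m}Y$ lie in $I_{n,m}$ for the generating transvections $x_j\mapsto x_j+x_i$ ($i<j$). This is done by expanding the determinant in \cref{def:delta_operator} and using the polynomiality of delta operators.

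\textbf{Linear independence.} We use leading monomials in a graded lexicographic order with $x_1<\dots<x_n$, as in the bivariate example. For each element of $\mathcal{B}_m(n)$ we compute the leading monomial explicitly and show that these monomials are distinct and survive modulo $I_{n,m}$. The two families are then separated by the exponent pattern of $x_1$. In the family $V_1^a\Phi(Y)$, the exponent of $x_1$ is at most $p^m-2$. In the first family, the defining construction forces the $x_1$-exponent to be large, and the leading terms are inherited from $\mathcal{B}_m(n-1)$.

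\textbf{Spanning.} This is the main obstacle. The plan is a leading-monomial reduction generalizing the bivariate argument. Take a homogeneous $f\in Q(m,n)^U$ with leading monomial $x^\beta$. We aim to show that $x^\beta$ is the leading monomial of some element of $\mathcal{B}_m(n)$, and then subtract and induct. Equivalently, it suffices to show that $\dim Q(m,n)^U\le|\mathcal{B}_m(n)|$.

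For this we filter by the $x_1$-adic order, exactly as in \cref{prop:injectivity_cofix} and the example. Let $U'\subset U$ be the subgroup fixing $x_2,\dots,x_n$ modulo $x_1$. Write $f=x_1^{i}g(x_2,\dots,x_n)+x_1^{i+1}(\cdots)$. Invariance under the transvections $x_j\mapsto x_j+ax_1$ then forces, via Lucas's theorem and the hypothesis $q=p$, one of two alternatives:
\begin{itemize}
\item[(a)] $i$ is so large that $g$ is only constrained by the $U_{n-1}$-action, which gives the $\delta_{1;m}$ family counted by $\mathcal{B}_m(n-1)$; or
\item[(b)] $g$ is a $p$-th power expression in the $V/\langle x_1\rangle$ variables, which gives the $\Phi$ family counted by $\mathcal{B}_{m-1}(n-1)$.
\end{itemize}
The primality of $q$ is exactly what makes ``each transvection coefficient polynomial in $a$ of degree $<p$ vanishes'' force divisibility, as in the example. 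The delicate point is to make alternative (b) precise in $n$ variables: we must show that the $x_1$-initial form lies in $\mathbb{F}_p[f_{\langle x_1\rangle}(x_2),\dots]$ modulo the correct truncation. We would try to derive this from the $n=2$ computation applied to each pair $(x_1,x_j)$, together with the inductive hypothesis.
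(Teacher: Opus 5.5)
Your three-part architecture (polynomiality and invariance, independence via leading monomials, spanning via an $x_1$-adic leading-monomial reduction) matches the paper's. However, the invariance step for the second family rests on a false identity. $\Phi(Y)$ is \emph{not} the substitution $Y(f_{\langle x_1\rangle}(x_2),\dots,f_{\langle x_1\rangle}(x_n))$, and the substituted elements need not even be invariant. Take $m=2$, $n=3$ and $Y=\delta_{1;1}(V_1)=x_1^{p-1}x_2\in\mathcal{B}_1(2)$. Then $\Phi(Y)=\delta_{2;2}(V_2)=x_3^{p}Q_{2,1}(x_1,x_2)-x_3Q_{2,0}(x_1,x_2)=x_3^{p^2}-V_3$. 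Your substitution instead gives $V_2^{p-1}f_{\langle x_1\rangle}(x_3)=f_{\langle x_1\rangle}(x_3)^p-V_3=x_3^{p^2}-x_1^{p^2-p}x_3^p-V_3$. In $Q(2,3)$ the transvection $x_3\mapsto x_3+x_2$ changes this by $-x_1^{p^2-p}x_2^p\neq 0$. The error is that $f_{\langle x_1\rangle}(x_j)^{p^{m-1}}=x_j^{p^m}-x_1^{p^m-p^{m-1}}x_j^{p^{m-1}}\notin I_{n,m}$, and membership in an ideal containing $x_1$ gives no control modulo $I_{n,m}$.

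Invariance of $\Phi(Y)$ really comes from the determinantal structure of the nested $\delta_{k;m}$ with $k\ge 2$. You need two ingredients, neither of which is in your plan:
\begin{itemize}
\item polynomiality, via \cref{lemma:functional_eq}, \cref{cor:lower_delta_is_easier} and \cref{lemma:polynomiality_D_mod}, using that $V_k$ is $GL_{k-1}$-invariant in $x_1,\dots,x_{k-1}$;
\item \cref{lemma:U_induction}: $\delta_{r;m}$ maps $Q(m,k)^U$ into $Q(m,k+1)^U$ whenever its output is a polynomial, proved by showing $\sigma(N)-N$ is divisible by $x_i^{p^m+1}$ (for $i\le r$) or by $x_i^{p^m}$ (for $i>r$) against the squarefree denominator.
\end{itemize}
Conversely, the first family, where you placed the determinant work, is trivial once the formula is corrected. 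One has $\delta_{1;m}(f)=x_1^{p^m-1}f(x_2,\dots,x_n)$, not $x_1^{p^m}f$ (which is $0$ in $Q(m,n)$). Any change under $x_j\mapsto x_j+ax_1$ is divisible by $x_1$ and hence killed by $x_1^{p^m-1}$; the remaining transvections are handled by induction.

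The spanning step is left as an intention, and the target you set for (b) is the wrong one. The $x_1$-initial form $f_1(x_2,\dots,x_n)$ contains no $x_1$, so there is nothing to place in $\mathbb{F}_p[f_{\langle x_1\rangle}(x_2),\dots]$; what is needed is only that $f_1$ is a $p$-th power. You also need degrevlex with $x_1$ smallest, not grlex. For $n\ge 3$ the grlex leading monomial need not carry the minimal $x_1$-exponent (e.g.\ $x_2^2+x_1x_3$), so your $x_1$-adic analysis would not control it. In degrevlex $\mathrm{lm}(f)=x_1^k\,\mathrm{lm}(f_1)$, and the two cases go as follows.
\begin{itemize}
\item If $k=p^m-1$, then $f=\delta_{1;m}(f_1)$ with $f_1\in Q(m,n-1)^U$.
\item If $k<p^m-1$, take a term $c_\alpha x^\alpha$ of $f_1$ and $\sigma\colon x_j\mapsto x_j+x_1$. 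The coefficient of $x_1^{k+1}x^{\alpha-e_j}$ in $\sigma(f)-f$ is exactly $\alpha_j c_\alpha$, and it must vanish since $k+1\le p^m-1$. Hence $p\mid\alpha_j$; this is where $q=p$ enters. So $f_1=g_1^p$ with $g_1\in Q(m-1,n-1)^U$.
\end{itemize}
To match leading monomials you then need $\Phi(Y)(0,x_2,\dots,x_n)=Y(x_2,\dots,x_n)^p$. This follows by dividing the first column of each determinant by $x_1$ and setting $x_1=0$, and it gives $\mathrm{lm}(V_1^k\Phi(Y))=x_1^k\,\mathrm{lm}(Y)^p$. The same specialization is what your linear-independence step needs; as written, that step is also only asserted.
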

First and foremost, we show that $\mathcal{B}_m(n)$ is a linearly independent set of polynomials in $Q(m, n)^U$. This is done by the following series of lemmas.
\begin{lemma}[{\cite[Propostition 8.4]{Ha_Hai_Nghia_2024}}]\label{lemma:chain_complex}
    Consider two delta operators
    \begin{align*}
        \delta_{a;b}:\mathbb{F}_q(x_1, \ldots, x_c)&\to \mathbb{F}_q(x_1, \ldots, x_{c+1}),\\
        \delta_{a+1;b}:\mathbb{F}_q(x_1, \ldots, x_{c+1})&\to \mathbb{F}_q(x_1, \ldots, x_{c+2}).
    \end{align*}
    Then, $\delta_{a+1;b}\delta_{a;b}$ is the zero map.
\end{lemma}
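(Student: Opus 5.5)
The plan is a direct computation: expand both determinants along their last rows and watch the resulting double sum cancel in pairs. For $r\geq 1$ write $L_r(y_1,\ldots,y_r)$ for the $r\times r$ Moore determinant $\det(y_j^{q^{s-1}})_{1\leq s,j\leq r}$, with $L_0=1$. The denominator of $\delta_{a;b}$ is then $L_a(x_1,\ldots,x_a)$.

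\textbf{Step 1: expand $\delta_{a;b}$.} Expanding the numerator of \cref{def:delta_operator} along its last row, and noting that deleting column $i$ of the first $a-1$ rows leaves the Moore matrix of $x_1,\ldots,\widehat{x_i},\ldots,x_a$, we get
\begin{equation*}
g:=\delta_{a;b}(f)=\sum_{i=1}^{a}(-1)^{a+i}\,x_i^{q^b}\,f(x_1,\ldots,\widehat{x_i},\ldots,x_{c+1})\,\frac{L_{a-1}(x_1,\ldots,\widehat{x_i},\ldots,x_a)}{L_a(x_1,\ldots,x_a)}.
\end{equation*}
The same expansion gives
\begin{equation*}
\delta_{a+1;b}(g)=\sum_{k=1}^{a+1}(-1)^{a+1+k}\,x_k^{q^b}\,g(x_1,\ldots,\widehat{x_k},\ldots,x_{c+2})\,\frac{L_a(x_1,\ldots,\widehat{x_k},\ldots,x_{a+1})}{L_{a+1}(x_1,\ldots,x_{a+1})}.
\end{equation*}

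\textbf{Step 2: substitute $g$ into the second expansion.} In $g(x_1,\ldots,\widehat{x_k},\ldots,x_{c+2})$ the variables are relabelled in increasing order. Thus the term of $g$ indexed by position $j$ deletes some original variable $x_i$ with $i\neq k$, where $j=i$ if $i<k$ and $j=i-1$ if $i>k$.

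The key observation is that the denominator of this inner term is exactly $L_a(x_1,\ldots,\widehat{x_k},\ldots,x_{a+1})$. This cancels the numerator factor coming from the outer expansion. Hence $\delta_{a+1;b}\delta_{a;b}(f)$ becomes a sum over ordered pairs $(k,i)$ of distinct indices in $\{1,\ldots,a+1\}$ of
\begin{equation*}
\pm\, x_k^{q^b}x_i^{q^b}\, f(\text{all of }x_1,\ldots,x_{c+2}\text{ except }x_i,x_k,\text{ in order})\,\frac{L_{a-1}(x_1,\ldots,x_{a+1}\text{ without } x_i,x_k)}{L_{a+1}(x_1,\ldots,x_{a+1})}.
\end{equation*}
Apart from its sign, this expression depends only on the unordered pair $\{i,k\}$.

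\textbf{Step 3: compare signs.} Take $i<k$.
\begin{itemize}
\item For the ordered pair $(k,i)$, the outer sign is $(-1)^{a+1+k}$. The inner position of $x_i$ is $i$, giving sign $(-1)^{a+i}$. The total sign is $(-1)^{k+i+1}$.
\item For the ordered pair $(i,k)$, the outer sign is $(-1)^{a+1+i}$. The inner position of $x_k$ is $k-1$, giving sign $(-1)^{a+k-1}$. The total sign is $(-1)^{i+k}$.
\end{itemize}
These two signs are opposite, so the terms cancel in pairs and $\delta_{a+1;b}\delta_{a;b}=0$.

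The only delicate point is the index bookkeeping in Step 3. Specifically, one must check that the shift of positions after deleting $x_k$ produces precisely the extra factor $-1$. One must also check that the argument list of $f$ (and the $L_{a-1}$ factor) is literally the same ordered list for both orderings of the pair. Both hold because in each case the remaining variables appear in increasing order. No hypothesis on $f$ beyond its being a rational function in $c$ variables is needed.
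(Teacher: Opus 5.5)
Your proposal is correct and follows the paper's proof: you expand both numerators along their last rows, cancel the inner Moore determinant $L_a(x_1,\ldots,\widehat{x_k},\ldots,x_{a+1})$ against the outer cofactor, and pair the $(k,i)$ and $(i,k)$ terms, whose signs $(-1)^{i+k+1}$ and $(-1)^{i+k}$ are opposite. Your version is more careful than the paper's, which leaves the factor $f(\cdots)$ and this cancellation implicit and has a small typo in its sign convention $m(i,j)$.
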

\begin{proof}
Using Laplace expansion along the last row, we need to show that
    \begin{equation}\label{eq:showing_chain_complex}
        \sum\limits_{i=1}^{a+1}(-1)^{a+1+i}x_i^{q^b}\left(\sum\limits_{\substack{j=1\\ j\neq i}}^{a+1}(-1)^{m(i, j)}x_j^{q^b}L(x_1, \ldots, \widehat{x_i}, \ldots, \widehat{x_j}, \ldots, x_{a+1})\right) = 0,
    \end{equation}
    where $m(i, j) = (-1)^{a+j}$ if $j<i$, and $m(i, j) = (-1)^{a+j+1}$ if $i>j$. Therefore, we can rearrange the left-hand side of \eqref{eq:showing_chain_complex} as follows.
    \begin{equation*}
        \sum\limits_{1\leq i < j\leq n}L(x_1, \ldots, \widehat{x_i}, \ldots, \widehat{x_j}, \ldots, x_{a+1})(x_i x_j)^{q^b}\cdot ((-1)^{i+j}+(-1)^{i+j+1}) = 0.\qedhere
    \end{equation*}
\end{proof}
\begin{lemma}\label{lemma:functional_eq}
    Consider $f\in \mathbb{F}_q[x_1, \ldots, x_c]$. $\delta_{a;b}(f)$ is a polynomial if and only if the following equation of polynomials holds for any $c_1, \ldots, c_{a-1}\in \mathbb{F}_q$
    \begin{equation}\label{eq:fundamental_functional_eq}
        \sum\limits_{i=1}^{a-1}c_i x_i^{q^b}f(x_1, \ldots, \widehat{x_i},\ldots, x_{a-1}, \sum\limits_{j=1}^{a-1}c_j x_j, \ldots, x_{c+1}) = \left(\sum\limits_{j=1}^{a-1}c_j x_j^{q^b}\right)f(\widehat{x_a}),
    \end{equation}
    where $f(\widehat{x_a})$ is $f(x_1, \ldots, x_{a-1}, x_{a+1}, \ldots, x_{c+1})$.
\end{lemma}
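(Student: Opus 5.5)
We compare divisibility of the numerator by the linear forms dividing the denominator with specializations of the numerator. Write $N$ for the numerator determinant in \cref{def:delta_operator}, so that $\delta_{a;b}(f)=N/L(x_1,\ldots,x_a)$. By the proof of \cref{lemma:f_W}, $L(x_1,\ldots,x_a)$ is the product of the reduced vectors of $(x_1,\ldots,x_a)$. These are pairwise non-associate linear forms, hence pairwise coprime primes in $\mathbb{F}_q[x_1,\ldots,x_{c+1}]$. Therefore $\delta_{a;b}(f)$ is a polynomial if and only if $N$ is divisible by every linear form $\ell$ in $span_{\mathbb{F}_q}\{x_1,\ldots,x_a\}$, one for each line. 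We split these lines into two kinds. Type (A) lines have nonzero $x_a$-coefficient; we normalize them to $\ell=x_a-\sum_{j=1}^{a-1}c_jx_j$. Type (B) lines lie in $W=span\{x_1,\ldots,x_{a-1}\}$. By \cref{lemma:f_W}, $L(x_1,\ldots,x_a)=L(x_1,\ldots,x_{a-1})\cdot f_W(x_a)$, and the product of the type (A) forms is exactly $f_W(x_a)$.

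\emph{Type (A).} Since $\ell$ is monic in $x_a$, $\ell\mid N$ if and only if $N$ vanishes after substituting $x_a=\sum c_jx_j$. After this substitution, Frobenius linearity gives $(\sum c_jx_j)^{q^r}=\sum c_jx_j^{q^r}$. We then perform the column operation $C_a\mapsto C_a-\sum_{j<a}c_jC_j$. This kills the first $a-1$ entries of column $a$, and its last entry becomes
\begin{equation*}
\Big(\sum_{j=1}^{a-1} c_jx_j^{q^b}\Big)f(\widehat{x_a})-\sum_{i=1}^{a-1}c_ix_i^{q^b}f\Big(x_1,\ldots,\widehat{x_i},\ldots,x_{a-1},\sum_j c_jx_j,\ldots,x_{c+1}\Big).
\end{equation*}
The entries of $f(\widehat{x_a})$ contain no $x_a$, so they are unaffected by the substitution. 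Expanding along column $a$, the specialized $N$ equals $\pm$ this entry times $L(x_1,\ldots,x_{a-1})\neq 0$. Hence divisibility by all type (A) lines is equivalent to \eqref{eq:fundamental_functional_eq} holding for all $(c_1,\ldots,c_{a-1})\in\mathbb{F}_q^{a-1}$. This already proves the "only if" direction.

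\emph{Type (B), for the "if" direction.} Take $\ell=x_k-\sum_{j<k}c_jx_j$ with $k<a$. The same column operation on column $k$ shows that $\ell\mid N$ if and only if a $k$-analogue of \eqref{eq:fundamental_functional_eq} holds, namely
\begin{equation*}
\sum_{j<k}c_jx_j^{q^b}\,f(\widehat{x_j})\big|_{x_k=\sum c x}=\Big(\sum_{j<k}c_jx_j^{q^b}\Big)f(\widehat{x_k}).
\end{equation*}
We must derive this from \eqref{eq:fundamental_functional_eq}. I expect this to be the main obstacle. My first attempt is to apply \eqref{eq:fundamental_functional_eq} twice: once with the coefficient vector $(c_1,\ldots,c_{k-1},0,\ldots,0)$, and once with the same vector modified by $-1$ in position $k$ (the line $x_a-\sum_{j<k}c_jx_j+x_k$). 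Subtracting these two identities, and then specializing $x_a\mapsto x_k$ (legitimate since the identities are polynomial identities), should isolate the $x_k$-column term and yield the $k$-analogue.

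If this bookkeeping fails, the fallback is structural. Type (A) divisibility gives $N=f_W(x_a)\cdot M$. We would then show $L(x_1,\ldots,x_{a-1})\mid M$ by comparing $x_a$-degrees. Laplace expansion of $N$ along column $a$ shows that the $x_a^{q^b}$-part is $\pm x_a^{q^b}f(\widehat{x_a})L(x_1,\ldots,x_{a-1})$, while the remaining terms involve only $x_a^{q^r}$ with $r\le a-2$ times minors of the first $a-1$ columns. Dividing by the monic $q$-polynomial $f_W(x_a)$ of degree $q^{a-1}$ should force every coefficient of $M$, as a polynomial in $x_a$, to lie in the ideal generated by $L(x_1,\ldots,x_{a-1})$.
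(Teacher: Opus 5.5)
Your type (A) analysis is correct and matches the paper's argument: substitute (or work modulo $\ell$), apply the column operation $C_a\mapsto C_a-\sum_{j<a}c_jC_j$, and expand along column $a$ against $L(x_1,\ldots,x_{a-1})\neq 0$. So the ``only if'' direction is fine.

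The gap is the ``if'' direction, namely showing that the type (B) forms divide $N$. Neither of your two plans closes it.

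\emph{First plan.} Every instance of \eqref{eq:fundamental_functional_eq} is an identity in $x_1,\ldots,x_{a-1},x_{a+1},\ldots,x_{c+1}$, and $x_a$ does not occur in it. Hence ``specializing $x_a\mapsto x_k$'' does nothing. Your $k$-analogue, by contrast, evaluates $f$ at tuples that contain $x_a$ (in slot $a-1$) and carry $\sum_{j<k}c_jx_j$ in slot $k-1$ rather than slot $a-1$. Subtracting the instances at $(c_1,\ldots,c_{k-1},0,\ldots,0)$ and $(c_1,\ldots,c_{k-1},-1,0,\ldots,0)$ introduces values of $f$ at $\sum_{j<k}c_jx_j-x_k$. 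These values do not occur in the $k$-analogue at all. Nothing in the plan permutes the arguments of $f$.

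\emph{Fallback.} The minors in the Laplace expansion along column $a$ are not free of $x_a$: the last row of the first $a-1$ columns contains $f(\widehat{x_i})$, which depends on $x_a$. So there is no clean $x_a$-degree comparison. The key step, that dividing by $f_W(x_a)$ ``should force'' the coefficients of $M$ into $(L(x_1,\ldots,x_{a-1}))$, is asserted rather than argued. It is in fact the entire content of the ``if'' direction.

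The missing idea is symmetry.
\begin{itemize}
\item Take $c=e_j$ (that is, $c_j=1$ and all other $c_i=0$) in \eqref{eq:fundamental_functional_eq} and cancel $x_j^{q^b}$. The result says $f$ is invariant under cycling its argument slots $j,j+1,\ldots,a-1$.
\item These cycles, for $1\le j\le a-2$, generate $\Sigma_{a-1}$. Hence $f$ is symmetric in its first $a-1$ arguments.
\item Consequently $N$ is antisymmetric in $x_1,\ldots,x_a$: swapping $x_i$ and $x_k$ (with $i,k\le a$) interchanges columns $i$ and $k$ and fixes every other column.
\item For a type (B) form $\ell=x_k-\sum_{j<k}c_jx_j$, the transposition $\tau=(x_k\,x_a)$ sends $\ell$ to the type (A) form $x_a-\sum_{j<k}c_jx_j$, which divides $N$. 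Applying $\tau$ gives $\ell\mid\tau(N)=-N$.
\end{itemize}
This is exactly how the paper finishes.

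Equivalently, once symmetry is known, your $k$-analogue is literally the instance $(c_1,\ldots,c_{k-1},0,\ldots,0)$ of \eqref{eq:fundamental_functional_eq}. To see this, rename $x_k$ as $x_a$, which is legitimate because $x_k$ occurs there only inside the arguments of $f$. Then reorder the first $a-1$ arguments of $f$.
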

\begin{proof}
    Using the proof of \cref{lemma:f_W}, one sees that the denominator $L(x_1, \ldots, x_a)$ in the definition of $\delta_{a;b}(f)$ is a product of linear combinations of $x_1, \ldots, x_a$; and if $m$ is a nonzero linear combination of $x_1, \ldots, x_a$, then $m\mid L(x_1, \ldots, x_a)$, but $m^2$ is not a divisor of $L(x_1, \ldots, x_a)$. Therefore, $\delta_{a;b}(f)$ is a polynomial if and only if the denominator is divisible by any nonzero linear combination of the variables $x_1, \ldots, x_a$. In particular, for any $c_1, \ldots, c_{a-1}\in \mathbb{F}_q$, let $m = x_a - \sum\limits_{j=1}^{a-1}c_j x_j$, by elementary column transformation, we have 
\begin{equation}\label{eq:det_manip_fundamental_functional_eq}
    \begin{vmatrix}
            x_1 & \cdots & x_a\\
            x_1^q & \cdots & x_a^q\\
            \vdots & \ddots & \vdots\\
            x_1^{q^{a-2}} & \cdots  & x_a^{q^{a-2}}\\
            x_1^{q^b}f(\widehat{x_1}) & \cdots  & x_a^{q^b}f(\widehat{x_a})
        \end{vmatrix} = \begin{vmatrix}
            x_1 & \cdots & m\\
            x_1^q & \cdots & m^q\\
            \vdots & \ddots & \vdots\\
            x_1^{q^{a-2}} & \cdots  & m^{q^{a-2}}\\
            x_1^{q^b}f(\widehat{x_1}) & \cdots  & M
        \end{vmatrix},
\end{equation}
in this equation,
\begin{equation}\label{eq:bezout_functional}
    M=-\sum\limits_{i=1}^{a-1}c_i x_i^{q^b}f(x_1, \ldots, \widehat{x_i},\ldots, x_{c+1}) + x_{a}^{q^b}f(\widehat{x_a}).
\end{equation}
It follows that $M\cdot L(x_1, \ldots, x_{a-1})$ is divisible by $m$; however, $L(x_1, \ldots, x_{a-1})$ is not divisible by $m$, thus $m\mid M$. \eqref{eq:fundamental_functional_eq} expresses the fact that $M=0$ whenever $m=0$.\smallbreak
Conversely, suppose that \eqref{eq:fundamental_functional_eq} holds. For $j<a-1$, letting $c_j = 1$ and $c_i = 0, \text{ for all } i\neq j$, \eqref{eq:fundamental_functional_eq} becomes
\begin{equation*}
    f(x_1, \ldots, \widehat{x_j}, \ldots, x_{a-1}, x_j, x_{a+1}\ldots, x_{c+1}) =f(x_1,\ldots, x_{a-1}, x_{a+1}, \ldots, x_{c+1}).  
\end{equation*}
Therefore, $f$ is symmetric in the first $(a-1)$ variables. Consequently, the numerator of  $\delta_{a;b}(f)$ is anti-symmetric in the first $a$ variables. Furthermore, from \eqref{eq:det_manip_fundamental_functional_eq} and \eqref{eq:bezout_functional}, the numerator of $\delta_{a;b}(f)$ is divisible by the polynomial $\prod\limits_{c_1, \ldots, c_{a-1}\in \mathbb{F}_q}(x_a + \sum\limits_{j=1}^{a-1}c_j x_j)$. Thus, by anti-symmetry of the numerator of $\delta_{a;b}(f)$, it follows that this numerator is divisible by $L(x_1, \ldots, x_a)$, which means that $\delta_{a;b}(f)$ is a polynomial.
\end{proof}
\begin{corollary}\label{cor:lower_delta_is_easier}
    If $f\in \mathbb{F}_q[x_1, \ldots, x_c]$ is a polynomial such that $\delta_{a;b}(f)$ is a polynomial, then for any $a'<a$, $\delta_{a';b}(f)$ is a polynomial.
\end{corollary}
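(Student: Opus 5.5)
The plan is to reduce to the criterion of \cref{lemma:functional_eq}. Assume $\delta_{a;b}(f)$ is a polynomial and fix $a'<a$. By \cref{lemma:functional_eq} applied to $a$, the identity \eqref{eq:fundamental_functional_eq} holds for every choice $c_1,\ldots,c_{a-1}\in\mathbb{F}_q$. We want to derive \eqref{eq:fundamental_functional_eq} with $a$ replaced by $a'$, that is, for all $c_1,\ldots,c_{a'-1}$,
\begin{equation*}
\sum_{i=1}^{a'-1}c_i x_i^{q^b} f(x_1,\ldots,\widehat{x_i},\ldots,x_{a'-1},\textstyle\sum_{j<a'} c_jx_j,x_{a'+1},\ldots,x_{c+1}) = \Bigl(\sum_{j=1}^{a'-1}c_jx_j^{q^b}\Bigr) f(\widehat{x_{a'}}).
\end{equation*}
Applying the converse direction of \cref{lemma:functional_eq} then gives that $\delta_{a';b}(f)$ is a polynomial. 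It suffices to treat $a'=a-1$ and then induct downward.

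To get the identity for $a-1$, I would specialize the identity for $a$ by setting $c_{a-1}=0$. This gives a relation in which the linear combination $\sum_{j\le a-2}c_jx_j$ sits in slot $a-1$ of $f$ (after removing $x_i$) while the variable $x_{a-1}$ is still present. The first step is to use the symmetry of $f$ in its first $a-1$ variables, which the proof of \cref{lemma:functional_eq} extracts from \eqref{eq:fundamental_functional_eq}. With this symmetry we can rearrange arguments. The identity for $a$ with $c_{a-1}=0$ has the form
\begin{equation*}
\sum_{i\le a-2}c_ix_i^{q^b}f(\ldots,\widehat{x_i},\ldots,x_{a-1},\textstyle\sum c_jx_j,x_{a+1},\ldots)=\Bigl(\sum_{j\le a-2}c_jx_j^{q^b}\Bigr)f(x_1,\ldots,x_{a-1},x_{a+1},\ldots).
\end{equation*}
The target identity for $a-1$ has the same shape, with the variable list $x_1,\ldots,x_{a-2},\,\cdot\,,x_a,\ldots,x_{c+1}$ instead. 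The second step is to apply the variable substitution $x_{a-1}\mapsto x_a$, $x_a\mapsto x_{a-1}$ (a relabelling of the independent indeterminates $x_{a-1},x_a$), combined with the symmetry of $f$ in the first $a-1$ slots. This should turn the specialized $a$-identity into exactly the $(a-1)$-identity, up to moving the inserted linear combination into the right position.

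The main obstacle is the bookkeeping of argument positions. In the $a$-identity the combination occupies position $a-1$ of $f$ with $x_{a-1}$ kept in an earlier slot, whereas the $(a-1)$-identity wants the combination at position $a-2$ with $x_a$ following. I expect symmetry in the first $a-1$ arguments to resolve this, since both positions lie among the symmetric slots. If the bookkeeping fails, the fallback is to argue directly with divisibility. By the first half of the proof of \cref{lemma:functional_eq}, $m=x_{a-1}-\sum_{j<a-1}c_jx_j$ divides the relevant cofactor $M$. Specializing the numerator of $\delta_{a;b}(f)$ by expanding along column $a$ then exhibits $M'$ for $a-1$ as a specialization of $M$ under a substitution fixing $m$, so $m\mid M'$.
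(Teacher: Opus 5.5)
Your proposal is correct and is essentially the paper's argument: the paper also sets $c_{a'}=\cdots=c_{a-1}=0$ in \eqref{eq:fundamental_functional_eq} for $\delta_{a;b}$ and then invokes the converse direction of \cref{lemma:functional_eq}. Your extra step --- relabelling $x_{a-1}$ as $x_a$ (legitimate because $x_a$ no longer occurs after the substitution), and using the symmetry of $f$ in its first $a-1$ arguments to move the inserted linear form into the correct slot --- is exactly the bookkeeping that the paper's one-line ``becomes'' leaves implicit, so the divisibility fallback is not needed.
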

\begin{proof}
    Just observe that when $c_{a'} = \cdots = c_{a-1} = 0$, \eqref{eq:fundamental_functional_eq} for $\delta_{a;b}$ becomes the corresponding \eqref{eq:fundamental_functional_eq} for $\delta_{a';b}$.
\end{proof}
\begin{lemma}[{\cite[Corollary 3.2]{Ha_Hai_Nghia_2024}}]\label{lemma:polynomiality_D_mod}
    If $f\in \mathbb{F}_q[x_1, \ldots, x_c]$ is a polynomial such that $\delta_{a;b}(f)$ is a polynomial, and $g\in \mathbb{F}_q[x_1, \ldots, x_c]$ that is $GL_{a-1}$-invariant in the first $(a-1)$ variables, then $\delta_{a;b}(gf)$ is a polynomial. 
\end{lemma}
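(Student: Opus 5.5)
The plan is to use the criterion of \cref{lemma:functional_eq}: $\delta_{a;b}(h)$ is a polynomial if and only if the functional equation \eqref{eq:fundamental_functional_eq} holds for $h$ and every choice $c_1,\ldots,c_{a-1}\in\mathbb{F}_q$. Since $\delta_{a;b}(f)$ is a polynomial, \eqref{eq:fundamental_functional_eq} holds for $f$. It therefore suffices to check that \eqref{eq:fundamental_functional_eq} holds for $h=gf$. The idea is that the factor $g$ comes out with the same value in every term, so the identity for $gf$ is just $g(\widehat{x_a})$ times the identity for $f$.

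The key step is to show that, for each $i\in\{1,\ldots,a-1\}$,
\begin{equation*}
g\Big(x_1,\ldots,\widehat{x_i},\ldots,x_{a-1},\sum_{j=1}^{a-1}c_jx_j,x_{a+1},\ldots,x_{c+1}\Big)=g(x_1,\ldots,x_{a-1},x_{a+1},\ldots,x_{c+1}).
\end{equation*}
First I make the conventions of \cref{lemma:functional_eq} precise: in $h(x_1,\ldots,\widehat{x_i},\ldots)$ the remaining variables fill the $c$ slots of $h$ in order. So the first $a-1$ slots of $g$ receive $x_1,\ldots,\widehat{x_i},\ldots,x_{a-1},x_a$, and then $x_a$ is replaced by $\sum_j c_jx_j$. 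This means the first $a-1$ slots of $g$ receive the images of $x_1,\ldots,x_{a-1}$ under the linear substitution $\sigma_i$ that sends $x_i\mapsto \sum_j c_jx_j$ and fixes the other $x_k$ (up to a permutation of slots). The slots from $a$ onward receive $x_{a+1},\ldots,x_{c+1}$, exactly as in $g(\widehat{x_a})$.

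There are two cases. If $c_i\neq 0$, then $\sigma_i$, possibly composed with a permutation matrix, lies in $GL_{a-1}(\mathbb{F}_q)$. Since $g$ is $GL_{a-1}$-invariant in its first $a-1$ variables, the displayed equality follows. If $c_i=0$, the substitution is singular and invariance does not apply directly. However, the $i$-th term in \eqref{eq:fundamental_functional_eq} carries the coefficient $c_ix_i^{q^b}=0$, so that term vanishes regardless of $g$. Hence every nonzero term on the left of \eqref{eq:fundamental_functional_eq} for $gf$ equals $g(\widehat{x_a})$ times the corresponding term for $f$. The right-hand side is visibly $g(\widehat{x_a})$ times the right-hand side for $f$. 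Multiplying the identity for $f$ by $g(\widehat{x_a})$ therefore gives the identity for $gf$, and \cref{lemma:functional_eq} then shows that $\delta_{a;b}(gf)$ is a polynomial.

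The main obstacle is the bookkeeping of variable slots: confirming that the substituted tuple in the first $a-1$ slots really is $\sigma_i(x_1,\ldots,x_{a-1})$ up to permutation, and that invariance under $GL_{a-1}$ includes permutations. It does, since permutation matrices lie in $GL_{a-1}$. As a fallback, one could argue directly from the determinant: write $g$ in the row $x_k^{q^b}g(\ldots\widehat{x_k}\ldots)f(\ldots)$ and use the column operation in \eqref{eq:det_manip_fundamental_functional_eq} modulo $m=x_a-\sum_j c_jx_j$. This leads to the same invariance check.
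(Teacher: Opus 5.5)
Your proposal is correct and follows the paper's proof: you apply the criterion of \cref{lemma:functional_eq}, use $GL_{a-1}$-invariance to see that every term with $c_i\neq 0$ has $g$-factor equal to $g(\widehat{x_a})$, and multiply the identity for $f$ by $g(\widehat{x_a})$. Your explicit handling of the vanishing $c_i=0$ terms and of the slot permutation spells out what the paper leaves implicit.
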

\begin{proof}
    Using \cref{lemma:functional_eq}, $f$ satisfies the functional equation \eqref{eq:fundamental_functional_eq} for any $c_1, \ldots, c_{a-1}\in \mathbb{F}_q$. Fixing $c_1, \ldots, c_{a-1}\in \mathbb{F}_q$, for an index $i$ such that $c_i\neq 0$, since $g$ is $GL_{a-1}$-invariant in the first $(a-1)$ variables, we have
    \begin{equation*}
g(x_1, \ldots, \widehat{x_i},\ldots, x_{a-1}, \sum\limits_{j=1}^{a-1}c_j x_j, \ldots, x_{c+1}) = g(x_1, \ldots, x_{a-1}, x_{a+1}, \ldots, x_{c+1}).        
    \end{equation*}
    Therefore, multiplying both sides of \eqref{eq:fundamental_functional_eq} by $g(x_1, \ldots, x_{a-1}, x_{a+1}, \ldots, x_{c+1})$, one sees that $gf$ also satisfies $\eqref{eq:fundamental_functional_eq}$, as desired.
\end{proof}
\begin{lemma}\label{lemma:U_induction}
    Suppose $f\in Q(m, k)^U$, and for some $r\leq k+1$, $g=\delta_{r;m}(f)$ is a polynomial. Then $g\in Q(m, k+1)^U$.
\end{lemma}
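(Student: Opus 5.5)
The plan is to check invariance on generators of $U$. Write $U_{k+1}\subseteq GL_{k+1}(\mathbb{F}_q)$ for the unipotent group in $k+1$ variables; it is generated by the elementary transvections $\tau_{i,j,c}\colon x_j\mapsto x_j+cx_i$ (with $i<j$, $c\in\mathbb{F}_q$) fixing the other variables. So it suffices to show
\begin{equation*}
\tau(g)-g\in I_{k+1,m}
\end{equation*}
for each such $\tau$, where $g=\delta_{r;m}(f)=N/L(x_1,\ldots,x_r)$ and $N$ is the numerator determinant of \cref{def:delta_operator}. Since $f\in Q(m,k)^U$, every transvection $\tau'$ in $k$ variables satisfies $\tau'(f)=f+\sum_l x_l^{q^m}h_l$ for some polynomials $h_l$. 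The argument should therefore reduce to two facts. First, how $\tau$ acts on the columns of $N$, which are indexed by $t\in\{1,\ldots,r\}$ with entry $x_t^{q^m}f(\widehat{x_t})$ in the last row. Second, a statement that $\delta_{r;m}$ carries \emph{ideal elements} to ideal elements.

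I would split into two cases. \emph{Case 1: $j>r$.} Then $\tau$ fixes $x_1,\ldots,x_r$, hence fixes $L(x_1,\ldots,x_r)$ and every row of $N$ except the last. On each $f(\widehat{x_t})$ it acts as a transvection $\tau'_t$ on the remaining $k$ variables; after relabelling this transvection is unipotent, although its indices depend on whether $t$ is at or after $i$. So $\tau(g)-g=\delta_{r;m}(\tau' f-f)$, computed column by column.

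\emph{Case 2: $j\le r$.} Then $\tau$ restricted to $\langle x_1,\ldots,x_r\rangle$ is an upper unitriangular column operation. Since $\tau(x_t)^{q^s}=\tau(x_t^{q^s})$ is $\mathbb{F}_q$-linear in the $x$'s, the first $r-1$ rows of $N$ and all of $L$ transform by the same column operation, and $\det\tau=1$ leaves $L$ unchanged. For the last row I use $\tau(x_j)^{q^m}=x_j^{q^m}+cx_i^{q^m}$, together with the fact that the omitted-variable substitution turns $\tau(f(\widehat{x_t}))$ into $f(\widehat{x_t})$ modulo $I$ (this is invariance of $f$ under the induced transvection, or the identity when $t=j$). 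Then $\tau(N)$ equals the column-operated $N$ plus a determinant whose last-row entries lie in $I$. As in Case 1, $\tau(g)-g$ becomes $\delta_{r;m}$-type expressions applied to elements of $I_{k,m}$.

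The key lemma, and the main obstacle I expect, is the following. If $h=\sum_l x_l^{q^m}h_l\in I_{k,m}$, and the relevant $\delta_{r;m}$-expression $P$ of $h$ is a polynomial (it is, being a difference of polynomials $\tau(g)-g$), then $P\in I_{k+1,m}$. The subtlety is that $PL\in I$ does not by itself give $P\in I$.

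My first attempt: expand along the last row and split the $x_l^{q^m}$ factors into two kinds.
\begin{itemize}
\item[(a)] For a variable $x_l$ outside the first $r$ columns' range, $x_l^{q^m}$ is $GL_{r-1}$-invariant in the relevant variables. It factors out of the determinant, so $P=x_l^{q^m}\cdot(\text{polynomial})$ by \cref{lemma:polynomiality_D_mod}, provided I choose the representatives $h_l$ compatibly.
\item[(b)] For variables among $x_1,\ldots,x_r$, the last-row entries become $x_t^{q^m}x_s^{q^m}(\cdots)$. Here I would use the identity $x_t^{q^m}=\bigl(x_t^{q^{m-r+1}}\bigr)^{q^{r-1}}$ together with the fundamental equation $f_{\langle x_1,\ldots,x_r\rangle}$ from \cref{lemma:f_W}. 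The idea is to rewrite the last row modulo the first $r-1$ rows and $L$, so that it becomes a combination of $q^m$-th powers times Dickson-type quotients that are polynomial.
\end{itemize}
If that direct route fails, I would instead argue by leading monomials, in the spirit of the bivariate example. That is, I would show that the lowest-order terms of $\tau(g)-g$ all have some exponent $\ge q^m$.
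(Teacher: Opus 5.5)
Your reduction to transvections $\tau\colon x_j\mapsto x_j+cx_i$ and the split into $j>r$ and $j\le r$ parallel the paper's case analysis. In both cases you arrive at $\tau(N)-N=\det(T;\Delta)$, where $T$ is the top $r-1$ rows of $\tau(N)$ and $\Delta$ is a defect row. But the proof then rests entirely on your key lemma, which you do not prove and which is false as stated. Take $k=1$, $r=2$ and $h=x_1^{q^m+q-2}\in I_{1,m}$. The numerator of $\delta_{2;m}(h)$ is $x_1x_2^{q^m}h(x_1)-x_2x_1^{q^m}h(x_2)=(x_1x_2)^{q^m}(x_1^{q-1}-x_2^{q-1})$. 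Dividing by $L(x_1,x_2)=x_1x_2(x_2^{q-1}-x_1^{q-1})$ gives the polynomial $-(x_1x_2)^{q^m-1}\notin I_{2,m}$. (It is still $U$-invariant, so the lemma itself is not contradicted.) Its last row $(x_1^{q^m}h(x_2),\,x_2^{q^m}h(x_1))$ has exactly the shape ``$x_t^{q^m}$ times an element of $I$'' that your two cases produce. So no argument that only uses $\Delta_t\in I$ can close the proof. This example is precisely your route (b). Route (a) would need each piece $\delta_{r;m}(x_l^{q^m}h_l)$ to be a polynomial separately, which nothing guarantees, and the leading-monomial fallback is not developed.

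The missing idea, which the paper uses, is to track \emph{which} generator of $I$ occurs. For $\tau\colon x_j\mapsto x_j+cx_i$, every $\Delta_t$ is divisible by the single prime power $x_i^{q^m}$:
\begin{itemize}
\item For $t\notin\{i,j\}$, $\tau$ acts on the arguments of $f(\widehat{x_t})$ as a transvection with source $x_i$. If $f$ has all exponents below $q^m$ (the paper tacitly uses such a representative), only the exponent of $x_i$ can reach $q^m$ in the difference, so membership in $I$ forces divisibility by $x_i^{q^m}$.
\item For $t=j$ (your Case 2), $\Delta_j=c\,x_i^{q^m}\bigl(f(\widehat{x_j})-f(\widehat{x_i})\bigr)$ carries the factor explicitly.
\item For $t=i$, you describe the substitution as an induced transvection, but $x_i$ is not an argument of $f(\widehat{x_i})$. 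In fact $\Delta_i=x_i^{q^m}\bigl(\tau(f(\widehat{x_i}))-f(\widehat{x_i})\bigr)$, which is divisible by $x_i^{q^m+1}$.
\end{itemize}
Factor $x_i^{q^m}$ out of the last row, and for $i\le r$ expand along column $i$, whose entries in $T$ are powers of $x_i$. This gives $x_i^{q^m}\mid\tau(N)-N$, and even $x_i^{q^m+1}\mid\tau(N)-N$ when $i\le r$. Now $L(x_1,\ldots,x_r)$ is $\tau$-invariant, squarefree, and divisible by $x_i$ exactly when $i\le r$, so unique factorization gives $x_i^{q^m}\mid\tau(g)-g$. Divisibility by a power of the prime $x_i$ survives division by $L$, while membership in the non-principal ideal $I$ does not; that is exactly the obstruction you identified.
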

\begin{proof}
    We follow the proof of \cite[Proposition 4.3]{Ha_Hai_Nghia_2024}. Let $N$ denote the numerator of $\delta_{r;m}(f)$, which is
    \begin{equation*}
        N = \begin{vmatrix}
            x_1 & \cdots & x_k\\
            \vdots & \ddots & \vdots\\
            x_1^{q^{r-2}} & \cdots & x_r^{q^{r-2}}\\
            x_1^{q^m}f(\widehat{x_1}, x_2, \ldots, x_{k+1}) & \cdots  & x_r^{q^m}f(x_1, \ldots, \widehat{x_r}, \ldots, x_{k+1})
        \end{vmatrix}.
    \end{equation*}
    For any $a\in \mathbb{F}_q^{\times}$, consider the operator $\sigma\in U$ sending $x_j$ to $x_j + ax_i$ for some $1\leq i<j\leq k+1$ and fixing $x_{\ell}, \text{ for all } \ell\neq j$. Because the denominator $L(x_1, \ldots, x_k)$ is invariant under the action of $\sigma$, and $L(x_1, \ldots, x_k)$ is square-free, it suffices to show that $\sigma(N) - N$ is divisible by $x_i^{q^{m}+1}$ if $i\leq r$, and by $x_i^{q^m}$ if $i>r$.
    \begin{itemize}
        \item[(1)] If $i>r$, then $j>i>r$, therefore, the first $(r-1)$ rows of $\sigma(N)$ and $N$ are the same. Furthermore, for all $\ell\leq r$, the $(r, \ell)$-entry of $\sigma(N)-N$ is 
        \begin{equation*}
            x_{\ell}^{q^m}\cdot (f(x_1, \ldots,x_i,\ldots, \widehat{x_{\ell}},\ldots, x_j+ax_i, \ldots)-f(x_1, \ldots,x_i,\ldots, \widehat{x_{\ell}},\ldots, x_j, \ldots)),
        \end{equation*}
        which is divisible by $x_i^{q^m}$ due to the assumption $f\in Q(m, k)^U$.
        \item[(2)] If $i\leq r$, we have two cases. The first case is when $j\leq r$, the $j$th column of $\sigma(N)$ is a sum of two column vectors
            \begin{equation*}
                (x_j, x_j^q, \ldots, x_j^{q-2}, x_j^{q^m}f(\widehat{x_j}))^T + a\cdot (x_i, x_i^q, \ldots, x_i^{q-2}, x_i^{q^m}f(\widehat{x_j}))^T,
            \end{equation*}
            hence, the difference $\sigma(N)-N$ can be written as a sum of two determinants, in which the first have the top $(r-1)$ rows exactly the same as that of $N$, and for any $\ell\in \{1, \ldots, r\}$, the $(r, \ell)$-entry of which is
            \begin{equation*}
                    \delta_{\ell j}x_{\ell}^{q^m}(f(x_1, \ldots, \widehat{x_{\ell}}, \ldots, x_j+x_i, \ldots, x_{k+1}) - f(x_1, \ldots, \widehat{x_{\ell}}, \ldots, x_j, \ldots, x_{k+1})),
            \end{equation*}
            (here $\delta_{\ell r}$ is the Kronecker delta.) If $\ell\neq i$, then by the assumption $f\in Q(m, k)^U$, the $(r, \ell)$-entry is divisible by $x_i^{q^m}$. Furthermore, the $(r, i)$-entry is divisible by $x_i^{q^m+1}$. Since all the elements in the $i$th column are divisible by $x_i$, it follows that the first determinant is divisible by $x_i^{q^m+1}$. The second determinant can be computed explicitly; let 
            \begin{equation*}
                A = f(x_1, \ldots, x_i, \ldots, \widehat{x_j}, \ldots, x_{k+1}) - f(x_1, \ldots, \widehat{x_i}, \ldots, x_{j}+x_i, \ldots, x_{k+1}),
            \end{equation*}
            then the determinant in consideration equals
            \begin{equation*}
                a\cdot x_i^{q^m}L(x_1, \ldots, \widehat{x_j}, \ldots, x_r)\cdot A,
            \end{equation*}
            which is divisible by $x_i^{q^m+1}$ since $x_i\mid L(x_1, \ldots, \widehat{x_j}, \ldots, x_r)$. The second case is when $j>r\geq i$, in this case, only the first determinant occurs. One checks that for $\ell\neq i$, the $(\ell, r)$-entry of this determinant is divisible by $x_i^{q^m}$, and the $(i, r)$-entry of this determinant is divisible by $x_i^{q^m+1}$.\qedhere
        \end{itemize}
\end{proof}
\begin{corollary}\label{cor:Y_is_polynomial}
    $\mathcal{B}_m(n)$ is a linearly independent subset of $Q(m, n)^U$.
\end{corollary}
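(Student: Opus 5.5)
We argue by induction on $n$ and $m$, following the recursive description of $\mathcal{B}_m(n)$ in \cref{def:basis_unipotent}. There are two things to establish. First, every element of $\mathcal{B}_m(n)$ is a polynomial lying in $Q(m,n)^U$. Second, the family is linearly independent in $Q(m,n)$.

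The base cases are immediate. $\mathcal{B}_0(n)=\{1\}$, and $\mathcal{B}_m(1)$ consists of powers $V_1^a=x_1^a$ with $a<p^m$. Both are genuine $U$-invariant polynomials, and they are independent modulo $x_1^{p^m}$.

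\textbf{Polynomiality and invariance.} Next take $n\ge 2$. There are two kinds of elements.

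For the family $V_1^a\Phi(Y)$ with $Y\in\mathcal{B}_{m-1}(n-1)$, we use the shape of $\Phi$: $\Phi(Y)=Y_m((0,I);(0,J))$ is obtained from $Y$ by raising the Frobenius index by one and shifting the variables. We then check, using \cref{lemma:functional_eq} (the functional equation is preserved under $q$-th powers and index shift), that $\Phi$ sends polynomials to polynomials. Since $V_1$ is $U$-invariant, it remains to see that $\Phi(Y)$ is invariant in $Q(m,n)$. Because $q=p$, the Frobenius $x\mapsto x^p$ maps $x_i^{p^{m-1}}$ to $x_i^{p^m}$. So invariance of $Y$ modulo $I_{n-1,m-1}$ becomes invariance modulo $I_{\cdot,m}$ after applying $\Phi$, provided $\Phi$ intertwines the $U$-actions on the shifted variables. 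This is where $q$ prime is used.

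For the family $\delta_{1;m}(Y)$, the nested structure of $Y_m(I;J)$ means that every element of $\mathcal{B}_m(n)$ has the form $\delta_{r;m}$ applied to $V_{r}^{j}$ times an element built one level deeper. Polynomiality at each nesting level is then obtained in three steps:
\begin{itemize}
\item[(i)] apply \cref{lemma:polynomiality_D_mod} with $g=V_r^{j}$, which is $GL_{r-1}$-invariant in $x_1,\dots,x_{r-1}$ because $V_r=f_{\mathrm{span}(x_1,\dots,x_{r-1})}(x_r)$;
\item[(ii)] apply \cref{cor:lower_delta_is_easier} to pass from $\delta_{r;m}$ to $\delta_{r';m}$ for $r'\le r$;
\item[(iii)] check the base case $\delta_{k;m}(V_k^{j_k})$ directly, from the determinant formula and \cref{lemma:functional_eq}.
\end{itemize}
Invariance then propagates level by level through \cref{lemma:U_induction}, starting from the fact that $V_k^{j_k}$ lies in $S^U$.

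\textbf{Linear independence.} We fix a monomial order; following the bivariate example, take lex with $x_1<\cdots<x_n$. The plan is to compute leading monomials and show they are distinct and not in $I_{n,m}$.

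For the first family, the leading term of $\delta_{1;m}(Y)$ is $x_1^{p^m}\cdot Y(x_2,\dots,x_{n})$. Its leading monomial is therefore $x_1^{p^m}$ times the shifted leading monomial of $Y$. Exponents must stay below $p^m$ in $Q(m,n)$, so we have to choose the order carefully, most likely the reverse one in which $x_1$ is the largest variable; the leading monomial is then $x_1^{p^m-1}\cdot(\ldots)$, as in the HHN Borel computation.

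For the second family, the leading monomial is $x_1^{a}$ times the Frobenius image of the leading monomial of $Y$ in $x_2,\dots,x_n$, so every exponent of $x_2,\dots,x_n$ is divisible by $p$. The two families are then separated by the $x_1$-exponent: it equals $p^m-1$ for the first family and is strictly smaller for the second, since $a<p^m-1$. Within each family, distinctness follows by induction.

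\textbf{Main obstacle.} I expect the hardest step to be this leading-monomial bookkeeping. One must show that the leading monomial of each nested $Y_m(I;J)$ survives modulo $I_{n,m}$, i.e.\ has all exponents at most $p^m-1$, and this is exactly where the bounds $j_s<p^{m-s+2}-1$ and $j_k\le p^{m-k+1}-1$ enter. I would first try to mimic the Borel argument of \cite{Ha_Hai_Nghia_2024} verbatim, replacing $D_s$ by $V_s$, whose leading monomial is $x_s^{p^{s-1}}$.
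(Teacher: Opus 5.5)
Your architecture is the paper's. Polynomiality comes from \cref{lemma:polynomiality_D_mod} and \cref{cor:lower_delta_is_easier}, invariance comes level by level from \cref{lemma:U_induction}, and independence comes from distinct leading monomials, with the two families separated by the exponent of $x_1$. The gap is in the independence step. You assert $lm(V_1^a\Phi(Y))=x_1^a\,lm(Y)(x_2,\ldots,x_n)^p$ without proof. This claim depends on the monomial order, which you leave open and tentatively choose wrongly. First, $\delta_{1;m}(Y)=x_1^{p^m-1}Y(x_2,\ldots,x_n)$ holds exactly, because the denominator is $L(x_1)=x_1$. So the first family has leading monomial $x_1^{p^m-1}lm(Y)$ in every order, and nothing forces $x_1$ to be the largest variable. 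Making it the largest breaks the second family. For lex with $x_1>x_2$, $x_1^aV_2^j$ has leading monomial $x_1^{a+(p-1)j}x_2^j$. For $p=2$, $m=2$ this gives $lm(x_1^2V_2)=x_1^3x_2=lm(\delta_{1;2}(x_1))$, a collision. For $p=3$, $m=2$, $a=7$, $j=2$ the leading monomial even lies in $I_{2,2}$.

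What is needed is an order that reads the leading monomial of a homogeneous $F$ off $F(0,x_2,\ldots,x_n)$ whenever that is nonzero; the paper uses degrevlex with $x_1<\cdots<x_n$. You also need the identity $\Phi(Y)(0,x_2,\ldots,x_n)=Y(x_2,\ldots,x_n)^q$, which your proposal never identifies. It follows from two facts:
\begin{itemize}
\item if $f(0,x_2,\ldots)=g(x_2,\ldots)^q$ then $\delta_{a+1;b+1}(f)(0,x_2,\ldots)=\delta_{a;b}(g)^q$ (divide the first column of both determinants by $x_1$ and set $x_1=0$; that column becomes $(1,0,\ldots,0)^T$);
\item $V_{s+1}(0,x_2,\ldots,x_{s+1})=V_s(x_2,\ldots,x_{s+1})^q$.
\end{itemize}
With these, $lm(V_1^a\Phi(Y))=x_1^a\,lm(Y)(x_2,\ldots,x_n)^q$ has $x_1$-exponent exactly $a<p^m-1$. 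The exponent bounds you call the main obstacle are then automatic from the induction, since $lm(Y)^p$ has exponents at most $p(p^{m-1}-1)<p^m$. No separate bookkeeping of the $j_s$ is required.

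Separately, your Frobenius argument for invariance of $\Phi(Y)$ does not work as written. $\Phi$ is not the Frobenius: $\Phi(Y)$ agrees with $Y(x_2,\ldots,x_n)^p$ only modulo $x_1$. The transvections $x_j\mapsto x_j+ax_1$ have no counterpart acting on $Y$, so there is nothing for $\Phi$ to intertwine. The argument is also unnecessary, because $V_1^a\Phi(Y)=Y_m((0,I);(a,J))$ is itself a nested $\delta$-expression. Your level-by-level argument therefore covers both families at once. For iterated powers $\delta_{r;m}^{i}$ with $i\ge 2$ it should invoke \cref{lemma:chain_complex}: $\delta_{r+1;m}\delta_{r;m}=0$ is a polynomial, so \cref{cor:lower_delta_is_easier} applies. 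Finally, the hypothesis $q=p$ is not needed for this corollary; in the paper it enters only in the spanning half of \cref{thm: q=p_case}.
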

\begin{proof}
    \cref{cor:lower_delta_is_easier} and \cref{lemma:polynomiality_D_mod} show that $\mathcal{B}_m(n)$ are polynomials. Consider an element $Y_m(I;J) = \delta_{1; m}^{i_1}(V_1^{j_1}\delta_{2; m}^{i_2}(V_2^{j_2}(\cdots (\delta_{k; m}^{i_1}(V_k^{j_k}))\cdots )))$, we have $V_k^{j_k}\in Q(m, k)^{U}$, thus by \cref{lemma:U_induction}, $\delta_{k; m}^{i_1}(V_k^{j_k})\in Q(m, k+i_k)^{U}$. Repeating this argument, we see that $Y_m(I; J)$ is an element of 
    \begin{equation*}
        Q(m, k+i_k + \cdots + i_1)^U = Q(m, n)^U.
    \end{equation*}  
    Consider the graded reversed lexicographic ordering (degrevlex)\index{degrevlex}\index{graded reversed lexicographic ordering} on the monomials of $S$ generated by $x_1<\cdots <x_n$, we prove by induction that the leading monomials of elements in $\mathcal{B}_{m}(n)$ are pairwise distinct nonzero elements of $Q(m, n)$, from which it follows that $\mathcal{B}_m(n)$ is linearly independent. The base case $n=1$ is trivial; assume that the claim is true for $(n-1)$, for any $Y\in \mathcal{B}_m(n-1)$, we have
    \begin{equation*}
        lm(\delta_{1;m}(Y)) = x_1^{q^m-1}lm(Y)(x_2, \ldots, x_n),
    \end{equation*}
    where $lm(f)$ denotes the leading monomial\index{leading monomial} of a nonzero polynomial $f\in S$. Moreover, from the definition of $\delta$ operators, consider a polynomial $f\in \mathbb{F}_q[x_1, \ldots, x_c]$ such that $\delta_{a+1;b+1}(f)$ is a polynomial. Then, we have
    \begin{equation*}
       \delta_{a+1;b+1}(f) = \dfrac{\begin{vmatrix}
            1 & \cdots & x_{a+1}\\
            x_1^{q-1} & \cdots & x_{a+1}^q\\
            \vdots & \ddots & \vdots\\
            x_1^{q^{a-1}-1} & \cdots  & x_{a+1}^{q^{a-1}}\\
            x_1^{q^{b+1}-1}f(\widehat{x_1}, x_2, \ldots, x_{c+1}) & \cdots  & x_{a+1}^{q^{b+1}}f(x_1, \ldots, \widehat{x_{a+1}}, \ldots, x_{c+1})
        \end{vmatrix}}{\begin{vmatrix}
            1 & \cdots & x_{a+1}\\
            x_1^{q-1} & \cdots & x_{a+1}^q\\
            \vdots & \ddots & \vdots\\
            x_1^{q^{a-1}-1} & \cdots  & x_{a+1}^{q^{a-1}}\\
            x_1^{q^{a}-1} & \cdots  & x_{a+1}^{q^{a}}\\
        \end{vmatrix}}. 
    \end{equation*}
    Assigning $x_1=0$, we obtain
    \begin{align*}
        \delta_{a+1;b+1}(f)(0, x_1, \ldots, x_n)&= \dfrac{\begin{vmatrix}
            1 & \cdots & x_{a+1}\\
            0 & \cdots & x_{a+1}^q\\
            \vdots & \ddots & \vdots\\
            0 & \cdots  & x_{a+1}^{q^{a-1}}\\
            0 & \cdots  & x_{a+1}^{q^{b+1}}f(0, \ldots, \widehat{x_{a+1}}, \ldots, x_{c+1})
        \end{vmatrix}}{\begin{vmatrix}
            1 & \cdots & x_{a+1}\\
            0 & \cdots & x_{a+1}^q\\
            \vdots & \ddots & \vdots\\
            0 & \cdots  & x_{a+1}^{q^{a-1}}\\
            0 & \cdots  & x_{a+1}^{q^{a}}\\
        \end{vmatrix}}\\
        &=\delta_{a;b}(f(0, x_2, \ldots, x_n))^q.
    \end{align*}
    Therefore, by definition, we have
    \begin{equation*}
        \Phi(Y_{m-1}(I;J))(0, x_2, \ldots, x_n) = Y_{m-1}(I; J)(x_2, \ldots, x_n)^q, \text{ for all } Y_{m-1}(I; J)\in \mathcal{B}_{m-1}(n-1). 
    \end{equation*}
    It follows that for $0\leq a<q^m-1$ and $Y\in \mathcal{B}_{m-1}(n-1)$, we have
    \begin{equation*}
        lm(V_1^a \cdot \Phi(Y)) = x_1^a \cdot lm(Y)^q.
    \end{equation*}
    Since $a<q^m-1$, the leading monomials of the first and the second families are distinct; by the induction hypothesis, the leading monomials of elements of the first family are distinct, similarly for the second family, proving the claim.
\end{proof}
\begin{proof}[Proof of \cref{thm: q=p_case}]
    By \cref{cor:Y_is_polynomial}, it suffices to prove that $\mathcal{B}_m(n)$ spans $Q(m, n)^U$. We proceed via induction by examining the leading monomial of a nonzero homogeneous element $f\in Q(m, n)^U$. If $lm(f)$ is divisible by $x_1^{q^m-1}$, then 
    \begin{equation*}
        f(x_1, \ldots, x_n) = x_1^{q^m-1}g(x_2, \ldots, x_n),
    \end{equation*}
    where $g\in \mathbb{F}_q[x_1, \ldots, x_{n-1}]$. It is obvious that $g\in Q(m, n-1)^U$, by the induction hypothesis, $f = \delta_{1;m}(g)$ belongs to the $\mathbb{F}_q$-span of the first family in $\mathcal{B}_m(n)$. Otherwise, suppose that
     \begin{equation*}
            f(x_1, \ldots, x_n) = x_1^k f_1(x_2, \ldots, x_n) + x_1^{k+1} g(x_1, \ldots, x_n),
        \end{equation*}
        where $0\leq k<p^m-1$, and $f_1$ is a nonzero polynomial. It is not hard to verify that $f_1\in Q(m, n-1)^{U}$, thus if $f_1 = g_1^p$ for some polynomial $g_1$, then $g_1$ automatically belongs to $Q(m-1, n-1)^{U}$. Now suppose for the sake of contradiction that there exists a monomial $a_{a_2, \ldots, a_n}\cdot x_2^{a_2}\cdots x_n^{a_n}$ with nonzero coefficient in $f_1$, such that $p\nmid \gcd(a_2, \ldots, a_n)$. If $p\nmid a_j$ for some $j\geq 2$, then for all $a\in \mathbb{F}_p$, 
        \begin{align*}
            &f(x_1, \cdots, x_j + a x_1, \cdots, x_n) - f(x_1, \ldots, x_n)  =\\ &=x_1^k \left(a_{a_2, \ldots, a_n}\prod\limits_{j'\neq j}x_{j'}^{a_{j'}}\cdot((x_j+ax_1)^{a_j}-x_j^{a_j})+\right.\\
            &\left.+\sum\limits_{(a'_2, \ldots, a'_n)\neq (a_2, \ldots, a_n)}a_{a'_2, \ldots, a'_n}\prod\limits_{j'\neq j}x_{j'}^{a'_{j'}}\cdot((x_j+ax_1)^{a'_j}-x_j^{a'_j}) \right) +x_1^{k+2} r(x_1, \ldots, x_n),
        \end{align*} 
        for some $r\in \mathbb{F}_p[x_1, \ldots, x_n]$. Because $k< p^{m}-1$, the coefficient of $x_1^{k+1}$ in the right-hand side must be zero for any $a$. The only term in the right-hand side that contributes to the coefficient of the monomial $x_1^{k+1}\prod\limits_{j'\neq j}x_{j'}^{a_{j'}}\cdot x_j^{a_j-1}$ is $a_{a_2, \ldots, a_n}\prod\limits_{j'\neq j}x_{j'}^{a_{j'}}\cdot((x_j+ax_1)^{a_j}-x_j^{a_j})$, and the contribution is 
        \begin{equation*}
            a\cdot a_j\cdot a_{a_2, \ldots, a_n}\prod\limits_{j'\neq j}x_{j'}^{a_{j'}}\cdot x_j^{a_j-1}.
        \end{equation*}
        Therefore,
        \begin{equation*}
            a\cdot a_{a_2, \ldots, a_n}\cdot a_j =0, \text{ for all } a\in \mathbb{F}_p.
        \end{equation*}
        This shows that $a_{a_2, \ldots, a_n} = 0$, a contradiction. We conclude that $f_1 = g_1^p$ for some $g_1\in Q(m, n-1)^U$, and by the induction hypothesis, there exists $Y\in \mathcal{B}_{m-1}(n-1)$ such that $lm(V_1^{k}\Phi(Y)) = lm(f)$, therefore, $f-c\cdot V_1^k \Phi(Y)$ has leading monomial strictly smaller than $lm(f)$ for some $c\in \mathbb{F}_q$. Repeating this process, we deduce that $Q(m, n)^U$ is generated by $\mathcal{B}_m(n)$.
\end{proof}
\begin{remark}
    The proof of \cref{thm: q=p_case} relies heavily on the construction of the Frobenius-like operator, and the proven property of the leading monomial of an invariant element. It is still an open question to generalize this method to compute an $\mathbb{F}_q$-basis for $Q(m, n)^U$ in the general case (when $q>p$.)
\end{remark}
\section{The Delta Operator}\label{sec:delta_operator}
As seen in \Cref{sec:the_conj}, $\delta$ operators are the main ingredient in constructing bases for invariant spaces of truncated polynomial rings. Furthermore, a substantial portion of the proof of \cref{thm: q=p_case}, as well as that of \cite[Theorem 1.6]{Ha_Hai_Nghia_2024}, is devoted to examining the polynomiality\index{polynomiality} property of $\delta$ operators. In this section, we address this problem in more detail and propose a conjecture, along with partial results related to this new topic.\smallbreak
Fix a nonnegative integer $m\geq 0$. In this section, we use the shortened notation $\delta_n$ for $\delta_{n;m}$.
\begin{definition}\label{def:}
    Consider the polynomial ring $S = \mathbb{F}_q[x_1, \ldots, x_n]$. A polynomial $f\in S$ is called \emph{$(n+1, m)$-excellent}\index{$(n+1, m)$-excellent polynomial} if $\delta_{n+1}(f)$ is a polynomial. Denote by $A_{m}^{(n)}$ the set of all $(n+1, m)$-excellent polynomials. 
\end{definition}
\begin{remark}
    \begin{enumerate}
    \item We may consider the set of all $(n+1, m)$-excellent polynomials $f\in \mathbb{F}_q[x_1, \ldots, x_c]$ for any $c\geq n$, but by \cref{lemma:functional_eq}, this property is determined by the first $n$ variables, hence it suffices to consider the space $A_{m}^{(n)}$ as above.
        \item \cref{lemma:functional_eq} shows that $f\in A_{m}^{(n)}$ if and only if $f$ satisfies \eqref{eq:fundamental_functional_eq} for all $c_1, \ldots, c_{n}\in \mathbb{F}_q$. Furthermore, the proof of \cref{lemma:functional_eq} points out the fact that: if $f\in A_{m}^{(n)}$, then $f$ is a symmetric polynomial. Moreover, letting $c_{n} \in \mathbb{F}_q^{\times}$ and $c_1= \cdots = c_{n-1}=0$, we have
        \begin{equation*}
            f(x_1, \ldots, c_n x_n) = f(x_1, \ldots, x_n).
        \end{equation*}
        Consequently, $f\in \mathbb{F}_q[x_1^{q-1}, \ldots, x_n^{q-1}]$.
        \item One checks that the above properties of elements of $A_{m}^{(n)}$ suffice to determine $A_{m}^{(n)}$ in the case $n=1$; in this case, $A_{m}^{(n)} = \mathbb{F}_q[x_1^{q-1}] = \mathcal{D}_1$, for any $m$. However, when $n>1$ and $m>0$, $A_{m}^{(n)}$ is strictly larger than $\mathcal{D}_n$; for example, $(x_1\cdots x_n)^{q^m-1}\in A_{m}^{(n)}\backslash \mathcal{D}_n$.
        \item \cref{lemma:polynomiality_D_mod} establishes that $A_{m}^{(n)}$ is a module over the Dickson algebra $\mathcal{D}_n$. Hence, it is natural to examine the $\mathcal{D}_n$-module structure of $A_{m}^{(n)}$.
    \end{enumerate}
\end{remark}
The following conjecture gives a generating set for $A_{m}^{(n)}$ as a $\mathcal{D}_n$-module, motivated by \cref{lemma:chain_complex} and \cref{lemma:polynomiality_D_mod}.
\begin{conjecture}\label{conj:polynomiality}
   For $n\geq 2$, $A_{m}^{(n)}$ is generated as a $\mathcal{D}_n$-module by the set
    \begin{equation*}
        G_{m}^{(n)} = \{1\}\cup \{\delta_{n}(g)\mid g\in A_{m}^{(n-1)}\}.
    \end{equation*}
\end{conjecture}
\subsection{The Bivariate Case}\label{subsection:poly_bivariate}
We prove a strengthened form of \cref{conj:polynomiality} in the bivariate case.
\begin{theorem}\label{thm:pol_bivariate}
    $A_{m}^{(2)}$ is a free $\mathcal{D}_2$-module with a basis
    \begin{equation*}
        H_{m}^{(2)} = \{1\}\sqcup \{\delta_2(x_1^{s(q-1)})\mid 2\leq s\leq q\}.
    \end{equation*}
    Furthermore, the $\mathcal{D}_2$-module
\begin{equation*}
    \mathcal{M} = \left\{\dfrac{f(x_1, x_2) - f(x_1 + x_2, x_2)}{x_2^{q^m}}\mid f\in A_{m}^{(2)}\right\}
\end{equation*} 
does not depend on the choice of $m$. In particular, $\mathcal{M}$ is the free $\mathcal{D}_2$-module generated by 
\begin{equation*}
    \left\{\dfrac{x_1^{s(q-1)+1} + x_2^{s(q-1)+1} - (x_1 + x_2)^{s(q-1)+1}}{\begin{vmatrix}
        x_1 & x_2\\
        x_1^{q}& x_2^{q}
    \end{vmatrix}}\mid 2\leq s\leq q\right\}.
\end{equation*}
\end{theorem}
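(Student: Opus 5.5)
The plan is to linearize the problem with the difference map that appears in the second half of the statement. First I would make membership in $A_{m}^{(2)}$ explicit. By \cref{lemma:functional_eq} and the Remark after the definition of excellence, $f\in A_{m}^{(2)}$ if and only if $f$ is symmetric, lies in $\mathbb{F}_q[x_1^{q-1},x_2^{q-1}]$, and satisfies \eqref{eq:fundamental_functional_eq} with $a=3$, $b=m$. Using symmetry and diagonal invariance to normalise $c_1,c_2$, only the case $c_1=c_2=1$ remains:
\begin{equation*}
x_1^{q^m}\bigl(f(x_1+x_2,x_2)-f\bigr)=-x_2^{q^m}\bigl(f(x_1,x_1+x_2)-f\bigr).
\end{equation*}
Since $x_1^{q^m}$ and $x_2^{q^m}$ are coprime, this is equivalent to $x_2^{q^m}\mid f-\tau f$, where $\tau\colon x_1\mapsto x_1+x_2$; the symmetric condition then follows by swapping variables. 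Hence $A_{m}^{(2)}$ consists of the $f$ invariant under permutations and diagonal matrices with $f\equiv\tau f \pmod{x_2^{q^m}}$.

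Next, I would use the map $\partial\colon A_{m}^{(2)}\to\mathcal{M}$, $f\mapsto (f-\tau f)/x_2^{q^m}$. It is $\mathcal{D}_2$-linear, since Dickson invariants are $\tau$-invariant. Its kernel is $A_{m}^{(2)}\cap S^{GL_2(\mathbb{F}_q)}=\mathcal{D}_2$, because $GL_2(\mathbb{F}_q)$ is generated by permutations, diagonal matrices and $\tau$ (\cref{thm:dickson}). This gives a short exact sequence $0\to\mathcal{D}_2\to A_{m}^{(2)}\to\mathcal{M}\to 0$, with $1$ spanning the kernel.

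To compute $\partial$ on the proposed generators, write $\delta_2(g)=\bigl(x_1^{q^m}g(x_2)-x_2^{q^m}g(x_1)\bigr)/L(x_1,x_2)$ and use $L(x_1+x_2,x_2)=L(x_1,x_2)$. This gives
\begin{equation*}
\partial\,\delta_2(g)=\frac{g(x_1+x_2)-g(x_1)-g(x_2)}{L(x_1,x_2)},
\end{equation*}
after absorbing the $\mathbb{F}_q^\times$-normalisation and possibly shifting the exponent by one. I still have to check the exact exponent ($s(q-1)$ versus $s(q-1)+1$) and the sign against the statement. The key point is that $m$ has disappeared. So if $\mathcal{M}$ is generated by these $q-1$ elements, it is independent of $m$. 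Freeness of $A_{m}^{(2)}$ with basis $H_{m}^{(2)}$ then follows from freeness of $\mathcal{M}$: the sequence splits because $\mathcal{M}$ is free, and the lifts $\delta_2(x_1^{s(q-1)})$ together with $1$ give a basis.

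For freeness of $\mathcal{M}$, I would show that the $q-1$ generators $h_s$ are $\mathcal{D}_2$-linearly independent. The plan is to compare leading monomials of $\mathcal{D}_2$-combinations, using the monomial basis $\mathcal{G}$ of $S$ over $\mathcal{D}_2$ from \cref{thm:dickson}. One should check that the leading monomials of the $h_s$ lie in distinct cosets, for instance distinct residues of the $x_1$-exponent in the range $<q^2-1$. A rank count is a backup: $\mathcal{M}\subseteq S$, which has $\mathcal{D}_2$-rank $|GL_2(\mathbb{F}_q)|$.

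The main obstacle is spanning: every $h=\partial f$ must be a $\mathcal{D}_2$-combination of the $h_s$. My first attempt is induction on degree. Take $f\in A_{m}^{(2)}$ homogeneous. Using symmetry, diagonal invariance and the congruence, I would extract its top part in $x_1$ and show that it is forced to look like $x_1^{q^m}g(x_2)$ plus symmetric terms, with $g\in A_{m}^{(1)}=\mathbb{F}_q[x^{q-1}]$. Then I would subtract $\delta_2(g)$ and reduce $g$ modulo $\mathcal{D}_1$-multiples to exponents $s(q-1)$ with $2\le s\le q$, pulling Dickson factors outside via \cref{lemma:polynomiality_D_mod}, and conclude by induction that the remainder lies in $\mathcal{D}_2$. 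If this bookkeeping fails, the fallback is a Hilbert series comparison. That would compute $\mathrm{Hilb}(A_{m}^{(2)},t)$ directly from the description ``$GL_2$-invariant modulo $x_2^{q^m}$'' and match it with $\mathrm{Hilb}(\mathcal{D}_2,t)\bigl(1+\sum_s t^{\deg\delta_2(x_1^{s(q-1)})}\bigr)$.
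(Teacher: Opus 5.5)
There is a genuine gap at your first step: your description of $A_m^{(2)}$ is wrong. Coprimality of $x_1^{q^m}$ and $x_2^{q^m}$ gives only one direction of the claimed equivalence.

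Suppose $f$ is symmetric and diagonal-invariant and $f-\tau f=x_2^{q^m}h$. Swapping variables gives $f(x_1,x_1+x_2)-f=-x_1^{q^m}h(x_2,x_1)$. Your displayed identity then becomes $x_1^{q^m}x_2^{q^m}\bigl(h(x_1,x_2)+h(x_2,x_1)\bigr)=0$. So you also need $h=\partial f$ to be \emph{antisymmetric}. This is exactly the extra condition in the paper's \cref{lemma:crucial_prop}, and it is not automatic.

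For example, take $q=2$, $m=1$, $f=x_1^4+x_2^4$. Then $f-\tau f=x_2^4$ is divisible by $x_2^2$. But $x_1^2(\tau f-f)+x_2^2\bigl(f(x_1,x_1+x_2)-f\bigr)=x_1^2x_2^4+x_1^4x_2^2\neq 0$, so $f\notin A_1^{(2)}$. More generally, $x_1^{(q-1)q^{m+1}}+x_2^{(q-1)q^{m+1}}$ meets all of your conditions for every $q$ and $m$. Yet its image under $\partial$ contains $-x_2^{(q-1)q^{m+1}-q^m}$ while every monomial in it has positive $x_2$-degree. Antisymmetry would force $x_1^{(q-1)q^{m+1}-q^m}$ to appear, so it is not antisymmetric.

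Consequently, your spanning argument, which uses only ``symmetry, diagonal invariance and the congruence'', and your Hilbert-series fallback would both run on a strictly larger space. On that space $H_m^{(2)}$ does not span, so neither can succeed as stated. Membership $y_s\in A_m^{(2)}$ should instead come from $\delta_3\delta_2=0$ (\cref{lemma:chain_complex}).

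Even after restoring antisymmetry, the spanning step, which is the real content of the theorem, has no working mechanism in your sketch. \cref{lemma:polynomiality_D_mod} only says that $\delta_2(dg)$ is a polynomial. It does not let you pull $\mathcal{D}_1$-factors out of $\delta_2$ as $\mathcal{D}_2$-factors. Indeed, \cref{prop:commutator} gives $\delta_2(x_1^{q-1}g)=x_1^{q-1}\delta_2(g)-\delta_1(g)$, with $x_1^{q-1}\notin\mathcal{D}_2$. So expressing $\delta_2(x^{s(q-1)})$ for $s>q$ through $1,y_2,\dots,y_q$ is itself part of what must be proved.

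The paper works entirely with $G(f)=\partial f$. It derives the functional equation \eqref{eq:fundamental_eq_for_G} and combines it with antisymmetry to pin down the leading monomial $x_1^kx_2^l$ of $G(f)$ (graded lex, $x_1>x_2$):
\begin{itemize}
\item $L(x_1,x_2)^{l}\mid G(f)$;
\item $1+a^{l+1}=(a+1)^{l+1}$ on $\mathbb{F}_q$, so $l=(q-1)r+p^e-1$ with $p^e<q$;
\item after dividing by $Q_{2,0}^{r}$, the remaining $x_1$-exponent $k'$ satisfies $v_p\bigl((k'+p^e)/(q-1)\bigr)=e$.
\end{itemize}
Together these force $x_1^kx_2^l$ to be the leading monomial of $G(Q_{2,1}^{k_1}Q_{2,0}^{r}y_s)$ for some $s\in\{2,\dots,q\}$, and one inducts.

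The rest of your plan is sound and matches the paper: $\mathcal{D}_2$-linearity of $\partial$ with kernel $\mathcal{D}_2$, splitting once $\mathcal{M}$ is free, and independence via leading monomials. Your $m$-free formula for $\partial\delta_2(g)$ also holds once you use the correct numerator $x_1x_2^{q^m}g(x_1)-x_2x_1^{q^m}g(x_2)$ of $\delta_2(g)$. It then gives exactly $\bigl(\tilde g(x_1)+\tilde g(x_2)-\tilde g(x_1+x_2)\bigr)/L(x_1,x_2)$ with $\tilde g(x)=xg(x)$, i.e. the exponent $s(q-1)+1$ of the statement.
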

\begin{proof}
    For each $s\in \{2, \ldots, q\}$, denote $y_s = \delta_2(x_1^{s(q-1)})$. By \cref{lemma:chain_complex}, it is obvious that $H_m^{(2)}$ is a subset of $A_m^{(2)}$. \smallbreak
    Consider the graded lexicographic order on $\mathbb{F}_q[x_1, x_2]$ by asserting $x_1>x_2$. For each nonzero homogeneous polynomial $f\in \mathbb{F}_q[x_1, x_2]$, let $in(f)$ be the leading monomial of $f$ with respect to the lexicographic order just defined. Consider the map
    \begin{align*}
        G: \mathbb{F}_q[x_1, x_2]&\to \mathbb{F}_q(x_1, x_2)\\ 
            f&\mapsto \dfrac{f(x_1, x_2) - f(x_1+x_2, x_2)}{x_2^{q^m}}.
    \end{align*}
    The map $G$ is of significant importance, as shown in the following lemma.
    \begin{lemma}\label{lemma:crucial_prop}
    Let $f\in \mathbb{F}_q[x_1^{q-1}, x_2^{q-1}]$, such that $f$ is symmetric. Then $f\in A_m^{(2)}$ if and only if $G(f)\in \mathbb{F}_q[x_1, x_2]$ and $G(f)$ is antisymmetric, furthermore, $G(f) = 0$ if and only if $f\in \mathcal{D}_2$. 
\end{lemma}
\begin{proof}
    If $f\in A_{m}^{(2)}$, from \eqref{eq:fundamental_functional_eq}, by specializing $c_1=1$, we see that $G(f)$ must be a polynomial; furthermore,
    \begin{equation*}
        G(f)(x_1, x_2) = \dfrac{f(x_1, x_1+x_2) - f(x_1, x_2)}{x_1^{q^m}}, \text{ for all } f\in A_m^{(2)}.
    \end{equation*}
    Therefore, $G(f)(x_2, x_1) = \dfrac{f(x_2, x_1+x_2) - f(x_1, x_2)}{x_2^{q^m}} = -G(f)(x_1, x_2)$, which implies that $G(f)$ is antisymmetric. Conversely, if $G(f)\in \mathbb{F}_q[x_1, x_2]$, then we must show that \eqref{eq:fundamental_functional_eq} holds for all $c_1\in \mathbb{F}_q$. This equation obviously holds when $c_1=0$; when $c_1\neq 0$, we have 
    \begin{equation*}
        G(f)(x_1, ax_2) = \dfrac{f(x_1, x_2) - f(x_1 + ax_2, x_2)}{ax_2^{q^m}} = \dfrac{f(x_1, x_1+ax_2) - f(x_1, x_2)}{x_1^{q^m}}.
    \end{equation*}
    The second equality holds because $G(f)$ is antisymmetric. The equality of the middle and the right-hand side is exactly \eqref{eq:fundamental_functional_eq}. Finally, $G(f) = 0$ if and only if $f(x_1+x_2, x_2) = f(x_1, x_2)$, because $f$ is symmetric, and $f\in \mathbb{F}_q[x_1^{q-1}, x_2^{q-1}]$, one sees that $G(f) = 0$ if and only if $f$ is $GL_2(\mathbb{F}_q)$-invariant, or $f\in \mathcal{D}_2$.
\end{proof}
Fix a homogeneous polynomial $f\in A_m^{(2)}$. The next lemma provides the functional equation for $G(f)$, from which we can extract useful information about the leading monomial of $G(f)$.
\begin{lemma}
    For all $a\in \mathbb{F}_q$, we have
    \begin{equation}\label{eq:fundamental_eq_for_G}
        G(f)(x_1+ax_2, x_2) + aG(f)(x_1, ax_2) = (a+1)G(f)(x_1, (a+1)x_2).
    \end{equation}
\end{lemma}
\begin{proof}
    By direct manipulation of the formulas, we have
\begin{equation*}
    G(f)(x_1+ax_2, x_2)  = \dfrac{f(x_1+ax_2, x_2) - f(x_1, x_2)}{x_2^{q^m}} + \dfrac{f(x_1, x_2) - f(x_1+(a+1)x_2, x_2)}{x_2^{q^m}}.
\end{equation*}
If $a\neq 0$, we have $\dfrac{f(x_1+ax_2, x_2) - f(x_1, x_2)}{x_2^{q^m}} = -aG(f)(x_1, ax_2)$, it is not hard to check that the above equality also holds when $a=0$. Similarly, $\dfrac{f(x_1, x_2) - f(x_1+(a+1)x_2, x_2)}{x_2^{q^m}} = (a+1)G(f)(x_1, (a+1)x_2), \text{ for all } a\in \mathbb{F}_q$. Substituting into the above equation, we get \eqref{eq:fundamental_eq_for_G}, as desired. 
\end{proof}
Now, fix a polynomial $f\in A_m$ such that $G(f)\neq 0$, assume that the leading monomial $in(G(f)(x_1, x_2)) = x_1^k x_2^l$. Then, since $G(f)$ is antisymmetric, we have $k\geq l$, furthermore,
\begin{equation*}
    G(f)(x_1, x_2) = \sum\limits_{j = l}^{k} c_j x_1^{l+k-j}x_2^{j}.
\end{equation*}
Firstly, we claim that $G(f)$ is divisible by $(x_1^q x_2 - x_2^q x_1)^{l}$. The claim trivially holds if $l=0$. Suppose that $l>0$, then $x_2 | G(f)(x_1, x_2)$. Therefore, by setting $x_1 = 0$, we have $G(f)(0, x_2) = 0$, from \eqref{eq:fundamental_eq_for_G}, we have $G(f)(ax_2, x_2) = 0, \text{ for all } a\in \mathbb{F}_q$, which means that $G(f)(x_1, x_2)$ is divisible by the least common multiple of the homogeneous polynomials of degree $1$ in $\mathbb{F}_q[x_1, x_2]$, which is $L_2 = x_1^q x_2 - x_2^q x_1$. If $l'$ is the largest exponent such that $L_2^{l'}|G(f)(x_1, x_2)$, then let $H(f)(x_1, x_2) = \dfrac{G(f)}{L_2^{l'}}$, \eqref{eq:fundamental_eq_for_G} is equivalent to
\begin{equation*}
    H(f)(x_1+ax_2, x_2) + a^{l'+1}H(f)(x_1, ax_2) = (a+1)^{l'+1}H(f)(x_1, (a+1)x_2), \text{ for all } a\in \mathbb{F}_q.
\end{equation*}
Because $G(f)$ and $L_2$ are both antisymmetric, $H(f)$ is either symmetric or antisymmetric. Furthermore, if $l'<l$, then the leading monomial of $H(f)$ is $x_1^{k-ql'} x_2^{l-l'}$, which is divisible by $x_2$, therefore, $H(f)(0, x_2) = 0$, thus $H_f(ax_2, x_2) = 0, \text{ for all } a\in \mathbb{F}_q$, which contradicts to the maximality of $l'$. Consequently, $l = l'$.\smallbreak
Secondly, we show that there exists unique nonnegative integers $r, e$, such that $p^e < q$, and $l = (q-1)r + p^e - 1$. Indeed, because $in(G(f)(x_1, x_2)) = x_1^k x_2^l$, $in(G(f)(x_1+ax_2, x_2)) = x_1^k x_2^l, \text{ for all } a\in \mathbb{F}_q$. Calculating the coefficients of the leading monomials of the left-hand side and the right-hand side of \eqref{eq:fundamental_eq_for_G}, we have
\begin{equation*}
    1 + a^{l+1} = (a+1)^{l+1}, \text{ for all } a\in \mathbb{F}_q.
\end{equation*}
If $l = (q-1)r + r_1$, for some $0\le r_1\le q-2$, then we have $1+ a^{r_1+1} = (a+1)^{r_1+1}, \text{ for all } a\in \mathbb{F}_q$. Therefore, the polynomial equation $(x+1)^{r_1+1} = x^{r_1+1} + 1$ has $q>r_1+1$ roots in $\mathbb{F}_q$, which means that $(x+1)^{r_1+1} = x^{r_1+1} + 1$ as polynomials in $\mathbb{F}_q$. This only happens when $r_1+1 = p^e$, for some $e\geq 0$, such that $p^e < q$. Therefore, $l = (q-1)r + p^e-1$, as desired. Uniqueness follows from the fact that if $q = p^m$, then $\{1, p,\ldots, p^{m-1}\}$ are pairwise different modulo $q-1$.\smallbreak
From the second claim, we see that $G(f)(x_1, x_2)$ is divisible by the polynomial $(x_1^q x_2 - x_1 x_2^q)^{(q-1)r} = Q_{2, 0}^{r}$, and the polynomial $G_1(f) = \dfrac{G(f)}{Q_{2, 0}^{r}}$ is antisymmetric and satisfies \eqref{eq:fundamental_eq_for_G}, therefore, one may assume that $l = p^e - 1$ for some $e\ge 0$, $p^e<q$. It is obvious that $deg(G(f))$ is $(-1)$ modulo $(q-1)$, from which we can assume that $k = (q-1)(qk_1 + s) - p^e$, for some nonnegative integers $k_1, s$, such that $0\le s\le q-1$. Our third claim is that, $qk_1 + s$ is divisible by $p^e$ but is not divisible by $p^{e+1}$, in other words, $v_p(qk_1+s) = e$. Indeed, 
\begin{itemize}
    \item If $v_p(qk_1+s)=v_p(k) = d<e$, then the coefficient of $x_1^{k-p^d}x_2^{l+p^d}$ in $G(f)(x_1 + ax_2, x_2)$ is of the form 
\begin{equation*}
    C_l a^{p^d} + \alpha(a),
\end{equation*}
where $C_l = c_l \cdot \binom{k}{p^d}\neq 0$, and $\alpha\in \mathbb{F}_q[x]$ is a polynomial of degree strictly less than $p^d$. It is not hard to see that the coefficient of $x_1^{k-p^d}x_2^{l+p^d}$ in $(a+1)G(f)(x_1, (a+1)x_2)-aG(f)(x_1, ax_2)$ is
\begin{equation*}
    c_{p^d+l}((a+1)^{p^d + p^e+1}-a^{p^d+p^e+1}) = c_{p^d + l}(a^{p^d} + a^{p^e}+1).
\end{equation*}
By \eqref{eq:fundamental_eq_for_G}, these coefficients are equal, therefore,
\begin{equation*}
    C_l a^{p^d} + \alpha(a) = c_{p^d + l}(a^{p^d} + a^{p^e}+1), \text{ for all } a\in \mathbb{F}_q.
\end{equation*}
As $p^d < p^e<q$, and $\deg(\alpha)<p^d$, it follows that $c_{p^d+l} = 0$, and hence $C_l = 0$, a contradiction.
\item If $v_p(qk_1+s)= d>e$, then $k \equiv p^{e+1} - p^{e}(\text{mod }p^{e+1})$. Therefore, the coefficients of $x_1^{k+p^e - p^{e+1}}x_2^{p^{e+1}-1}$ in $G(f)(x_1+ax_2, x_2)$ is of the form 
\begin{equation*}
    C_l a^{p^{e+1}-p^e} + \alpha(a),
\end{equation*}
where $C_l = c_l \cdot \binom{k}{p^{e+1}-p^e}\neq 0$, and $\alpha\in \mathbb{F}_q[x]$ is a polynomial of degree strictly less than $p^{e+1}-p^e$. It is not hard to see that the coefficient of $x_1^{k+p^e - p^{e+1}}x_2^{p^{e+1}-1}$ in $(a+1)G(f)(x_1, (a+1)x_2)-aG(f)(x_1, ax_2)$ is
\begin{equation*}
    c_{p^{e+1}-1}((a+1)^{p^{e+1}}-a^{p^{e+1}}) = c_{p^{e+1}-1}.
\end{equation*}
By \eqref{eq:fundamental_eq_for_G}, these coefficients are equal, therefore,
\begin{equation*}
    C_l a^{p^{e+1}-p^e} + \alpha(a) = c_{p^{e+1}-1}, \text{ for all } a\in \mathbb{F}_q.
\end{equation*}
Since $p^{e}<q$, we have $p^{e+1}-p^e < q$, therefore, the above equality is a contradiction. 
\end{itemize}
The arguments just presented have shown that $k = (q-1)(q(k_1+r) + p^e s_0) - p^e$, where $p^e s_0 \in \{1, \ldots, q-1\}$, and $s_0$ is not divisible by $p$, recall that $l = (q-1)r + p^e-1$. Choose $s = 1 + p^e s_0$, then $s\in \{2, \ldots, q\}$, we easily see that if $f_0 = Q_{2, 1}^{k_1} Q_{2, 0}^r\cdot y_s \in A_m$, then $in(G(f_0)) = x_1^{k}x_2^{l}$, therefore, $f-c_l f_0\in A_m$, and $G(f-c_l f_0)$ is either zero or has the leading monomial strictly smaller than $x_1^k x_2^l$. This allows us to perform an inductive argument to show that $\mathcal{M}$ is generated over $\mathcal{D}_2$ by $G(y_2), \ldots, G(y_q)$, and by \cref{lemma:crucial_prop}, it follows that $A_m^{(2)}$ is generated over $\mathcal{D}_2$ by $\{1, y_2, \ldots, y_q\}$.\smallbreak
Finally, if there are $f_2, \ldots, f_q\in \mathcal{D}_2$ which are not all zero, such that
\begin{equation*}
    f_2 y_2 + \cdots + f_q y_q\in \mathcal{D},
\end{equation*}
then $f_2 G(y_2) + \cdots + f_q G(y_q) = 0$, consequently, there exists $2\leq s < t \leq q$, such that $f_s, f_t\neq 0$, and $in(f_s G(y_s)) = in(f_t G(y_t)) = x_1^k x_2^l$. Because $f_s$ and $f_t$ are in $\mathbb{F}_q[x_1^{q-1}, x_2^{q-1}]$, we see that 
\begin{equation*}
    p^{v_p(s-1)}-1 \equiv l\equiv p^{v_p(t-1)}-1 (\text{mod }q-1),
\end{equation*}
since $p^{v_p(s-1)}<q$ and $p^{v_p(t-1)}<q$, it follows that $v_p(s-1) = v_p(t-1) = d$, thus $s = 1+p^d s_0, t = 1+p^d t_0$ for some $s_0, t_0$ not divisible by $p$. Now, because $f_s, f_t\in \mathcal{D}_2$, one may assume that 
\begin{equation*}
    in(f_s) = x_1^{q(q-1)(k_s+l_s)}x_2^{(q-1)l_s}, \quad in(f_t) = x_1^{q(q-1)(k_t+l_t)}x_2^{(q-1)l_t}
\end{equation*}
for some nonnegative integers $k_s, l_s, k_t, l_t$. It follows that $(q-1)l_t + p^d - 1 = (q-1)l_s + p^d-1$, because they both equal the exponent of $x_2$ in the monomials
\begin{equation*}
    in(f_s G(y_s))= in(f_t G(y_t)),
\end{equation*}
hence $l_s = l_t$, consider the exponent of $x_1$ in $in(f_s G(y_s)) = f_t G(y_t))$, we have
\begin{equation*}
    q(q-1)k_s + (s-1)(q-1) = q(q-1)k_t + (t-1)(q-1).
\end{equation*}
Consequently, $q\mid (t-s)$, which is impossible since $2\le s<t\le q$. We conclude that $A_m$ is free over $\mathcal{D}_2$ with basis $\{1, y_2, \ldots, y_q\}$, and $\mathcal{M}$ is free over $\mathcal{D}_2$ with basis $\{G(y_2), \ldots, G(y_q)\}$.
\end{proof}
\begin{remark}
    The proof of \cref{thm:pol_bivariate} also gives an algorithm to express an element $f\in A_m^{(2)}$ as a $\mathcal{D}_2$-linear combination of $H_m^{(2)}$ by examining the leading monomial of $G(f)$.
\end{remark}
\subsection{Partial Results for the General Case}
We present an alternative approach to \cref{conj:polynomiality} in the general case, employing the following results.
\begin{remark}
    We may regard the formula for the $\delta$ operators as a root produced by Cramer's rule, with respect to a specific system of linear equations. In particular, if $f$ is a rational function in $r$ variables and $r\geq n$, by Laplace expansion, for all $i\in \{0,\ldots, n\}$,
    \begin{equation*}
        \delta_{n+1;m}(Q_{n,i}f) = \dfrac{\begin{vmatrix}
        x_1 & x_2 & \cdots & x_{n+1}\\
        \vdots & \vdots & \ddots & \vdots\\
        \widehat{x_1^{q^i}} & \widehat{x_2^{q^i}} & \cdots & \widehat{x_{n+1}^{q^i}}\\
        \vdots & \vdots & \ddots & \vdots\\
        x_1^{q^{n}} & x_2^{q^{n}} & \cdots & x_{n+1}^{q^{n}}\\
        x_1^{q^m}f(\widehat{x_1}) & x_2^{q^m}f(\widehat{x_2}) & \cdots & x_{n+1}^{q^m}f(\widehat{x_{n+1}})
    \end{vmatrix}}{\begin{vmatrix}
        x_1 & x_2 & \cdots & x_{n+1}\\
        x_1^{q} & x_2^{q} & \cdots & x_{n+1}^q\\
        \vdots & \vdots & \ddots & \vdots\\
        x_1^{q^{n-1}} & x_2^{q^{n-1}} & \cdots & x_{n+1}^{q^{n-1}}\\
        x_1^{q^n} & x_2^{q^n} & \cdots & x_{n+1}^{q^n}
    \end{vmatrix}}.
    \end{equation*}
    Therefore, $(\delta_{n+1}(Q_{n, 0}f), \ldots, \delta_{n+1}(Q_{n, n}f))^{T}$ is the unique solution of the following system of equations
    \begin{equation}\label{eq:sys_of_eqs}
        \begin{aligned}
         \sum\limits_{i=0}^{n} x_{1}^{q^i}\cdot (-1)^{n+i}\delta_{n+1;m}(Q_{n,i}f)&=x_{1}^{q^m}f(x_2, \ldots, x_{r+1}) \\
         \sum\limits_{i=0}^{n} x_{2}^{q^i}\cdot (-1)^{n+i}\delta_{n+1;m}(Q_{n,i}f)&=x_{2}^{q^m}f(x_1, x_3 \ldots, x_{r+1}) \\
         \cdots &\\
         \sum\limits_{i=0}^{n} x_{n+1}^{q^i}\cdot (-1)^{n+i}\delta_{n+1;m}(Q_{n,i}f)&=x_{n+1}^{q^m}f(x_1, \ldots, x_{n}, x_{n+2}, \ldots, x_{r+1}).
    \end{aligned}
    \end{equation}
\end{remark}
\begin{proposition}[{\cite{ha_hai_nghia_GL}}]\label{prop:commutator}
    For each rational function $f$ in $r$ variables with $r\geq n$, the following identity holds for all $i\in \{0, \ldots, n-1\}$.
    \begin{equation}\label{eq:commutator}
        Q_{n, i}\delta_{n+1}(f) - \delta_{n+1}(Q_{n,i}f) = \delta_{n}(Q_{n-1, i}f).
    \end{equation}
\end{proposition}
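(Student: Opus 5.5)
The plan is to identify both sides of \eqref{eq:commutator} through the linear system \eqref{eq:sys_of_eqs} (Cramer's rule), without expanding determinants. The Dickson invariants are taken in the first $n$ variables, $Q_{n,i}=Q_{n,i}(x_1,\ldots,x_n)$ and $Q_{n-1,i}=Q_{n-1,i}(x_1,\ldots,x_{n-1})$, as in the definitions of $\delta_{n+1}$ and $\delta_n$. We extend the index range to $i=n$ with the conventions $Q_{n,n}=1$ and $Q_{n-1,n}=0$; then \eqref{eq:commutator} holds trivially at $i=n$, since both sides vanish.

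Set $z_i=\delta_{n+1}(Q_{n,i}f)$ and $y_i=Q_{n,i}\,\delta_{n+1}(f)-\delta_n(Q_{n-1,i}f)$ for $i\in\{0,\ldots,n\}$. By the Remark preceding the proposition, $(z_0,\ldots,z_n)$ is the unique solution of \eqref{eq:sys_of_eqs}. We will not try to verify all $n+1$ equations for $y$, because the last one, indexed by $x_{n+1}$, looks awkward. Instead we only check the first $n$ equations and then pin down the remaining one-dimensional ambiguity.

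First, fix $k\le n$. The fundamental equation gives
\begin{equation*}
\sum_{i=0}^{n}(-1)^{n+i}x_k^{q^i}Q_{n,i}=f_{V}(x_k)=0,\qquad V=\mathrm{span}\{x_1,\ldots,x_n\},
\end{equation*}
because $x_k\in V$. So the contribution of $Q_{n,i}\delta_{n+1}(f)$ to the $k$th equation is zero.

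Second, apply the same Remark to $\delta_n$, with $n-1$ in place of $n$. Provided $r\ge n-1$, this gives for each $k\le n$
\begin{equation*}
\sum_{i=0}^{n-1}(-1)^{n-1+i}x_k^{q^i}\,\delta_n(Q_{n-1,i}f)=x_k^{q^m}f(\widehat{x_k}),
\end{equation*}
where the arguments of $f$ are the same $r$ variables as in the $k$th row of \eqref{eq:sys_of_eqs}. The sign change between $(-1)^{n+i}$ and $(-1)^{n-1+i}$ cancels the minus sign in the definition of $y_i$. Hence $y$ satisfies the first $n$ equations of \eqref{eq:sys_of_eqs}, and so $y-z$ lies in the kernel of the $n\times(n+1)$ Moore-type matrix $(x_k^{q^i})_{1\le k\le n,\,0\le i\le n}$.

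Since $L(x_1,\ldots,x_n)\ne 0$, this matrix has rank $n$ over $\mathbb{F}_q(x_1,\ldots,x_{r+1})$, so its kernel is one-dimensional. By the fundamental equation just used, the kernel is spanned by $\bigl((-1)^{n+i}Q_{n,i}\bigr)_{0\le i\le n}$, whose last coordinate is $Q_{n,n}=1\neq0$. Therefore $y-z=c\,\bigl((-1)^{n+i}Q_{n,i}\bigr)_i$ for some rational function $c$. Comparing the $i=n$ coordinates,
\begin{equation*}
y_n-z_n=\delta_{n+1}(f)-0-\delta_{n+1}(f)=0,
\end{equation*}
so $c=0$ and $y_i=z_i$ for all $i$, which is \eqref{eq:commutator}.

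The main point needing care is bookkeeping rather than a conceptual difficulty. We must make sure that the variable substitutions $f(\widehat{x_k})$ produced by $\delta_n$ acting on a function of $r$ variables agree exactly with those in the first $n$ rows of \eqref{eq:sys_of_eqs} for $\delta_{n+1}$. Both omit $x_k$ from $x_1,\ldots,x_{r+1}$, but this should be written out. We must also check that the Cramer description of $\delta_n$ is valid, with the same index conventions and $r\ge n-1$. If the sign conventions turn out to be off, a single global sign in \eqref{eq:sys_of_eqs} would be adjusted, and the kernel argument is unaffected.
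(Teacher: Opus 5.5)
Your argument is correct and is essentially the paper's proof. The paper uses the same ingredients: the fundamental equation at $x_j$ for $j\le n$, the first $n$ rows of \eqref{eq:sys_of_eqs}, and the Cramer system for $\delta_n$. The only difference is packaging: the paper moves the $i=n$ term $x_j^{q^n}\delta_{n+1}(f)$ to the right-hand side and concludes by uniqueness for the $n\times n$ system, which is exactly your kernel-plus-last-coordinate step.

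One harmless slip: $y-z$ lies in the kernel of the signed matrix $\bigl((-1)^{n+i}x_k^{q^i}\bigr)$, so it is a multiple of $(Q_{n,i})_{0\le i\le n}$ rather than of $\bigl((-1)^{n+i}Q_{n,i}\bigr)_i$. The last coordinate is still nonzero, so $c=0$ and your conclusion stands.
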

\begin{proof}
    For any $j\in \{1, \ldots, n\}$, by the fundamental equation, we have
    \begin{equation*}
        \sum\limits_{i=0}^{n-1}x_j^{q^i}Q_{n, i}\cdot (-1)^{n+i}\delta_{n+1}(f) = -x_j^{q^n}\delta_{n+1}(f).
    \end{equation*}
    By the above remark, we also have
    \begin{equation*}
        \sum\limits_{i=0}^{n-1}x_j^{q^i}\cdot (-1)^{n+i}\delta_{n+1}(Q_{n, i} f) = x_j^{q^m}f(x_1, \ldots, \widehat{x_j}, \ldots, x_{r+1}) - x_j^{q^n}\delta_{n+1}(f).
    \end{equation*}
    Consequently, for $1\leq j\leq n$, we have
    \begin{equation}\label{eq:part_1_of_eqsys}
        \sum\limits_{i=0}^{n-1}x_j^{q^i}\cdot (-1)^{n+i}(Q_{n, i}\delta_{n+1}(f)-\delta_{n+1}(Q_{n, i} f)) = x_j^{q^m}f(x_1, \ldots, \widehat{x_j}, \ldots, x_{r+1}).
    \end{equation}
    The proof is completed by combining \eqref{eq:sys_of_eqs} (applied for $\delta_n$) and \eqref{eq:part_1_of_eqsys}. 
\end{proof}
The next lemma shows that the $\delta$ operators constitute an exact sequence.
\begin{lemma}
    \label{lemma:exactness}
If $f$ is a polynomial of $r\geq n$ variables, and $\delta_{n+1}(f) = 0$, then there exists a polynomial $g$ of $(r-1)$ variables such that $f = \delta_n(g)$. 
\end{lemma}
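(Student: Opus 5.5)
The plan is to read the hypothesis $\delta_{n+1}(f)=0$ as a vanishing determinant, expand it along its last row, and compare coefficients in the variable $x_{n+1}$; this should recover $f$ itself as the numerator of some $\delta_n(g)$, with $g$ a coefficient of $f$. Concretely, let $N_{n+1}(f)$ denote the numerator of $\delta_{n+1}(f)$. The hypothesis says $N_{n+1}(f)=0$, and Laplace expansion along the last row (as in the proof of \cref{lemma:chain_complex}) gives
\begin{equation*}
\sum_{i=1}^{n+1}(-1)^{n+1+i}\,x_i^{q^m}\,f(\widehat{x_i})\,L(x_1,\ldots,\widehat{x_i},\ldots,x_{n+1})=0 .
\end{equation*}
I will isolate the $i=n+1$ term. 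That term is $x_{n+1}^{q^m}\,f(x_1,\ldots,x_n,x_{n+2},\ldots)\,L(x_1,\ldots,x_n)$, and its only dependence on $x_{n+1}$ is the explicit factor $x_{n+1}^{q^m}$. Every other term, for $i\le n$, depends on $x_{n+1}$ in two places: inside $L(\ldots,\widehat{x_i},\ldots,x_{n+1})$, and through the $n$-th slot of $f(\widehat{x_i})$.

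Next I regard the identity as a polynomial in $x_{n+1}$ and expand $f$ in its $n$-th argument, writing $f=\sum_t f_t\,y_n^t$, where the $f_t$ are polynomials in the remaining $r-1$ variables. By \cref{lemma:f_W}, each $L(\ldots,\widehat{x_i},\ldots,x_{n+1})$ equals $L(x_1,\ldots,\widehat{x_i},\ldots,x_n)\,f_{W_i}(x_{n+1})$. Here $W_i$ is the span of the other $n-1$ variables, and $f_{W_i}$ is an additive polynomial in $x_{n+1}$ with leading term $x_{n+1}^{q^{n-1}}$. Extracting a suitable coefficient in $x_{n+1}$ should produce
\begin{equation*}
L(x_1,\ldots,x_n)\,f(x_1,\ldots,x_n,\ldots)=\pm\sum_{i=1}^{n}(-1)^{i}\,x_i^{q^m}\,g(\widehat{x_i})\,L(x_1,\ldots,\widehat{x_i},\ldots,x_n).
\end{equation*}
The coefficient to take is that of $x_{n+1}^{q^m}$, or the top degree in $x_{n+1}$ combined with Frobenius bookkeeping, and $g$ is the corresponding coefficient $f_t$ with its variables re-indexed. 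The right-hand side is exactly $\pm N_n(g)$, so $f=\pm\delta_n(g)$, and absorbing the sign into $g$ finishes the argument. Polynomiality of $g$ is automatic, because $g$ is a coefficient of the polynomial $f$. The $r-1$ variables arise because one argument of $f$ has been consumed by the coefficient extraction.

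The main obstacle is the coefficient extraction itself. The factor $f_{W_i}(x_{n+1})=\sum_k(\pm)\,Q\text{-type coefficients}\cdot x_{n+1}^{q^k}$ mixes powers $x_{n+1}^{q^k}$ with the powers $x_{n+1}^{t}$ coming from $f$. As a result, the coefficient of $x_{n+1}^{q^m}$ on the right collects contributions $f_{q^m-q^k}$ for several $k$, not a single $f_t$. If that happens, my fallback is the Cramer viewpoint of \eqref{eq:sys_of_eqs}. Vanishing of $N_{n+1}(f)$ means the row $\bigl(x_i^{q^m}f(\widehat{x_i})\bigr)_i$ lies in the span of the rows $\bigl(x_i^{q^k}\bigr)_{0\le k\le n-1}$, so there are rational functions $c_0,\ldots,c_{n-1}$ with $x_i^{q^m}f(\widehat{x_i})=\sum_k c_k x_i^{q^k}$ for $i=1,\ldots,n+1$. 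Solving the first $n$ equations by Cramer's rule expresses each $c_k$ as a $\delta_n$-type quotient. Substituting into the $(n+1)$-st equation, and using \cref{prop:commutator} to trade $Q_{n-1,i}$-multiples across $\delta_n$, should then identify $f$ with $\delta_n$ applied to the combination $g=\sum_k \pm Q_{n-1,k}\cdot(\text{coefficient})$. I expect the difficulty there to be showing that this $g$ is a genuine polynomial rather than only a rational function.
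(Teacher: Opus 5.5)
Your main route matches the paper's: Laplace-expand the vanishing numerator along its last row, isolate the $i=n+1$ term, and read off the coefficient of $x_{n+1}^{q^m}$. The one difference is that you expand $f$ in its $n$-th slot, while the paper expands $F=f\cdot L(x_1,\ldots,x_n)$, and that choice is exactly where your argument stalls. As written the proof does not close. Your claimed identity, with $g$ a single coefficient $f_t$, is false in general, and you leave the ``mixing'' obstacle unresolved.

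Concretely, take $n=r=2$ and $f=(x_1x_2)^{q^m-1}$. Then $f=\delta_2(-x_1^{q^m+q-2})$, so $\delta_3(f)=0$ by \cref{lemma:chain_complex}. Yet the only nonzero coefficient of $f$ in its second variable is $x_1^{q^m-1}$, and $\delta_2(x_1^{q^m-1})=0$. So no single coefficient, or multiple of one, can serve as $g$. Your Cramer/commutator fallback is only sketched, and it is not needed.

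The missing observation is that the mixing is the same for every $i$. For $i\le n$ the extra factor is
\[
f_{W_i}(x_{n+1})=\sum_{k=0}^{n-1}(-1)^{n-1-k}Q_{n-1,k}(x_1,\ldots,\widehat{x_i},\ldots,x_n)\,x_{n+1}^{q^k}.
\]
Its coefficients are evaluated on exactly the variables at which the $f_t$ are evaluated. Hence the coefficient of $x_{n+1}^{q^m}$ in the $i$-th term is $x_i^{q^m}L(x_1,\ldots,\widehat{x_i},\ldots,x_n)\,g(\widehat{x_i})$ for the single polynomial
\[
g=\sum_{k=0}^{n-1}(-1)^{n-1-k}\,Q_{n-1,k}\,f_{q^m-q^k}.
\]
Here $f_t:=0$ for $t<0$, $Q_{n-1,n-1}=1$, and $Q_{n-1,k}$ is taken in the first $n-1$ arguments. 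After relabeling the trailing variables this gives
\[
L(x_1,\ldots,x_n)\,f=\sum_{i=1}^{n}(-1)^{n+i}x_i^{q^m}L(x_1,\ldots,\widehat{x_i},\ldots,x_n)\,g(\widehat{x_i}),
\]
that is, $f=\delta_n(g)$. This $g$ is visibly a polynomial in $r-1$ variables.

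The paper organizes the same computation by expanding $F$ rather than $f$. Then, for $j\le n$, all $x_{n+1}$-dependence lies in the $n$-th slot of $F$, so only one coefficient is extracted: the coefficient $F_{(q^m)}$ of $x_n^{q^m}$ in $F$. One takes $g=\pm F_{(q^m)}/L(x_1,\ldots,x_{n-1})$. This is a polynomial because $L(x_1,\ldots,x_n)=L(x_1,\ldots,x_{n-1})\,f_W(x_n)$, with $W=\mathrm{span}\{x_1,\ldots,x_{n-1}\}$ and the first factor free of $x_n$. Since $F/L(x_1,\ldots,x_{n-1})=f\cdot f_W(x_n)$, the paper's $g$ is precisely the combination above.
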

\begin{proof}
    If $r\geq n+1$, we may write $f = \sum\limits_{i=1}^{k} f_i(x_1, \ldots, x_n) g_i(x_{n+1}, \ldots, x_r)$, where $g_1, \ldots, g_k$ are distinct nonzero monomials, then $\delta_{n+1}(f) = \sum\limits_{i=1}^{k}\delta_{n+1}(f_i)\cdot g_i(x_{n+2}, \ldots, x_{r+1})$, thus it suffices to prove \cref{lemma:exactness} in the case $r=n$. Suppose that $f$ is homogeneous of degree $d$, let $F(x_1, \ldots, x_n) = f(x_1, \ldots, x_n)\cdot L(x_1, \ldots, x_n)$, we have
    \begin{equation*}
        F(x_1, \ldots, x_n) = \sum\limits_{i=0}^{d+\frac{q^n-1}{q-1}}x_n^{i}\cdot F_{d+\frac{q^n-1}{q-1}-i}(x_1, \ldots, x_{n-1}).
    \end{equation*}
    Because $\delta_{n+1}(f) = 0$, we have
    \begin{equation*}
        \sum\limits_{j=1}^{n+1}(-1)^{j+1}x_j^{q^m}\cdot F(x_1, \ldots, \widehat{x_j}, \ldots, x_{n+1}) = 0,
    \end{equation*}
    which implies that
    \begin{align*}
        0&=(-1)^{n}x_{n+1}^{q^m}\cdot F(x_1, \ldots, x_n) + \sum\limits_{i=0}^{d+\frac{q^n-1}{q-1}}x_{n+1}^i\sum\limits_{j=1}^{n}(-1)^{j+1}x_j^{q^m}F_{d+\frac{q^n-1}{q-1}-i}(\widehat{x_j})\\
        &=(-1)^{n}x_{n+1}^{q^m}\cdot F(x_1, \ldots, x_n) + \sum\limits_{i=0}^{d+\frac{q^n-1}{q-1}}x_{n+1}^i\cdot L(x_1, \ldots, x_n) \delta_n\left(\dfrac{F_{d+\frac{q^n-1}{q-1}-i}}{L(x_1, \ldots, x_{n-1})}\right).
    \end{align*}
    Note that $F$ is divisible by $L(x_1, \ldots, x_n)$, hence for each $i$, $F_{d+\frac{q^n-1}{q-1}-i}$ is divisible by $L(x_1, \ldots, x_{n-1})$. Dividing both sides by $L(x_1, \ldots, x_n)$, and consider the coefficient of $x_{n+1}^{q^m}$ of both sides, we have
    \begin{equation*}
        (-1)^{n}f(x_1, \ldots, x_n) + \delta_n\left(\dfrac{F_{d+\frac{q^n-1}{q-1}-q^m}}{L(x_1, \ldots, x_{n-1})}\right) = 0.
    \end{equation*}
    In particular, $f = \delta_n\left((-1)^{n+1}\dfrac{F_{d+\frac{q^n-1}{q-1}-q^m}}{L(x_1, \ldots, x_{n-1})}\right)$, as desired.
\end{proof}
In order to generalize \cref{lemma:exactness}, we need the following notion.
\begin{definition}\label{def:h-vanishing}
For each $h\in \mathbb{N}$: 
\begin{enumerate}
    \item A polynomial $f$ of $r\geq n$ variables is called $h$-vanishing if $\delta_{n+1}^{h}(f) = 0$. We simply say that $f$ is vanishing if there exists an $h$ such that $f$ is $h$-vanishing.\index{vanishing polynomial}
\item A polynomial $f$ of $r\geq n$ variables is called $h$-constructible if there exists a set of polynomials of $(r-1)$ variables \index{constructible polynomial}
\begin{equation*}
S = \{g_{i_{0}i_{1}\cdots i_{n-1}} \mid i_0 + i_1 + \cdots + i_{n-1} \leq h-1\},    
\end{equation*}
such that $\delta_n(g)$ is a polynomial for all $g\in S$, and 
\begin{equation*}
    f = \sum\limits_{i_0 + i_1 + \cdots + i_{n-1} \leq h-1}Q_{n,0}^{i_0}Q_{n,1}^{i_1}\cdots Q_{n, n-1}^{i_{n-1}}\delta_n(g_{i_{0}i_{1}\cdots i_{n-1}}).
\end{equation*}
\end{enumerate}
\end{definition}
\begin{lemma}
    \label{lemma:general_exact}
Suppose that $f$ is a polynomial of $r\geq n$ variables. Then $f$ is $h$-constructible if and only if $f$ is $h$-vanishing.
\end{lemma}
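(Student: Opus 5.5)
I will prove both directions by induction on $h$. The base case $h=1$ is exactly \cref{lemma:exactness} together with \cref{lemma:chain_complex}. A $1$-constructible polynomial is $\delta_n(g)$ with $g$ a single polynomial, so $\delta_{n+1}\delta_n(g)=0$. Conversely, a $1$-vanishing polynomial is of the form $\delta_n(g)$ by \cref{lemma:exactness}.

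For the direction ``constructible $\Rightarrow$ vanishing'', I use the commutator identity \cref{prop:commutator}, $\delta_{n+1}(Q_{n,i}F)=Q_{n,i}\delta_{n+1}(F)-\delta_n(Q_{n-1,i}F)$. Apply it to $F=Q^{I'}\delta_n(g)$, where $Q^{I'}$ is a monomial in the Dickson invariants with one fewer factor. By induction on the total degree $|I|=i_0+\cdots+i_{n-1}$, I will show that $\delta_{n+1}(Q^{I}\delta_n(g))$ is $|I|$-constructible in the variables $x_1,\ldots,x_{n+1}$, viewed as a polynomial in $r+1$ variables.
\begin{itemize}
\item The term $Q_{n,i}\delta_{n+1}(F)$ raises the Dickson degree by one while $\delta_{n+1}$ kills one level.
\item The correction term $\delta_n(Q_{n-1,i}F)$ is already $1$-constructible.
\end{itemize}
Iterating gives $\delta_{n+1}^{h}(f)=0$ whenever $f$ is $h$-constructible. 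Here I must interpret each $\delta_{n+1}$ in the iterate as acting on the appropriate number of variables, and I must check that the Dickson invariants $Q_{n,i}$ (in $x_1,\ldots,x_n$) commute past the index shift in the intended way. The second bullet above needs the polynomiality of $\delta_n(Q_{n-1,i}F)$, which follows from \cref{lemma:polynomiality_D_mod}.

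For the direction ``vanishing $\Rightarrow$ constructible'', suppose $\delta_{n+1}^{h}(f)=0$. Then $\delta_{n+1}^{h-1}(\delta_{n+1}f)=0$, so by induction $\delta_{n+1}(f)$ is $(h-1)$-constructible:
\begin{equation*}
\delta_{n+1}(f)=\sum_{|I|\le h-2}Q^{I}\delta_n(g_I).
\end{equation*}
The idea is to lift each term. I rewrite $Q^{I}\delta_n(g_I)$ as $\delta_{n+1}$ of something plus a correction, using the same identity read backwards. For $|I|\geq 1$, with $I=e_i+I'$,
\begin{equation*}
Q_{n,i}\,\delta_n(\cdot)=\delta_{n+1}\bigl(Q_{n,i}\,\cdot\bigr)+\text{lower terms}.
\end{equation*}
When $|I|=0$, I use $\delta_n(g)=\delta_{n+1}(\tilde g)$, where $\tilde g$ is found by solving the Cramer system \eqref{eq:sys_of_eqs} at level $n+1$. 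This produces an $h$-constructible $f'$ with $\delta_{n+1}(f-f')=0$. Then \cref{lemma:exactness} gives $f-f'=\delta_n(g)$, which is $1$-constructible, and hence $f$ is $h$-constructible.

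The main obstacle is the lifting step: showing that each $(h-1)$-constructible polynomial lying in the image of $\delta_{n+1}$ comes from an $h$-constructible preimage, with all intermediate objects polynomial rather than rational. My first attempt will be to take the candidate preimage $\sum_{I} Q^{I}Q_{n,i}\,(\cdots)$ suggested by \cref{prop:commutator}. I will then verify polynomiality through the functional equation criterion \cref{lemma:functional_eq} and the $\mathcal{D}_n$-module property \cref{lemma:polynomiality_D_mod}, organizing the induction by the lexicographic order on the exponent vectors $I$ so that the correction terms have strictly smaller index.
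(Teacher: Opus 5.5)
\textbf{The direction ``constructible $\Rightarrow$ vanishing'' is fine.} Showing via \cref{prop:commutator} and \cref{lemma:chain_complex} that $\delta_{n+1}$ sends $k$-constructible polynomials to $(k-1)$-constructible ones is a reorganization of the paper's identity $\delta_{n+1}^2(Q_{n,i}F)=\delta_{n+1}(Q_{n,i}\delta_{n+1}(F))$. For the polynomiality of the correction term, first apply \cref{cor:lower_delta_is_easier} to $\delta_{n+1}\delta_n(g)=0$, and only then \cref{lemma:polynomiality_D_mod}.

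\textbf{The converse has a genuine gap, at exactly the lifting step you flag, and your proposed fix cannot work.} A bare term $\delta_n(g)$ is in general \emph{not} in the image of $\delta_{n+1}$. That image is constrained; for instance it lies in $\ker\delta_{n+2}$ by \cref{lemma:chain_complex}. Already for $n=r=1$ and $m\geq 1$, every
\[
\delta_2(\tilde g)=\frac{x_1x_2^{q^m}\tilde g(x_1)-x_2x_1^{q^m}\tilde g(x_2)}{x_1x_2^{q}-x_2x_1^{q}}
\]
with $\tilde g$ a one-variable function (even a rational one) is symmetric in $x_1,x_2$. But $\delta_1(1)=x_1^{q^m-1}$ is not symmetric, so it has no preimage.

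The Cramer system \eqref{eq:sys_of_eqs} does not invert $\delta_{n+1}$; it only computes $\delta_{n+1}(Q_{n,i}f)$ from a given $f$. Also, \cref{prop:commutator} reads $Q_{n,i}\delta_{n+1}(F)=\delta_{n+1}(Q_{n,i}F)+\delta_n(Q_{n-1,i}F)$, not $Q_{n,i}\delta_n(\cdot)=\delta_{n+1}(Q_{n,i}\,\cdot)+\cdots$. Using it to lift $Q_{n,i}\delta_n(g)$ presupposes $\delta_n(g)=\delta_{n+1}(F)$. Its correction term is again a bare $\delta_n$-term, so your recursion always bottoms out in the impossible $|I|=0$ case. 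Only the whole sum $\delta_{n+1}(f)$ is liftable, and producing a constructible preimage of it is the entire content of the lemma. The plan is therefore circular.

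\textbf{The missing idea is that no lifting is needed: you can read $f$ off from $\delta_{n+1}(f)$, exactly as in the proof of \cref{lemma:exactness}.} Reduce to $r=n$ with $f$ homogeneous. Write $F=f\cdot L(x_1,\ldots,x_n)=\sum_j x_n^{j}F^{(j)}(x_1,\ldots,x_{n-1})$, where each $F^{(j)}$ is divisible by $L(x_1,\ldots,x_{n-1})$. Expand the numerator of $\delta_{n+1}(f)$ along the $x_{n+1}$-column and divide by $L(x_1,\ldots,x_n)$ to get
\begin{equation*}
x_{n+1}^{q^m}f-\sum_j x_{n+1}^{j}\,\delta_n\Bigl(\frac{F^{(j)}}{L(x_1,\ldots,x_{n-1})}\Bigr)=\Bigl(\sum_{i=0}^{n}(-1)^iQ_{n,n-i}\,x_{n+1}^{q^{n-i}}\Bigr)\delta_{n+1}(f).
\end{equation*}
By induction, $\delta_{n+1}(f)=\sum_I Q^I\delta_n(g_I)$. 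Splitting each $g_I$ by powers of its last variable shows that $\delta_{n+1}(f)$ is a polynomial in $x_{n+1}$ whose coefficients are $(h-1)$-constructible in $x_1,\ldots,x_n$. Now compare coefficients of $x_{n+1}^{q^m}$. This expresses $f$ as a combination of these coefficients with multipliers $Q_{n,n-i}$, which raise the Dickson degree by at most one, plus one polynomial of the form $\delta_n(\cdot)$. Hence $f$ is $h$-constructible. This coefficient extraction is the paper's argument and replaces your lifting step entirely.
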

\begin{proof}
    Applying $\delta_{n+1}$ to both sides of \eqref{eq:commutator}, we have
    \begin{equation*}
        \delta_{n+1}(Q_{n, i}\delta_{n+1}(f)) = \delta_{n+1}^2(Q_{n, i}f).
    \end{equation*}
    Consequently, by induction, for any $i_1, \ldots, i_{h-1}\in \{0, \ldots, n\}$, 
    \begin{equation*}
        \delta_{n+1}^h(Q_{n, i_1}\cdots Q_{n, i_{h-1}} f) = \delta_{n+1}(Q_{n, i_1}\delta_{n+1}(Q_{n, i_2}(\cdots(Q_{n, i_{h-1}}\delta_{n+1}(f))\cdots ). 
    \end{equation*}
    If $f = \delta_n(g)$, then $\delta_{n+1}(f) = 0$, hence $\delta_{n+1}^h(Q_{n, i_1}\cdots Q_{n, i_{h-1}} f) = 0$. It follows that if $f$ is $h$-constructible then $f$ is $h$-vanishing.\smallbreak
    We prove the converse by induction on $h$. The base case $h=1$ is proven in \cref{lemma:exactness}. Suppose that \cref{lemma:general_exact} is true for some $h \in \mathbb{N}$. If $f$ is an $(h+1)$-vanishing polynomial of $r\geq n$ variables, then $\delta_{n+1}(f)$ is an $h$-vanishing polynomial of $(r+1)$ variables. By the induction hypothesis, $\delta_{n+1}(f)$ is $h$-constructible. Therefore, we have a set of polynomials of $r$ variables 
\begin{equation*}
S = \{g_{i_{0}i_{1}\cdots i_{n-1}} \mid i_0 + i_1 + \cdots + i_{n-1} \leq h-1\},    
\end{equation*}
such that $\delta_n(g)$ is a polynomial for all $g\in S$, and 
\begin{equation}\label{eq:description_of_delta_f}
    \delta_{n+1}(f) = \sum\limits_{i_0 + i_1 + \cdots + i_{n-1} \leq h-1}Q_{n,0}^{i_0}Q_{n,1}^{i_1}\cdots Q_{n, n-1}^{i_{n-1}}\delta_n(g_{i_{0}i_{1}\cdots i_{n-1}}).
\end{equation}
As in previous arguments, we can assume without loss of generality that $r = n$ and $f$ is homogeneous of degree $d$. In this case, let $F = f\cdot L(x_1, \ldots, x_n)$, suppose that $F = \sum\limits_{i=0}^{d+\frac{q^n-1}{q-1}}x_n^{i}F_{d+\frac{q^n-1}{q-1}-i}(x_1, \ldots, x_{n-1})$, then the above equation is equivalent to
\begin{equation*}
    (-1)^{n}x_{n+1}^{q^m}\cdot f + \sum\limits_{i=0}^{d+\frac{q^n-1}{q-1}}x_{n+1}^i\cdot  \delta_n\left(\dfrac{F_{d+\frac{q^n-1}{q-1}-i}}{L(x_1, \ldots, x_{n-1})}\right) = \dfrac{L(x_1, \ldots, x_{n+1})}{L(x_1, \ldots, x_n)}\delta_{n+1}(f).
\end{equation*}
Notice that $\dfrac{L(x_1, \ldots, x_{n+1})}{L(x_1, \ldots, x_n)} = \sum\limits_{i=0}^{n}(-1)^i Q_{n, n-i}x_{n+1}^{q^{n-i}}$, and by \eqref{eq:description_of_delta_f}, $\delta_{n+1}(f)$ is a polynomial of the variable $x_{n+1}$ with the coefficients being $h$-constructible polynomials of $n$ variables. Therefore, the right-hand side of the above equation is a polynomial of the variable $x_{n+1}$ with the coefficients being $(h+1)$-constructible polynomials of $x_1, \ldots, x_n$. Comparing with the coefficient of $x_{n+1}^{q^m}$ in the left-hand side, it follows that $f$ is $(h+1)$-constructible, as desired.
\end{proof}
This result shows a method of explicitly "detecting" products of $GL_n$-invariant polynomials and images of $\delta_n$. 
\begin{corollary}\label{cor:weak_version_m_less_than_n}
\cref{conj:polynomiality} is true whenever $m< n$.
\end{corollary}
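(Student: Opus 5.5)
The plan is to reduce \cref{conj:polynomiality} for $m<n$ to \cref{lemma:general_exact}. Every $h$-constructible polynomial is, by \cref{def:h-vanishing}, a $\mathcal{D}_n$-linear combination of elements $\delta_n(g)$ with $\delta_n(g)$ a polynomial, i.e.\ $g\in A_m^{(n-1)}$. So it suffices to show that every homogeneous $f\in A_m^{(n)}$ is \emph{vanishing}, i.e.\ $\delta_{n+1}^h(f)=0$ for some $h$. Then $f$ lies in the $\mathcal{D}_n$-span of $G_m^{(n)}$; the generator $1$ is not even needed here.

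The role of $m<n$ is a degree count. For homogeneous $f$ of degree $d$, the numerator of $\delta_{n+1;m}(f)$ has degree $d+q^m+\sum_{i=0}^{n-1}q^i$, while $L(x_1,\ldots,x_{n+1})$ has degree $\sum_{i=0}^{n}q^i$. Hence $\delta_{n+1}(f)$ is homogeneous of degree $d-(q^n-q^m)$, which is a \emph{strict} decrease exactly when $m<n$. (Consistently, $\delta_{n+1;m}(1)=0$ for $m<n$, since the last row repeats an earlier one.) I would therefore argue by strong induction on $d$ for the statement: every homogeneous $f\in A_m^{(n)}$, viewed in any number $r\ge n$ of variables, is vanishing.

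The base case $d<q^n-q^m$ is immediate. Here $\delta_{n+1}(f)$ is a polynomial of negative degree, so it is $0$.

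For the inductive step, set $F=\delta_{n+1}(f)$, a polynomial in $x_1,\ldots,x_{r+1}$. As in the proof of \cref{lemma:exactness}, I would expand $F=\sum_k F_k(x_1,\ldots,x_n)\,u_k(x_{n+1},\ldots,x_{r+1})$ with distinct monomials $u_k$. Then $\delta_{n+1}(F)=\sum_k\delta_{n+1}(F_k)\,u_k(\text{shifted})$, so it is enough that each $F_k$ lies in $A_m^{(n)}$. Each $F_k$ has degree at most $d-(q^n-q^m)<d$, so the induction hypothesis then gives $\delta_{n+1}^{h_k}(F_k)=0$. Taking $h=1+\max_k h_k$ yields $\delta_{n+1}^h(f)=0$.

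The main obstacle is this step: showing that the $x_{\ge n+1}$-coefficients of $\delta_{n+1}(f)$ satisfy the functional equation \eqref{eq:fundamental_functional_eq} with $a=n+1$. By \cref{lemma:functional_eq}, this criterion depends only on the first $n$ variables. I would first try to verify it directly. Substitute $x_{n+1}\mapsto\sum c_jx_j$ into the numerator of $\delta_{n+1}(f)$, rewritten through the Cramer system \eqref{eq:sys_of_eqs}, and use the functional equation already satisfied by $f$ together with its symmetry and $\mathbb{F}_q^\times$-homogeneity.

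If that direct verification is messy, I would instead use the commutator identity \eqref{eq:commutator} from \cref{prop:commutator} to rewrite the polynomials $\delta_{n+1}(Q_{n,i}f)$. This would show that the coefficients of $F$ are $\mathcal{D}_n$-combinations of elements already known to be polynomial under $\delta_{n+1}$, using \cref{lemma:polynomiality_D_mod}.

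Once this closure property is established, the degree induction above closes. \cref{lemma:general_exact} then converts ``vanishing'' into ``constructible'', which gives the corollary.
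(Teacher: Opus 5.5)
Your route is the paper's: for $m<n$ each application of $\delta_{n+1}=\delta_{n+1;m}$ lowers degree by $q^n-q^m\ge 1$, so $f$ is vanishing, and \cref{lemma:general_exact} turns this into constructibility. There is, however, a gap at the crux. The step you call the main obstacle is never proved: you need $\delta_{n+1}(\delta_{n+1}(f))$ to be a polynomial, equivalently each coefficient $F_k$ to lie in $A_m^{(n)}$, as you correctly note. Without this, ``negative degree'' does not force zero, since a nonzero homogeneous rational function can have negative degree. Neither of your suggested routes is carried out. The commutator identity \eqref{eq:commutator} also does not obviously yield the claim.

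The missing idea is short and uses only results already in the paper. By \cref{lemma:chain_complex}, $\delta_{n+2}(\delta_{n+1}(g))=0$ for every $g$. So whenever $\delta_{n+1}(g)$ is a polynomial, it is trivially $(n+2,m)$-excellent in its variables. \cref{cor:lower_delta_is_easier}, with $a=n+2$ and $a'=n+1$, then shows that $\delta_{n+1}(\delta_{n+1}(g))$ is a polynomial. Starting from $f\in A_m^{(n)}$ and iterating, every $\delta_{n+1}^h(f)$ is a polynomial, homogeneous of degree $d-h(q^n-q^m)$. Hence it vanishes once $h>d/(q^n-q^m)$.

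This settles your closure step, so your strong induction on $d$ does close. But it is no longer needed: the direct iteration above is exactly the paper's proof, which gives the degree count without justifying that the iterates stay polynomial. You were right to flag that point. The same observation also underlies the induction in the proof of \cref{lemma:general_exact}. That proof implicitly assumes $\delta_{n+1}(f)$ and its further iterates are polynomials, which holds precisely because $f\in A_m^{(n)}$.
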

\begin{proof}
    Suppose that $f$ is a homogeneous polynomial in $A_m^{(n)}$. Then for any $h\in \mathbb{N}$, if $\delta_{n+1}^h(f)\neq 0$, then $\delta_{n+1}^h(f)$ is of degree $\mathrm{deg}(f) + h\cdot(q^m - q^n)$, as $q^m - q^n \le -1$, it implies that $\delta_{n+1}^{\mathrm{deg}(f)}(f) = 0$, which means that $f$ is $\mathrm{deg}(f)$-vanishing. By \cref{lemma:general_exact}, $f$ lies in the $\mathcal{D}$-module generated by $\delta_n(A_{m}^{(n-1)})$.    
\end{proof}
Considering the case $m\geq n$, we show that
\begin{proposition}
    \label{prop:weakened_version}
For any $n\geq 2$ and $m\geq n$, $Q_{n, 0}^{\frac{q^m-q^n}{q-1}+1}A_{m}^{(n)}$ lies in the $\mathcal{D}_n$-module generated by 
\begin{equation*}
    \{1\}\cup \{\delta_{n;m}(f)\mid f\in A_{m}^{(n-1)}\}. 
\end{equation*}
\end{proposition}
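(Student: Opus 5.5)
The plan is to reduce the statement to \cref{lemma:general_exact}. For $f\in A_m^{(n)}$ homogeneous and $N=\frac{q^m-q^n}{q-1}+1$, I would show that $Q_{n,0}^{N}f-d$ is $h$-vanishing for some $d\in\mathcal{D}_n$ and some $h$. Then \cref{lemma:general_exact} writes $Q_{n,0}^Nf-d$ as a $\mathcal{D}_n$-combination of elements $\delta_n(g)$ with $\delta_n(g)$ a polynomial. By \cref{lemma:functional_eq}, the condition that $\delta_n(g)$ is a polynomial depends only on the first $n-1$ variables, so one can split variables as in the proof of \cref{lemma:exactness} to take $g\in A_m^{(n-1)}$. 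This gives the statement. The degree trick of \cref{cor:weak_version_m_less_than_n} is no longer available, since $\delta_{n+1}$ now raises degree by $q^m-q^n\ge 0$. The extra factor $Q_{n,0}^N$ must compensate for this.

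\textbf{Step 1.} Reduce $X^{q^m}$ modulo the fundamental equation of $W=span\{x_1,\ldots,x_n\}$. Since $f_W(X)=\sum_{i=0}^n(-1)^{n-i}Q_{n,i}X^{q^i}$ is additive with coefficients in $\mathcal{D}_n$, repeated substitution gives
\begin{equation*}
X^{q^m}=\sum_{i=0}^{n-1}R_i X^{q^i}+P(X)\,f_W(X),
\end{equation*}
with $R_i\in\mathcal{D}_n$ and $P$ an additive ($q$-)polynomial over $\mathcal{D}_n$. For $j\le n$ this gives $x_j^{q^m}=\sum_i R_ix_j^{q^i}$.

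\textbf{Step 2.} Substitute this into the system \eqref{eq:sys_of_eqs}, whose unknowns are $u_i=\delta_{n+1}(Q_{n,i}f)$. The first $n$ equations then say that the vector $\bigl(u_i-(\text{explicit combination of }R_i f)\bigr)_i$ is annihilated by the Moore matrix in $x_1,\ldots,x_n$. The last equation, in $x_{n+1}$, carries the genuinely new information. It involves $f_W(x_{n+1})=L(x_1,\ldots,x_{n+1})/L(x_1,\ldots,x_n)$, and therefore $P(f_W(x_{n+1}))$.

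Solving this system by Cramer's rule over $\mathcal{D}_n$ introduces powers of $L(x_1,\ldots,x_n)^{q-1}=\pm Q_{n,0}$ in the denominators. I expect the number of such factors to be controlled by the $q$-degree of $P$, which is $m-n$. I expect this bookkeeping to produce $N=\frac{q^m-q^n}{q-1}+1$: the number of Frobenius twists of $f_W$ occurring in $P$, plus one for the final division.

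\textbf{Step 3.} Iterate the commutator identity \eqref{eq:commutator} with $i=0$ to move the factors $Q_{n,0}$ inside the operator:
\begin{equation*}
\delta_{n+1}(Q_{n,0}^N f)=Q_{n,0}^N\delta_{n+1}(f)-\sum_{k}Q_{n,0}^{k}\,\delta_n(Q_{n-1,0}\,Q_{n,0}^{N-1-k}f).
\end{equation*}
The $\delta_n$-terms are constructible, hence vanishing. Combined with Step 2, this should express $Q_{n,0}^N\delta_{n+1}(f)$ as $\delta_{n+1}(d)$ plus vanishing terms, where $d\in\mathcal{D}_n$ is the constant term in $x_{n+1}$ extracted as in the proof of \cref{lemma:general_exact}. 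That yields the vanishing of $Q_{n,0}^Nf-d$.

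\textbf{Main obstacle.} I expect the hardest part to be Step 2, specifically proving that after multiplying by exactly $Q_{n,0}^N$ every coefficient in $x_{n+1}$ is a polynomial lying in $\mathcal{D}_n+\{\text{constructible}\}$. To handle it, I would first work out the case $m=n$, where $P$ is constant and $N=1$, and check it against \cref{thm:pol_bivariate} for $n=2$. I would then induct on $m$, writing $X^{q^{m+1}}=(X^{q^m})^q$ so that each step adds $q^{m}$ to the degree of $P$ and one factor $Q_{n,0}^{q^{m-n}}$ to the power of $Q_{n,0}$.
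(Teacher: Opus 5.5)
Your overall reduction is the right one: exhibit a vanishing element and invoke \cref{lemma:general_exact}. But Steps 2--3, which carry all the content, have a genuine gap.

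\textbf{What fails in your Steps 2--3.}

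Step 2 is wrong as stated. The right-hand side of the $j$th equation in \eqref{eq:sys_of_eqs} is $x_j^{q^m}f(\widehat{x_j})$, and $f(\widehat{x_j})$ depends on $j$. So substituting $x_j^{q^m}=\sum_i R_ix_j^{q^i}$ does not produce a single vector annihilated by the Moore matrix of $x_1,\ldots,x_n$. The first $n$ equations are precisely what yields \cref{prop:commutator}, i.e.\ $u_i=Q_{n,i}u_n-\delta_n(Q_{n-1,i}f)$. The $j$-dependence is absorbed by the $\delta_n$-terms, and the $R_i$ play no role. Also, in $X^{q^m}=\sum_iR_iX^{q^i}+P(X)f_W(X)$ the quotient $P$ is not additive; you are conflating the product with the composition $P(f_W(X))$.

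The exponent $N$ is only guessed, and your proposed induction on $m$ contradicts it: $N(m+1)-N(m)=q^m$, not $q^{m-n}$.

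Step 3 reduces everything to the claim that $Q_{n,0}^N\delta_{n+1}(f)-\delta_{n+1}(d)$ is vanishing. That claim is exactly what has to be proved, and no argument is given for it.

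\textbf{The missing idea.} Work with the coefficients of $g=\delta_{n+1}(f)=\sum_k x_{n+1}^k g_{(k)}$, where $g_{(k)}\in\mathbb{F}_q[x_1,\ldots,x_n]$. Expanding the numerator of $\delta_{n+1}(f)$ along its last row and collecting powers of $x_{n+1}$ gives
\begin{equation*}
g\cdot\sum_{i=0}^{n}(-1)^iQ_{n,n-i}\,x_{n+1}^{q^{n-i}}=(-1)^n x_{n+1}^{q^m}f+\sum_{j}x_{n+1}^{j}\,\delta_n(\phi_j),\qquad \phi_j\in A_m^{(n-1)}.
\end{equation*}
Every coefficient on the right other than that of $x_{n+1}^{q^m}$ is therefore $1$-constructible. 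Vanishing polynomials are closed under sums and $\mathcal{D}_n$-multiples, since constructible ones are.

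First, compare coefficients of $x_{n+1}^j$ for $j>q^m$. Descending induction shows that $g_{(k)}$ is vanishing for every $k>q^m-q^n$.

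Next, compare coefficients of $x_{n+1}^{1+s(q-1)}$ with $1+s(q-1)<q^m$. The only new term is $(-1)^nQ_{n,0}\,g_{(s(q-1))}$; the others are $Q_{n,n-i}\,g_{(s'(q-1))}$ with $s'<s$. Ascending induction then shows $Q_{n,0}^{s+1}g_{(s(q-1))}$ is vanishing, each step costing exactly one factor of $Q_{n,0}$.

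Finally, the coefficient of $x_{n+1}^{q^m}$ expresses $(-1)^nf$ through $g_{(q^m-q^n)}$ plus vanishing terms. Since $q^m-q^n=(N-1)(q-1)$, reaching it costs exactly $N=\frac{q^m-q^n}{q-1}+1$ factors of $Q_{n,0}$. Hence $Q_{n,0}^Nf$ is itself vanishing (your $d$ can be taken to be $0$), and \cref{lemma:general_exact} finishes.
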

\begin{proof}
    Suppose that $f\in A_{m}^{(n)}$ is a homogeneous polynomial of degree $d > 0$, and $g(x_1, \ldots, x_{n+1}) = \delta_{n+1}(f)$. Then we have
    \begin{equation*}
        g = \sum\limits_{i=0}^{d+q^m - q^n}x_{n+1}^{i}\cdot g_{d+q^m-q^n-i}(x_1, \ldots, x_n).
    \end{equation*}
    Similarly, let $F = f\cdot L(x_1, \ldots, x_n)$, suppose that 
    \begin{equation*}
        F = \sum\limits_{i=0}^{d+\frac{q^n-1}{q-1}}x_n^{i}F_{d+\frac{q^n-1}{q-1}-i}(x_1, \ldots, x_{n-1}).
    \end{equation*}
    Following the computation in \cref{lemma:general_exact}, we have
    \begin{equation}\label{eq:g_induct}
        g\cdot \left(\sum\limits_{i=0}^{n}(-1)^i Q_{n, n-i}x_{n+1}^{q^{n-i}}\right) =  (-1)^{n}x_{n+1}^{q^m}\cdot f(x_1, \ldots, x_n) + \sum\limits_{j=0}^{d+\frac{q^n-1}{q-1}}x_{n+1}^j\cdot  \delta_n\left(\dfrac{F_{d+\frac{q^n-1}{q-1}-j}}{L_{n-1}}\right).
    \end{equation}
    Comparing the coefficient of $x_{n+1}^{q^m}$ of \eqref{eq:g_induct}, we have
    \begin{equation*}
    \sum\limits_{i=0}^{n}(-1)^i \cdot Q_{n, n-i}\cdot g_{d+q^{n-i} - q^n} = (-1)^n f + \delta_n\left(\dfrac{F_{d+\frac{q^n-1}{q-1}-q^m}}{L_{n-1}}\right).    
    \end{equation*}
    For $j>q^m \geq q^n$, comparing the coefficient of $x_{n+1}^j$ of \eqref{eq:g_induct} gives
    \begin{equation}\label{eq:f_induct_explicit}
    \sum\limits_{i=0}^{n}(-1)^i \cdot Q_{n, n-i}\cdot g_{d+q^m - q^n - j+q^{n-i}} =  \delta_n\left(\dfrac{F_{d+\frac{q^n-1}{q-1}-j}}{L_{n-1}}\right).    
    \end{equation}
    Notice that the coefficient of $x_{n+1}^{d+q^m}$ of the left-hand side is $ g_0$, hence $g_0$ is vanishing. By induction, we see that $g_0, \ldots, g_{d-1}$ are vanishing polynomials. On the other hand, since $g = \delta_{n+1}(f)$, we have $g\in A_{m}^{(n+1)}$; in particular, $g\in \mathbb{F}_q[x_1^{q-1}, \ldots, x_{n+1}^{q-1}]$. Finally, using \eqref{eq:f_induct_explicit}, it suffices to prove by induction on $s\leq \dfrac{q^m-q^n}{q-1}$ that, 
    $Q_{n, 0}^{s+1}g_{d+q^m-q^n - s(q-1)}$ is vanishing. The case $s = 0$ follows from comparing coefficient of $x_{n+1}^{1}$ of \eqref{eq:g_induct}; and if $Q_{n, 0}^{s+1}g_{d+q^m-q^n - s(q-1)}$ is vanishing for all $s\leq s_0-1$ (for some $s_0\leq \frac{q^m-q^n}{q-1}$,) examine the coefficient of $x_{n+1}^{1+s_0(q-1)}$, notice that 
    \begin{equation*}
        1+s_0(q-1)\leq q^m - q^n + 1 < q^m,
    \end{equation*}
    it follows that the coefficient of $x_{n+1}^{1+s_0(q-1)}$ at the right-hand side of \eqref{eq:g_induct} is vanishing; considering the left-hand side, it follows that $Q_{n, 0}^{s_0+1}g_{d+q^m-q^n - s_0(q-1)}$ is vanishing. The proof is completed.
\end{proof}
\chapter{Conclusion}\label{chap:conclusion}
In previous chapters, we presented some topics of our interest in modular invariant theory, ranging from $q$-analogues of Schur functions, to invariant spaces of truncated polynomial rings. In general, substantial work remains to address the existing gaps in the current theory, as well as to simplify, generalize, and unify the underlying structures. Furthermore, we aim to establish connections between these structures and other areas of mathematics, such as algebraic topology and combinatorics. In this final chapter, we offer insights that may prove valuable and propose questions intended to inspire future research in this intriguing area of study.
\begin{enumerate}
    \item On Schur functions over finite fields.\smallbreak
    In the theory of Schur functions, the Littlewood-Richardson rule \cite{littlewood_richardson} provides a combinatorial formula representing the product of two Schur functions as a linear combination of Schur functions. One may ask for a $q$-analogue of the Littlewood-Richardson rule, the dual Pieri rule (a special case of the Littlewood-Richardson rule).
    As shown in \cref{prop:stanton_basis}, unlike the classical Schur functions, Schur functions over $\mathbb{F}_q$ do not form a basis for $\mathcal{D}_n$. Alternatively, one can show that
    \begin{proposition}\label{prop:proper_definition_of_schur}
        Let $\lambda = (\lambda_1\geq  \ldots\geq \lambda_n)$ be a partition. Then there exists a unique element $\overline{S}_{\lambda}$ in the basis $\mathcal{B}$ (see \cref{prop:stanton_basis}) having $x_1^{q^{n-1}(q-1)\lambda_1}x_2^{q^{n-2}(q-1)\lambda_2}\cdots x_n^{(q-1)\lambda_n}$ as its leading monomial; furthermore, any element of $\mathcal{B}$ is of the form $\overline{S}_{\lambda}$ for some partition $\lambda$. 
    \end{proposition}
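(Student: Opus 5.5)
The plan is to read off leading monomials explicitly and show that the assignment (basis element) $\mapsto$ (leading monomial) is a bijection from $\mathcal{B}$ onto the set of monomials $x_1^{q^{n-1}(q-1)\mu_1}x_2^{q^{n-2}(q-1)\mu_2}\cdots x_n^{(q-1)\mu_n}$ with $\mu$ a partition. To avoid a clash of notation, write $B(\nu,a_1,\ldots,a_n)=S_\nu(V)\prod_i E_i(V)^{a_i}$ for the elements of $\mathcal{B}$, with $0\le a_i\le q^{\nu_i}-1$. The letter $\lambda$ is reserved for the partition in the statement.

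\textbf{Step 1 (leading monomials).} Use the lexicographic order $x_1>\cdots>x_n$, as in the proof of \cref{prop:stanton_basis}. This order is multiplicative, so $lm(B)=lm(S_\nu)\prod_i lm(E_i)^{a_i}$. For $E_i=Q_{n,n-i}$, the leading monomial is $\prod_{k\le i}x_k^{q^{n-k}(q-1)}$. For $S_\nu=A_{\nu+\delta_n}/A_{\delta_n}$, the leading monomial is the ratio of the diagonal terms, namely $\prod_i x_i^{q^{n-i+\nu_i}-q^{n-i}}$. This is exactly the computation already carried out in the proof of \cref{prop:stanton_basis}, and it gives
\begin{equation*}
lm(B(\nu,a))=\prod_{i=1}^n x_i^{q^{n-i}(q-1)\mu_i},\qquad \mu_i=\sum_{k\ge i}a_k+\frac{q^{\nu_i}-1}{q-1}.
\end{equation*}
Next, $\mu_i-\mu_{i+1}=a_i+\frac{q^{\nu_i}-q^{\nu_{i+1}}}{q-1}\ge 0$, so $\mu$ is a partition. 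This proves the last assertion of the statement: every element of $\mathcal{B}$ equals $\overline{S}_\mu$, where $\mu$ is determined by its leading monomial.

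\textbf{Step 2 (bijectivity).} The key observation is that the intervals
\begin{equation*}
I_l=\Bigl[\tfrac{q^l-1}{q-1},\ \tfrac{q^l-1}{q-1}+q^l-1\Bigr]=\Bigl[\tfrac{q^l-1}{q-1},\ \tfrac{q^{l+1}-1}{q-1}-1\Bigr],\qquad l\ge 0,
\end{equation*}
partition $\mathbb{Z}_{\ge 0}$. Consequently every $N\ge 0$ can be written uniquely as $N=a+\frac{q^l-1}{q-1}$ with $0\le a\le q^l-1$.

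Given a partition $\lambda$, we recover $(\nu,a)$ by backward induction on $i$. We decompose $N_i:=\lambda_i-\sum_{k>i}a_k$ as $a_i+\frac{q^{\nu_i}-1}{q-1}$. For this to be legitimate we need $N_i\ge 0$, and for $\nu$ to be a partition we need $\nu_i\ge\nu_{i+1}$. Both follow from
\begin{equation*}
N_i\ \ge\ \lambda_{i+1}-\sum_{k>i}a_k\ =\ N_{i+1}-a_{i+1}\ =\ \frac{q^{\nu_{i+1}}-1}{q-1},
\end{equation*}
which forces $N_i$ to lie in some $I_l$ with $l\ge\nu_{i+1}$. This gives existence. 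Uniqueness is the injectivity already established in the proof of \cref{prop:stanton_basis}: the decomposition at each step is forced, so distinct elements of $\mathcal{B}$ have distinct leading monomials.

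The only delicate point is the monotonicity check $\nu_i\ge\nu_{i+1}$ in the backward induction, which is exactly the displayed inequality. Everything else is bookkeeping already present in the proof of \cref{prop:stanton_basis}.
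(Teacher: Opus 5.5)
Your proof is correct. The paper states this proposition without proof, and your Step 1 together with the uniqueness claim is exactly the leading-monomial computation and backward induction from the proof of \cref{prop:stanton_basis}. The part the paper leaves unwritten is existence for an arbitrary partition $\lambda$, and you handle it correctly: the intervals $I_l$ partition $\mathbb{Z}_{\ge 0}$, and the inequality $N_i\ge \frac{q^{\nu_{i+1}}-1}{q-1}$ guarantees both $N_i\ge 0$ and $\nu_i\ge\nu_{i+1}$.
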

    The polynomials $\overline{S}_{\lambda}$ should be more suitable for finding an analogue of the Littlewood-Richardson rule.
    \item On polynomiality of the $\delta$ operators.\smallbreak
    \begin{enumerate}
        \item We initially intended to approach \cref{conj:polynomiality} using results in \cite{ha_hai_nghia_GL} about the spanning property of certain elements in $Q(m, n)^{GL_n(\mathbb{F}_q)}$. In particular, using the canonical projection, a polynomial $f\in A_{m}^{(n)}$ becomes an element $[f]\in Q(m, n)^{GL_n(\mathbb{F}_q)}$, therefore, roughly speaking, we only have to analyze the part of $f$ that belongs to $I_{n, m}$; however, this seems to be intractable.
        \item The proof of \cref{thm:pol_bivariate} reveals an interesting feature of $A_m^{(n)}$, namely, after suitably "differentiating", we get a $\mathcal{D}_n$-module that does not depend on $m$. The first part in the proof of \cref{thm:pol_bivariate} can be extended to the general case; however, one needs to examine the exact form of the leading monomials (with respect to a monomial order) of the differentiated polynomials. 
        \item The proofs of \cref{cor:weak_version_m_less_than_n} and \cref{prop:weakened_version} follow from a different approach. The missing piece of this approach is the following question. Suppose that for some $f\in A_{m}^{(n-1)}$, $\delta_{n}(f)$ is divisible by $Q_{n,0}$. By \eqref{eq:fundamental_functional_eq}, $g=\dfrac{\delta_n(f)}{Q_{n,0}}\in A_{m}^{(n)}$. How does one express $g$ as a $\mathcal{D}_n$-linear combination of $G_m^{(n)}$?
    \end{enumerate}
    \item On analyzing the cofixed space.\smallbreak
    Let $G$ be a parabolic subgroup of $GL_n(\mathbb{F}_q)$. As stated earlier, we aim to find an $\mathbb{F}_q$-basis for $S_{G}$, or determine the $S^G$-module structure of $S_G$ as explicitly as possible.
    \begin{enumerate}
        \item A potential approach is to employ \cref{prop:dual_hilbert_series} to find a basis for $S_G$ containing elements in $Q(m, n)_G$ that are "stable" after taking $m\to +\infty$. The main issue of this approach is to calculate the dual basis with respect to the given basis of the invariant space. An instance where this approach proves effective is when $G=B$, the Borel subgroup. In this scenario, as established in \cite[Corollary 5.4]{Ha_Hai_Nghia_2024}, the leading monomials with respect to degrevlex of $\mathcal{B}_m(n)$ are pairwise distinct, from which the dual basis can be explicitly computed. However, in the most interesting case, $G = GL_n(\mathbb{F}_q)$, the proposed basis does not have this property.
        \item Another approach stemming from the Lewis-Reiner-Stanton conjecture is to inductively construct bases for $S_G$. In particular, let $F_G^{(n)}(t) = Hilb(S_G, t)$, from \cite[Equation (5.9)]{Lewis_Reiner_Stanton_2017}, we have the recursive formula
        \begin{equation*}
            F_{GL_n(\mathbb{F}_q)}^{(n)}(t) = F_{GL_{n-1}(\mathbb{F}_q)}^{(n-1)}(t) - t^{(n-1)(q-1)}F_{GL_{n-1}(\mathbb{F}_q)}^{(n-1)}(t^q) + \dfrac{t^{(n-1)(q^n-1)}}{\prod\limits_{i=0}^{n-1}(1-t^{q^n-q^i})}.  
        \end{equation*}
        This formula suggests that there should be an operator (corresponding to the first two terms of the right-hand side) that maps the cofixed space in $(n-1)$ variables to the cofixed space in $n$ variables.
        \item Finally, understanding the structure of $S$ as an $\mathbb{F}_q G$-module is another method to investigate the cofixed space $S_G$. In \cite{karagueguzian_symonds_finiteness, symonds_structure_theorem}, it has been proven that for $G=GL_n(\mathbb{F}_q)$, $S$ can be decomposed as
        \begin{equation}\label{eq:structure_theorem}
            S = \bigoplus\limits_{I\subseteq \{0, \ldots, n-1\}}\mathbb{F}_q[Q_{n, i}\mid i\in I]\otimes_{\mathbb{F}_q}X_{I},
        \end{equation}
        where each $X_{I}$ is a finite-dimensional $\mathbb{F}_q G$-submodule of $S$. To apply this result to the problem of determining $S_{GL_n(\mathbb{F}_q)}$, we aim to refine \eqref{eq:structure_theorem} by providing an explicit description  of the modules $X_I$ appearing in the decomposition.  
    \end{enumerate}
\end{enumerate}
\appendix
\chapter{Brauer Character Theory}\label{appendix:brauer_character}
In this appendix, we collect relevant results about the theory of modular characters that serve to clarify the assertions in the proof of \cref{cor:brauer_iso}. 
\section{Preliminary}\label{sec:1}
\subsection{The algebraic closure $\overline{\mathbb{Q}_p}$ of the $p$-adic field $\mathbb{Q}_p$ is isomorphic to $\mathbb{C}$ as fields}
We state (without proof) some results about discrete valuation fields that we shall use here. Basically, they help us work more conveniently in the field of complex numbers, instead of the algebraic closure of the field of $p$-adic numbers. \smallbreak
Let $p$ be a prime number. We have the field of $p$-adic numbers\index{$p$-adic numbers} $\mathbb{Q}_p$, which is the metric space completion of the rationals with respect to the $p$-adic norm $\left|\dfrac{m}{n}\right|_p = p^{v_p(n) - v_p(m)}$. Explicitly, an element of $\mathbb{Q}_p$ is a series $\sum\limits_{k = m}^{+\infty}a_k p^k$, where $m\in \mathbb{Z}$, and $a_k\in \{0, \ldots, p-1\}, \text{ for all } k\in \{0, \ldots, p-1\}$. We prove the following theorem about the algebraic closure of $\mathbb{Q}_p$.
\begin{theorem}[{\cite[Remark 9.17]{milne_fields_galois}}]\label{thm: Q_p and C are the same}
Let $\overline{\mathbb{Q}_p}$ be the algebraic closure of $\mathbb{Q}_p$. Then $\overline{\mathbb{Q}_p}$ is isomorphic to $\mathbb{C}$ as fields.
\end{theorem}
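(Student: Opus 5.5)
The plan is the standard Steinitz-type argument: an algebraically closed field is determined up to isomorphism by its characteristic and its transcendence degree over the prime field. Both $\overline{\mathbb{Q}_p}$ and $\mathbb{C}$ are algebraically closed of characteristic $0$, so both contain $\mathbb{Q}$ as prime field. It therefore suffices to show that they have the same transcendence degree over $\mathbb{Q}$. We will aim to show that both transcendence degrees equal the cardinality of the continuum $\mathfrak{c} = 2^{\aleph_0}$.

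The first step is to compute cardinalities. The description of $\mathbb{Q}_p$ as Laurent-type series $\sum_{k\ge m} a_k p^k$ with digits $a_k \in \{0,\ldots,p-1\}$ gives $|\mathbb{Q}_p| = \mathfrak{c}$: it injects $\{0,\ldots,p-1\}^{\mathbb{N}}$, and it is a countable union of such sets. The algebraic closure of an infinite field $K$ has the same cardinality as $K$, because each element is a root of one of $|K|$ polynomials, each having finitely many roots. Hence $|\overline{\mathbb{Q}_p}| = \mathfrak{c}$, and likewise $|\mathbb{C}| = \mathfrak{c}$.

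The second step is to pass from cardinality to transcendence degree. If $L$ is algebraically closed of characteristic $0$ with transcendence basis $T$ over $\mathbb{Q}$, then $L$ is algebraic over $\mathbb{Q}(T)$. If $T$ is infinite, then $|\mathbb{Q}(T)| = |T|$, so $|L| = |T|$. If $T$ were countable, $L$ would be countable, which is impossible. Thus both fields have a transcendence basis of cardinality $\mathfrak{c}$.

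The third step is to build the isomorphism. Choose a bijection between a transcendence basis $T_1$ of $\overline{\mathbb{Q}_p}$ and a transcendence basis $T_2$ of $\mathbb{C}$. This bijection induces an isomorphism $\mathbb{Q}(T_1) \cong \mathbb{Q}(T_2)$. Each field is an algebraic closure of the corresponding purely transcendental subfield. By uniqueness of algebraic closures up to isomorphism (a Zorn's lemma argument), the isomorphism extends to $\overline{\mathbb{Q}_p} \cong \mathbb{C}$.

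The main obstacle is really only foundational: existence of transcendence bases and uniqueness of algebraic closures. Both rely on the axiom of choice, so the isomorphism is non-constructive and in particular not continuous. I would cite these standard facts (for instance from \cite{milne_fields_galois}) rather than reprove them, and carry out only the cardinality computations explicitly.
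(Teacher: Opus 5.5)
Your proposal is correct and follows essentially the same route as the paper: equal cardinality $2^{\aleph_0}$ forces transcendence bases over $\mathbb{Q}$ of the same cardinality. A bijection of these bases then gives $\mathbb{Q}(T_1)\cong\mathbb{Q}(T_2)$, and uniqueness of algebraic closures extends this to $\overline{\mathbb{Q}_p}\cong\mathbb{C}$; you also make explicit the step from cardinality to transcendence degree (via $|\mathbb{Q}(T)|=|T|$ for infinite $T$), which the paper leaves implicit.
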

\begin{proof}
    It is not hard to see that $\mathbb{Q}_p$ has the same cardinality as $\mathbb{C}$, so any algebraic extension of $\mathbb{Q}_p$ (in particular, $\overline{\mathbb{Q}_p}$) has the same cardinality as $\mathbb{C}$, and these fields have uncountably many elements. Thus, if $S$ is a transcendence basis of $\mathbb{C}$ over $\mathbb{Q}$, $T$ is a transcendence basis of $\overline{\mathbb{Q}_p}$ over $\mathbb{Q}$, then $S$ and $T$ both have the same cardinality as $\mathbb{C}$. One readily sees that any bijection $S\to T$ extends to a field isomorphism $\mathbb{Q}(S) \to \mathbb{Q}(T)$. But $\overline{\mathbb{Q}_p}$ is the algebraic closure of $\mathbb{Q}(T)$, whilst $\mathbb{C}$ is the algebraic closure of $\mathbb{Q}(S)$, by uniqueness of the algebraic closure of a field, one obtains the desired result.   
\end{proof}
This result helps us work with the algebraic closure of $\mathbb{Q}_p$ more explicitly, just like working with the complex numbers. Another useful thing that we shall extensively use is the following result about finite separable extensions of $\mathbb{Q}_p$ (each of these finite separable extensions can be regarded as a subfield of $\mathbb{C}$, if needed), for which the proofs are omitted, as they are beyond the scope of this manuscript.
\begin{theorem}[{\cite[Chapter 2, Section 5]{neukirch_algebraic_2013}}]\label{thm: local_fields}
Let $K$ be a finite separable extension of $\mathbb{Q}_p$. There exists a metric $v: \mathbb{K}\to [0, +\infty)$ which restricts to $|\cdot |_p$ such that $v(xy) = v(x)v(y), \text{ for all } x, y\in K$, and $ v(x+y)\leq \max\{v(x), v(y)\}$, for all $x, y\in K$. Furthermore, the ring of integers $\mathcal{O}_K = \{x\in K\mid v(x)\leq 1\}$ is a local ring with a unique maximal ideal $\mathfrak{m} = \{x\in K\mid v(x)\leq p^{-1}\}$, and the residue field $k = \mathcal{O}_K/\mathfrak{m}$ is a finite field of characteristic $p$. 
\end{theorem}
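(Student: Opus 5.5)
The statement to prove is \cref{thm: local_fields}: for a finite separable extension $K/\mathbb{Q}_p$, the absolute value $|\cdot|_p$ extends to a multiplicative, non-archimedean absolute value $v$ on $K$, the valuation ring is local, and its residue field is finite of characteristic $p$.

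Put $d=[K:\mathbb{Q}_p]$. I would define the extension by the norm formula
\begin{equation*}
v(x)=|N_{K/\mathbb{Q}_p}(x)|_p^{1/d}.
\end{equation*}
Multiplicativity is immediate from multiplicativity of the norm. Since $N_{K/\mathbb{Q}_p}(a)=a^d$ for $a\in\mathbb{Q}_p$, the restriction of $v$ to $\mathbb{Q}_p$ is $|\cdot|_p$. For the ultrametric inequality it suffices to show $v(x)\le 1\Rightarrow v(1+x)\le 1$. Granting that, for $y\neq 0$ with $v(x)\le v(y)$ we write $x+y=y(1+x/y)$, so $v(x+y)\le v(y)=\max\{v(x),v(y)\}$.

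The key lemma is therefore that $\mathcal{O}_K=\{x: v(x)\le 1\}$ coincides with the integral closure of $\mathbb{Z}_p$ in $K$. Since that closure is a ring, it is closed under $x\mapsto 1+x$, which gives the ultrametric inequality. For one inclusion, if $x$ is integral then its minimal polynomial over $\mathbb{Q}_p$ has coefficients in $\mathbb{Z}_p$, because $\mathbb{Z}_p$ is integrally closed. The norm is, up to sign, a power of the constant term, so $|N(x)|_p\le1$. For the converse, suppose $|N(x)|_p\le 1$ and let $f=X^k+\dots+a_0$ be the minimal polynomial of $x$. We know $|a_0|_p\le1$ and must show all $a_i\in\mathbb{Z}_p$. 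Here I invoke Hensel's lemma in its factorization form: an irreducible monic $f$ over $\mathbb{Q}_p$ satisfies $\max_i|a_i|_p=\max\{|a_0|_p,1\}$. Otherwise, scaling $f$ to be primitive and reducing modulo $p$ gives a nontrivial coprime factorization $X^r\cdot(\text{unit})$, which lifts to $\mathbb{Z}_p[X]$ and contradicts irreducibility. I expect this step to be the main obstacle, and it is exactly where completeness of $\mathbb{Q}_p$ is used.

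It remains to treat the ring structure. The units of $\mathcal{O}_K$ are precisely the elements with $v=1$, so the non-units form the ideal $\{v<1\}$ and $\mathcal{O}_K$ is local. The value group $v(K^\times)$ lies in $p^{\frac1d\mathbb{Z}}$, so it is discrete. Hence $\{v<1\}=\{v\le p^{-e/d}\}$ for some $e\ge1$, which gives the description of $\mathfrak{m}$ up to normalization. Replacing $p^{-1}$ by the largest value below $1$ makes the statement precise; for a totally ramified $K$ the literal bound $p^{-1}$ must be read this way.

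For the residue field, note $p\in\mathfrak{m}$, so the characteristic is $p$. For finiteness, $\mathcal{O}_K$ is a finitely generated $\mathbb{Z}_p$-module: it is contained in $\frac{1}{\mathrm{disc}}$ times the span of an integral basis, by separability and the nondegeneracy of the trace form. It is then free of rank $d$, so $k$ is a quotient of $\mathcal{O}_K/p\mathcal{O}_K\cong\mathbb{F}_p^d$, which is finite.
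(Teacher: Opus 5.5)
The paper gives no proof of this statement; it quotes it from Neukirch and explicitly leaves it unproved. Your argument is the standard proof from that source, namely the extension theorem in Chapter~II, \S4, and it is correct. The steps are:
\begin{itemize}
\item define $v(x)=|N_{K/\mathbb{Q}_p}(x)|_p^{1/d}$;
\item identify $\{v\le 1\}$ with the integral closure of $\mathbb{Z}_p$ in $K$, using the Hensel consequence that an irreducible $a_kX^k+\dots+a_0$ with $a_0a_k\neq 0$ satisfies $\max_i|a_i|_p=\max\{|a_0|_p,|a_k|_p\}$;
\item get the ultrametric inequality from the fact that this integral closure is a ring;
\item get finiteness of the residue field from $\mathcal{O}_K\cong\mathbb{Z}_p^{d}$, via the discriminant.
\end{itemize}

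You are right that the statement's description of $\mathfrak{m}$ cannot be taken literally. However, the problem is not confined to totally ramified $K$. Since $\{x\in K: v(x)\le p^{-1}\}=p\mathcal{O}_K=\mathfrak{m}^{e(K/\mathbb{Q}_p)}$, the printed formula holds exactly when $K/\mathbb{Q}_p$ is unramified. In general the maximal ideal is $\{x: v(x)<1\}$, which is what your argument uses.

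Two wording fixes are needed.
\begin{itemize}
\item In the Hensel step, the reduction of the primitive rescaled polynomial is $X^r\bar h$ with $r\ge 1$, $\bar h(0)\neq 0$ and $\deg(X^r\bar h)<k$. It is not $X^r$ times a unit. The conditions $r\ge 1$ and $\deg(X^r\bar h)<k$ are exactly what make both lifted factors nonconstant.
\item In the finiteness step, \emph{integral basis} must mean a $\mathbb{Q}_p$-basis of $K$ lying in $\mathcal{O}_K$. A $\mathbb{Z}_p$-basis of $\mathcal{O}_K$ is what that step produces, so reading it that way would be circular.
\end{itemize}
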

\subsection{Composition series of a module}
In this section, we give the definition of composition series\index{composition series} of modules, which is a key notion in the theory of modular characters. We prove the Jordan-Holder theorem about uniqueness of composition multiplicity, which, in some sense, allows one to "compare" modules.
\begin{definition}\label{def: composition_series}
Let $R$ be a ring and $M$ is a nonzero left $R$-module. A composition series of $M$ is a filtration 
\begin{equation*}
    0 = M_0 \subsetneq M_1\subsetneq \cdots \subsetneq M_n = M,
\end{equation*}
where $n$ is a positive integers, $M_0, \ldots, M_n$ are submodules of $M$, and for each $i\in \{1, \ldots, n\}$, the quotient $M_{i+1}/M_i$ is a simple left $R$-module.
\end{definition}
For example, if $R = kG$, the group algebra over a field $k$ with respect to a group $G$, and $V$ is a nonzero left $R$-module, such that $V$ is a finite-dimensional $k$-vector space, then $V$ has a composition series. This comes from the fact that any nonzero $R$-module is a nonzero finite-dimensional $k$-vector space.\smallbreak
We state and prove the Jordan-Holder theorem\index{Jordan-Holder theorem}.
\begin{theorem}[{\cite[Theorem 3]{mahmoudi_jordan-holder_2012}}]\label{thm: Jordan-Holder}
Suppose $M$ is a left $R$-module that is both noetherian and artinian. Then, $M$ has a composition series. Furthermore, if
\begin{align*}
    0 &= M_0 \subsetneq M_1\subsetneq \cdots \subsetneq M_n = M,\\
    0 &= N_0 \subsetneq N_1\subsetneq \cdots \subsetneq N_m = M,
\end{align*}
are two composition series of $M$, then $n = m$, and two lists 
\begin{align*}
 [M_1/M_0, M_2/M_1, \ldots, M_n/M_{n-1}],\\   
[N_1/N_0, N_2/N_1, \ldots, N_m/N_{m-1}]
\end{align*}
are the same up to a permutation. \smallbreak
This list is called the composition factors of $M$.
\end{theorem}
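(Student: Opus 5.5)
Existence and uniqueness are handled separately, each by induction on length.

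\emph{Existence.} Since $M$ is noetherian and $M \neq 0$, the set of proper submodules of $M$ has a maximal element $M'$, and $M/M'$ is then simple. Applying the same choice to $M'$ (which is again noetherian and artinian) gives a descending chain $M \supsetneq M' \supsetneq M'' \supsetneq \cdots$ with simple successive quotients. The artinian hypothesis forces this chain to terminate, and it can only terminate at $0$, because a nonzero module always has a proper maximal submodule. Reversing the chain gives a composition series.

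\emph{Uniqueness.} I would first define $\ell(M)$ as the minimal length of a composition series of $M$, and then induct on $\ell(M)$. The statement to prove is: every composition series of $M$ has length $\ell(M)$, and any two composition series have the same multiset of factors.

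The case $\ell(M) = 1$ is immediate, since then $M$ is simple.

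For the inductive step, take two series as in the statement. If $M_{n-1} = N_{m-1}$, apply the induction hypothesis to this common submodule. This needs $\ell(M_{n-1}) < \ell(M)$, which is why I arrange $n = \ell(M)$: choose the first series to be a minimal one, so that $M_{n-1}$ has a series of length $n-1$. I would set this up carefully to avoid circularity, and expect the bookkeeping of lengths to be the only fiddly point.

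Otherwise $M_{n-1} \neq N_{m-1}$. Both are maximal submodules, so $M_{n-1} + N_{m-1} = M$. Put $K = M_{n-1} \cap N_{m-1}$. The second isomorphism theorem gives
\[
M_{n-1}/K \cong M/N_{m-1}, \qquad N_{m-1}/K \cong M/M_{n-1},
\]
and both of these are simple.

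Take any composition series of $K$ (it exists by the first part). Appending $M_{n-1}$ to it gives a second composition series of $M_{n-1}$. By induction on $M_{n-1}$, which has $\ell \le n-1$, this series has length $n-1$ and the same factors as $M_0 \subsetneq \cdots \subsetneq M_{n-1}$. In particular $K$ has length $n-2$.

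Consequently $N_{m-1}$ has a series of length $n-1$, namely the one through $K$, so $\ell(N_{m-1}) \le n-1 < \ell(M)$. Induction applies to $N_{m-1}$ as well, and gives $m-1 = n-1$ together with matching factors between $N_0 \subsetneq \cdots \subsetneq N_{m-1}$ and the series through $K$.

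Comparing factor lists then finishes the argument. The series of $M$ through $M_{n-1}$ and $K$ has factors (factors of $K$) $+\, M/N_{m-1} + M/M_{n-1}$, and the series through $N_{m-1}$ and $K$ has the same factors with the last two swapped.

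The main obstacle is the second case: recognising the intersection trick, and making sure the induction is on a quantity that actually decreases. Using the minimal length $\ell$ resolves the latter.
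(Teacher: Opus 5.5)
Your proposal is correct and follows essentially the same route as the paper: induction on the minimal length of a composition series, with the first series chosen to be a minimal one. Like the paper, you split on whether $M_{n-1}=N_{m-1}$ and use $K=M_{n-1}\cap N_{m-1}$ with the second isomorphism theorem to match factor lists; the only differences are that your existence step builds the series top-down (maximal submodules from the noetherian condition, termination from the artinian one) where the paper builds it bottom-up, and that you handle the reduction to a minimal-length series more explicitly than the paper's ``without loss of generality''.
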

\begin{proof}
    First, the existence of a composition series is guaranteed by seeing that, the artinianity of $M$ guarantees the existence of a minimal nonzero module $M_1\subseteq M$, which must be a simple module, and the noetherianity of $M$ guarantees that the process of taking the smallest module that properly contains the module constructed in the previous step must halt after finitely many iterations.\smallbreak
  We prove the result by induction on $k$, where $k$ is the length of a Jordan-Holder series of $M$ of minimum length. Without loss of generality, suppose that $k = n$, in particular we have $m \geq n$. If $n=1$ then $M$ is a simple module and the length of every other Jordan-Holder series of $M$ is also 1 and the only composition factor is $M$ and the result is proved.\smallbreak
Now suppose that $n>1$. Consider two submodules $M_{n-1}$ and $N_{m-1}$ and put $K=M_{n-1} \cap N_{m-1}$. There are two possibilities,
\begin{itemize}
    \item[(i)] $M_{n-1}=N_{m-1}$.
    \item[(ii)] $M_{n-1} \neq N_{m-1}$.
\end{itemize}
In the first case we have $K=M_{n-1}=N_{m-1}$. Consider two Jordan-Holder series
\begin{equation*}
0=M_{0} \subsetneq M_{1} \subsetneq \cdots \subsetneq M_{n-1}=K,
\end{equation*}
\begin{equation*}
0=N_{0} \subsetneq N_{1} \subsetneq \cdots \subsetneq N_{m-1}=K.
\end{equation*}
The above series shows that $K$ has a Jordan-Holder series of length at most $ n-1$, so the induction hypothesis implies that $n-1=m-1$ and the composition factors of above series are the same. Consequently the two original Jordan-Holder series have the same length and the same composition factors, explicitly, the composition factors of $M$ are those of $K$, and an $M/K$ added.\smallbreak
In the second case, we have $K \subsetneq M_{n-1}$ and $K \subsetneq N_{m-1}$. As $M_{n-1} \neq$ $N_{m-1}$ and $M_{n-1}$ and $N_{m-1}$ are maximal in $M$, we obtain $M_{n-1}+N_{m-1}=M$. Consequently, we have
\begin{equation*}
M_{n-1} / K=M_{n-1} /\left(M_{n-1} \cap N_{m-1}\right) \cong\left(M_{n-1}+N_{m-1}\right) / N_{m-1}=M / N_{m-1}.
\end{equation*}
So
\begin{equation*}
M_{n-1} / K \cong M / N_{m-1},
\end{equation*}
similarly, we have
\begin{equation*}
N_{m-1} / K \cong M / M_{n-1}.
\end{equation*}
In particular, two quotient modules $M_{n-1} / K$ and $N_{m-1} / K$ are simple modules. As $M$ is both artinian and notherian, $K$ is as well. In particular $K$ has a Jordan-Holder series as follows.
\begin{equation*}
0=K_{0} \subsetneq K_{1} \subsetneq \cdots \subsetneq K_{r}=K.
\end{equation*}
We therefore obtain two new Jordan-Holder series for $M$, which are
\begin{equation*}
\begin{aligned}
& 0=K_{0} \subsetneq K_{1} \subsetneq \cdots \subsetneq K_{r}=K \subsetneq M_{n-1} \subsetneq M_{n}=M, \\
& 0=K_{0} \subsetneq K_{1} \subsetneq \cdots \subsetneq K_{r}=K \subsetneq N_{m-1} \subsetneq N_{m}=M.
\end{aligned}
\end{equation*}
It is trivial that $M_{n-1}$ has a Jordan-Holder series of length at most $ n-1$, so we can apply the induction hypothesis for $M_{n-1}$, hence, all Jordan-Holder series of $M_{n-1}$ are of the same length. The above composition series show that $M_{n-1}$ has a Jordan-Holder series of length $r+1$ and the original composition series shows that $M_{n-1}$ has a Jordan-Holder series of length $n-1$, so we have $r+1=n-1$ and two Jordan-Holder series
\begin{equation*}
0=K_{0} \subsetneq K_{1} \subsetneq \cdots \subsetneq K_{r}=K \subsetneq M_{n-1}
\end{equation*}
and
\begin{equation*}
0=M_{0} \subsetneq M_{1} \subsetneq \cdots \subsetneq M_{n-1}
\end{equation*}
have the same composition factors. Hence the length and the composition factors of 
\begin{equation*}
     0=K_{0} \subsetneq K_{1} \subsetneq \cdots \subsetneq K_{r}=K \subsetneq M_{n-1} \subsetneq M_{n}=M
\end{equation*}
and
\begin{equation*}
     0 = M_0 \subsetneq M_1\subsetneq \cdots \subsetneq M_n = M
\end{equation*}
are the same. Similarly, $N_{m-1}$ has a series of length $r+1=n-1$. By induction, the length and the composition factors of the below Jordan-Holder series of $N_{m-1}$ are the same,
\begin{equation*}
0=K_{0} \subsetneq K_{1} \subsetneq \cdots \subsetneq K_{r}=K \subsetneq N_{m-1}
\end{equation*}
and
\begin{equation*}
0=N_{0} \subsetneq N_{1} \subsetneq \cdots \subsetneq N_{m-1}.
\end{equation*}
Consequently, the length and the composition factors of two series 
\begin{equation*}
     0=K_{0} \subsetneq K_{1} \subsetneq \cdots \subsetneq K_{r}=K \subsetneq N_{m-1} \subsetneq N_{m}=M
\end{equation*}
and
\begin{equation*}
     0 = N_0 \subsetneq N_1\subsetneq \cdots \subsetneq N_m = M
\end{equation*}
are the same. The proven results conclude the proof, because $[N_m/N_{m-1}, N_{m-1}/K]$ and $[M_n/M_{n-1}, M_{n-1}/K]$ are the same list.
\end{proof}
The Jordan-Holder theorem simply asserts that, if $M$ is a left $R$-module that is both artinian and noetherian, and for each simple module $S$, the number of occurrences of $S$ in the composition factors of $M$ is a well-defined number, we call it the composition multiplicity of $S$ with respect to $M$.
\subsection{Semisimple representations are uniquely determined by characteristic polynomials}
Let $R$ be a ring, and $M$ be a nonzero $R$-module that is both noetherian and artinian. Then, \cref{thm: Jordan-Holder} shows that $M$ admits a composition series 
\begin{equation*}
    0 = M_0 \subsetneq M_1\subsetneq \cdots\subsetneq M_n = M.
\end{equation*}
Furthermore, it is not hard to see that the \emph{semisimplification}\index{semisimplification} $M_{s}=\bigoplus_{i=1}^{n}M_i/M_{i-1}$ is uniquely determined by $M$, and the composition factors of $M$ and $M_s$ are the same. Therefore, if one only cares about comparing composition factors of $R$-modules, in certain cases, one can assume without loss of generality that the modules in consideration are semisimple, i.e., they are direct sums of simple $R$-modules.\smallbreak
We prove that given any field $k$ and a finite group $G$, two semisimple $kG$-modules $M$ and $N$ that are finite-dimensional over $k$, if for any $g\in G$, the characteristic polynomials of $g_M: M\to M$ and $g_N: N\to N$ are equal, then $M$ and $N$ are isomorphic. In order to successfully prove this, we need some technical lemmas.
\begin{lemma}\label{lemma: 5.8}
Suppose that $a\in kG$, and $a_M: M\to M$ has eigenvalue $\lambda\in k$. Then the characteristic polynomial $\chi_{M}(a)$, viewed as an element of $k[T]$, is divisible by $(1-\lambda T)$. If $\lambda \neq 0$, one has $\chi_{M}(a) \neq 1$.
\end{lemma}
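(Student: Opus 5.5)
We need to prove: if $a\in kG$ and $a_M$ has eigenvalue $\lambda\in k$, then $\chi_M(a)$, understood as the reverse characteristic polynomial $\det(1 - T a_M)\in k[T]$, is divisible by $(1-\lambda T)$; and if $\lambda\neq 0$, then $\chi_M(a)\neq 1$.

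The plan is to choose a basis of $M$ adapted to the eigenvector and compute the determinant in block form. Since $\lambda$ is an eigenvalue of $a_M$, there is a nonzero $v\in M$ with $a_M(v)=\lambda v$. Extend $v$ to a $k$-basis $v=e_1,e_2,\ldots,e_d$ of $M$, which is possible because $M$ is finite-dimensional over $k$. In this basis the matrix $A$ of $a_M$ has first column $(\lambda,0,\ldots,0)^T$, so it is block upper triangular:
\begin{equation*}
A=\begin{pmatrix}\lambda & *\\ 0 & A'\end{pmatrix},
\end{equation*}
where $A'$ is a $(d-1)\times(d-1)$ matrix over $k$.

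Next, compute $\chi_M(a)=\det(I-TA)$ in $k[T]$. The determinant does not depend on the chosen basis, and $I-TA$ is again block upper triangular, so expanding along the first column gives
\begin{equation*}
\chi_M(a)=(1-\lambda T)\det(I_{d-1}-TA').
\end{equation*}
This gives the divisibility, since $\det(I_{d-1}-TA')\in k[T]$. For the second claim, note that $\det(I_{d-1}-TA')$ has constant term $1$, so it is a nonzero polynomial. Hence $\deg\chi_M(a)\geq 1$ whenever $\lambda\neq 0$. More concretely, the coefficient of $T$ in $\chi_M(a)$ is $-\operatorname{tr}(A)$, but it is cleaner to argue by degree: $k[T]$ is a domain, so the product of $(1-\lambda T)$, which has degree $1$, with a nonzero polynomial has degree at least $1$. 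Therefore $\chi_M(a)\neq 1$.

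The only point needing care is the convention for $\chi_M(a)$. The statement's divisibility by $(1-\lambda T)$ indicates that the reversed polynomial $\det(1-Ta_M)$ is intended, rather than $\det(T-a_M)$. I will state this convention explicitly at the start. Under the other convention, the same computation gives the factor $(T-\lambda)$ instead. Otherwise the argument is routine linear algebra.
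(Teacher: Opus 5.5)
Your proof is correct and is essentially the paper's argument. Both extend an eigenvector to a basis and factor $\chi_M(a)$ as $(1-\lambda T)$ times the reverse characteristic polynomial of the induced map on the quotient (your block $A'$). Both then conclude $\chi_M(a)\neq 1$ because, for $\lambda\neq 0$, the factor $(1-\lambda T)$ is a non-unit of $k[T]$; your degree count is an equivalent form of that step. Your reading of $\chi_M(a)$ as $\det(1-Ta_M)$ is also the right one: it agrees with the paper's later use of $\chi_N(a)=1$ when $aN=0$ in the proof of \cref{thm: 5.10}.
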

\begin{proof}
    If $a_{M}$ has eigenvalue $\lambda \in k$, there is some $k$-basis $\left\{m_{1}, \ldots, m_{n}\right\}$ of $M$ satisfying $a_{M} m_{1}=\lambda m_{1}$. We easily see that for $M^{\prime}=k m_{1}$ and $M^{\prime \prime}=M / M^{\prime}$, we have $\chi_{M}(a)=\chi_{M^{\prime}}(a) \cdot \chi_{M^{\prime \prime}}(a)$. As $\chi_{M^{\prime}}(a)=(1-\lambda T)$, one has $\chi_{M}(a)$ is divisible by $(1-\lambda T)$. If $\lambda \neq 0$, one has $(1-\lambda T)$ is not invertible in $k[T]$, hence it follows that $\chi_{M}(a) \neq 1$.
\end{proof}
\begin{lemma}[{\cite[Lemma 5.9]{eggermont_generalizations_2011}}]\label{lemma: 5.9}
Let $R$ be a semisimple ring and let $S$ be a simple $R$-module. Then for each $s \in S \backslash\{0\}$ there is some $r \in R$ with $r s=s$ and such that for any simple $R$-module $T$ that is not isomorphic to $S$, one has $r T=0$.
\end{lemma}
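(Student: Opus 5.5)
The plan is to use the Artin--Wedderburn decomposition of the semisimple ring $R$. Write
\begin{equation*}
    R \cong \prod_{i=1}^{t} R_i, \qquad R_i \cong M_{n_i}(D_i),
\end{equation*}
where each $D_i$ is a division ring. Let $e_1, \ldots, e_t$ be the corresponding central primitive idempotents, so that $e_i e_j = \delta_{ij} e_i$, $\sum_i e_i = 1$, and $R_i = Re_i$. Every simple $R$-module is isomorphic to the unique simple module of exactly one factor $R_i$. Two facts follow, and both are standard:
\begin{enumerate}
    \item $e_i$ acts as the identity on a simple module of type $i$;
    \item $e_i$ acts as zero on every simple module of type $j \neq i$.
\end{enumerate}
Indeed, $e_i$ is central, so it acts on a simple module $T$ by a module endomorphism. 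Since $e_i$ is idempotent, its image and its kernel are submodules of $T$, so $e_i$ acts either as $1$ or as $0$. Moreover $e_iT \neq 0$ holds exactly when $T$ is a module over the factor $R_i$.

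With this in hand the construction is short. Let $S$ be the given simple module; it is isomorphic to the simple module of some factor, say $R_i$. Take $r = e_i$, which does not depend on $s$. Then $rs = s$ for every $s \in S$ by fact (1). If $T$ is simple and not isomorphic to $S$, then $T$ is of type $j \neq i$, so $rT = 0$ by fact (2). The hypothesis $s \neq 0$ is not needed at all.

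The only point that needs care is justifying the correspondence between simple modules and Wedderburn factors. The cleanest way to organise it is to take $e_i$ to be the identity element of the isotypic component of $R$ (viewed as a left module over itself) corresponding to $S$. Any simple module is a quotient of $R$, and hence is isomorphic to a summand of ${}_RR$. By Schur's lemma, $\operatorname{Hom}$ between non-isomorphic simple modules is zero. Together these show that the isotypic component of type $T$ annihilates $S$ when $T \not\cong S$, and that the component of type $S$ acts on $S$ through its identity $e_i$. I expect this bookkeeping, rather than any computation, to be the main point of the argument.
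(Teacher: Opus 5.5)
Your proof is correct, but it takes a different route from the paper's.

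The paper uses no structure theory. It considers the surjection $f\colon R\to S$, $x\mapsto xs$, which is onto because $s\neq 0$ and $S$ is simple. It uses semisimplicity of $R$ only to split this surjection by some $\phi\colon S\to R$ with $f\circ\phi=\mathrm{Id}_S$, and sets $r=\phi(s)$, so that $rs=f(\phi(s))=s$. For a simple $T\not\cong S$ and $t\in T$, the $R$-linear map $S\to T$, $s'\mapsto \phi(s')t$, cannot be injective, so it is zero by simplicity of $S$, which gives $rt=0$.

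You instead split $1=\sum_j e_j$ along the isotypic components of ${}_RR$ (the Wedderburn blocks) and take $r=e_i$, the component in the block of type $S$. Your vanishing statements rest on the same Schur fact the paper uses, namely $\operatorname{Hom}_R(S,T)=0$ for $T\not\cong S$:
\begin{itemize}
\item applied to $b\mapsto bt$ on the block of type $S$, it gives $e_iT=0$;
\item applied to $b\mapsto bs$ on the other blocks, it gives $e_js=0$ for $j\neq i$, hence $e_is=s$.
\end{itemize}

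What your version buys is a stronger conclusion. Your $r$ is a central idempotent that does not depend on $s$ and acts as the identity on every module isomorphic to $S$, so the hypothesis $s\neq 0$ really is superfluous. The cost is the block bookkeeping you flag. The componentwise argument just described already yields $e_is=s$ and $e_iT=0$, so you do not need the full Artin--Wedderburn theorem or centrality of $e_i$.

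The paper's element $r=\phi(s)$ is obtained from a single splitting plus Schur's lemma. It depends on $s$ and is in general not central. It is an idempotent, since $r^2=\phi(rs)=\phi(s)=r$, and it generates the minimal left ideal $\phi(S)\cong S$.

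For the use of this lemma in the proof of \cref{thm: 5.10}, either choice of $r$ works.
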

\begin{proof}
    Let $s \in S \backslash\{0\}$. Consider the $R$-linear map $f: R \rightarrow S$ defined by $f(r)=r s$ for each $r \in R$. As $S$ is simple and $f(1)=s \neq 0$, one has $f(R)=S$. Consider the exact sequence $0 \rightarrow \operatorname{Ker}(f) \rightarrow R \xrightarrow{f} S \rightarrow 0$. Since $R$ is semisimple, this sequence splits, hence there is an $R$-linear map $\phi: S \rightarrow R$ such that $f \circ \phi=\operatorname{Id}_{S}$. Define $r=\phi(s)$. Then one has 
    \begin{equation*}
        r s=f(r)=f(\phi(s))=(f \circ \phi)(s)=s.
    \end{equation*}
Let $T$ be a simple $R$-module and suppose it is not isomorphic to $S$. Let $t \in T$. Consider the $R$-linear map $g: S \rightarrow T$ defined by $s^{\prime} \mapsto \phi\left(s^{\prime}\right) t$ for $s^{\prime} \in S$. If $g$ is injective, it is an isomorphism since $T$ is simple and $S$ is not $0$. This is false by assumption, hence $\operatorname{Ker}(g) \neq 0$. Since $S$ is simple, it follows $\operatorname{Ker}(g)=S$. In particular, one has $r t=\phi(s) t=g(s)=0$. Hence one has $r T=0$.
\end{proof}
Recall that a ring is semisimple\index{semisimple!ring} if any finitely generated module over that ring is semisimple, or equivalently, if the ring is a semisimple module over itself (proof: any finitely generated module is a quotient of a finite direct sum of the ring, thus it suffices to prove that any finite direct sum of the ring is semisimple, which is trivial.) There are two equivalent definition of semisimple modules, the first is that $M$ is a semisimple module if every epimorphism from $M$ splits, while the second is that $M$ is a direct sum of simple modules. A standard property of semisimple modules\index{semisimple!module} is that, submodules of semisimple modules are semisimple, in fact,  submodules are always direct summands of a semisimple module.
\begin{lemma}[{\cite[Lemma 2.9]{eggermont_generalizations_2011}}]\label{lemma: 2.9}
If $k$ is a field and $A$ is a $k$-algebra that is finite-dimensional over $k$, and the Jacobson radical $J(A) = (0)$, then $A$ is a semisimple ring.\smallbreak
The Jacobson radical\index{Jacobson radical} $J(A)$ is defined to be the intersection of all maximal left ideals of $A$.
\end{lemma}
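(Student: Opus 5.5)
The plan is to exhibit the left regular module $A$ as a submodule of a finite direct sum of simple $A$-modules. Since submodules of semisimple modules are semisimple, $A$ will then be a semisimple module over itself. By the equivalence recalled just before the statement, this makes $A$ a semisimple ring.

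\emph{Step 1: reduce to finitely many maximal ideals.} We use finite-dimensionality to replace the possibly infinite intersection defining $J(A)$ by a finite one. Among all finite intersections $\mathfrak{m}_1\cap\cdots\cap\mathfrak{m}_r$ of maximal left ideals, each is a $k$-subspace of $A$. Pick one, $I=\mathfrak{m}_1\cap\cdots\cap\mathfrak{m}_r$, of minimal $k$-dimension; this exists because dimensions are nonnegative integers bounded by $\dim_k A$. For any maximal left ideal $\mathfrak{m}$, the ideal $I\cap\mathfrak{m}$ is again such a finite intersection and is contained in $I$. By minimality $\dim(I\cap \mathfrak{m})=\dim I$, hence $I\subseteq \mathfrak{m}$. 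Therefore $I\subseteq J(A)=(0)$, i.e.\ $\mathfrak{m}_1\cap\cdots\cap\mathfrak{m}_r=(0)$. (If $A=0$ there is nothing to prove, so we may assume maximal left ideals exist.)

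\emph{Step 2: embed $A$ into a sum of simples.} Consider the left $A$-module map
\begin{equation*}
\Psi: A\to \bigoplus_{i=1}^{r} A/\mathfrak{m}_i,\qquad a\mapsto (a+\mathfrak{m}_1,\ldots,a+\mathfrak{m}_r).
\end{equation*}
Its kernel is $\bigcap_i \mathfrak{m}_i=(0)$ by Step 1, so $\Psi$ is injective. Each $A/\mathfrak{m}_i$ is a simple left $A$-module, because its submodules correspond to left ideals between $\mathfrak{m}_i$ and $A$, and $\mathfrak{m}_i$ is maximal. Thus the target is a semisimple module.

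\emph{Step 3: conclude.} By Step 2, $A$ is isomorphic to a submodule of a semisimple module, so ${}_AA$ is semisimple, and hence $A$ is a semisimple ring. If one wants to avoid quoting that submodules of semisimple modules are semisimple, note that the image of $\Psi$ is a direct summand of $\bigoplus_i A/\mathfrak{m}_i$. That sum is finite-length, so the image is a direct sum of some of the simple factors.

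The only real obstacle is Step 1, passing from the infinite intersection to a finite one. The minimal-dimension argument handles it, and it is exactly where the finite-dimensionality hypothesis is used. Everything else is formal.
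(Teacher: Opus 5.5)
Your proof is correct and follows essentially the same route as the paper: you choose a finite intersection of maximal left ideals of minimal $k$-dimension, show it equals $J(A)=(0)$, and embed $A$ into $\bigoplus_i A/\mathfrak{m}_i$. The only cosmetic difference is that you show directly that the minimal intersection lies in every maximal left ideal, whereas the paper argues by contradiction using a nonzero element lying outside some maximal left ideal $\mathfrak{m}_0$.
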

\begin{proof}
    Since the dimension over $k$ of any finite intersection of maximal left ideal cannot exceed $\dim(A)$, therefore, there exists a finite intersection $M = \mathfrak{m}_1\cap \ldots\cap \mathfrak{m}_l$ of maximal left ideals of minimal dimension $d$. If $d>0$, there exists some nonzero $x\in M$, and because $J(A) = 0$, there is some maximal left ideal $\mathfrak{m}_0$ that does not contain $x$, then $M\cap \mathfrak{m}_0$ is properly contained in $M$, a contradiction. Therefore, $d=0$, hence, $M = J(A)$. Now, we have an injective $R$-map
    \begin{equation*}
        R = R/M \xhookrightarrow{} \bigoplus\limits_{i=1}^{l}R/\mathfrak{m}_i
    \end{equation*}
    that is induced by the projections $R\to R/\mathfrak{m}_i$. This means that $R$ is a submodule of $\bigoplus\limits_{i=1}^{l}R/\mathfrak{m}_i$, and each $R/\mathfrak{m}_i$ is a simple module, hence, $\bigoplus\limits_{i=1}^{l}R/\mathfrak{m}_i$ is semisimple, which implies that $R$ is semisimple.
\end{proof}
\begin{lemma}[{\cite[Lemma 2.11]{eggermont_generalizations_2011}}]\label{lemma: 2.11}
If $k$ is a field and $A$ is a $k$-algebra, let $M$ be a semisimple $A$-module that is finite-dimensional over $k$. Then $A/\mathrm{Ann}(M)$ is finite-dimensional over $k$, and is a semisimple ring.
\end{lemma}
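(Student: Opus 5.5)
The strategy is to represent $A$ concretely on $M$ and then invoke \cref{lemma: 2.9}. Consider the $k$-algebra homomorphism $\rho: A\to \mathrm{End}_k(M)$ sending $a$ to the map $a_M: m\mapsto am$. Its kernel is exactly $\mathrm{Ann}(M)$, so $A/\mathrm{Ann}(M)$ embeds into $\mathrm{End}_k(M)$. Since $M$ is finite-dimensional over $k$, so is $\mathrm{End}_k(M)$, and hence $A/\mathrm{Ann}(M)$ is finite-dimensional. This settles the first claim with no real work.

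For semisimplicity, set $\bar A = A/\mathrm{Ann}(M)$. By \cref{lemma: 2.9}, it suffices to show that the Jacobson radical $J(\bar A)$ vanishes. The module $M$ is naturally a faithful $\bar A$-module. Its $\bar A$-submodules coincide with its $A$-submodules, so $M$ is still semisimple over $\bar A$. Because $M$ is finite-dimensional, we may write $M = S_1\oplus\cdots\oplus S_r$ as a finite direct sum of simple $\bar A$-modules.

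The key step is to show that $J(\bar A)$ annihilates every simple $\bar A$-module $S$. Choose $0\neq s\in S$. The map $\bar A\to S$ given by $a\mapsto as$ is surjective, and its kernel $\mathfrak m_s$ is a maximal left ideal, since the quotient $\bar A/\mathfrak m_s\cong S$ is simple. Hence $J(\bar A)\subseteq \mathfrak m_s$, which gives $J(\bar A)s=0$. Since $s$ was arbitrary, $J(\bar A)S=0$. It follows that $J(\bar A)$ kills each $S_i$, hence kills $M$. By faithfulness, $J(\bar A)\subseteq \mathrm{Ann}_{\bar A}(M)=0$. Applying \cref{lemma: 2.9} then shows that $\bar A$ is a semisimple ring.

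The only point that needs care is this radical-kills-simples step together with the identification of the maximal left ideals as annihilators of elements. The remaining steps are routine checks: that semisimplicity passes to $\bar A$ and that the direct sum decomposition is finite. I do not expect a genuine obstacle.
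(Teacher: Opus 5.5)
Your proof is correct and follows essentially the same route as the paper: you embed $A/\mathrm{Ann}(M)$ into $\mathrm{End}_k(M)$, show the Jacobson radical of the quotient vanishes because annihilators of nonzero elements of simple modules are maximal left ideals, and conclude with \cref{lemma: 2.9}. Working directly in $\bar A$, where $M$ is faithful, is a slightly cleaner packaging than the paper's. The paper passes from $J(A)\subseteq \mathrm{Ann}(M)$ to $J(A/\mathrm{Ann}(M))=0$ somewhat hastily; what is really needed there is that $\mathrm{Ann}(M)$ is an intersection of maximal left ideals, and your faithfulness argument supplies exactly that.
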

\begin{proof}
    Since $M$ is finite-dimensional over $k$, $\mathrm{End}_k(M)$ is finite-dimensional, and because one has a monomorphism $A/\mathrm{Ann}(M) \to \mathrm{End}_k(M)$, the algebra $A/\mathrm{Ann}(M)$ is also finite-dimensional.\smallbreak
    Suppose that $M = \bigoplus\limits_{i=1}^l S_i$, where each $S_i$ is a simple $A$-module. It follows that $\mathrm{Ann}(M) = \bigcap\limits_{i=1}^l \mathrm{Ann}(S_i)$ is an intersection of maximal left ideals of $A$, in particular, $J(A)\subseteq \mathrm{Ann}(M)$. Therefore, $J(A/\mathrm{Ann}(M)) = (0)$, and $J(A/\mathrm{Ann}(M)) = \bigcap\limits_{i=1}^l \mathrm{Ann}(S_i)/\mathrm{Ann}(M)$ is an intersection of finitely many maximal ideals, hence by \cref{lemma: 2.9}, $A/\mathrm{Ann}(M)$ is a semisimple ring. 
\end{proof}
\begin{theorem}[{\cite[Theorem 5.10]{eggermont_generalizations_2011}}]\label{thm: 5.10}
Suppose that $A$ is a $k$-algebra. Let $M, N$ be semisimple $A$-modules that are finite-dimensional over $k$. Suppose that for all $a \in A$, one has $\chi_{M}(a)=\chi_{N}(a)$. Then, $M$ and $N$ are isomorphic as $kG$-modules.
\end{theorem}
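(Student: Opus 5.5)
Since $M$ and $N$ are semisimple and finite-dimensional over $k$, write $M\cong\bigoplus_S S^{m_S}$ and $N\cong\bigoplus_S S^{n_S}$, where $S$ runs over isomorphism classes of simple $A$-modules. It suffices to show $m_S=n_S$ for every simple $S$. I would first reduce to a semisimple ring. Let $I=\mathrm{Ann}(M)\cap\mathrm{Ann}(N)=\mathrm{Ann}(M\oplus N)$. Since $M\oplus N$ is semisimple and finite-dimensional, \cref{lemma: 2.11} shows that $R=A/I$ is a finite-dimensional semisimple ring. Both $M$ and $N$ are $R$-modules, and every element of $R$ lifts to $A$. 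The hypothesis on characteristic polynomials therefore holds for all $r\in R$, and it suffices to prove the statement over $R$.

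The core step uses \cref{lemma: 5.9}. Fix a simple $R$-module $S$ occurring in $M$ or $N$, and pick $s\in S\setminus\{0\}$. The lemma gives $r\in R$ with $rs=s$ and $rT=0$ for every simple $T\not\cong S$. Then $r$ acts as zero on every isotypic component of $M$ other than the $S$-component, so $\chi_M(r)=\chi_S(r)^{m_S}$, and likewise $\chi_N(r)=\chi_S(r)^{n_S}$. Because $rs=s$, the endomorphism $r_S$ has eigenvalue $1$. By \cref{lemma: 5.8}, the polynomial $\chi_S(r)$ is divisible by $(1-T)$, and in particular $\chi_S(r)\neq 1$.

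From $\chi_S(r)^{m_S}=\chi_S(r)^{n_S}$ in $k[T]$, I would conclude $m_S=n_S$. The cleanest route is to compare multiplicities of the factor $(1-T)$: $k[T]$ is a UFD and $(1-T)$ divides the nonzero polynomial $\chi_S(r)$ to some order $e\ge 1$. Then $(1-T)$ divides the two sides to orders $em_S$ and $en_S$, which must be equal. Hence the isotypic decompositions agree and $M\cong N$. The problem statement writes "$kG$-modules", but the argument gives an isomorphism of $A$-modules, which specializes to $kG$-modules when $A=kG$.

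The main obstacle is where the theorem needs more than the characteristic polynomials. In positive characteristic the traces alone do not determine multiplicities, since for example $p$ copies of a module have trace zero. This is why I compare full characteristic polynomials and read off exponents through unique factorization rather than using traces. The other point needing care is the passage to $R=A/I$, which is what makes \cref{lemma: 5.9} applicable; without a semisimple quotient, the separating element $r$ need not exist.
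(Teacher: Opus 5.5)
Your proof is correct and follows the paper's route. You use the same reduction to the finite-dimensional semisimple quotient $A/\mathrm{Ann}(M\oplus N)$ via \cref{lemma: 2.11}, the same separating element from \cref{lemma: 5.9}, and the same eigenvalue argument from \cref{lemma: 5.8}. The one difference is in the last step: the paper inducts on the number of simple summands, using the separating element only to show that $S_1$ occurs among the $T_i$ and then passing to $M/S_1$ and $N/T_1$, whereas you get $m_S=n_S$ in one step by comparing the power of $(1-T)$ in $\chi_S(r)^{m_S}=\chi_S(r)^{n_S}$.
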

Before proving this theorem, we present the corollary that we shall use, that is, if $A = kG$, the group algebra with respect to a group $G$ over a field $k$, suppose that the characteristic polynomial of $g_N$ and $g_M$ are equal for any $g\in G$, then because $G$ generates $kG$, it follows that the assumptions of \cref{thm: 5.10} are fulfilled. Therefore, $M$ and $N$ are isomorphic as semisimple representations of $G$ over $k$.
\begin{proof}
    First, we show that we may assume that $A$ is finite-dimensional over $k$ and semisimple. Let $I \subset A$ be the annihilator of $M \bigoplus N$ (obviously, $M\bigoplus N$ is semisimple.) Then both $M$ and $N$ are $A / I$-modules and $A / I$ is semisimple and finite-dimensional over $k$ by \cref{lemma: 2.11}. As $(a+I)_{M}=a_{M}$ and $(a+I)_{N}=a_{N}$ for any $a \in A$, we have $\chi_{M}(a+I)=$ $\chi_{N}(a+I)$ for all $a \in A$. Note that we have a canonical bijection $\operatorname{Hom}_{A}(M, N) \cong\operatorname{Hom}_{A / I}(M, N)$, since any $A$-linear map from $M$ to $N$ is also $A / I$-linear and vice versa. Hence we have $M \cong_{A} N$ if and only if $M \cong_{A / I} N$. Moreover, $M$ and $N$ are still semisimple as $A / I$-modules (because $A/I$ is semisimple.) So if the theorem holds with $A$ replaced by $A / I$, it holds for $A$ as well.\smallbreak
    Assume that $A$ is semisimple and finite-dimensional over $k$. Write $M=\bigoplus\limits_{i=1}^{d} S_{i}$ and $N=\bigoplus\limits_{i=1}^{e} T_{i}$ with $d, e \in \mathbb{Z}_{\geq 0}$, and $S_{1}, \ldots, S_{d}$, $T_{1}, \ldots, T_{e}$ are simple $A$-modules. Assume $d \geq e$ without loss of generality. We apply induction to $d$.\smallbreak
If $d=0$, both $M$ and $N$ are the zero module, and hence one has $M=N$.
Suppose $d>0$ and that the result is true for all $d^{\prime}<d$. View a component $S_{1}$ of $M$ as a subset of $M$. Let $s \in S_{1} \backslash\{0\}$. Let $a \in A$ such that as $=1$ and $a T=0$ for each simple $A$-module $T$ that is not isomorphic to $S_{1}$. Such $a$ exists by \cref{lemma: 5.9}.\smallbreak
Suppose $S_{1}$ is not isomorphic to $T_{i}$ for any $i \in\{1,2, \ldots, e\}$. Then one has $a T_{i}=0$ for each $i \in\{1,2, \ldots, e\}$. In particular, it follows that $a N=0$, yielding that $\chi_{N}(a)$ is the constant polynomial $1$.\smallbreak
On the other hand, viewing $S_{1}$ as a nonzero $A$-submodule of $M$, we see that $a_{M}$ has eigenvalue $1$, since we have $a s=s$. Thus $\chi_{M}(a)$ is not the constant polynomial $1$ by \cref{lemma: 5.8}, contradicting $\chi_{M}(a)=\chi_{N}(a)$. So at least one of the $T_{i}$ is isomorphic to $S_{1}$.\smallbreak
Assume without loss of generality that $T_{1}$ and $S_{1}$ are isomorphic. Now consider the modules $M / S_{1}$ and $N / T_{1}$; these satisfy $\chi_{M / S_{1}}(a)=\chi_{M}(a) / \chi_{S_{1}}(a)=$ $\chi_{N}(a) / \chi_{T_{1}}(a)=\chi_{N / T_{1}}(a)$ for all $a \in A$, using $\chi_{S_{1}}(a)=\chi_{T_{1}}(a)$ since $S_{1} \cong T_{1}$. As both $M / S_{1}$ and $N / T_{1}$ have precisely one fewer simple submodule in their decomposition, we can apply the induction hypothesis to $M / S_{1}$ and $N / T_{1}$ and conclude $M / S_{1} \cong N / T_{1}$. As $M \cong M / S_{1} \bigoplus S_{1}$ and $N \cong N / T_{1} \bigoplus T_{1}$ since $M$ and $N$ are semisimple, it follows that $M$ and $N$ are isomorphic.
\end{proof}
\section{Modular characters}\label{sec:2}
The following definitions are taken from \cite[Chapter 18]{serre1977representations}. Let $p$ be a prime number, and $G$ be a finite group. Denote by $G_{reg}$ the set of $p$-regular elements\index{$p$-regular element} of $G$, that is, the set of elements $g\in G$ such that $\gcd(|g|, p) = 1$, let $m'$ be the least common divisor of the orders of elements in $G_{reg}$. Suppose that $\mathbb{K}$ be a finite extension of $\mathbb{Q}_p$ that contains every $m'$th root of unity, let $\mathfrak{m}$ be the maximal ideal of the ring of integers of $\mathbb{K}$. Because $\gcd(m', p) = 1$, let $\mu_{\mathbb{K}}$ be the group of $m'$th roots of unity in $\mathbb{K}$, and $\mu_{k}$ be the group of $m'$th roots of unity in $k=\mathcal{O}_{\mathbb{K}}/\mathfrak{m}$, then both $\mu_{\mathbb{K}}$ and $\mu_k$ have exactly $m'$ elements (as the polynomial $x^{m'}-1$ is separable in $k$), which means that the map 
\begin{align*}
    \pi_{\mathfrak{m}}&:  \mu_{\mathbb{K}}\to \mu_k\\
    x &\mapsto [x] 
\end{align*}
is a group isomorphism (notice that a root of unity must have norm $1$, which implies that it belongs to the ring of integers.) For each $\lambda\in \mu_k$, let $\Tilde{\lambda} = \pi_{\mathfrak{m}}^{-1}(\lambda)$. \smallbreak
Let $n$ be a positive integer, and $E$ is a left $kG$-module, that is also an $n$-dimensional $k$-vector space. Suppose that $s\in G_{reg}$, $s_E: E\to E$ is the endomorphism of $E$ induced by $s$. If $s_E$ is of order $a$, then by definition, $a\mid m'$, thus there are $a$ distinct roots of unity $\epsilon_1, \ldots, \epsilon_a$ in $k$. Therefore, we have
\begin{equation*}
    a\cdot s_E^{-1} = a\cdot s_E^{a-1} = (s_E^a - I)' = \left(\prod\limits_{j = 1}^{a}(s_E - \epsilon_j I) \right)' = \sum\limits_{j=1}^{a}\prod\limits_{i = 1, i\neq j}^{a}(s_E - \epsilon_i I).
\end{equation*}
This implies that, for any $v\in E$, we have
\begin{equation*}
    v = a s_E^{-1} (a^{-1} s_E(v)) = \sum\limits_{j=1}^{a}\prod\limits_{i = 1, i\neq j}^{a}(s_E - \epsilon_i I)(a^{-1}s_E(v)).
\end{equation*}
For each $j$, the summand $\prod\limits_{i = 1, i\neq j}^{a}(s_E - \epsilon_i I)(a^{-1}s_E(v))$ lies in the kernel of $A-\epsilon_j I$, because $\prod\limits_{i = 1}^{a}(s_E - \epsilon_i I) = 0$, hence, the above equation shows that $E$ can be decomposed into a direct sum of eigenspaces of $s_E$, which means that $s_E$ is diagonalizable.\smallbreak
Let $\lambda_1, \ldots, \lambda_n \in k$ be the eigenvalues of $s_E$, put
\begin{equation*}
    \phi_E(s) = \sum\limits_{i=1}^{n}\Tilde{\lambda_i}.
\end{equation*}
The function $\phi_E: G_{reg}\to \mathcal{O}_{\mathbb{K}}$ just defined is called the \emph{modular character}\index{modular character} of $E$. We have the following simple properties of modular characters.
\begin{proposition}[{\cite[Subsection 18.1]{serre1977representations}}]\label{prop: properties of Brauer_char}
With the above notations, the following claims are true.
\begin{enumerate}
    \item $\phi_E(1) = n$.
    \item $\phi_E(tst^{-1}) = \phi_E(s), \text{ for all } t\in G, s\in S_{reg}$, i.e., $\phi_E$ is a class function on $G_{reg}$.
    \item If $0\xrightarrow{} E' \xrightarrow{} E \xrightarrow{} E" \xrightarrow{} 0$ is an exact sequence of left $kG$-modules, then $\phi_E = \phi_{E'}+\phi_{E"}$.
\end{enumerate}
\end{proposition}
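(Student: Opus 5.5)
The three claims follow directly from the definition of $\phi_E$ as the sum of the lifts $\Tilde{\lambda_i}$ of the eigenvalues of $s_E$. The only structural facts needed are that $\pi_{\mathfrak{m}}$ is a bijection $\mu_{\mathbb{K}}\to\mu_k$ and that each $s_E$ with $s\in G_{reg}$ is diagonalizable, as shown just above.

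For (1), I take $s=1$. Then $s_E=I$, all $n$ eigenvalues equal $1\in k$, and $\Tilde{1}=1$ because $\pi_{\mathfrak{m}}$ is a group isomorphism and so sends $1$ to $1$. Hence $\phi_E(1)=n$.

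For (2), I first note that $tst^{-1}$ has the same order as $s$, so it lies in $G_{reg}$. Moreover $(tst^{-1})_E=t_E s_E t_E^{-1}$ is similar to $s_E$, so it has the same characteristic polynomial and the same multiset of eigenvalues in $k$. The lifted sums therefore agree, and $\phi_E(tst^{-1})=\phi_E(s)$.

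For (3), I fix $s\in G_{reg}$. The map $s_E$ preserves the image of $E'$ and induces $s_{E'}$ on $E'$ and $s_{E''}$ on $E''$. Choosing a basis of $E$ that extends a basis of $E'$ puts $s_E$ in block upper-triangular form with diagonal blocks $s_{E'}$ and $s_{E''}$. Consequently $\chi_{s_E}=\chi_{s_{E'}}\chi_{s_{E''}}$, and the multiset of eigenvalues of $s_E$ is the disjoint union of those of $s_{E'}$ and $s_{E''}$. I also need that the eigenvalues of $s_{E'}$ and $s_{E''}$ are $m'$th roots of unity, so that their lifts are defined. This holds because $s_{E'}$ and $s_{E''}$ are restrictions and quotients of a diagonalizable operator whose eigenvalues lie in $\mu_k$. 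Summing the lifts over this disjoint union gives $\phi_E(s)=\phi_{E'}(s)+\phi_{E''}(s)$.

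All steps are routine. The only point requiring care is (3): the eigenvalues must be counted as multisets, with multiplicity, which is exactly what the block-triangular form of $s_E$ provides.
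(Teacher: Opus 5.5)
Your proposal is correct and follows essentially the same route as the paper: (1) is immediate, (2) uses that $(tst^{-1})_E = t_E s_E t_E^{-1}$ is similar to $s_E$, and (3) uses that the multiset of eigenvalues of $s_E$ is the union of those of $s_{E'}$ and $s_{E''}$, because $E'$ is invariant and $E''=E/E'$. Your block-triangular argument and your check that the eigenvalues of $s_{E'}$ and $s_{E''}$ lie in $\mu_k$ (so their lifts are defined) make explicit what the paper's short proof leaves implicit.
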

\begin{proof}
    The first property is trivial. For the second one, just notice that two similar matrices have the same characteristic polynomial. For the last property, simply notice that, for any $s\in G_{reg}$, the list of eigenvalues of $s_E: E\to E$ is the union of the list of eigenvalues of $s_{E'}: E'\to E'$ and the list of eigenvalues of $s_{E"}: E"\to E"$, because $E" = E/E'$, and $E'$ is an invariant subspace of $E$.    
\end{proof}
In particular, \cref{prop: properties of Brauer_char} (3) implies that if $E_0 \subseteq E_1\subseteq \cdots \subseteq E_m = E$ is a filtration of $E$, then $\phi_E = \phi_{E_0} + \phi_{E_1/E_0} + \cdots + \phi_{E_m/E_{m-1}}$. Furthermore, one sees that if $E$ and $E'$ are two $kG$ modules which have the same list of composition factors, then $\phi_E = \phi_{E'}$ (take a composition series of $E$, by the above summation formula and commutativity of addition, $\phi_E$ only depends on the composition factors of $E$). We now prove that the converse also holds, i.e., if $E$ and $E'$ are two $kG$-modules having the same modular character, then they are \emph{Brauer isomorphic}\index{Brauer isomorphic}, i.e., they have the same list of composition factors. Additionally, if $E$ and $E'$ are semisimple (e.g., if $G = G_{reg}$), then $E\cong E'$.
\begin{theorem}[{\cite[Subsection 18.2, Corollary 1]{serre1977representations}}]\label{thm:main_thm_brauer}
Let $E_1$ and $E_2$ be left $kG$-modules such that $\phi_{E_1} = \phi_{E_2}$. Then they have the same composition factors.
\end{theorem}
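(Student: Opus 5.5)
The plan is to follow the standard Brauer argument: first lift everything to characteristic zero, and then use linear independence of the Brauer characters of the simple modules.

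\emph{Step 1 (reduction to simple characters).} By \cref{prop: properties of Brauer_char}(3), the modular character is additive along composition series. So if $S_1,\ldots,S_h$ are the pairwise non-isomorphic simple $kG$-modules and $E$ has composition multiplicities $n_i(E)$, then
\begin{equation*}
\phi_E=\sum_{i=1}^h n_i(E)\,\phi_{S_i}.
\end{equation*}
Hence $\phi_{E_1}=\phi_{E_2}$ gives $\sum_i (n_i(E_1)-n_i(E_2))\phi_{S_i}=0$. It remains to show that an integer relation $\sum_i c_i\phi_{S_i}=0$ forces every $c_i=0$. Since the $\phi_{S_i}$ take values in $\mathcal{O}_{\mathbb{K}}$, I will first show that the reductions $\overline{\phi_{S_i}}$ modulo $\mathfrak{m}$ are $k$-linearly independent. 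I will then handle the fact that this only yields $c_i\equiv 0 \pmod p$.

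\emph{Step 2 (independence mod $\mathfrak{m}$).} By construction $\pi_{\mathfrak{m}}(\tilde\lambda)=\lambda$, so $\overline{\phi_E(s)}=\mathrm{Tr}(s_E)\in k$ for $s\in G_{reg}$. To see that the $\mathrm{Tr}_{S_i}$ are $k$-linearly independent as functions on all of $kG$, I would pass to a splitting field; we may enlarge $k$, as the problem allows, since the statement is insensitive to finite extensions. The Jacobson density / Wedderburn theorem (cf. \cref{lemma: 2.11}) then shows that $kG\to\prod_i \mathrm{End}_k(S_i)$ is surjective. Choosing $a$ acting as a rank-one idempotent on $S_j$ and as $0$ on the other $S_i$ gives $\mathrm{Tr}_{S_i}(a)=\delta_{ij}$. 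Next, to get from $kG$ down to $G_{reg}$, I would use the standard fact that for any $g=g_{p'}g_p$ (Jordan decomposition into commuting $p$-regular and $p$-parts), $g_p$ acts unipotently on $g_{p'}$-eigenspaces. Therefore $\mathrm{Tr}_E(g)=\mathrm{Tr}_E(g_{p'})$, so a relation on $G_{reg}$ extends to all of $G$, hence to $kG$.

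\emph{Step 3 (removing the mod $p$ ambiguity).} Suppose the $c_i$ are not all zero. Divide by the largest power of $p$ dividing all of them; the relation $\sum_i c_i\phi_{S_i}=0$ still holds in characteristic zero. Reducing modulo $\mathfrak{m}$ then gives a nontrivial $k$-relation among the $\mathrm{Tr}_{S_i}$, contradicting Step 2. Hence $n_i(E_1)=n_i(E_2)$ for all $i$, which is the claim.

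I expect the main obstacle to be Step 2. It needs two things: the splitting-field enlargement, with a check that simple modules stay distinct and multiplicities behave under extension of scalars; and the argument that traces on $p$-singular elements are determined by their $p$-regular parts. Alternatively, Step 2 could go through \cref{thm: 5.10} applied to the semisimplifications. That would require a Newton-identity argument recovering characteristic polynomials from power traces, which fails in characteristic $p$. This is exactly why the lift to $\mathcal{O}_{\mathbb{K}}$ is used, and I would fall back on it only if the trace-independence route stalls.
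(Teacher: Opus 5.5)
Your proposal is correct, and it is essentially Serre's own argument. You prove that the Brauer characters of the simple modules are linearly independent by reducing modulo $\mathfrak{m}$ to traces, separating the $\mathrm{Tr}_{S_i}$ on $kG$ by density, and clearing common powers of $p$.

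The paper takes a different route and never decomposes $\phi_E$ into simple characters. For $s\in G_{reg}$ it applies $\phi_{E_1}=\phi_{E_2}$ to all powers $s^k$, so the power sums of the lifted eigenvalues agree in the characteristic-zero field $\mathbb{K}$. Newton's identities then force equal eigenvalue multisets, hence equal characteristic polynomials of $s_{E_1}$ and $s_{E_2}$. It passes to arbitrary $g=ah$ ($a$ $p$-regular, $h$ of order $p^l$) via $\det(g-\lambda I)^{p^l}=\det(a-\lambda I)^{p^l}$. It finishes with the Brauer--Nesbitt statement \cref{thm: 5.10} applied to the semisimplifications.

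What the paper's route buys is that characteristic polynomials record multiplicities exactly, so no density theorem, splitting field, or division by powers of $p$ is needed. What yours buys is linearity: you only handle traces, which extend $k$-linearly from $G$ to $kG$. By contrast, \cref{thm: 5.10} needs $\chi_{E_1}(a)=\chi_{E_2}(a)$ for all $a\in kG$, and its proof uses the non-group elements of \cref{lemma: 5.9}. The paper's remark that agreement on $G$ suffices ``because $G$ generates $kG$'' hides a nontrivial step, since $a\mapsto\chi(a)$ is not additive. Something like Amitsur's formula, or an independence argument like yours, is needed there.

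Finally, the obstacle you flag in Step 2 is milder than you fear. Over a finite field each $D_i=\mathrm{End}_{kG}(S_i)$ is a finite field, and $kG$ surjects onto $\prod_i\mathrm{End}_{D_i}(S_i)$. Take $a$ acting on $S_j$ as a $D_j$-rank-one idempotent composed with some $d\in D_j$ of nonzero trace to $k$, and as $0$ on the other $S_i$. This gives $\mathrm{Tr}_{S_i}(a)=\delta_{ij}\,\mathrm{Tr}_{D_j/k}(d)$ without enlarging $k$ at all.
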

\begin{proof}
    Let $g\in G_{reg}$, then for any $k\geq 0$, we have $\phi_{E_1}(g^k) = \phi_{E_2}(g^k)$. If $[\lambda_1, \ldots, \lambda_n]$ are eigenvalues of $g_{E_1}$, $[\mu_1, \ldots, \mu_m]$ are eigenvalues of $g_{E_2}$, then the above identities simplify to 
    \begin{equation*}
        \sum\limits_{i = 1}^{n}\Tilde{\lambda_i}^{k} = \sum\limits_{j = 1}^{m}\Tilde{\mu}_j^{k}, \text{ for all } k\geq 0. 
    \end{equation*}
    Because all the eigenvalues are nonzero elements in the field $\mathbb{K}$ of characteristic zero, inserting $k=0$ gives $m=n$, then invoking the Newton's identities and Viete's theorem, we see that the two lists $[\lambda_1, \ldots, \lambda_n]$ and $[\mu_1, \ldots, \mu_m]$ must be the same up to a permutation of indices. In particular, the characteristic polynomials of $g_{E_1}$ and $g_{E_2}$ are the same for any $g\in G_{reg}$.\smallbreak
    Next, for any $g\in G$, suppose that $|g| = p^m\cdot m'$, where $m\ge 0$, and $\gcd(m', p) = 1$. Then there exist $x, y\in \mathbb{Z}$ such that $xp^m + ym' =1$. Then,
    \begin{equation*}
        g = g^{xp^m + ym'} = g^{xp^m}g^{ym'}, 
    \end{equation*}
    we see that $a= g^{xp^m}\in G_{reg}$, and the order of $h = g^{ym'}$ is a power of $p$. If $|h| = p^l$, then because $h_{E_1}$ and $a_{E_1}$ commute, we have
    \begin{equation*}
        \det(g_{E_1}-\lambda I)^{p^l} = \det(h_{E_1}^{p^l} a^{p^{l}}_{E_1} - \lambda^{p^l} I) = \det(a_{E_1}-\lambda I)^{p^l}.
    \end{equation*}
    Similarly, $\det(g_{E_1}-\lambda I) = \det(a_{E_1}-\lambda I) = \det(a_{E_2}-\lambda I) = \det(g_{E_2}-\lambda I)$, i.e., the characteristic polynomial of $g_{E_1}$ and $g_{E_2}$ are equal, for any $g\in G$.
    \smallbreak Using the remark following the statement of \cref{thm: 5.10}, it follows that the semisimplification of $E_1$ and $E_2$ are isomorphic (since the characteristic polynomial only depends on the composition factors,) thus $E_1$ and $E_2$ have the same composition factors. 
\end{proof}
Recall the unproven claim in \cref{cor:brauer_iso} that two $\mathbb{K}G$-modules are Brauer isomorphic if and only if they are Brauer isomorphic after extending scalars. Using \cref{thm:main_thm_brauer}, this statement becomes trivial, since extending scalars does not change the modular character map. 
\bibliographystyle{plain} 
\bibliography{citations} 
\addcontentsline{toc}{chapter}{{\bf{Bibliography}}\rm }
\printindex
\end{document}